\providecommand{\U}[1]{\protect\rule{.1in}{.1in}}
\newtheorem{theorem}{Theorem}
\newtheorem{corollary}[theorem]{Corollary}
\newtheorem{definition}[theorem]{Definition}
\newtheorem{lemma}[theorem]{Lemma}
\newtheorem{notation}[theorem]{Notation}
\newtheorem{proposition}[theorem]{Proposition}
\newtheorem{remark}[theorem]{Remark}
\newenvironment{proof}[1][Proof]{\textbf{#1.} }{\ \rule{0.5em}{0.5em}}
\begin{document}

\title{Differential Geometry of Microlinear Fr\"{o}licher Spaces IV-1}
\author{Hirokazu Nishimura\\Institute of Mathematics\\University of Tsukuba\\Tsukuba, Ibaraki 305-8571\\Japan}
\maketitle

\begin{abstract}
The fourth paper of our series of papers entitled ''Differential Geometry of
Microlinear Fr\"{o}licher Spaces is concerned with jet bundles. We present
three distinct approaches together with transmogrifications of the first into
the second and of the second to the third. The affine bundle theorem and the
equivalence of the three approaches with coordinates are relegated to a
subsequent paper.

\end{abstract}

\section{Introduction}

As the fourth of our series of papers entitled ''Differential Geometry of
Microlinear Fr\"{o}licher Spaces'', this paper will discuss jet bundles. Since
the paper has become somewhat too long as a single paper, we have decided to
divide it into two parts. In this first part we will present three distinct
approaches to jet bundles in the general context of Weil exponentiable and
microlinear Fr\"{o}licher spaces. In the subsequent part, we will establish
the affine bundle theorem in the second and the third approaches, and we will
show that the three approaches are equivalent, as far as coordinates are
available (i.e., in the classical context).

This part consisits of 7 sections. The first section is this introduction,
while the second section is devoted to some preliminaries. We will present
three distinct approaches to jet bundles in Sections 3, 4 and 5. In Section 6
we will show how to translate the first approach into the second, while
Section 7 is devoted to the transmogrification of the second approach into the third.

We have already discussed these three approaches to jet bundles in the context
of synthetic differential geometry, for which the reader is referred to our
previous work \cite{ni1}, \cite{ni2}, \cite{ni3}, \cite{ni4}, \cite{ni5} and
\cite{ni6}. Now we have emancipated them to the real world of Fr\"{o}licher spaces.

\section{\label{s2}Preliminaries}

\subsection{\label{s2.1}Fr\"{o}licher Spaces}

Fr\"{o}licher and his followers have vigorously and consistently developed a
general theory of smooth spaces, often called \textit{Fr\"{o}licher spaces}
for his celebrity, which were intended to be the \textit{maximal class of
}spaces where smooth structures can live.\textit{\ }A Fr\"{o}licher space is
an underlying set endowed with a class of real-valued functions on it (simply
called \textit{structure} \textit{functions}) and a class of mappings from the
set $\mathbb{R}$ of real numbers to the underlying set (simply called
\textit{structure} \textit{curves}) subject to the condition that structure
curves and structure functions should compose so as to yield smooth mappings
from $\mathbb{R}$ to itself. It is required that the class of structure
functions and that of structure curves should determine each other so that
each of the two classes is maximal with respect to the other as far as they
abide by the above condition. What is most important among many nice
properties about the category $\mathbf{FS}$ of Fr\"{o}licher spaces and smooth
mappings is that it is cartesian closed, while neither the category of
finite-dimensional smooth manifolds nor that of infinite-dimensional smooth
manifolds modelled after any infinite-dimensional vector spaces such as
Hilbert spaces, Banach spaces, Fr\'{e}chet spaces or the like is so at all.
For a standard reference on Fr\"{o}licher spaces, the reader is referred to
\cite{fr3}.

\subsection{\label{s2.2}Weil Algebras and Infinitesimal Objects}

\subsubsection{\label{s2.2.1}The Category of Weil Algebras and the Category of
Infinitesimal Objects}

The notion of a \textit{Weil algebra} was introduced by Weil himself in
\cite{we}. We denote by $\mathbf{W}$ the category of Weil algebras, which is
well known to be left exact. Roughly speaking, each Weil algebra corresponds
to an infinitesimal object in the shade. By way of example, the Weil algebra
$\mathbb{R}[X]/(X^{2})$ (=the quotient ring of the polynomial ring
$\mathbb{R}[X]$\ of an indeterminate $X$\ over $\mathbb{R}$ modulo the ideal
$(X^{2})$\ generated by $X^{2}$) corresponds to the infinitesimal object of
first-order nilpotent infinitesimals, while the Weil algebra $\mathbb{R}%
[X]/(X^{3})$ corresponds to the infinitesimal object of second-order nilpotent
infinitesimals. Although an infinitesimal object is undoubtedly imaginary in
the real world, as has harassed both mathematicians and philosophers of the
17th and the 18th centuries such as philosopher Berkley (because
mathematicians at that time preferred to talk infinitesimal objects as if they
were real entities), each Weil algebra yields its corresponding \textit{Weil
functor} or \textit{Weil prolongation}\ on the category of smooth manifolds of
some kind to itself, which is no doubt a real entity. By way of example, the
Weil algebra $\mathbb{R}[X]/(X^{2})$ yields the tangent bundle functor as its
corresponding Weil functor. Intuitively speaking, the Weil functor
corresponding to a Weil algebra stands for the exponentiation by the
infinitesimal object corresponding to the Weil algebra at issue. For Weil
functors on the category of finite-dimensional smooth manifolds, the reader is
referred to \S 35 of \cite{kol}, while the reader can find a readable
treatment of Weil functors on the category of smooth manifolds modelled on
convenient vector spaces in \S 31 of \cite{kri}. In \cite{ni7} we have
discussed how to assign, to each pair $(X,W)$\ of a Fr\"{o}licher space $X$
and a Weil algebra $W$,\ another Fr\"{o}licher space $X\otimes W$\ called the
\textit{Weil prolongation of} $X$ \textit{with respect to} $W$, which is
naturally extended to a bifunctor $\mathbf{FS}\times\mathbf{W\rightarrow FS}
$. And we have shown that, given a Weil algebra $W$, the functor assigning
$X\otimes W$ to each object $X$ in $\mathbf{FS}$\ and $f\otimes\mathrm{id}%
_{W}$ to each morphism $f$\ in $\mathbf{FS}$, namely, the Weil functor on
$\mathbf{FS}$\ corresponding to $W$\ is product-preserving. The proof can
easily be strengthened to

\begin{theorem}
The Weil functor on the category $\mathbf{FS}$\ corresponding to any Weil
algebra is left exact.
\end{theorem}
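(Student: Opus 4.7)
The plan is to extend the product-preservation result of \cite{ni7} to all finite limits. Since left exactness of a functor between categories with finite limits is equivalent to preservation of the terminal object, binary products, and equalizers, and since the first two of these are already established in \cite{ni7} (the terminal object being the empty product), the only substantive new content is preservation of equalizers.

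Given a parallel pair $f, g : X \rightrightarrows Y$ in $\mathbf{FS}$, I would take the equalizer $e : E \hookrightarrow X$ concretely as the subspace $\{x \in X : f(x) = g(x)\}$ endowed with the initial Fr\"olicher structure induced by the inclusion, and then verify directly that $e \otimes \mathrm{id}_W : E \otimes W \to X \otimes W$ is the equalizer of $f \otimes \mathrm{id}_W$ and $g \otimes \mathrm{id}_W$ in $\mathbf{FS}$. The route is to unfold the construction of $\otimes$ from \cite{ni7}: an element of $X \otimes W$ is a $W$-shaped infinitesimal object of $X$, and the action of $f \otimes \mathrm{id}_W$ is postcomposition with $f$. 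Consequently an element of $X \otimes W$ is equalized by $f \otimes \mathrm{id}_W$ and $g \otimes \mathrm{id}_W$ precisely when its underlying ``values'' in $X$ are equalized by $f$ and $g$, that is, when it factors through $E$.

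To discharge the universal property, suppose $Z$ is any Fr\"olicher space and $h : Z \to X \otimes W$ satisfies $(f \otimes \mathrm{id}_W) \circ h = (g \otimes \mathrm{id}_W) \circ h$. Transposing $h$ through the constructions of \cite{ni7}, one obtains associated smooth data on $X$ that must land in $E$ by the equalizer property of $E$ in $\mathbf{FS}$; transposing back yields a unique morphism $Z \to E \otimes W$ factoring $h$. This is exactly the argument pattern used for products in \cite{ni7}, now applied to the defining universal property of an equalizer rather than that of a product.

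The main obstacle is verifying that $e \otimes \mathrm{id}_W$ really does present $E \otimes W$ as a subspace of $X \otimes W$ in the Fr\"olicher sense, not merely as an injection of underlying sets, so that the induced factorization $Z \to E \otimes W$ is genuinely smooth. This is where one must open up the concrete construction of $\otimes$ from \cite{ni7} and track how the initial Fr\"olicher structure on $E$ transports through the Weil prolongation. Once this is checked, the strengthening from product preservation to left exactness is just a matter of replaying the earlier proof with the equalizer universal property in place of the product universal property.
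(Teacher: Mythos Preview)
The paper does not actually supply a proof of this theorem: it merely states, just before the theorem, that the product-preservation argument from \cite{ni7} ``can easily be strengthened to'' left exactness, and then moves on. Your proposal is precisely a sketch of how one would carry out that strengthening---reducing to equalizer preservation and replaying the \cite{ni7} argument with the equalizer universal property in place of the product one---so it is fully in line with what the paper has in mind.
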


There is a canonical projection $\pi:X\otimes W\rightarrow X$. Given $x\in X
$, we write $\left(  X\otimes W\right)  _{x}$ for the inverse image of
$x$\ under the mapping $\pi$. We denote by $\mathbf{S}_{n}$ the symmetric
group of the set $\{1,...,n\}$, which is well known to be generated by $n-1$
transpositions $<i,i+1>$ exchanging $i$ and $i+1(1\leq i\leq n-1)$ while
keeping the other elements fixed. Given $\sigma\in\mathbf{S}_{n}$ and
$\gamma\in X\otimes\mathcal{W}_{D^{n}}$, we define $\gamma^{\sigma}\in
X\otimes\mathcal{W}_{D^{n}}$ to be
\[
\gamma^{\sigma}=\left(  \mathrm{id}_{X}\otimes\mathcal{W}_{(d_{1}%
,...,d_{n})\in D^{n}\mapsto(d_{\sigma(1)},...,d_{\sigma(n)})\in D^{n}}\right)
\left(  \gamma\right)
\]
Given $\alpha\in\mathbb{R}$ and $\gamma\in X\otimes\mathcal{W}_{D^{n}}$, we
define $\alpha\underset{i}{\cdot}\gamma\in\gamma\in X\otimes\mathcal{W}%
_{D^{n}}\ (1\leq i\leq n)$ to be
\[
\alpha\underset{i}{\cdot}\gamma=\left(  \mathrm{id}_{X}\otimes\mathcal{W}%
_{(d_{1},...,d_{n})\in D^{n}\mapsto(d_{1},...,d_{i-1},\alpha d_{i}%
,d_{i+1},...,d_{n})\in D^{n}}\right)  \left(  \gamma\right)
\]
Given $\alpha\in\mathbb{R}$ and $\gamma\in X\otimes\mathcal{W}_{D_{n}}$, we
define $\alpha\gamma\in X\otimes\mathcal{W}_{D_{n}}\ (1\leq i\leq n)$ to be
\[
\alpha\gamma=\left(  \mathrm{id}_{X}\otimes\mathcal{W}_{d\in D_{n}%
\mapsto\alpha d\in D_{n}}\right)  \left(  \gamma\right)
\]
for any $d\in D_{n}$. The restriction mapping $\gamma\in\mathbf{T}%
_{x}^{D_{n+1}}(M)\mapsto\gamma|_{D_{n}}\in\mathbf{T}_{x}^{D_{n}}(M)$ is often
denoted by $\pi_{n+1,n}$.

Between $X\otimes\mathcal{W}_{D^{n}}$ and $X\otimes\mathcal{W}_{D^{n+1}}$
there are $2n+$ $2$ canonical mappings:
\[
X\otimes\mathcal{W}_{D^{n+1}}\ \
\begin{array}
[c]{c}%
\underrightarrow{\ \ \ \mathbf{d}_{i}\ \ \ }\\
\overleftarrow{\ \ \ \mathbf{s}_{i}\ \ \ }%
\end{array}
\ \ X\otimes\mathcal{W}_{D^{n}}\qquad(1\leq i\leq n+1)
\]
For any $\gamma\in X\otimes\mathcal{W}_{D^{n}}$, we define $\mathbf{s}%
_{i}(\gamma)\in X\otimes\mathcal{W}_{D^{n+1}}\ $to be
\[
\mathbf{s}_{i}(\gamma)=\left(  \mathrm{id}_{X}\otimes\mathcal{W}%
_{(d_{1},...,d_{n+1})\in D^{n+1}\mapsto(d_{1},...,d_{i-1},d_{i+1}%
,...,d_{n+1})\in D^{n}}\right)  \left(  \gamma\right)
\]
For any $\gamma\in X\otimes\mathcal{W}_{D^{n+1}}$, we define $\mathbf{d}%
_{i}(\gamma)\in X\otimes\mathcal{W}_{D^{n}}\ $to be
\[
\mathbf{d}_{i}(\gamma)=\left(  \mathrm{id}_{X}\otimes\mathcal{W}%
_{(d_{1},...,d_{n})\in D^{n}\mapsto(d_{1},...,d_{i-1},0,d_{i},...,d_{n})\in
D^{n+1}}\right)  \left(  \gamma\right)
\]
These operations satisfy the so-called simplicial identities (cf. Goerss and
Jardine \cite{go}), so that the family of $X\otimes\mathcal{W}_{D^{n}}$'s
together with mappings $\mathbf{s}_{i}$'s and $\mathbf{d}_{i}$'s form a
so-called simplicial set.

\textit{Synthetic differential geometry }(usually abbreviated to SDG), which
is a kind of differential geometry with a cornucopia of nilpotent
infinitesimals, was forced to invent its models, in which nilpotent
infinitesimals were visible. For a standard textbook on SDG, the reader is
referred to \cite{la}, while he or she is referred to \cite{ko} for the model
theory of SDG constructed vigorously by Dubuc \cite{du} and others. Although
we do not get involved in SDG herein, we will exploit locutions in terms of
infinitesimal objects so as to make the paper highly readable. Thus we prefer
to write $\mathcal{W}_{D}$\ and $\mathcal{W}_{D_{2}}$\ in place of
$\mathbb{R}[X]/(X^{2})$ and $\mathbb{R}[X]/(X^{3})$ respectively, where $D$
stands for the infinitesimal object of first-order nilpotent infinitesimals,
and $D_{2}$\ stands for the infinitesimal object of second-order nilpotent
infinitesimals. To Newton and Leibniz, $D$ stood for
\[
\{d\in\mathbb{R}\mid d^{2}=0\}
\]
while $D_{2}$\ stood for
\[
\{d\in\mathbb{R}\mid d^{3}=0\}
\]
More generally, given a natural number $n$, we denote by $D_{n}$ the set
\[
\{d\in\mathbb{R}|d^{n+1}=0\}\text{,}%
\]
which stands for the infinitesimal object corresponding to the Weil algebra
$\mathbb{R}[X]/(X^{n+1})$. Even more generally, given natural numbers $m,n$,
we denote by $D(m)_{n}$ the infinitesimal object
\[
\{(d_{1},...,d_{m})\in\mathbb{R}^{m}|d_{i_{1}}...d_{i_{n+1}}=0\}\text{,}%
\]
where $i_{1},...,i_{n+1}$ shall range over natural numbers between $1$ and $m
$ including both ends. It corresponds to the Weil algebra $\mathbb{R}%
[X_{1},...,X_{m}]/I$, where $I$ is the ideal generated by $X_{i_{1}%
}...X_{i_{n+1}}$'s. Therefore we have
\begin{align*}
D(1)_{n}  &  =D_{n}\\
D\left(  m\right)  _{1}  &  =D\left(  m\right)
\end{align*}
Trivially we have
\[
D(m)_{n}\subseteq D(m)_{n+1}%
\]
It is easy to see that
\begin{align*}
D(m_{1})_{n}\times D(m_{2})_{1}  &  \subseteq D(m_{1}+m_{2})_{n+1}\\
D(m_{1}+m_{2})_{n}  &  \subseteq D(m_{1})_{n}\times D(m_{2})_{n}%
\end{align*}
By convention, we have
\[
D^{0}=D_{0}=\{0\}=1
\]
A polynomial $\rho$ of $d\in D_{n}$ is called a \textit{simple} polynomial of
$d\in D_{n}$ if every coefficient of $\rho$ is either $1$ or $0$, and if the
constant term is $0$. A simple polynomial $\rho$ of $d\in D_{n}$ is said to be
of dimension $m$, in notation $\mathrm{\dim}(\rho)=m$, provided that $m$ is
the least integer with $\rho^{m+1}=0$. By way of example, letting $d\in D_{3}%
$, we have
\begin{align*}
\mathrm{\dim\,}(d)  &  =\mathrm{\dim\,}(d+d^{2})=\mathrm{\dim\,}(d+d^{3})=3\\
\mathrm{\dim\,}(d^{2})  &  =\mathrm{\dim\,}(d^{3})=\mathrm{\dim\,}(d^{2}%
+d^{3})=1
\end{align*}

We will write $\mathcal{W}_{d\in D_{2}\mapsto d^{2}\in D}$ for the homomorphim
of Weil algebras $\mathbb{R}[X]/(X^{2})\rightarrow\mathbb{R}[X]/(X^{3})$
induced by the homomorphism $X\rightarrow X^{2}$ of the polynomial ring
\ $\mathbb{R}[X]$ to itself. Such locutions are justifiable, because the
category $\mathbf{W}$ of Weil algebras in the real world and the category
$\mathbf{D}$ of infinitesimal objects in the shade are dual to each other in a
sense. Thus we have a contravariant functor $\mathcal{W}$\ from the category
of infinitesimal objects in the shade to the category of Weil algebras in the
real world. Its inverse contravariant functor from the category of Weil
algebras in the real world to the category of infinitesimal objects in the
shade is denoted by $\mathcal{D}$. By way of example, $\mathcal{D}%
_{\mathbb{R}[X]/(X^{2})}$ and $\mathcal{D}_{\mathbb{R}[X]/(X^{3})} $\ stand
for $D$ and $D_{2}$,\ respectively. Since the category $\mathbf{W}$\ is left
exact, the category $\mathbf{D}$\ is right exact, in which we write
$\mathbb{D\oplus D}^{\prime}$ for the coproduct of infinitesimal objects
$\mathbb{D}$ and $\mathbb{D}^{\prime}$. For any two infinitesimal objects
$\mathbb{D},\mathbb{D}^{\prime}$ with $\mathbb{D}\subseteq\mathbb{D}^{\prime}%
$, we write $i$ or $i_{\mathbb{D}\rightarrow\mathbb{D}^{\prime}} $ for its
natural injection of $\mathbb{D}$\ into $\mathbb{D}^{\prime}$. We write
$\mathbf{m}$ or $\mathbf{m}_{D_{n}\times D_{m}\rightarrow D_{n}}$ for the
mapping $\left(  d,d^{\prime}\right)  \in D_{n}\times D_{m}\mapsto dd^{\prime
}\in D_{n}$. Given $\alpha\in\mathbb{R} $, we write $\left(  \alpha
\underset{i}{\cdot}\right)  _{D^{n}}$ for the mapping
\[
(d_{1},...,d_{n})\in D^{n}\mapsto(d_{1},...d_{i-1},\alpha d_{i},d_{i+1}%
,...,d_{n})\in D^{n}%
\]
To familiarize himself or herself with such locutions, the reader is strongly
encouraged to read the first two chapters of \cite{la}, even if he or she is
not interested in SDG at all.

\subsubsection{\label{s2.2.2}Simplicial Infinitesimal Objects}

\begin{definition}
\label{d2.2.2.1}

\begin{enumerate}
\item \textit{Simplicial infinitesimal spaces} are objects of the form
\[
D\left\{  m;\mathcal{S}\right\}  =\{(d_{1},...,d_{m})\in D^{m}|d_{i_{1}%
}...d_{i_{k}}=0\text{ for any }(i_{1},...,i_{k})\in\mathcal{S}\}\text{,}%
\]
where $\mathcal{S}$ is a finite set of sequences $(i_{1},...,i_{k})$ of
natural numbers with $1\leq i_{1}<...<i_{k}\leq m$.

\item A simplicial infinitesimal object $D\left\{  m;\mathcal{S}\right\}  $ is
said to be \textit{symmetric }if $(d_{1},...,d_{m})\in D\left\{
m;\mathcal{S}\right\}  $ and $\sigma\in\mathbf{S}_{m}$ always imply
$(d_{\sigma(1)},...,d_{\sigma(m)})\in D\left\{  m;\mathcal{S}\right\}  $.
\end{enumerate}
\end{definition}

To give examples of simplicial infinitesimal spaces, we have
\begin{align*}
D(2)  &  =D\left\{  2;(1,2)\right\} \\
D(3)  &  =D\left\{  3;(1,2),(1,3),(2,3)\right\}  \text{,}%
\end{align*}
which are all symmetric.

\begin{definition}
\label{d2.2.2.2}

\begin{enumerate}
\item The number $m$ is called the \textit{degree} of $D\left\{
m;\mathcal{S}\right\}  $, in notation: $m=\mathrm{\deg\,}D\left\{
m;\mathcal{S}\right\}  $.

\item The maximum number $n$ such that there exists a sequence $(i_{1}%
,...,i_{n})$ of natural numbers of length $n$ with $1\leq i_{1}<...<i_{n}\leq
m$ containing no subsequence in $\mathcal{S}$ is called the \textit{dimension}
of $D\left\{  m;\mathcal{S}\right\}  $, in notation: $n=\mathrm{\dim
\,}D\left\{  m;\mathcal{S}\right\}  $.
\end{enumerate}
\end{definition}

By way of example, we have
\begin{align*}
\mathrm{\deg\,}D(3)  &  =\mathrm{\deg\,}D\left\{  3;(1,2)\right\}
=\mathrm{\deg\,}D\left\{  3;(1,2),(1,3)\right\}  =\mathrm{\deg\,}D^{3}=3\\
\mathrm{\dim\,}D(3)  &  =1\\
\mathrm{\dim\,}D\left\{  3;(1,2)\right\}   &  =\mathrm{\dim\,}D\left\{
3;(1,2),(1,3)\right\}  =2\\
\mathrm{\dim\,}D^{3}  &  =3
\end{align*}
It is easy to see that

\begin{proposition}
if $n=\mathrm{\dim\,}D\left\{  m;\mathcal{S}\right\}  $, then
\[
d_{1}+...+d_{m}\in D_{n}%
\]
for any $(d_{1},...,d_{m})\in D\left\{  m;\mathcal{S}\right\}  $, so that we
have the mapping
\[
+_{D\left\{  m;\mathcal{S}\right\}  \rightarrow D_{n}}:D\left\{
m;\mathcal{S}\right\}  \rightarrow D_{n}%
\]

\end{proposition}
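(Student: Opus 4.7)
The plan is to expand $(d_1+\cdots+d_m)^{n+1}$ by the multinomial theorem and show that every term vanishes, either because some $d_i$ appears squared or because the surviving monomials encode a sequence forbidden by $\mathcal{S}$.

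First I would write
\[
(d_1+\cdots+d_m)^{n+1}=\sum_{j_1,\ldots,j_{n+1}\in\{1,\ldots,m\}} d_{j_1}\cdots d_{j_{n+1}},
\]
and split the sum according to whether the multi-index $(j_1,\ldots,j_{n+1})$ has a repetition or not. Since each $d_i$ lies in $D$, we have $d_i^2=0$, so every term with a repeated index vanishes automatically. This reduces the problem to terms $d_{j_1}\cdots d_{j_{n+1}}$ where the indices $j_1,\ldots,j_{n+1}$ are all distinct; after reordering (which is legitimate in the commutative algebra $\mathcal{W}_{D^m}$) such a term is of the form $d_{i_1}\cdots d_{i_{n+1}}$ with $1\leq i_1<\cdots<i_{n+1}\leq m$.

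Next I would invoke the hypothesis $n=\dim D\{m;\mathcal{S}\}$. By definition of the dimension, $n$ is the \emph{maximum} length of a strictly increasing sequence in $\{1,\ldots,m\}$ containing no subsequence belonging to $\mathcal{S}$. Therefore the strictly increasing sequence $(i_1,\ldots,i_{n+1})$, being of length $n+1>n$, must contain some subsequence $(i_{k_1},\ldots,i_{k_r})\in\mathcal{S}$. By the defining relations of $D\{m;\mathcal{S}\}$, the corresponding product $d_{i_{k_1}}\cdots d_{i_{k_r}}$ equals zero, and hence the full product $d_{i_1}\cdots d_{i_{n+1}}$, which contains this as a factor, also vanishes.

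Putting both observations together, every term in the multinomial expansion is zero, so $(d_1+\cdots+d_m)^{n+1}=0$, i.e.\ $d_1+\cdots+d_m\in D_n$. Since this addition map is the restriction of the polynomial map $+:\mathbb{R}^m\to\mathbb{R}$ and respects the ideal structure, it is a well-defined morphism of infinitesimal objects $+_{D\{m;\mathcal{S}\}\to D_n}:D\{m;\mathcal{S}\}\to D_n$. The only nontrivial point is the translation between the extremal combinatorial definition of $n$ and the algebraic vanishing statement; once one observes that a monomial $d_{i_1}\cdots d_{i_{n+1}}$ in $\mathcal{W}_{D\{m;\mathcal{S}\}}$ is killed precisely when its index sequence contains some member of $\mathcal{S}$, the argument is a short combinatorial counting exercise with no real obstacle.
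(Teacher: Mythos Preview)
Your argument is correct and is precisely the straightforward combinatorial verification the paper has in mind; the paper itself gives no proof, merely prefacing the proposition with ``It is easy to see that'' and leaving the details to the reader. Your expansion via the multinomial theorem, together with the observation that repeated-index terms die because $d_i^2=0$ and distinct-index terms of length $n+1$ must contain a subsequence in $\mathcal{S}$ by maximality of $n$, is exactly the intended reasoning.
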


\begin{definition}
\label{d2.2.2.3}Infinitesimal objects of the form $D^{m}$ are called
\textit{basic infinitesimal }objects.
\end{definition}

\begin{definition}
\label{d2.2.2.4}Given two simplicial infinitesimal objects $D\left\{
m;\mathcal{S}\right\}  $ and $D\left\{  m^{\prime};\mathcal{S}^{\prime
}\right\}  $, a mapping
\[
\varphi=(\varphi_{1},...,\varphi_{m^{\prime}}):D\left\{  m;\mathcal{S}%
\right\}  \rightarrow D\left\{  m^{\prime};\mathcal{S}^{\prime}\right\}
\]
is called a \textit{monomial mapping} if every $\varphi_{j}$ is a monomial in
$d_{1},...,d_{m}$ with coefficient $1$.
\end{definition}

\begin{notation}
We denote by $D\left\{  m\right\}  _{n}$ the infinitesimal object
\[
\{(d_{1},...,d_{m})\in D^{m}|d_{i_{1}}...d_{i_{n+1}}=0\}\text{,}%
\]
where $i_{1},...,i_{n+1}$ shall range over natural numbers between $1$ and $m
$ including both ends.
\end{notation}

\subsubsection{\label{s2.2.3}Quasi-Colimit Diagrams}

\begin{definition}
A diagram in the category $\mathbf{D}$\ is called a quasi-colimit diagram if
its dually corresponding diagram in the category $\mathbf{W}$\ is a limit diagram.
\end{definition}

\begin{theorem}
\label{t2.2.4.1}(The Fundamental Theorem on Simplicial Infinitesimal Objects)
Any simplicial infinitesimal object $\mathbb{D}$ of dimension $n$ is the
quasi-colimit of a finite diagram whose objects are of the form $D^{k} $'s
($0\leq k\leq n$) and whose arrows are natural injections.
\end{theorem}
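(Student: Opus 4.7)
The plan is to realize $\mathbb{D}=D\{m;\mathcal{S}\}$ as the apex of a canonical cone out of a finite diagram of basic infinitesimal objects indexed by the faces of the associated simplicial complex, and then verify the quasi-colimit property by checking in $\mathbf{W}$ that the dually corresponding cone is a limit, using the explicit squarefree-monomial basis. For the setup, call a sequence $J=(j_{1}<\ldots<j_{p})$ in $\{1,\ldots,m\}$ a \emph{face} if it contains no subsequence in $\mathcal{S}$; by Definition~\ref{d2.2.2.2} every face has length at most $n$. For each face $J$ define the monomial map $\iota_{J}:D^{|J|}\rightarrow\mathbb{D}$ that places its coordinates into positions $j_{1},\ldots,j_{p}$ and zeros elsewhere, which is well-defined precisely because $J$ contains no subsequence in $\mathcal{S}$. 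For each inclusion $J'\subseteq J$ of faces, define the natural injection $\iota_{J',J}:D^{|J'|}\hookrightarrow D^{|J|}$ by zero-padding, so that $\iota_{J}\circ\iota_{J',J}=\iota_{J'}$. This produces a finite diagram indexed by the poset of faces, whose objects are $D^{k}$'s with $0\leq k\leq n$ and whose arrows are natural injections, equipped with a canonical cone from $\mathbb{D}$.

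Dualizing yields a diagram in $\mathbf{W}$ whose prospective limit object is $\mathcal{W}_{\mathbb{D}}=\mathbb{R}[X_{1},\ldots,X_{m}]/I$, where $I$ is generated by the $X_{i}^{2}$ together with the monomials $X_{i_{1}}\cdots X_{i_{k}}$ for $(i_{1},\ldots,i_{k})\in\mathcal{S}$. A convenient $\mathbb{R}$-basis of $\mathcal{W}_{\mathbb{D}}$ is $\{X^{F}:=\prod_{i\in F}X_{i}\mid F\text{ a face}\}$, and similarly $\mathcal{W}_{D^{|J|}}$ has basis $\{Y^{S}\mid S\subseteq J\}$. Unwinding the definitions, the morphism dual to $\iota_{J}$ sends $X^{F}$ to $Y^{F}$ when $F\subseteq J$ and to $0$ otherwise, while the morphism dual to $\iota_{J',J}$ sends $Y^{S}$ to $Y^{S}$ when $S\subseteq J'$ and to $0$ otherwise.

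To verify that $\mathcal{W}_{\mathbb{D}}$ is the limit of this diagram, consider a compatible family $(w_{J})$ with $w_{J}=\sum_{S\subseteq J}c_{S}^{J}Y^{S}$: the compatibility condition along $J'\subseteq J$ forces $c_{S}^{J}=c_{S}^{J'}$ whenever $S\subseteq J'$. Since every face lies in some facet, the scalar $c_{S}^{J}$ depends only on $S$, and the element $\sum_{F}c_{F}X^{F}\in\mathcal{W}_{\mathbb{D}}$ maps to the given family. Injectivity is immediate because $X^{F}$ has nonzero image in $\mathcal{W}_{D^{|J|}}$ for any face $J\supseteq F$. I expect the main obstacle to be purely combinatorial bookkeeping: matching the ideal generators of $\mathcal{W}_{\mathbb{D}}$ with the compatibility relations so that no cross-face constraint is silently lost, and keeping the monomial reindexings between $J$ and $J'$ straight. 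Once the basis identification is laid out cleanly, the universal property reduces to elementary linear algebra.
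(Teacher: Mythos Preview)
Your argument is correct and in fact more complete than the paper's, but it builds a different (larger) diagram. The paper indexes its diagram only by the \emph{maximal} faces of the associated complex together with, for each pair of maximal faces, their common intersection; the arrows are the two injections from each pairwise intersection into the two facets containing it. By contrast, you index by the entire face poset with all inclusions. Both cones are quasi-colimits, and your verification via the squarefree-monomial basis of $\mathcal{W}_{\mathbb{D}}$ goes through cleanly (indeed, with the full poset the compatibility step is simpler, since for any face $S$ you can compare directly against the object $D^{|S|}$ indexed by $S$ itself, rather than routing through two facets and their intersection). The paper, on the other hand, merely describes its construction and illustrates it on $D\{3\}_{2}$ without writing out the limit check.

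The practical difference is that the paper immediately names its smaller representation the \emph{standard} quasi-colimit representation and relies on that specific shape later (e.g., in the proof of Theorem~\ref{t4.1.5} and in Corollary~\ref{2.12}). Your representation proves the theorem as stated, but if you continue into Sections~\ref{s4}--\ref{s7} you will want to either adopt the paper's facets-plus-pairwise-intersections diagram or note that your full-poset diagram is cofinal over it, so that the induced $\nabla_{x}^{\mathbb{D}}$ agrees with the one the paper defines.
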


\begin{proof}
Let $\mathbb{D}\mathcal{=}D(m;\mathcal{S})$. For any maximal sequence $1\leq
i_{1}<...<i_{k}\leq m$ of natural numbers containing no subsequence in
$\mathcal{S}$ (maximal in the sense that it is not a proper subsequence of
such a sequence), we have a natural injection of $D^{k}$ into $\mathbb{D}$. By
collecting all such $D^{k}$'s together with their natural injections into
$\mathbb{D}$, we have an overlapping representation of $\mathbb{D}$ in terms
of basic infinitesimal spaces. This representation is completed into a
quasi-colimit representation of $\mathbb{D}$ by taking $D^{l}$ together with
its natural injections into $D^{k_{1}}$ and $D^{k_{2}}$ for any two basic
infinitesimal spaces $D^{k_{1}}$ and $D^{k_{2}}$ in the overlapping
representation of $\mathbb{D}$, where if $D^{k_{1}}$ and $D^{k_{2}}$ come from
the sequences $1\leq i_{1}<...<i_{k_{1}}\leq m$ and $1\leq\overline{i}%
_{1}<...<\overline{i}_{k_{2}}\leq m$ in the above manner, then $D^{l}$
together with its natural injections into $D^{k_{1}}$ and $D^{k_{2}}$ comes
from the maximal common subsequence $1\leq\widetilde{i}_{1}<...<\widetilde
{i}_{l}\leq m$ of both the preceding sequences of natural numbers in the above
manner. By way of example, the above method leads to the following
quasi-colimit representation of $\mathbb{D}\mathcal{=}D\left\{  3\right\}
_{2} $:
\[%
\begin{array}
[c]{ccccc}
&  & D^{2} &  & \\
& i_{1}\nearrow &  & \nwarrow i_{2} & \\
D &  & \downarrow i_{12} &  & D\\
i_{1}\downarrow &  & D(3)_{2} &  & \downarrow i_{1}\\
D^{2} & i_{13}\nearrow &  & \nwarrow i_{23} & D^{2}\\
& i_{2}\nwarrow &  & \nearrow i_{2} & \\
&  & D &  &
\end{array}
\]
In the above representation $i_{jk}$'s and $i_{j}$'s are as follows:

\begin{enumerate}
\item the $j$-th and $k$-th components of $i_{jk}(d_{1},d_{2})\in D(3)_{2}$
are $d_{1}$ and $d_{2}$, respectively, while the remaining component is $0$;

\item the $j$-th component of $i_{j}(d)\in D^{2}$ is $d$, while the other
component is $0$.
\end{enumerate}
\end{proof}

\begin{definition}
The quasi-colimit representation of $\mathbb{D}$ depicted in the proof of the
above theorem is called \textit{standard}.
\end{definition}

\begin{remark}
Generally speaking, there are multiple ways of quasi-colimit representation of
a given simplicial infinitesimal space. By way of example, two quasi-colimit
representations of $D\left\{  3;(1,3),(2,3)\right\}  $ ($=(D\times D)\oplus
D$) were given in Lavendhomme \cite[pp.92-93]{la} (\S 3.4, pp.92-93), only the
second one being standard.
\end{remark}

\subsection{\label{s2.3}Weil-Exponentiability and Microlinearity}

\subsubsection{\label{s2.3.1}Weil-Exponentiability}

We have no reason to hold that all Fr\"{o}licher spaces credit Weil
prolongations as exponentiations by infinitesimal objects in the shade.
Therefore we need a notion which distinguishes Fr\"{o}licher spaces that do so
from those that do not.

\begin{definition}
A Fr\"{o}licher space $X$ is called \textit{Weil exponentiable }if
\begin{equation}
(X\otimes(W_{1}\otimes_{\infty}W_{2}))^{Y}=(X\otimes W_{1})^{Y}\otimes W_{2}
\label{2.3.1.1}%
\end{equation}
holds naturally for any Fr\"{o}licher space $Y$ and any Weil algebras $W_{1}$
and $W_{2}$.
\end{definition}

If $Y=1$, then (\ref{2.3.1.1}) degenerates into
\[
X\otimes(W_{1}\otimes_{\infty}W_{2})=(X\otimes W_{1})\otimes W_{2}%
\]
If $W_{1}=\mathbb{R}$, then (\ref{2.3.1.1}) degenerates into
\[
(X\otimes W_{2})^{Y}=X^{Y}\otimes W_{2}%
\]

The following three propositions have been established in our previous paper
\cite{ni7}.

\begin{proposition}
\label{t2.3.1.1}Convenient vector spaces are Weil exponentiable.
\end{proposition}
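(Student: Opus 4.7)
The plan is to reduce the required isomorphism to two basic identifications and then to invoke cartesian closedness of $\mathbf{FS}$.

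First I would invoke from \cite{ni7} and from \S 31 of \cite{kri} the following explicit description of the Weil prolongation on convenient vector spaces: if $E$ is a convenient vector space and $W$ is a Weil algebra of dimension $k$, then $E\otimes W$ is naturally isomorphic in $\mathbf{FS}$ to the algebraic tensor product $E\otimes_{\mathbb{R}}W$, and any choice of basis of $W$ realizes this as the convenient vector space $E^{k}$. Since Weil algebras are finite-dimensional, the smooth tensor product collapses to the algebraic one, so $W_{1}\otimes_{\infty}W_{2}=W_{1}\otimes_{\mathbb{R}}W_{2}$. Combining these gives the natural identification
\[
X\otimes(W_{1}\otimes_{\infty}W_{2})\cong(X\otimes W_{1})\otimes W_{2}
\]
for any convenient vector space $X$, which settles the degenerate case $Y=1$ noted after the definition.

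For the general $Y$, I would set $E:=X\otimes W_{1}$, which is itself convenient by the description above, so that the task reduces to proving the natural isomorphism $(E\otimes W_{2})^{Y}\cong E^{Y}\otimes W_{2}$. Choose a basis $1,e_{1},\ldots,e_{k-1}$ of $W_{2}$. The identification $E\otimes W_{2}\cong E^{k}$ gives $(E\otimes W_{2})^{Y}\cong(E^{k})^{Y}$; cartesian closedness of $\mathbf{FS}$ yields $(E^{k})^{Y}\cong(E^{Y})^{k}$; and the same basis identifies $(E^{Y})^{k}$ with $E^{Y}\otimes W_{2}$. Composing these produces the desired natural isomorphism, which is (\ref{2.3.1.1}) in the present notation.

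The main obstacle is not any individual step but rather the verification, already carried out in \cite{ni7}, that the abstract bifunctor $\mathbf{FS}\times\mathbf{W}\to\mathbf{FS}$ applied to a convenient vector space is genuinely given by the concrete algebraic-tensor-product formula and that both $E\mapsto E\otimes W$ and $E\mapsto E^{Y}$ respect this description up to canonical isomorphism. Once that identification is in hand, naturality in $Y$, $W_{1}$ and $W_{2}$ is forced by the universal properties of tensor product and exponential object, so the present argument amounts to assembling those naturalities with cartesian closedness of $\mathbf{FS}$ to yield the Weil-exponentiability identity.
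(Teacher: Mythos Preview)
The paper does not actually prove this proposition here; it merely records it as one of three results ``established in our previous paper \cite{ni7}''. So there is no in-paper proof to compare against, only the cited source. Your sketch is in the expected spirit of that reference: exploit that a Weil algebra $W$ is finite-dimensional, so that on a convenient vector space $E$ the Weil prolongation $E\otimes W$ is just $E^{\dim W}$, and then let cartesian closedness of $\mathbf{FS}$ do the rest.

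There is, however, one step you pass over that deserves explicit justification. After setting $E:=X\otimes W_{1}$ and reducing to $(E\otimes W_{2})^{Y}\cong E^{Y}\otimes W_{2}$, you write ``the same basis identifies $(E^{Y})^{k}$ with $E^{Y}\otimes W_{2}$''. That identification is precisely the statement that the Weil prolongation of $E^{Y}$ is again given by the algebraic tensor-product formula, and your first paragraph only asserts this for \emph{convenient vector spaces}. You therefore need the fact that if $E$ is convenient and $Y$ is an arbitrary Fr\"{o}licher space, then the exponential object $E^{Y}$ in $\mathbf{FS}$ is again a convenient vector space. This is true (it is one of the standard closure properties in the Fr\"{o}licher--Kriegl setting; cf.\ \cite{fr3} or \cite{kri}), but it is not automatic from cartesian closedness alone and should be stated explicitly, since otherwise the last identification is circular. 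Once that is inserted, your argument is complete and essentially what one expects the proof in \cite{ni7} to amount to.
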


\begin{corollary}
$C^{\infty}$-manifolds in the sense of \cite{kri} (cf. Section 27) are Weil exponentiable.
\end{corollary}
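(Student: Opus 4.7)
The plan is to reduce the manifold case to the convenient vector space case by exploiting the local structure of Kriegl--Michor $C^{\infty}$-manifolds and showing that Weil exponentiability is preserved under the two operations used to build such manifolds, namely passage to open subsets and gluing along smooth atlases.

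First I would recall that, by definition, any $C^{\infty}$-manifold $M$ in the sense of \cite{kri} admits an atlas $\{(U_{\alpha},\varphi_{\alpha})\}$ in which each chart $\varphi_{\alpha}:U_{\alpha}\rightarrow V_{\alpha}$ is a diffeomorphism of the open set $U_{\alpha}\subseteq M$ onto an open subset $V_{\alpha}$ of a convenient vector space $E_{\alpha}$. Since Proposition \ref{t2.3.1.1} tells us each $E_{\alpha}$ is Weil exponentiable, it suffices to propagate Weil exponentiability from the $E_{\alpha}$'s, first to the open pieces $V_{\alpha}$, and then to the whole of $M$.

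Next I would establish the local lemmas. For an open subset $U$ of a Weil exponentiable Fr\"{o}licher space $X$, the Weil prolongation $U\otimes W$ is canonically identified with the preimage $\pi^{-1}(U)$ under the projection $X\otimes W\rightarrow X$, which is itself open in $X\otimes W$; the exponential law (\ref{2.3.1.1}) for $X$ then restricts to one for $U$, using that $Y$-parameter families of points landing in $U$ form an open subfamily. The globalization step is to check that the functors $(-)\otimes W$ and $(-)^{Y}$, and their iterations appearing on both sides of (\ref{2.3.1.1}), respect the open cover $\{U_{\alpha}\}$: on both sides one gets objects glued from the corresponding objects built from the $U_{\alpha}$'s along the same transition data, so an isomorphism local in $\alpha$ extends to a global isomorphism by the universal property of gluing in $\mathbf{FS}$.

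I expect the main obstacle to lie in the gluing step, because one must be careful that the $Y$-parameterization in $(X\otimes W_{1})^{Y}$ does not obstruct locality: a smooth map $Y\rightarrow X\otimes W_{1}$ need not take values in a single chart $U_{\alpha}\otimes W_{1}$, so one has to argue that such a map is locally-in-$Y$ factored through charts (using that $Y\rightarrow X$ is smooth and the chart images are open), and that this local-in-$Y$ factorization is compatible with tensoring by $W_{2}$. Once this local factorization is in hand, the equality (\ref{2.3.1.1}) follows chart-by-chart from Proposition \ref{t2.3.1.1}, and sheaf-theoretic reassembly yields the corollary.
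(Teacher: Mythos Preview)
The paper does not actually supply a proof of this corollary: it is simply listed among the results ``established in our previous paper \cite{ni7}'' and recorded as an immediate consequence of Proposition~\ref{t2.3.1.1}. So there is no in-paper argument to compare your proposal against.

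That said, your outline is the natural route one would take to justify the corollary from Proposition~\ref{t2.3.1.1}, and it correctly isolates the nontrivial step. Reducing to open subsets of convenient vector spaces via an atlas, and checking that $U\otimes W$ is the open preimage $\pi^{-1}(U)\subseteq X\otimes W$, are standard. Your caution about the globalization step is well placed: the identity~(\ref{2.3.1.1}) involves the exponential $(-)^{Y}$, which is a right adjoint and has no reason to commute with the colimit assembling $M$ from its charts. The local-in-$Y$ factorization you propose---using that the composite $Y\to X\otimes W_{1}\to X$ is smooth and that chart domains are open, so $Y$ is covered by preimages on which the map lands in a single $U_{\alpha}\otimes W_{1}$---is exactly what is needed, together with the observation that the Fr\"{o}licher structure on $(X\otimes W_{1})^{Y}$ is determined by such local data. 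One should also verify that the gluing of the $U_{\alpha}$'s is respected by $-\otimes W$ (this is where microlinearity or at least the left-exactness of the Weil functor enters), so that both sides of~(\ref{2.3.1.1}) are assembled from the same chartwise pieces along the same transition maps.
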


\begin{proposition}
\label{t2.3.1.2}If $X$ is a Weil exponentiable Fr\"{o}licher space, then so is
$X\otimes W$ for any Weil algebra $W$.
\end{proposition}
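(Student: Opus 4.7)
The plan is to reduce the Weil exponentiability of $X\otimes W$ directly to that of $X$ by repeatedly invoking the associativity-like identity
\[
X\otimes(W'\otimes_{\infty}W'')=(X\otimes W')\otimes W''\text{,}
\]
which is just the $Y=1$ degeneration of the defining equation (\ref{2.3.1.1}) applied to $X$. The whole proof should be a chain of natural isomorphisms, with no real geometric content beyond bookkeeping.

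More precisely, given a Fr\"{o}licher space $Y$ and Weil algebras $W_{1},W_{2}$, I would start from the left-hand side
\[
\bigl((X\otimes W)\otimes(W_{1}\otimes_{\infty}W_{2})\bigr)^{Y}
\]
and first rewrite the inner Weil prolongation using the $Y=1$ form of Weil exponentiability of $X$, obtaining
\[
(X\otimes W)\otimes(W_{1}\otimes_{\infty}W_{2})=X\otimes\bigl(W\otimes_{\infty}(W_{1}\otimes_{\infty}W_{2})\bigr)=X\otimes\bigl((W\otimes_{\infty}W_{1})\otimes_{\infty}W_{2}\bigr)\text{,}
\]
where the last step uses the (well-known) associativity of $\otimes_{\infty}$ in the category $\mathbf{W}$ of Weil algebras. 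Then I would apply the Weil exponentiability of $X$ itself, this time with the pair of Weil algebras $W\otimes_{\infty}W_{1}$ and $W_{2}$, to get
\[
\bigl(X\otimes((W\otimes_{\infty}W_{1})\otimes_{\infty}W_{2})\bigr)^{Y}=\bigl(X\otimes(W\otimes_{\infty}W_{1})\bigr)^{Y}\otimes W_{2}\text{.}
\]
Finally, one more application of the $Y=1$ identity, in the reverse direction, rewrites the base as $(X\otimes W)\otimes W_{1}$, yielding the required right-hand side
\[
\bigl((X\otimes W)\otimes W_{1}\bigr)^{Y}\otimes W_{2}\text{.}
\]

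All of these isomorphisms are natural in $Y$, $W_{1}$, $W_{2}$, because each step either invokes the naturality built into Weil exponentiability of $X$ or the bifunctoriality of $\otimes:\mathbf{FS}\times\mathbf{W}\rightarrow\mathbf{FS}$ recalled in Section~\ref{s2.2.1}. There is no essential difficulty; the only thing to check with care is that the associativity of $\otimes_{\infty}$ lifts coherently through the bifunctor $\otimes$, but this is immediate from the fact that the Weil functor corresponding to a tensor product of Weil algebras is (by its very definition in \cite{ni7}) the composite of the Weil functors corresponding to the factors. Consequently, the proof is really just a four-line calculation once associativity and the $Y=1$ degeneration are in hand.
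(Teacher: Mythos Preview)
Your argument is correct and is exactly the natural chain of isomorphisms one would expect: reduce $(X\otimes W)\otimes(W_{1}\otimes_{\infty}W_{2})$ to $X\otimes((W\otimes_{\infty}W_{1})\otimes_{\infty}W_{2})$ via the $Y=1$ degeneration and associativity of $\otimes_{\infty}$, then apply Weil exponentiability of $X$ with the pair $(W\otimes_{\infty}W_{1},\,W_{2})$, and finally unwind with the $Y=1$ identity again.

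As for comparison with the paper: the paper does not actually prove this proposition here; it merely records it as one of three results ``established in our previous paper \cite{ni7}''. So there is no in-text argument to compare against. Your proof is precisely the kind of short calculation one would expect to find in the cited source, and nothing in the present paper suggests a different route.
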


\begin{proposition}
\label{t2.3.1.3}If $X$ and $Y$ are Weil exponentiable Fr\"{o}licher spaces,
then so is $X\times Y$.
\end{proposition}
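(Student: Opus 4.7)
The plan is to chain together natural isomorphisms, using Weil exponentiability of $X$ and $Y$ on the inside and pushing products through the Weil functor and the exponential functor on the outside. The three ingredients I would invoke repeatedly are: (i) the Weil functor $(-)\otimes W$ on $\mathbf{FS}$ is left exact (Theorem 1 in the excerpt), so in particular it preserves finite products; (ii) the exponential functor $(-)^Z$ preserves finite products, which is an immediate consequence of the cartesian closedness of $\mathbf{FS}$; (iii) both isomorphisms in (i) and (ii) are natural in all variables.

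First I would start with the left-hand side $((X\times Y)\otimes(W_{1}\otimes_{\infty}W_{2}))^{Y'}$ (writing $Y'$ for the test Fröl{}icher space, to avoid clashing with the given $Y$) and apply (i) to move the product outside the Weil prolongation, obtaining
\[
((X\otimes(W_{1}\otimes_{\infty}W_{2}))\times(Y\otimes(W_{1}\otimes_{\infty}W_{2})))^{Y'}.
\]
Then I would apply (ii) to distribute the exponential over the product, giving
\[
(X\otimes(W_{1}\otimes_{\infty}W_{2}))^{Y'}\times(Y\otimes(W_{1}\otimes_{\infty}W_{2}))^{Y'}.
\]
Next, invoking Weil exponentiability of $X$ and of $Y$ separately, each factor rewrites, producing
\[
((X\otimes W_{1})^{Y'}\otimes W_{2})\times((Y\otimes W_{1})^{Y'}\otimes W_{2}).
\]

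To finish, I would run the same two moves in reverse. Another application of (i) (left exactness of $(-)\otimes W_{2}$) pulls $\otimes W_{2}$ out of the product, and then (ii) pulls $(-)^{Y'}$ out, yielding successively
\[
((X\otimes W_{1})^{Y'}\times(Y\otimes W_{1})^{Y'})\otimes W_{2},\qquad((X\otimes W_{1})\times(Y\otimes W_{1}))^{Y'}\otimes W_{2},
\]
and finally a last use of (i) rewrites $(X\otimes W_{1})\times(Y\otimes W_{1})$ as $(X\times Y)\otimes W_{1}$, giving $((X\times Y)\otimes W_{1})^{Y'}\otimes W_{2}$, as required.

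The only real point to verify is that the composed isomorphism is natural in $Y'$, $W_{1}$, $W_{2}$; this follows step-by-step because each ingredient isomorphism is itself natural (the product-preservation of a left-exact functor is natural, the exponential–product distributivity in a cartesian closed category is natural, and the Weil-exponentiability isomorphism (\ref{2.3.1.1}) is natural by hypothesis). No genuinely hard step appears; the mild bookkeeping obstacle is merely keeping track of which of the three operations $\otimes$, $(-)^{Y'}$, $\times$ is being moved past which, and checking that the composite indeed agrees, up to the canonical coherences, with the candidate natural transformation demanded by Definition of Weil exponentiability.
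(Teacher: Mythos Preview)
Your proof is correct and matches the paper's approach essentially verbatim. The paper in fact does not prove Proposition~\ref{t2.3.1.3} itself (it is cited from \cite{ni7}), but it proves the strengthening Proposition~\ref{t2.3.1.4} (closure under arbitrary limits) by exactly the chain of natural isomorphisms you wrote, with $\mathrm{Lim}\,\Gamma$ in place of $X\times Y$; your argument is precisely the binary-product instance of that proof.
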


The last proposition can be strengthened to

\begin{proposition}
\label{t2.3.1.4}The limit of a diagram in $\mathbf{FS}$\ whose objects are all
Weil-exponentiable is also Weil-exponentiable.
\end{proposition}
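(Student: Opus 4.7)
The plan is to use two preservation principles already available: the Weil functor $(-)\otimes W$ is left exact on $\mathbf{FS}$ (Theorem~1), and the exponential $(-)^{Y}$ preserves all limits, being a right adjoint in the cartesian closed category $\mathbf{FS}$ (mentioned in Section~\ref{s2.1}). Given a diagram $\{X_{i}\}_{i\in\mathcal{I}}$ of Weil-exponentiable Fr\"olicher spaces with limit $X=\lim_{i}X_{i}$, both sides of the defining equation \eqref{2.3.1.1} for $X$ are built from these two limit-preserving operations, so the isomorphism for each $X_{i}$ will patch together to one for $X$.

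Concretely, I would fix a Fr\"olicher space $Y$ and Weil algebras $W_{1},W_{2}$, and compute the left-hand side of \eqref{2.3.1.1} for $X$ by the chain
\[
(X\otimes(W_{1}\otimes_{\infty}W_{2}))^{Y}=\bigl(\lim_{i}(X_{i}\otimes(W_{1}\otimes_{\infty}W_{2}))\bigr)^{Y}=\lim_{i}(X_{i}\otimes(W_{1}\otimes_{\infty}W_{2}))^{Y},
\]
using Theorem~1 at the first equality and limit preservation of $(-)^{Y}$ at the second. Applying Weil-exponentiability of each $X_{i}$ inside the limit gives $\lim_{i}\bigl((X_{i}\otimes W_{1})^{Y}\otimes W_{2}\bigr)$. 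Dually, the right-hand side of \eqref{2.3.1.1} for $X$ unwinds as
\[
(X\otimes W_{1})^{Y}\otimes W_{2}=\bigl(\lim_{i}(X_{i}\otimes W_{1})\bigr)^{Y}\otimes W_{2}=\bigl(\lim_{i}(X_{i}\otimes W_{1})^{Y}\bigr)\otimes W_{2}=\lim_{i}\bigl((X_{i}\otimes W_{1})^{Y}\otimes W_{2}\bigr),
\]
using Theorem~1, then limit preservation of $(-)^{Y}$, then Theorem~1 again for $(-)\otimes W_{2}$. Comparing the two computations yields the desired isomorphism for $X$.

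The one substantive point to verify is that the isomorphism we obtain is natural in $Y$, $W_{1}$, $W_{2}$, because Weil-exponentiability is a naturality statement, not merely an abstract bijection. This is handled by observing that every arrow in the two displayed chains is induced either by the universal property of the limit or by the adjunction $(-)\times Y\dashv(-)^{Y}$, both of which produce natural transformations; composing them preserves naturality, and the naturality hypothesis on each $X_{i}$ ensures the intermediate isomorphism inside the limit is compatible with the transition morphisms of the diagram, so the induced map on limits is well-defined.

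The main obstacle, if any, is conceptual rather than technical: one must trust that the exponential $(-)^{Y}\colon\mathbf{FS}\to\mathbf{FS}$ really does commute with arbitrary limits (not merely finite ones), which follows from cartesian closedness of $\mathbf{FS}$; and one must make sure that the bifunctor $(-)\otimes(-)$ on $\mathbf{FS}\times\mathbf{W}$ behaves well enough that $(-)\otimes W_{2}$ on $\mathbf{FS}$ commutes with the limit in question, which is precisely the content of Theorem~1. Once these two preservation properties are in hand, no further calculation is required: Proposition~\ref{t2.3.1.3} is recovered as the binary product case, and arbitrary limits pose no new difficulty.
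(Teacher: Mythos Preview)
Your proof is correct and follows essentially the same approach as the paper's: both use left exactness of the Weil functor (Theorem~1), preservation of limits by $(-)^{Y}$ from cartesian closedness, and Weil-exponentiability of each $X_{i}$ to pass the defining identity through the limit. The only cosmetic difference is that you compute the two sides separately and meet in the middle, whereas the paper runs a single chain of equalities from $\bigl((\mathrm{Lim}\,\Gamma)\otimes(W_{1}\otimes_{\infty}W_{2})\bigr)^{Y}$ to $\bigl((\mathrm{Lim}\,\Gamma)\otimes W_{1}\bigr)^{Y}\otimes W_{2}$; your added remarks on naturality are a welcome bit of care that the paper leaves implicit.
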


\begin{proof}
Let $\Gamma$\ be a diagram in $\mathbf{FS}$. Given a Weil algebra $W$, we
write $\Gamma\otimes W$ for the diagram obtained from $\Gamma$ by putting
$\otimes W$ to the right of every object in $\Gamma$\ and $\otimes
\mathrm{id}_{W}$ to the right of every morphism in $\Gamma$.We have
\begin{align*}
&  \left(  \left(  \mathrm{Lim\,}\Gamma\right)  \otimes\left(  W_{1}%
\otimes_{\infty}W_{2}\right)  \right)  ^{Y}\\
&  =\left(  \mathrm{Lim\,}\left(  \Gamma\otimes\left(  W_{1}\otimes_{\infty
}W_{2}\right)  \right)  \right)  ^{Y}\\
&  =\mathrm{Lim\,}\left(  \Gamma\otimes\left(  W_{1}\otimes_{\infty}%
W_{2}\right)  \right)  ^{Y}\\
&  =\mathrm{Lim\,}\left(  (\Gamma\otimes W_{1})^{Y}\otimes W_{2}\right) \\
&  =\left(  \mathrm{Lim\,}(\Gamma\otimes W_{1})^{Y}\right)  \otimes W_{2}\\
&  =\left(  \mathrm{Lim\,}\left(  \Gamma\otimes W_{1}\right)  \right)
^{Y}\otimes W_{2}\\
&  =\left(  \left(  \mathrm{Lim\,}\Gamma\right)  \otimes W_{1}\right)
^{Y}\otimes W_{2}%
\end{align*}
so that we have the coveted result.
\end{proof}

We have already established the following proposition and theorem in in our
previous paper \cite{ni7}.

\begin{proposition}
\label{t2.3.1.5}If $X$ is a Weil exponentiable Fr\"{o}licher space, then so is
$X^{Y}$ for any Fr\"{o}licher space $Y$.
\end{proposition}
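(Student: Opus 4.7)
The plan is to exploit the cartesian closedness of $\mathbf{FS}$ to bundle the two exponents $Y$ and $Z$ (with $Z$ being the auxiliary exponent from the definition of Weil exponentiability) into a single Fröhlicher space $Y\times Z$, so that the Weil exponentiability of $X$ itself can be applied directly.

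First I would record the special case of the defining identity \eqref{2.3.1.1} obtained by setting $W_{1}=\mathbb{R}$, namely
\[
(X\otimes W)^{Z}=X^{Z}\otimes W
\]
for any Fröhlicher space $Z$ and any Weil algebra $W$. Reading this identity with $Z$ replaced by $Y$ (and either $W=W_{1}\otimes_{\infty}W_{2}$ or $W=W_{1}$) gives the two rewriting steps
\[
X^{Y}\otimes(W_{1}\otimes_{\infty}W_{2})=(X\otimes(W_{1}\otimes_{\infty}W_{2}))^{Y},\qquad X^{Y}\otimes W_{1}=(X\otimes W_{1})^{Y},
\]
that will let me trade a Weil prolongation of $X^{Y}$ for an exponential of a Weil prolongation of $X$.

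Next I would chain these together with cartesian closedness of $\mathbf{FS}$ (which gives $(A^{Y})^{Z}=A^{Y\times Z}$) and one application of \eqref{2.3.1.1} for $X$ with exponent $Y\times Z$:
\begin{align*}
(X^{Y}\otimes(W_{1}\otimes_{\infty}W_{2}))^{Z}
&=((X\otimes(W_{1}\otimes_{\infty}W_{2}))^{Y})^{Z}\\
&=(X\otimes(W_{1}\otimes_{\infty}W_{2}))^{Y\times Z}\\
&=(X\otimes W_{1})^{Y\times Z}\otimes W_{2}\\
&=((X\otimes W_{1})^{Y})^{Z}\otimes W_{2}\\
&=(X^{Y}\otimes W_{1})^{Z}\otimes W_{2}.
\end{align*}
This is exactly the Weil exponentiability condition for $X^{Y}$.

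The only thing one has to check beyond these formal manipulations is that each identification is natural in $Z$ and in $W_{1},W_{2}$, but every step is either cartesian closedness in $\mathbf{FS}$ or an instance of the naturality already built into \eqref{2.3.1.1} for $X$, so this is automatic. Thus there is no real obstacle; the substance of the argument is the recognition that cartesian closedness reduces the statement for $X^{Y}$ to the statement for $X$ at the enlarged Fröhlicher space $Y\times Z$.
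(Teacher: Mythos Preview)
Your argument is correct: the chain of natural isomorphisms using cartesian closedness of $\mathbf{FS}$ together with the defining identity \eqref{2.3.1.1} for $X$ at the exponent $Y\times Z$ establishes Weil exponentiability of $X^{Y}$ exactly as required. Note that the present paper does not actually supply a proof of this proposition---it cites the author's earlier paper \cite{ni7} for the result---so a line-by-line comparison is not possible here; nonetheless, your derivation is the canonical one and is almost certainly what appears in the cited reference, since there is essentially only one way to combine cartesian closedness with \eqref{2.3.1.1} to obtain the statement.
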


\begin{theorem}
\label{t2.3.1.6}Weil exponentiable Fr\"{o}licher spaces, together with smooth
mappings among them, form a Cartesian closed subcategory $\mathbf{FS}%
_{\mathbf{WE}}$\ of the category $\mathbf{FS}$.
\end{theorem}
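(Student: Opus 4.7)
The plan is to assemble the three standard ingredients of Cartesian closedness for $\mathbf{FS}_{\mathbf{WE}}$ directly from the propositions just recorded, and then to observe that the currying/uncurrying bijection of $\mathbf{FS}$ restricts to this subcategory. Concretely, I would verify (i) that $\mathbf{FS}_{\mathbf{WE}}$ contains the terminal object of $\mathbf{FS}$, (ii) that it is closed under binary products, and (iii) that it is closed under the formation of exponentials $X^{Y}$, all three times with the underlying Fr\"olicher space computed in $\mathbf{FS}$ itself. Together, these three facts say that the inclusion $\mathbf{FS}_{\mathbf{WE}}\hookrightarrow\mathbf{FS}$ preserves the Cartesian closed structure.

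For (i), the one-point Fr\"olicher space $1$ satisfies $1\otimes W=1$ for every Weil algebra $W$ (since the Weil functor is left exact by Theorem~1 and thus preserves the terminal object), so both sides of (\ref{2.3.1.1}) collapse to $1$ for $X=1$; hence $1\in\mathbf{FS}_{\mathbf{WE}}$. Item (ii) is exactly Proposition~\ref{t2.3.1.3} (a special case of the already-proved strengthening Proposition~\ref{t2.3.1.4}), and item (iii) is exactly Proposition~\ref{t2.3.1.5}. The objects of $\mathbf{FS}_{\mathbf{WE}}$ being closed under these constructions and its morphisms being all smooth maps between such objects, $\mathbf{FS}_{\mathbf{WE}}$ is a full subcategory of $\mathbf{FS}$ admitting all the structure it needs internally.

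To conclude Cartesian closedness, I would then invoke the currying isomorphism of the ambient category: for any $X,Y,Z\in\mathbf{FS}_{\mathbf{WE}}$, both $X\times Y$ and $Z^{Y}$ lie in $\mathbf{FS}_{\mathbf{WE}}$ by (ii) and (iii), and since hom-sets are unchanged by passage to the full subcategory, the natural bijection
\[
\mathbf{FS}(X\times Y,Z)\;\cong\;\mathbf{FS}(X,Z^{Y})
\]
of the Cartesian closed category $\mathbf{FS}$ restricts to the natural bijection
\[
\mathbf{FS}_{\mathbf{WE}}(X\times Y,Z)\;\cong\;\mathbf{FS}_{\mathbf{WE}}(X,Z^{Y})\text{,}
\]
which is what is required.

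There is no substantive obstacle once the three preceding propositions are in hand; the whole content of the theorem is that the defining identity (\ref{2.3.1.1}) is stable under each of the constructions needed for Cartesian closedness. The only point demanding care is the verification that the subcategory is \emph{full} with the same terminal object, product, and exponential as $\mathbf{FS}$, so that Cartesian closedness is inherited by restriction rather than merely asserted abstractly; this is immediate from the set-up, but worth explicitly recording.
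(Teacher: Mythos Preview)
Your proposal is correct and follows the natural line of argument. Note, however, that the paper does not actually give a proof of this theorem; it merely records the statement, together with Proposition~\ref{t2.3.1.5}, as having been ``already established \ldots\ in our previous paper \cite{ni7}.'' There is therefore no in-paper proof to compare against. Your argument---closure of the full subcategory under the terminal object, binary products (Proposition~\ref{t2.3.1.3}), and exponentials (Proposition~\ref{t2.3.1.5}), followed by restriction of the ambient currying adjunction---is precisely the standard one and is presumably what \cite{ni7} does as well.
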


\subsubsection{\label{s2.3.2}Microlinearity}

The central object of study in SDG is \textit{microlinear} spaces. Although
the notion of a manifold (=a pasting of copies of a certain linear space) is
defined on the local level, the notion of microlinearity is defined on the
genuinely infinitesimal level. For the historical account of microlinearity,
the reader is referred to \S \S 2.4 of \cite{la} or Appendix D of \cite{ko}.
To get an adequately restricted cartesian closed subcategory of Fr\"{o}licher
spaces, we have emancipated microlinearity from within a well-adapted model of
SDG to Fr\"{o}licher spaces in the real world in \cite{ni8}. Recall that

\begin{definition}
A Fr\"{o}licher space $X$ is called \textit{microlinear} providing that any
finite limit diagram $\Gamma$ in $\mathbf{W}$ yields a limit diagram
$X\otimes\Gamma$ in $\mathbf{FS}$, where $X\otimes\Gamma$ is obtained from
$\Gamma$ by putting $X\otimes$ to the left of every object in $\Gamma$\ and
$\mathrm{id}_{X}\otimes$ to the left of every morphism in $\Gamma$.
\end{definition}

Generally speaking, limits in the category $\mathbf{FS}$ are bamboozling. The
notion of limit in $\mathbf{FS}$ should be elaborated geometrically.

\begin{definition}
A finite cone $\Gamma$ in $\mathbf{FS}$ is called a \textit{transversal limit}
\textit{diagram} providing that $\Gamma\otimes W$ is a limit diagram in
$\mathbf{FS}$ for any Weil algebra $W$, where the diagram $\Gamma\otimes W$ is
obtained from $\Gamma$ by putting $\otimes W$ to the right of every object in
$\Gamma$\ and $\otimes\mathrm{id}_{W}$ to the right of every morphism in
$\Gamma$. The limit of a finite diagram of Fr\"{o}licher spaces is said to be
\textit{transversal} providing that its limit diagram is a transversal limit diagram.
\end{definition}

\begin{remark}
By taking $W=\mathbb{R}$, we see that a transversal limit diagram in
$\mathbf{FS}$\ is always a limit diagram in $\mathbf{FS}$.
\end{remark}

We have already established the following two propositions in \ref{ni8}.

\begin{proposition}
\label{t2.3.2.1}If $\Gamma$ is a transversal limit diagram in $\mathbf{FS}%
$\ whose objects are all Weil exponentiable, then $\Gamma^{X}$ is also a
transversal limit diagram for any Fr\"{o}licher space $X$, where $\Gamma^{X} $
is obtained from $\Gamma$ by putting $X$ as the exponential over every object
in $\Gamma$\ and over every morphism in $\Gamma$.
\end{proposition}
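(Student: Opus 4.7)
The plan is to chase the definition of transversal limit diagram and exploit Weil exponentiability of each object of $\Gamma$ in order to interchange the operations $(-)^{X}$ and $(-)\otimes W$.

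First, I would fix an arbitrary Weil algebra $W$ and aim to show that $\Gamma^{X}\otimes W$ is a limit diagram in $\mathbf{FS}$. Since $\Gamma$ is a transversal limit diagram, $\Gamma\otimes W$ is already a limit diagram in $\mathbf{FS}$ by definition. Because the category $\mathbf{FS}$ is cartesian closed, the exponential functor $(-)^{X}\colon\mathbf{FS}\to\mathbf{FS}$ is right adjoint to $(-)\times X$, hence preserves all existing limits; applying it to $\Gamma\otimes W$ therefore yields that $(\Gamma\otimes W)^{X}$ is a limit diagram in $\mathbf{FS}$.

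Second, for each object $Z$ of $\Gamma$, Weil exponentiability of $Z$, in the form obtained by taking $W_{1}=\mathbb{R}$ and $W_{2}=W$ in equation~(\ref{2.3.1.1}), furnishes a natural isomorphism
\[
(Z\otimes W)^{X}\;\cong\;Z^{X}\otimes W.
\]
The naturality clause in the definition of Weil exponentiability ensures that for every morphism $f\colon Z_{1}\to Z_{2}$ appearing in $\Gamma$, the morphism $(f\otimes\mathrm{id}_{W})^{X}$ corresponds to $f^{X}\otimes\mathrm{id}_{W}$ under these isomorphisms. Consequently, the two diagrams $(\Gamma\otimes W)^{X}$ and $\Gamma^{X}\otimes W$ are canonically isomorphic as cones in $\mathbf{FS}$ indexed by the shape of $\Gamma$.

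Combining the two observations, $\Gamma^{X}\otimes W$ inherits the universal property of $(\Gamma\otimes W)^{X}$, so it is itself a limit diagram in $\mathbf{FS}$; since $W$ was arbitrary, $\Gamma^{X}$ is transversal, as required. The only delicate point, and hence the place where care is needed rather than real difficulty, is verifying that the Weil exponentiability isomorphism is natural not merely in the Weil algebra variable but also in the space variable $Z$, so that the morphisms in $\Gamma$ really do match up under the identifications above; this is, however, exactly what is meant by the word ``naturally'' in the definition of Weil exponentiability, and the rest of the argument is purely formal.
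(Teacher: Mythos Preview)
Your argument is correct and is the natural one: use transversality of $\Gamma$ to know $\Gamma\otimes W$ is a limit diagram, then push this through the limit-preserving right adjoint $(-)^{X}$, and finally invoke Weil exponentiability (with $W_{1}=\mathbb{R}$) to identify $(\Gamma\otimes W)^{X}$ with $\Gamma^{X}\otimes W$. Note, however, that the present paper does not actually prove this proposition; it merely records it as having been established in the author's earlier paper \cite{ni8}, so there is no in-text proof to compare against. Your argument is presumably the same as the one given there, since there is essentially only one way to assemble these ingredients.

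One small remark on the point you yourself flag: the word ``naturally'' in Definition~(\ref{2.3.1.1}) is most plausibly read as naturality in $Y$, $W_{1}$, and $W_{2}$, whereas what you need is naturality in the Weil-exponentiable space itself (your $Z$, ranging over the objects of $\Gamma$). This does hold, because the Weil prolongation $(-)\otimes W$ is constructed as a genuine bifunctor $\mathbf{FS}\times\mathbf{W}\to\mathbf{FS}$ (as noted in \S\ref{s2.2.1}) and the identification $(Z\otimes W)^{X}\cong Z^{X}\otimes W$ arises from that functorial construction; but strictly speaking you are appealing to a property of the bifunctor rather than to the literal wording of the definition of Weil exponentiability. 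It would be cleaner to say so explicitly.
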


\begin{proposition}
\label{t2.3.2.2}If $\Gamma$ is a transversal limit diagram in $\mathbf{FS}%
$\ whose objects are all Weil exponentiable, then $\Gamma\otimes W$ is also a
transversal limit diagram for any Weil algebra $W$.
\end{proposition}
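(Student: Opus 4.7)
The plan is to unpack what transversality means for $\Gamma\otimes W$: we need $(\Gamma\otimes W)\otimes W'$ to be a limit diagram in $\mathbf{FS}$ for every Weil algebra $W'$. The strategy is to rewrite this diagram in the form $\Gamma\otimes(W\otimes_{\infty}W')$ and then invoke the transversality of $\Gamma$ itself, using the fact that $W\otimes_{\infty}W'$ is again a Weil algebra.

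The key step is the natural identification $(X\otimes W)\otimes W'=X\otimes(W\otimes_{\infty}W')$, which is precisely the $Y=1$ degeneration of the Weil-exponentiability equation (\ref{2.3.1.1}) recorded just after the definition. Since every object of $\Gamma$ is assumed Weil exponentiable, this identification holds object-wise, and because the isomorphism is natural in $X$, it is compatible with the morphisms of $\Gamma$ and with the projection legs of the cone. Consequently, applying the functor $(-)\otimes W'$ to the cone $\Gamma\otimes W$ produces, up to this natural isomorphism, the cone $\Gamma\otimes(W\otimes_{\infty}W')$ obtained from $\Gamma$ by tensoring every object with $W\otimes_{\infty}W'$ and every morphism with $\mathrm{id}_{W\otimes_{\infty}W'}$.

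By the assumed transversality of $\Gamma$, the cone $\Gamma\otimes(W\otimes_{\infty}W')$ is a limit diagram in $\mathbf{FS}$. Hence $(\Gamma\otimes W)\otimes W'$ is a limit diagram for every $W'$, which is exactly the statement that $\Gamma\otimes W$ is transversal. Formally, the chain of identifications mirrors the computation used in the proof of Proposition \ref{t2.3.1.4}, but truncated since no exponential $Y$ appears here.

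The only genuine issue to check is that the natural isomorphism in the middle step assembles coherently into an isomorphism of cones, not just an object-wise isomorphism; this however is automatic from the naturality in $X$ guaranteed by (\ref{2.3.1.1}), so the argument ultimately reduces to a formal application of the Weil-exponentiability identity combined with the closure of Weil algebras under $\otimes_{\infty}$.
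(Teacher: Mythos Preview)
Your argument is correct and is precisely the natural one: reduce transversality of $\Gamma\otimes W$ to transversality of $\Gamma$ via the identification $(X\otimes W)\otimes W'=X\otimes(W\otimes_{\infty}W')$, which holds for Weil-exponentiable $X$.

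Note, however, that the present paper does not actually supply a proof of this proposition; it merely records it as having been established in the earlier paper \cite{ni8}. So there is no in-text proof to compare against. Your write-up is the expected argument and would serve perfectly well as the omitted proof; the remark about naturality ensuring coherence at the level of cones is the only point requiring any care, and you have addressed it.
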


The following results have been established in \cite{ni8}.

\begin{proposition}
\label{t2.3.2.3}Convenient vector spaces are microlinear.
\end{proposition}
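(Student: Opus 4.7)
My plan is to reduce microlinearity of a convenient vector space $V$ to pure linear algebra, using that Weil prolongation on convenient vector spaces is, up to a natural isomorphism of Fr\"{o}licher spaces, a plain tensor product. Specifically, I would first record the structural identification
\[
V\otimes W\;\cong\;V\otimes_{\mathbb{R}}W
\]
of Fr\"{o}licher spaces, where on the right $W$ is regarded as its underlying finite-dimensional $\mathbb{R}$-vector space. This is the linear case of the description of Weil functors on convenient manifolds in Kriegl and Michor \cite{kri}, \S31; once it is in place, the functor $W\mapsto V\otimes W$ factors through the forgetful functor $\mathbf{W}\to\mathbf{Vect}_{\mathbb{R}}^{\mathrm{fd}}$.

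Let $\Gamma$ be an arbitrary finite limit diagram in $\mathbf{W}$, with limiting vertex $W_{0}=\mathrm{Lim}\,\Gamma$. I would then argue in three steps. First, the forgetful functor $\mathbf{W}\to\mathbf{Vect}_{\mathbb{R}}^{\mathrm{fd}}$ creates finite limits: since a Weil-algebra homomorphism is exactly an $\mathbb{R}$-linear map that respects multiplication and augmentation, the underlying-vector-space limit automatically carries the correct algebra structure. Hence $W_{0}$ is simultaneously the $\mathbf{W}$-limit and the $\mathbf{Vect}_{\mathbb{R}}^{\mathrm{fd}}$-limit of $\Gamma$. Second, tensoring with $V$ over $\mathbb{R}$ is exact (every vector space is flat over a field), so $V\otimes_{\mathbb{R}}\Gamma$ is a limit cone in the category of $\mathbb{R}$-vector spaces; equipping each term with its natural convenient structure upgrades this to a limit cone in the category of convenient vector spaces with bounded linear maps.

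Third and finally, I would verify that such linear limits of convenient vector spaces coincide with Fr\"{o}licher-space limits. Finite products of convenient vector spaces are manifestly Fr\"{o}licher products; equalisers of bounded linear maps are realised as closed linear subspaces with the induced structure and are Fr\"{o}licher equalisers. Since all objects in sight are Weil exponentiable by Proposition~\ref{t2.3.1.1}, the target of Step 2 even lies in the Cartesian closed subcategory $\mathbf{FS}_{\mathbf{WE}}$. Combining the three steps, $V\otimes\Gamma$ is a limit diagram in $\mathbf{FS}$, which is precisely microlinearity.

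The main obstacle, I expect, will be Step 3: bridging the gap between an algebraic limit of convenient vector spaces and a genuine Fr\"{o}licher limit. Products pose no trouble, but the equaliser case requires checking that the Fr\"{o}licher structure on the underlying linear subspace agrees with the restriction of the ambient convenient structure --- equivalently, that every smooth curve into the equaliser is bounded as a curve in the ambient convenient vector space. This ought to follow from the exponential law characterisation of convenient smoothness, but it is the one genuinely non-formal part of the argument; Steps~1 and~2 are routine.
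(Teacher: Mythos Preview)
The paper does not actually prove this proposition: it is listed among the results that ``have been established in \cite{ni8}'' and is quoted without argument. There is therefore no in-paper proof to compare your proposal against.

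On its own merits, your outline is sound. Identifying $V\otimes W$ with $V\otimes_{\mathbb{R}}W$ naturally in $W$ (the linear case of \cite{kri}, \S31) reduces the question to showing that $V\otimes_{\mathbb{R}}(-)$ carries finite limits in $\mathbf{W}$ to finite limits in $\mathbf{FS}$; Steps~1 and~2 then dispose of the algebraic part cleanly. You are right that Step~3 is the only non-formal point. It does go through: for a closed linear subspace $L\subset V$ of a convenient vector space, a curve $c:\mathbb{R}\to L$ is smooth in the initial Fr\"{o}licher structure on $L$ iff it is smooth as a curve into $V$, because smoothness of curves in convenient vector spaces is detected by composition with bounded linear functionals and these restrict to $L$. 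Hence the Fr\"{o}licher equalizer of two bounded linear maps is precisely the convenient kernel with its induced structure, and your algebraic limit cone is already a limit cone in $\mathbf{FS}$. The argument in \cite{ni8} presumably runs along the same lines, but that cannot be confirmed from the present text.
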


\begin{corollary}
$C^{\infty}$-manifolds in the sense of \cite{kri} (cf. Section 27) are microlinear.
\end{corollary}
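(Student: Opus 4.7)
The plan is to bootstrap from Proposition \ref{t2.3.2.3} (microlinearity of convenient vector spaces) using the local chart structure of $C^{\infty}$-manifolds in the sense of \cite{kri}: every such $M$ is covered by open subsets $U_{\alpha}$ each diffeomorphic to an open subset of a convenient vector space. Thus the corollary will follow once I know (i) that an open subset of a microlinear Fr\"{o}licher space is itself microlinear, and (ii) that microlinearity is a local property with respect to open covers.

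For (i), let $U$ be open in a microlinear Fr\"{o}licher space $X$. The key observation is that Weil prolongation only sees nilpotent infinitesimal neighborhoods of each base point, so that the canonical projection $\pi:X\otimes W\rightarrow X$ restricts to a bijection $U\otimes W\cong\pi^{-1}(U)$, and the fiber $(U\otimes W)_{x}$ is literally $(X\otimes W)_{x}$ for each $x\in U$. Hence for any finite limit diagram $\Gamma$ in $\mathbf{W}$, the diagram $U\otimes\Gamma$ is the pullback of $X\otimes\Gamma$ along $U\hookrightarrow X$; since limits commute with pullbacks and $X\otimes\Gamma$ is already a limit diagram in $\mathbf{FS}$ by the microlinearity of $X$, the same is true of $U\otimes\Gamma$.

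For (ii), suppose $M$ is covered by open microlinear subsets $\{U_{\alpha}\}$ and let $\Gamma$ be a finite limit diagram in $\mathbf{W}$ with vertex $W$. Given a compatible cone from a Fr\"{o}licher space $Y$ into $M\otimes\Gamma$, I would argue as follows: each element of the cone at $y\in Y$ has a well-defined base point $x(y)\in M$ lying in some $U_{\alpha}$, and over the open preimage $Y_{\alpha}=x^{-1}(U_{\alpha})$ the cone takes values in $U_{\alpha}\otimes\Gamma$, which is a limit diagram by (i). The unique local factorizations $Y_{\alpha}\rightarrow U_{\alpha}\otimes W\subseteq M\otimes W$ agree on overlaps (by uniqueness applied to $U_{\alpha}\cap U_{\beta}$, which is still open and microlinear) and therefore glue to a unique smooth map $Y\rightarrow M\otimes W$ in $\mathbf{FS}$.

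The main obstacle is the gluing in step (ii): one must check that assembling the local factorizations really yields a morphism in $\mathbf{FS}$, i.e. that it respects the Fr\"{o}licher structure. This is where the chart base point map $x:Y\rightarrow M$ being smooth (so that the $Y_{\alpha}$'s are genuinely open in $Y$) and the agreement on overlaps (forced by uniqueness of factorization through a limit) are essential; once these are in place, smoothness of the global map reduces to smoothness on each $Y_{\alpha}$, which is given. Combining (i) and (ii) with Proposition \ref{t2.3.2.3} then immediately yields that every $C^{\infty}$-manifold in the sense of \cite{kri} is microlinear.
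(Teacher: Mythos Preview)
The paper does not actually prove this corollary here: together with Proposition~\ref{t2.3.2.3} it is listed among ``the following results [that] have been established in \cite{ni8}'', so there is no in-text argument to compare against.

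Your strategy --- reduce to Proposition~\ref{t2.3.2.3} by showing that microlinearity is inherited by open subsets and can be tested on an open cover --- is exactly the natural one and presumably what \cite{ni8} does. Two remarks on the sketch. First, the identification $U\otimes W\cong\pi^{-1}(U)$ in step~(i) is a locality statement about the Weil-prolongation bifunctor of \cite{ni7}; it is true, but it is an input from that construction rather than something available from the present paper alone. Second, the gluing in step~(ii) ultimately reduces (via the definition of smoothness in $\mathbf{FS}$) to the following: for every structure curve $c:\mathbb{R}\to Y$, the sets $c^{-1}(Y_{\alpha})$ form an ordinary open cover of $\mathbb{R}$, and a map $\mathbb{R}\to M\otimes W$ that is smooth on each member of such a cover is globally smooth. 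That is where the Fr\"{o}licher structure and the $c^{\infty}$-topology of $M$ genuinely enter; once you make this reduction explicit, the argument goes through. So your outline is correct in spirit, but the two locality facts you invoke are themselves results from \cite{ni7} and \cite{ni8} rather than consequences of anything proved in this paper.
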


\begin{proposition}
\label{t2.3.2.4}If $X$ is a Weil exponentiable and microlinear Fr\"{o}licher
space, then so is $X\otimes W$ for any Weil algebra $W$.
\end{proposition}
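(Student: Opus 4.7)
The Weil-exponentiability of $X \otimes W$ is already delivered by Proposition \ref{t2.3.1.2}, so the only substantive task is microlinearity: given a finite limit diagram $\Gamma$ in $\mathbf{W}$, I must show that $(X \otimes W) \otimes \Gamma$ is a limit diagram in $\mathbf{FS}$.

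My plan is to factor the functor $(X \otimes W) \otimes (-) : \mathbf{W} \to \mathbf{FS}$ through $\mathbf{W}$ itself. Specializing Weil-exponentiability of $X$ to $Y = 1$ in (\ref{2.3.1.1}) yields the associativity isomorphism $(X \otimes W) \otimes W' \cong X \otimes (W \otimes_\infty W')$, natural in $W'$. Hence $(X \otimes W) \otimes \Gamma$ is naturally isomorphic to $X \otimes (W \otimes_\infty \Gamma)$, where $W \otimes_\infty \Gamma$ denotes the diagram obtained by applying $W \otimes_\infty (-)$ objectwise and morphismwise to $\Gamma$. To conclude I would then invoke microlinearity of $X$ on the (still-to-be-verified) finite limit diagram $W \otimes_\infty \Gamma$ in $\mathbf{W}$.

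The remaining step, which I expect to be the main technical point, is showing that $W \otimes_\infty (-) : \mathbf{W} \to \mathbf{W}$ preserves finite limits. The key observation is that every Weil algebra is finite-dimensional over $\mathbb{R}$, so $W \otimes_\mathbb{R} (-)$ as an endofunctor of $\mathbb{R}$-vector spaces is naturally isomorphic to $\mathrm{Hom}_\mathbb{R}(W^{\ast}, -)$ and therefore preserves all limits. Since finite limits in $\mathbf{W}$ (which exist by the stated left-exactness of $\mathbf{W}$) are computed on underlying vector spaces with the algebra operations induced componentwise, this vector-space limit-preservation transports back to $\mathbf{W}$, closing the argument. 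Should the comparison with the $\mathrm{Hom}$-functor prove awkward to pin down precisely, a backup would be to verify preservation of the terminal object, binary products, and equalizers one at a time, using in each case the flatness of the finite-dimensional $W$.
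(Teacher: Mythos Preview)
The paper does not actually contain its own proof of this proposition: it is listed among the ``results \dots\ established in \cite{ni8}'' and no argument is reproduced here. So there is no in-paper proof against which to compare your attempt.

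That said, your argument is sound and is the natural one. Weil-exponentiability of $X\otimes W$ is indeed Proposition~\ref{t2.3.1.2}. For microlinearity, your factoring
\[
(X\otimes W)\otimes\Gamma \;\cong\; X\otimes(W\otimes_\infty\Gamma)
\]
via the $Y=1$ instance of (\ref{2.3.1.1}) is exactly the right move, reducing the problem to checking that $W\otimes_\infty(-):\mathbf{W}\to\mathbf{W}$ preserves finite limits. Your justification of this last point is correct: the forgetful functor $\mathbf{W}\to\mathbf{Vect}_{\mathbb{R}}$ creates finite limits (limits of commutative $\mathbb{R}$-algebras are computed on underlying vector spaces, and $\mathbf{W}$ is closed under them by the stated left-exactness), the underlying vector space of $W\otimes_\infty W'$ is $W\otimes_{\mathbb{R}}W'$, and tensoring with a finite-dimensional vector space preserves all limits (your $\mathrm{Hom}_{\mathbb{R}}(W^{\ast},-)$ observation, or equivalently the fact that it is a finite direct sum of copies of the identity functor). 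Since the forgetful functor is conservative, the comparison map in $\mathbf{W}$ is then an isomorphism. Your backup plan of checking terminal object, binary products, and equalizers separately would also work and is entirely routine.
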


\begin{proposition}
\label{t2.3.2.5}The class of microlinear Fr\"{o}licher spaces is closed under
transversal limits.
\end{proposition}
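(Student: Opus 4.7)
The plan is to reduce microlinearity of the transversal limit $X$ of a diagram of microlinear Fr\"olicher spaces to an interchange-of-limits argument. Let $\Gamma$ be a transversal limit cone in $\mathbf{FS}$ whose base $\Gamma_{0}$ consists of microlinear objects $X_{i}$ and whose apex is $X=\mathrm{Lim}\,\Gamma_{0}$. Let $\Delta$ be an arbitrary finite limit diagram in $\mathbf{W}$, with base $\Delta_{0}$ consisting of Weil algebras $W_{\alpha}$ and apex $W^{\ast}=\mathrm{Lim}\,\Delta_{0}$. The goal is to show $X\otimes\Delta$ is a limit diagram in $\mathbf{FS}$, i.e., that $X\otimes W^{\ast}$ together with the morphisms $\mathrm{id}_{X}\otimes f$ (for $f$ a morphism of $\Delta$) is the limit of $X\otimes\Delta_{0}$.

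The proof splits into two applications of our hypotheses followed by a Fubini step. First I would use microlinearity pointwise along $\Gamma$: for each base object $X_{i}$, the diagram $X_{i}\otimes\Delta$ is a limit diagram in $\mathbf{FS}$, hence
\[
X_{i}\otimes W^{\ast}=\mathrm{Lim}_{\alpha}\,(X_{i}\otimes W_{\alpha}).
\]
Next I would use transversality of $\Gamma$: for every Weil algebra $W$ appearing in $\Delta$ (including $W^{\ast}$), the cone $\Gamma\otimes W$ is a limit diagram, so
\[
X\otimes W=\mathrm{Lim}_{i}\,(X_{i}\otimes W).
\]
Finally, since limits commute with limits in any category, the double limit $\mathrm{Lim}_{i}\mathrm{Lim}_{\alpha}(X_{i}\otimes W_{\alpha})$ coincides with $\mathrm{Lim}_{\alpha}\mathrm{Lim}_{i}(X_{i}\otimes W_{\alpha})$. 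Chaining the three identifications yields
\[
X\otimes W^{\ast}=\mathrm{Lim}_{i}\,(X_{i}\otimes W^{\ast})=\mathrm{Lim}_{i}\mathrm{Lim}_{\alpha}(X_{i}\otimes W_{\alpha})=\mathrm{Lim}_{\alpha}\mathrm{Lim}_{i}(X_{i}\otimes W_{\alpha})=\mathrm{Lim}_{\alpha}\,(X\otimes W_{\alpha}),
\]
which is exactly the microlinearity statement for $X$ applied to $\Delta$. A routine naturality check shows that the cone maps produced by this chain of identifications are precisely the $\mathrm{id}_{X}\otimes f$'s.

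The main obstacle I anticipate is not conceptual but notational: justifying that the Fubini step takes place inside $\mathbf{FS}$ despite the authors' warning that limits there are ``bamboozling.'' This is handled by observing that the Fubini principle is a purely formal categorical fact about iterated limits, and requires no transversality or microlinearity beyond what we have already invoked in the two previous steps. Consequently the essential content of the argument is that the pair of hypotheses---microlinearity of the $X_{i}$'s and transversality of $\Gamma$---provides exactly the two families of limit computations one needs to swap, so the interchange step closes the proof without any further assumptions.
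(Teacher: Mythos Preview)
The paper does not actually prove this proposition here: it is listed among ``the following results [that] have been established in \cite{ni8},'' so there is no proof in the present paper against which to compare your attempt.

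That said, your argument is the standard and correct one. The three ingredients you identify---transversality of $\Gamma$ (giving $X\otimes W\cong\mathrm{Lim}_{i}(X_{i}\otimes W)$ for each Weil algebra $W$, including the apex $W^{\ast}$), microlinearity of each $X_{i}$ (giving $X_{i}\otimes W^{\ast}\cong\mathrm{Lim}_{\alpha}(X_{i}\otimes W_{\alpha})$), and the categorical interchange of limits---assemble exactly as you indicate. The Fubini step is indeed purely formal and valid in any category, so the authors' caveat about limits in $\mathbf{FS}$ being ``bamboozling'' is not an obstruction at that point; the subtlety is already absorbed into the hypotheses. Your naturality remark about the cone maps is also well placed: one must check that the limiting cone obtained is the one with legs $\mathrm{id}_{X}\otimes f$, and this follows because each identification in your chain is induced by maps of that form and the Fubini isomorphism respects them.
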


\begin{corollary}
Direct products are transversal limits, so that if $X$ and $Y$ are microlinear
Fr\"{o}licher spaces, then so is $X\times Y$.
\end{corollary}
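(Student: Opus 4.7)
The plan is to derive both halves of the corollary from material already on the page: the first half from Theorem 1 (left exactness of Weil functors on $\mathbf{FS}$), and the second half from Proposition \ref{t2.3.2.5} (microlinearity is closed under transversal limits).

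First I would unpack what needs to be shown in the first half. The finite cone in question is the product cone
\[
X \xleftarrow{\ \pi_{X}\ } X\times Y \xrightarrow{\ \pi_{Y}\ } Y,
\]
and by the definition of a transversal limit diagram the task is to show that, for every Weil algebra $W$, the diagram
\[
X\otimes W \xleftarrow{\pi_{X}\otimes \mathrm{id}_{W}} (X\times Y)\otimes W \xrightarrow{\pi_{Y}\otimes \mathrm{id}_{W}} Y\otimes W
\]
is a product diagram in $\mathbf{FS}$. Since products are a particular kind of finite limit, and since Theorem 1 asserts that the Weil functor $(-)\otimes W$ on $\mathbf{FS}$ is left exact, it preserves this product cone; hence the displayed cone exhibits $(X\times Y)\otimes W$ as the product of $X\otimes W$ and $Y\otimes W$ in $\mathbf{FS}$. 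This holds for every $W$, so the original product cone is a transversal limit diagram, i.e.\ direct products are transversal limits.

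For the second half, assume $X$ and $Y$ are microlinear. By the first half the product diagram exhibiting $X\times Y$ is transversal, and Proposition \ref{t2.3.2.5} says that a transversal limit of microlinear Fr\"{o}licher spaces is microlinear. Applying this to the two-object diagram with vertices $X$ and $Y$ yields the microlinearity of $X\times Y$.

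I do not anticipate a real obstacle: the entire argument is bookkeeping on top of Theorem 1 and Proposition \ref{t2.3.2.5}. The only subtlety worth flagging is to confirm that the canonical comparison morphism $(X\times Y)\otimes W \to (X\otimes W)\times (Y\otimes W)$ induced by the projections is precisely the one produced by left exactness, so that no mismatch of cones arises when invoking Theorem 1; once this is observed the proof is immediate.
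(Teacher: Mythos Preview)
Your proposal is correct and follows exactly the intended line: the paper states this corollary without proof, treating it as immediate from Proposition~\ref{t2.3.2.5} together with the fact (recorded just before Theorem~1) that Weil functors on $\mathbf{FS}$ preserve products. Your explicit unpacking via left exactness is precisely this argument written out in full.
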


\begin{proposition}
\label{t2.3.2.6}If $X$ is a Weil exponentiable and microlinear Fr\"{o}licher
space, then so is $X^{Y}$ for any Fr\"{o}licher space $Y$.
\end{proposition}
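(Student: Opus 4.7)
The plan is to reduce microlinearity of $X^{Y}$ to microlinearity of $X$ by using Weil exponentiability to move $(-)^{Y}$ past $\otimes W$, and then to exploit the fact that $\mathbf{FS}$ is cartesian closed so that $(-)^{Y}$ preserves limits. Weil exponentiability of $X^{Y}$ itself is immediate from Proposition \ref{t2.3.1.5}, so the only real content is microlinearity.

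First I would fix an arbitrary finite limit diagram $\Gamma$ in $\mathbf{W}$ and aim to show that $X^{Y}\otimes\Gamma$ is a limit diagram in $\mathbf{FS}$. The key identification, obtained by specializing (\ref{2.3.1.1}) to $W_{1}=\mathbb{R}$, is the natural isomorphism
\[
X^{Y}\otimes W\cong(X\otimes W)^{Y}
\]
for every Weil algebra $W$. Since this isomorphism is natural in $W$, the two functors $W\mapsto X^{Y}\otimes W$ and $W\mapsto(X\otimes W)^{Y}$ from $\mathbf{W}$ to $\mathbf{FS}$ are naturally isomorphic, so the diagrams $X^{Y}\otimes\Gamma$ and $(X\otimes\Gamma)^{Y}$ agree up to isomorphism of diagrams in $\mathbf{FS}$, and it suffices to show that the latter is a limit diagram.

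Then I would invoke microlinearity of $X$ to conclude that $X\otimes\Gamma$ is a limit diagram in $\mathbf{FS}$, and close the argument by observing that $(-)^{Y}:\mathbf{FS}\rightarrow\mathbf{FS}$ is right adjoint to $-\times Y$, by the cartesian closedness of $\mathbf{FS}$, and therefore preserves all limits. Applying this to the limit diagram $X\otimes\Gamma$ yields the limit diagram $(X\otimes\Gamma)^{Y}$, and hence the limit diagram $X^{Y}\otimes\Gamma$.

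I do not anticipate a serious obstacle. The only point that requires mild care is that the Weil-exponentiability isomorphism is natural in the Weil-algebra variable $W$, so that the two diagrams really match as \emph{diagrams} and not merely object-by-object; but this naturality is explicitly built into the definition of Weil exponentiability, so nothing new is needed.
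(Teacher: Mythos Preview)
Your argument is correct. However, note that this paper does not actually supply a proof of Proposition \ref{t2.3.2.6}: it is listed among the results ``established in \cite{ni8}'' and is simply quoted here without proof. So there is no in-paper proof to compare against. Your approach---reducing to microlinearity of $X$ via the natural isomorphism $X^{Y}\otimes W\cong(X\otimes W)^{Y}$ and then invoking that the right adjoint $(-)^{Y}$ preserves limits---is the expected one and is entirely in the spirit of the surrounding material (compare the proof of Proposition \ref{t2.3.1.4}, which uses the same pattern of commuting $(-)^{Y}$ and $\otimes W$ past limits).
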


\begin{proposition}
\label{t2.3.2.7}If a Weil exponentiable Fr\"{o}licher space $X$ is
microlinear, then any finite limit diagram $\Gamma$ in $\mathbf{W}$ yields a
transversal limit diagram $X\otimes\Gamma$ in $\mathbf{FS}$.
\end{proposition}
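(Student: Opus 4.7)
The plan is to unfold the definition of transversal limit diagram and then reduce to the microlinearity of $X$ by using Weil exponentiability to slide the ``outer'' Weil algebra past $\Gamma$. Fix an arbitrary Weil algebra $W$; I must show that the diagram $(X\otimes\Gamma)\otimes W$ in $\mathbf{FS}$ is a limit diagram.

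First I would apply the defining identity (\ref{2.3.1.1}) of Weil exponentiability with $Y=1$ at each vertex $W_{i}$ of $\Gamma$, obtaining natural isomorphisms $(X\otimes W_{i})\otimes W\cong X\otimes(W_{i}\otimes_{\infty}W)$ that are compatible with the arrows of $\Gamma$ in the obvious way. Hence the diagram $(X\otimes\Gamma)\otimes W$ in $\mathbf{FS}$ is naturally isomorphic to $X\otimes(\Gamma\otimes_{\infty}W)$, where $\Gamma\otimes_{\infty}W$ denotes the diagram in $\mathbf{W}$ gotten by applying $-\otimes_{\infty}W$ pointwise to every object and every morphism of $\Gamma$.

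The second step is to check that $\Gamma\otimes_{\infty}W$ is still a finite limit diagram in $\mathbf{W}$; equivalently, that the endofunctor $-\otimes_{\infty}W$ of $\mathbf{W}$ preserves finite limits. Since finite limits in $\mathbf{W}$ are created from the underlying commutative $\mathbb{R}$-algebras, and since every $\mathbb{R}$-vector space is flat, the functor $-\otimes_{\mathbb{R}}W$ is exact on $\mathbb{R}$-modules and commutes with finite products, so it preserves equalizers and finite products at the level of commutative $\mathbb{R}$-algebras; dually, this reflects the exponentiability of $\mathcal{D}_{W}$ in $\mathbf{D}$, i.e.\ the fact that $-\times\mathcal{D}_{W}$ preserves finite colimits of infinitesimal objects. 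This is the only step where the argument has real content and is the main obstacle.

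Finally, invoking the microlinearity of $X$ on the finite limit diagram $\Gamma\otimes_{\infty}W$ in $\mathbf{W}$ yields that $X\otimes(\Gamma\otimes_{\infty}W)$ is a limit diagram in $\mathbf{FS}$; transporting along the isomorphism of the first step gives that $(X\otimes\Gamma)\otimes W$ is a limit diagram, and since $W$ was arbitrary, $X\otimes\Gamma$ is transversal. As a sanity check, taking $W=\mathbb{R}$ simply recovers microlinearity of $X$ applied to $\Gamma$, as one expects given the remark preceding the proposition.
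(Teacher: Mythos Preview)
Your argument is correct and is precisely the natural approach: use Weil exponentiability at each vertex to identify $(X\otimes\Gamma)\otimes W$ with $X\otimes(\Gamma\otimes_{\infty}W)$, observe that $-\otimes_{\infty}W$ preserves the finite limit diagrams of Weil algebras (by flatness over the field $\mathbb{R}$, or dually because $-\times\mathcal{D}_{W}$ preserves the relevant finite colimits in $\mathbf{D}$), and then invoke microlinearity of $X$ on the resulting diagram $\Gamma\otimes_{\infty}W$. The present paper does not actually supply a proof of this proposition here; it is listed among the results imported from \cite{ni8}, so there is no in-text argument to compare against directly, but your three-step reduction is the expected one and is almost certainly what appears there.
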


\begin{theorem}
\label{t2.3.2.8}Weil exponentiable and microlinear Fr\"{o}licher spaces,
together with smooth mappings among them, form a cartesian closed subcategory
$\mathbf{FS}_{\mathbf{WE,ML}}$\ of the category $\mathbf{FS}$.
\end{theorem}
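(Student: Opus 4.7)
The plan is to verify that the subcategory $\mathbf{FS}_{\mathbf{WE,ML}}$ inherits the Cartesian closed structure of $\mathbf{FS}$ itself. Concretely, I need to check three things: that the terminal object lies in $\mathbf{FS}_{\mathbf{WE,ML}}$, that $\mathbf{FS}_{\mathbf{WE,ML}}$ is closed under binary products, and that it is closed under exponentials $X^{Y}$. Since the unit, evaluation, and currying maps of $\mathbf{FS}$ are smooth, they automatically serve as the corresponding structure maps in $\mathbf{FS}_{\mathbf{WE,ML}}$, provided the relevant objects actually belong to the subcategory.

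First I would deal with the terminal object: $1$ is trivially Weil exponentiable (both sides of (\ref{2.3.1.1}) reduce to $W_{2}$) and trivially microlinear (for any diagram $\Gamma$ in $\mathbf{W}$, $1\otimes\Gamma$ is the constant diagram on $1$, whose limit is $1$). Next, for binary products, Proposition \ref{t2.3.1.3} gives that $X\times Y$ is Weil exponentiable when $X$ and $Y$ are, and the Corollary following Proposition \ref{t2.3.2.5} (direct products are transversal limits, so closure under transversal limits yields closure under products) gives microlinearity of $X\times Y$. Thus binary products exist inside the subcategory.

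For exponentials, given $X,Y\in\mathbf{FS}_{\mathbf{WE,ML}}$, I combine Proposition \ref{t2.3.1.5}, which ensures $X^{Y}$ is Weil exponentiable because $X$ is, with Proposition \ref{t2.3.2.6}, which ensures $X^{Y}$ is microlinear because $X$ is simultaneously Weil exponentiable and microlinear. Hence $X^{Y}\in\mathbf{FS}_{\mathbf{WE,ML}}$, and the adjunction $\mathbf{FS}(Z\times Y,X)\cong\mathbf{FS}(Z,X^{Y})$ from $\mathbf{FS}$ restricts to the desired natural bijection inside the subcategory.

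There is no serious obstacle here: all of the substantive closure properties have already been secured by the preceding propositions in Sections \ref{s2.3.1} and \ref{s2.3.2}. The only mild point to keep in mind is that the exponential closure step genuinely needs both hypotheses on $X$ (hence the bundling of Weil exponentiability and microlinearity into a single subcategory); this is why Proposition \ref{t2.3.2.6} rather than mere microlinearity of $X$ is invoked. Once these three items are assembled, Cartesian closure follows from that of $\mathbf{FS}$ by inheritance.
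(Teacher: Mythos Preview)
Your proposal is correct. The paper itself does not supply a proof of Theorem~\ref{t2.3.2.8}; it is listed among the results that ``have been established in \cite{ni8}''. Your argument is precisely the natural assembly of the surrounding propositions (\ref{t2.3.1.3}, \ref{t2.3.1.5}, the Corollary to \ref{t2.3.2.5}, and \ref{t2.3.2.6}) into a verification that the subcategory inherits the Cartesian closed structure from $\mathbf{FS}$, which is exactly how one would expect the cited proof to proceed.
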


\subsection{\textbf{Convention}}

Unless stated to the contrary, every Fr\"{o}licher space occurring in the
sequel is assumed to be microlinear and Weil exponentiable. We will fix a
smooth mapping $\pi:E\rightarrow M$ arbitrarily. In this paper we will naively
speak of \textit{bundles} simply as smooth mappings of microlinear and Weil
exponentiable Fr\"{o}licher spaces, for which we will develop three theories
of jet bundles. We say that $t\in M\otimes\mathcal{W}_{D}$ is
\textit{degenerate} providing that
\[
t=\left(  i_{\left\{  x\right\}  \rightarrow M}\otimes\mathrm{id}%
_{\mathcal{W}_{D}}\right)  \left(  t^{\prime}\right)
\]
for some $x\in M$ and some $t^{\prime}\in\left\{  x\right\}  \otimes
\mathcal{W}_{D}$. We say that $t\in E\otimes\mathcal{W}_{D}$ is
\textit{vertical} provided that $\left(  \pi\otimes\mathrm{id}_{\mathcal{W}%
_{D}}\right)  \left(  t\right)  $ is degenerate. We write $\left(
E\otimes\mathcal{W}_{D}\right)  ^{\perp}$ for the totality of vertical $t\in
E\otimes\mathcal{W}_{D}$.

\section{\label{s3}The First Approach to Jets}

\begin{definition}
\label{d3.1}A $1$\textit{-tangential} \textit{over} the bundle $\pi
:E\rightarrow M$ \textit{at} $x\in E$ is a mapping $\nabla_{x}:\left(
M\otimes\mathcal{W}_{D}\right)  _{\pi(x)}\rightarrow\left(  E\otimes
\mathcal{W}_{D}\right)  _{x}$ subject to the following three conditions:

\begin{enumerate}
\item We have
\[
\left(  \pi\otimes\mathrm{id}_{\mathcal{W}_{D}}\right)  \left(  \nabla
_{x}(t)\right)  =t
\]
for any $t\in\left(  M\otimes\mathcal{W}_{D}\right)  _{\pi(x)}$.

\item We have
\[
\nabla_{x}(\alpha t)=\alpha\nabla_{x}(t)
\]
for any $t\in\left(  M\otimes\mathcal{W}_{D}\right)  _{\pi(x)}$ and any
$\alpha\in\mathbb{R}$.

\item The diagram
\[%
\begin{array}
[c]{ccc}%
\left(  M\otimes\mathcal{W}_{D}\right)  _{\pi\left(  x\right)  } &
\underrightarrow{\mathrm{id}_{M}\otimes\mathcal{W}_{\left(  d,e\right)  \in
D\times D_{m}\mapsto ed\in D}} & \left(  M\otimes\mathcal{W}_{D}\right)
_{\pi\left(  x\right)  }\otimes\mathcal{W}_{D_{m}}\\
\nabla_{x}\downarrow &  & \downarrow\nabla_{x}\otimes\mathrm{id}%
_{\mathcal{W}_{D_{m}}}\\
\left(  E\otimes\mathcal{W}_{D}\right)  _{x} & \overrightarrow{\mathrm{id}%
_{E}\otimes\mathcal{W}_{\left(  d,e\right)  \in D\times D_{m}\mapsto ed\in D}}
& \left(  E\otimes\mathcal{W}_{D}\right)  _{x}\otimes\mathcal{W}_{D_{m}}%
\end{array}
\]
is commutative, where $m$\ is an arbitrary natural number.
\end{enumerate}
\end{definition}

We note in passing that condition (1.2) implies that $\nabla_{x}$ is linear by
dint of Proposition 10 in \S 1.2 of \cite{la}.

\begin{notation}
\label{n3.1}We denote by $\mathbf{J}_{x}^{1}(\pi)$ the totality of
$1$\textit{-tangentials} $\nabla_{x}$ over the bundle $\pi:E\rightarrow M$ at
$x\in E$. We denote by $\mathbf{J}^{1}(\pi)$ the set-theoretic union of
$\mathbf{J}_{x}^{1}(\pi)$'s for all $x\in E$. The canonical projection
$\mathbf{J}^{1}(\pi)\rightarrow E$ is denoted by $\pi_{1,0}$ with
\[
\pi_{1}=\left(  \pi\otimes\mathrm{id}_{\mathcal{W}_{D}}\right)  \circ\pi
_{1,0}\text{.}%
\]

\end{notation}

\begin{definition}
\label{d3.2}Let $F$ be a morphism of bundles over $M$ from $\pi$ to
$\pi^{\prime}$ over the same base space $M$. We say that a $1$%
\textit{-tangential} $\nabla_{x}$ over $\pi$ at a point $x$ of $E$%
\textrm{\ }is $F$-related \textit{to} a $1$\textit{-tangential} $\nabla
_{F(x)}$ over $\pi^{\prime}$ at $F(x)$ of $E^{\prime}$ (\textit{in}
\textit{the} \textit{sense} \textit{of} \textit{Nishimura}) provided that
\[
\left(  F\otimes\mathrm{id}_{\mathcal{W}_{D}}\right)  \left(  \nabla
_{x}(t)\right)  =\nabla_{F(x)}(t)
\]
for any $t\in\left(  M\otimes\mathcal{W}_{D}\right)  _{\pi(x)}$.
\end{definition}

\begin{notation}
\label{n3.2}By convention, we let
\[
\tilde{\mathbf{J}}^{0}(\pi)=\hat{\mathbf{J}}^{0}(\pi)=\mathbf{J}^{0}(\pi)=E
\]
with
\[
\tilde{\pi}_{0,0}=\hat{\pi}_{0,0}=\pi_{0,0}=\mathrm{id}_{E}%
\]
and
\[
\tilde{\pi}_{0}=\hat{\pi}_{0}=\pi_{0}=\pi
\]
We let
\[
\tilde{\mathbf{J}}^{1}(\pi)=\hat{\mathbf{J}}^{1}(\pi)=\mathbf{J}^{1}(\pi)
\]
with
\[
\tilde{\pi}_{1,0}=\hat{\pi}_{1,0}=\pi_{1,0}%
\]
and
\[
\tilde{\pi}_{1}=\hat{\pi}_{1}=\pi_{1}%
\]

\end{notation}

\begin{notation}
\label{n3.3}Now we are going to define $\tilde{\mathbf{J}}^{k+1}(\pi)$,
$\hat{\mathbf{J}}^{k+1}(\pi)$ and $\mathbf{J}^{k+1}(\pi)$ together with
mappings $\tilde{\pi}_{k+1,k}:\tilde{\mathbf{J}}^{k+1}(\pi)\rightarrow
\tilde{\mathbf{J}}^{k}(\pi)$, $\hat{\pi}_{k+1,k}:\hat{\mathbf{J}}^{k+1}%
(\pi)\rightarrow\hat{\mathbf{J}}^{k}(\pi)$ and $\pi_{k+1,k}:\mathbf{J}%
^{k+1}(\pi)\rightarrow\mathbf{J}^{k}(\pi)$ by induction on $k\geq1$.
Intuitively speaking, these are intended for non-holonomic, semi-holonomic and
holonomic jet bundles in order. We let $\tilde{\pi}_{k+1}=\tilde{\pi}_{k}%
\circ\tilde{\pi}_{k+1,k}$ , $\hat{\pi}_{k+1}=\hat{\pi}_{k}\circ\hat{\pi
}_{k+1,k}$ and $\pi_{k+1}=\pi_{k}\circ\pi_{k+1,k}$.

\begin{enumerate}
\item First we deal with $\tilde{\mathbf{J}}^{k+1}(\pi)$, which is defined to
be $\mathbf{J}^{1}(\tilde{\pi}_{k})$ with $\tilde{\pi}_{k+1,k}=(\tilde{\pi
}_{k})_{1,0}$.

\item Next we deal with $\hat{\mathbf{J}}^{k+1}(\pi)$, which is defined to be
the subspace of $\mathbf{J}^{1}(\hat{\pi}_{k})$ consisting of $\nabla_{x} $'s
with $x=\nabla_{y}\in\hat{\mathbf{J}}^{k}(\pi)$ abiding by the condition that
$\nabla_{x}$ is $\hat{\pi}_{k,k-1}$-related to $\nabla_{y}$.

\item Finally we deal with $\mathbf{J}^{k+1}(\pi)$, which is defined to be the
subspace of $\mathbf{J}^{1}(\pi_{k})$ consisting of $\nabla_{x}$'s with
$x=\nabla_{y}\in\mathbf{J}^{k}(\pi)$ abiding by the conditions that
$\nabla_{x}$ is $\pi_{k,k-1}$-related to $\nabla_{y}$ and that the composition
of mappings
\begin{align*}
&  \left(  M\otimes\mathcal{W}_{D^{2}}\right)  _{\pi_{k}\left(  x\right)  }\\
&  \underrightarrow{\left\langle \mathrm{id}_{M}\otimes\mathcal{W}_{d\in
D\mapsto\left(  d,0\right)  \in D^{2}},\mathrm{id}_{M}\otimes\mathcal{W}%
_{\left(  d_{1}.d_{2}\right)  \in D^{2}\mapsto\left(  d_{2}.d_{1}\right)  \in
D^{2}}\right\rangle }\\
&  \left(  \left(  M\otimes\mathcal{W}_{D}\right)  \times_{M\otimes
\mathcal{W}_{D}}\left(  M\otimes\mathcal{W}_{D^{2}}\right)  \right)  _{\pi
_{k}\left(  x\right)  }\\
&  \underrightarrow{\nabla_{x}\times\mathrm{id}_{M\otimes\mathcal{W}_{D^{2}}}%
}\\
&  \left(  \left(  \mathbf{J}^{k}(\pi)\otimes\mathcal{W}_{D}\right)
\times_{M\otimes\mathcal{W}_{D}}\left(  M\otimes\mathcal{W}_{D^{2}}\right)
\right)  _{\pi_{k}\left(  x\right)  }\\
&  =\left(  \left(  \mathbf{J}^{k}(\pi)\otimes\mathcal{W}_{D}\right)
\times_{M\otimes\mathcal{W}_{D}}\left(  \left(  M\otimes\mathcal{W}%
_{D}\right)  \otimes\mathcal{W}_{D}\right)  \right)  _{\pi_{k}\left(
x\right)  }\\
&  =\left(  \left(  \mathbf{J}^{k}(\pi)\times_{M}\left(  M\otimes
\mathcal{W}_{D}\right)  \right)  \otimes\mathcal{W}_{D}\right)  _{\pi
_{k}\left(  x\right)  }\\
&  \underrightarrow{\left(  \left(  \nabla,t\right)  \in\mathbf{J}^{k}%
(\pi)\times_{M}\left(  M\otimes\mathcal{W}_{D}\right)  \mapsto\nabla\left(
t\right)  \in\left(  \mathbf{J}^{k-1}(\pi)\otimes\mathcal{W}_{D}\right)
\right)  \otimes\mathrm{id}_{\mathcal{W}_{D}}}\\
&  \left(  \mathbf{J}^{k-1}(\pi)\otimes\mathcal{W}_{D}\right)  \otimes
\mathcal{W}_{D}\\
&  =\mathbf{J}^{k-1}(\pi)\otimes\mathcal{W}_{D^{2}}%
\end{align*}
is equal to the composition of mappings
\begin{align*}
&  \left(  M\otimes\mathcal{W}_{D^{2}}\right)  _{\pi_{k}\left(  x\right)  }\\
&  \underrightarrow{\left\langle \mathrm{id}_{M}\otimes\mathcal{W}_{d\in
D\mapsto\left(  0,d\right)  \in D^{2}},\mathrm{id}_{M\otimes\mathcal{W}%
_{D^{2}}}\right\rangle }\\
&  \left(  \left(  M\otimes\mathcal{W}_{D}\right)  \times_{M\otimes
\mathcal{W}_{D}}\left(  M\otimes\mathcal{W}_{D^{2}}\right)  \right)  _{\pi
_{k}\left(  x\right)  }\\
&  \underrightarrow{\nabla_{x}\times\mathrm{id}_{M\otimes\mathcal{W}_{D^{2}}}%
}\\
&  \left(  \left(  \mathbf{J}^{k}(\pi)\otimes\mathcal{W}_{D}\right)
\times_{M\otimes\mathcal{W}_{D}}\left(  M\otimes\mathcal{W}_{D^{2}}\right)
\right)  _{\pi_{k}\left(  x\right)  }\\
&  =\left(  \left(  \mathbf{J}^{k}(\pi)\otimes\mathcal{W}_{D}\right)
\times_{M\otimes\mathcal{W}_{D}}\left(  \left(  M\otimes\mathcal{W}%
_{D}\right)  \otimes\mathcal{W}_{D}\right)  \right)  _{\pi_{k}\left(
x\right)  }\\
&  =\left(  \left(  \mathbf{J}^{k}(\pi)\times_{M}\left(  M\otimes
\mathcal{W}_{D}\right)  \right)  \otimes\mathcal{W}_{D}\right)  _{\pi
_{k}\left(  x\right)  }\\
&  \underrightarrow{\left(  \left(  \nabla,t\right)  \in\mathbf{J}^{k}%
(\pi)\times_{M}\left(  M\otimes\mathcal{W}_{D}\right)  \mapsto\nabla\left(
t\right)  \in\left(  \mathbf{J}^{k-1}(\pi)\otimes\mathcal{W}_{D}\right)
\right)  \otimes\mathrm{id}_{\mathcal{W}_{D}}}\\
&  \left(  \mathbf{J}^{k-1}(\pi)\otimes\mathcal{W}_{D}\right)  \otimes
\mathcal{W}_{D}\\
&  =\mathbf{J}^{k-1}(\pi)\otimes\mathcal{W}_{D^{2}}\\
&  \underrightarrow{\mathrm{id}_{\mathbf{J}^{k-1}(\pi)}\otimes\mathcal{W}%
_{\left(  d_{1},d_{2}\right)  \in D^{2}\mapsto\left(  d_{2},d_{1}\right)  \in
D^{2}}}\\
&  \mathbf{J}^{k-1}(\pi)\otimes\mathcal{W}_{D^{2}}%
\end{align*}

\end{enumerate}
\end{notation}

\begin{definition}
\label{d3.3}Elements of $\tilde{\mathbf{J}}^{n}(\pi)$\ are called
$n$-subtangentials, while elements of $\hat{\mathbf{J}}^{n}(\pi)$\ are called
$n$-quasitangentials. Elements of $\mathbf{J}^{n}(\pi)$\ are called $n$-tangentials.
\end{definition}

\section{\label{s4}The Second Approach to Jets}

\begin{definition}
\label{d4.1}Let $n$ be a natural number. A $D^{n}$-pseudotangential
\textit{over }the bundle $\pi:E\rightarrow M$ \textit{at} $x\in E$ is a
mapping $\nabla_{x}:\left(  M\otimes\mathcal{W}_{D^{n}}\right)  _{\pi
(x)}\rightarrow\left(  E\otimes\mathcal{W}_{D^{n}}\right)  _{x}\ $abiding by
the following conditions:

\begin{enumerate}
\item We have
\[
\left(  \pi\otimes\mathrm{id}_{\mathcal{W}_{D^{n}}}\right)  \left(  \nabla
_{x}(\gamma)\right)  =\gamma
\]
for any $\gamma\in\left(  M\otimes\mathcal{W}_{D^{n}}\right)  _{\pi(x)}$.

\item We have
\[
\nabla_{x}(\alpha\underset{i}{\cdot}\gamma)=\alpha\underset{i}{\cdot}%
\nabla_{x}(\gamma)\quad\quad(1\leq i\leq n)
\]
for any $\gamma\in\left(  M\otimes\mathcal{W}_{D^{n}}\right)  _{\pi(x)}$ and
any $\alpha\in\mathbb{R}$.

\item The diagram
\[%
\begin{array}
[c]{ccc}%
\left(  M\otimes\mathcal{W}_{D^{n}}\right)  _{\pi\left(  x\right)  } &
\rightarrow & \left(  M\otimes\mathcal{W}_{D^{n}}\right)  _{\pi\left(
x\right)  }\otimes\mathcal{W}_{D_{m}}\\
\nabla_{x}\downarrow &  & \downarrow\nabla_{x}\otimes\mathrm{id}%
_{\mathcal{W}_{D_{m}}}\\
\left(  E\otimes\mathcal{W}_{D^{n}}\right)  _{x} & \rightarrow & \left(
E\otimes\mathcal{W}_{D^{n}}\right)  _{x}\otimes\mathcal{W}_{D_{m}}%
\end{array}
\]
is commutative, where $m$\ is an arbitrary natural number, the upper
horizontal arrow is
\[
\mathrm{id}_{M}\otimes\mathcal{W}_{\left(  d_{1},...,d_{n},e\right)  \in
D^{n}\times D_{m}\mapsto\left(  d_{1},...,d_{i-1},ed_{i},d_{i+1}%
,...d_{n}\right)  \in D^{n}}\text{,}%
\]
and the lower horizontal arrow is
\[
\mathrm{id}_{E}\otimes\mathcal{W}_{\left(  d_{1},...,d_{n},e\right)  \in
D^{n}\times D_{m}\mapsto\left(  d_{1},...,d_{i-1},ed_{i},d_{i+1}%
,...d_{n}\right)  \in D^{n}}\text{.}%
\]

\item We have
\[
\nabla_{x}(\gamma^{\sigma})=(\nabla_{x}(\gamma))^{\sigma}%
\]
for any $\gamma\in\left(  M\otimes\mathcal{W}_{D^{n}}\right)  _{\pi(x)}$ and
for any $\sigma\in\mathbf{S}_{n}$.
\end{enumerate}
\end{definition}

\begin{remark}
The third condition in the above definition claims what is called
infinitesimal multilinearity, while the second claims what is authentic multilinearity.
\end{remark}

\begin{notation}
We denote by $\mathbb{\hat{J}}_{x}^{D^{n}}(\pi)$ the totality of $D^{n}%
$-pseudotangentials$\ \nabla_{x}\ $over the bundle $\pi:E\rightarrow M$ at
$x\in E$. We denote by $\mathbb{\hat{J}}^{D^{n}}(\pi)$ the set-theoretic union
of $\mathbb{\hat{J}}_{x}^{D^{n}}(\pi)$'s for all $x\in E$. In
particular,$\ \mathbb{\hat{J}}^{D^{0}}(\pi)=E$ by convention.
\end{notation}

\begin{lemma}
\label{t4.1.1}The diagram
\begin{align*}
&  E\otimes\mathcal{W}_{D^{n}}
\begin{array}
[c]{c}%
\underrightarrow{\mathrm{id}_{E}\otimes\mathcal{W}_{\left(  d_{1}%
,...,d_{n},d_{n+1}\right)  \in D^{n+1}\mapsto\left(  d_{1},...,d_{n}\right)
\in D^{n}}}\\
\,
\end{array}
E\otimes\mathcal{W}_{D^{n+1}}\\
&
\begin{array}
[c]{c}%
\underrightarrow{\qquad\qquad\;\;\;\;\;\;\;\mathrm{id}_{E\otimes
\mathcal{W}_{D^{n+1}}\qquad\qquad\qquad\;\;\;\;\;\;}}\\
\overrightarrow{\mathrm{id}_{E}\otimes\mathcal{W}_{\left(  d_{1}%
,...,d_{n},d_{n+1}\right)  \in D^{n+1}\mapsto\left(  d_{1},...,d_{n},0\right)
\in D^{n+1}}}%
\end{array}
E\otimes\mathcal{W}_{D^{n+1}}%
\end{align*}
is an equalizer.
\end{lemma}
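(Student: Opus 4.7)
The diagram has the form $E\otimes\Gamma$, where $\Gamma$ is the diagram in $\mathbf{W}$ obtained by applying the contravariant functor $\mathcal{W}$ to the dual diagram in $\mathbf{D}$. Since $E$ is assumed microlinear (by the standing convention of \S2.4), it suffices by the definition of microlinearity to exhibit $\Gamma$ as a finite limit diagram in $\mathbf{W}$, namely an equalizer.

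Unpacking, $\Gamma$ is the diagram
\[
\mathcal{W}_{D^{n}}\xrightarrow{\ \alpha\ }\mathcal{W}_{D^{n+1}}\
\begin{array}[c]{c}
\underrightarrow{\ \ \mathrm{id}\ \ }\\
\overrightarrow{\ \ \beta\ \ }
\end{array}
\ \mathcal{W}_{D^{n+1}},
\]
where $\alpha=\mathcal{W}_{(d_{1},\ldots,d_{n+1})\mapsto(d_{1},\ldots,d_{n})}$ is the natural inclusion of Weil algebras (thinking of $\mathcal{W}_{D^{k}}=\mathbb{R}[X_{1},\ldots,X_{k}]/(X_{1}^{2},\ldots,X_{k}^{2})$, it sends $X_{i}\mapsto X_{i}$ for $i\leq n$), and $\beta=\mathcal{W}_{(d_{1},\ldots,d_{n+1})\mapsto(d_{1},\ldots,d_{n},0)}$ is the endomorphism sending $X_{i}\mapsto X_{i}$ for $i\leq n$ and $X_{n+1}\mapsto 0$.

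I would then verify directly that $\Gamma$ is an equalizer in $\mathbf{W}$. An element $f\in\mathcal{W}_{D^{n+1}}$ has a unique decomposition $f=f_{0}+X_{n+1}f_{1}$ with $f_{0},f_{1}\in\mathcal{W}_{D^{n}}$, and $\beta(f)=f_{0}$; so $\beta(f)=f$ iff $X_{n+1}f_{1}=0$ iff $f=f_{0}\in\mathrm{im}(\alpha)$. Since $\alpha$ is injective, this identifies the equalizer of $\mathrm{id}$ and $\beta$ with $\mathcal{W}_{D^{n}}$ via $\alpha$, which is precisely the universal property required. The universal property transfers: any morphism $g:A\to\mathcal{W}_{D^{n+1}}$ in $\mathbf{W}$ with $\beta\circ g=g$ factors uniquely through $\alpha$ by mapping each image to its $X_{n+1}$-free part.

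Applying the functor $E\otimes(-)$ and invoking microlinearity of $E$ converts this equalizer in $\mathbf{W}$ into an equalizer in $\mathbf{FS}$, which is the stated diagram. The step I expect to require the most care is just bookkeeping the direction of arrows under the $\mathbf{W}$--$\mathbf{D}$ duality (to make sure $\alpha$ really corresponds to the first-projection map $D^{n+1}\to D^{n}$, not to some other inclusion), but once that is done the algebraic verification is immediate and the conclusion follows from microlinearity alone.
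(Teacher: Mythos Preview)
Your proposal is correct and follows essentially the same approach as the paper: reduce to the corresponding equalizer diagram in $\mathbf{W}$ and then invoke microlinearity of $E$. The paper simply asserts the Weil-algebra equalizer as ``well known,'' whereas you supply the explicit verification via the decomposition $f=f_{0}+X_{n+1}f_{1}$; otherwise the arguments coincide.
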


\begin{proof}
It is well known that the diagram
\[
\mathcal{W}_{D^{n}}
\begin{array}
[c]{c}%
\underrightarrow{\mathcal{W}_{\left(  d_{1},...,d_{n},d_{n+1}\right)  \in
D^{n+1}\mapsto\left(  d_{1},...,d_{n}\right)  \in D^{n}}}\\
\,
\end{array}
\mathcal{W}_{D^{n+1}}
\begin{array}
[c]{c}%
\underrightarrow{\,\qquad\qquad\qquad\mathrm{id}_{\mathcal{W}_{D^{n+1}}%
\qquad\qquad\qquad\,\,\,}}\\
\overrightarrow{\mathcal{W}_{\left(  d_{1},...,d_{n},d_{n+1}\right)  \in
D^{n+1}\mapsto\left(  d_{1},...,d_{n},0\right)  \in D^{n+1}}}%
\end{array}
\mathcal{W}_{D^{n+1}}%
\]
is an equalizer in the category of Weil algebras, so that the desired result
follows from the microlinearity of $E$.
\end{proof}

\begin{corollary}
\label{t4.1.1.1}$\gamma\in E\otimes\mathcal{W}_{D^{n+1}}$ is in the equalizer
of
\[
E\otimes\mathcal{W}_{D^{n+1}}
\begin{array}
[c]{c}%
\underrightarrow{\qquad\qquad\;\;\;\;\;\;\;\mathrm{id}_{E\otimes
\mathcal{W}_{D^{n+1}}\qquad\qquad\qquad\;\;\;\;\;\;}}\\
\overrightarrow{\mathrm{id}_{E}\otimes\mathcal{W}_{\left(  d_{1}%
,...,d_{n},d_{n+1}\right)  \in D^{n+1}\mapsto\left(  d_{1},...,d_{n},0\right)
\in D^{n+1}}}%
\end{array}
E\otimes\mathcal{W}_{D^{n+1}}%
\]
iff
\[
\gamma=\left(  \mathbf{s}_{n+1}\circ\mathbf{d}_{n+1}\right)  (\gamma)
\]

\end{corollary}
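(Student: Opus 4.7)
The plan is to observe that the corollary reduces to a direct unpacking of the definitions of $\mathbf{s}_{n+1}$ and $\mathbf{d}_{n+1}$. Recall from the simplicial setup in Section 2 that, for $\gamma\in E\otimes\mathcal{W}_{D^{n+1}}$, the face operator is
\[
\mathbf{s}_{n+1}(\gamma)=\bigl(\mathrm{id}_{E}\otimes\mathcal{W}_{(d_{1},\ldots,d_{n+1})\mapsto(d_{1},\ldots,d_{n})}\bigr)(\gamma),
\]
and for $\eta\in E\otimes\mathcal{W}_{D^{n}}$ the degeneracy is
\[
\mathbf{d}_{n+1}(\eta)=\bigl(\mathrm{id}_{E}\otimes\mathcal{W}_{(d_{1},\ldots,d_{n})\mapsto(d_{1},\ldots,d_{n},0)}\bigr)(\eta).
\]

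First I would compose the two underlying maps of infinitesimal objects: the map $(d_{1},\ldots,d_{n+1})\in D^{n+1}\mapsto(d_{1},\ldots,d_{n})\in D^{n}$ followed by $(d_{1},\ldots,d_{n})\in D^{n}\mapsto(d_{1},\ldots,d_{n},0)\in D^{n+1}$ is precisely the map $(d_{1},\ldots,d_{n+1})\mapsto(d_{1},\ldots,d_{n},0)$ appearing as the lower horizontal arrow of the equalizer diagram in Lemma~\ref{t4.1.1}. Next, by the functoriality of $\mathcal{W}$ (or equivalently by the bifunctoriality of $\otimes$), the two successive applications of $\mathrm{id}_{E}\otimes\mathcal{W}_{(-)}$ collapse to a single one, giving
\[
\mathbf{s}_{n+1}\circ\mathbf{d}_{n+1}=\mathrm{id}_{E}\otimes\mathcal{W}_{(d_{1},\ldots,d_{n+1})\in D^{n+1}\mapsto(d_{1},\ldots,d_{n},0)\in D^{n+1}}.
\]

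Finally, by definition $\gamma$ lies in the equalizer of the two parallel arrows in the statement iff
\[
\gamma=\bigl(\mathrm{id}_{E}\otimes\mathcal{W}_{(d_{1},\ldots,d_{n+1})\mapsto(d_{1},\ldots,d_{n},0)}\bigr)(\gamma),
\]
and by the identification above the right-hand side is exactly $(\mathbf{s}_{n+1}\circ\mathbf{d}_{n+1})(\gamma)$. This yields both implications simultaneously.

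There is no real obstacle; the only point requiring care is the bookkeeping that the composition of the two simplicial arrows in the \emph{shade} (i.e.\ between the infinitesimal objects) matches the arrow in the equalizer of Lemma~\ref{t4.1.1}, and that passing from the shade to the real world via the contravariant functor $\mathcal{W}$ (together with $\mathrm{id}_{E}\otimes(-)$) preserves this composition. Once that is noted, the corollary is a tautology.
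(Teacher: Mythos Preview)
Your argument is essentially the paper's proof verbatim: the paper's proof consists of the single identity $\mathbf{s}_{n+1}\circ\mathbf{d}_{n+1}=\mathrm{id}_{E}\otimes\mathcal{W}_{(d_{1},\ldots,d_{n+1})\in D^{n+1}\mapsto(d_{1},\ldots,d_{n},0)\in D^{n+1}}$, which is exactly what you derive. One bookkeeping slip worth correcting: in the paper's conventions $\mathbf{d}_{n+1}:E\otimes\mathcal{W}_{D^{n+1}}\to E\otimes\mathcal{W}_{D^{n}}$ is the face and $\mathbf{s}_{n+1}:E\otimes\mathcal{W}_{D^{n}}\to E\otimes\mathcal{W}_{D^{n+1}}$ is the degeneracy (you have the labels and domains swapped in your ``recall''), but since you compose the underlying maps of infinitesimal objects in the right order and invoke contravariance of $\mathcal{W}$, the final identity and hence the proof are unaffected.
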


\begin{proof}
This follows simply from
\[
\mathbf{s}_{n+1}\circ\mathbf{d}_{n+1}=\mathrm{id}_{E}\otimes\mathcal{W}%
_{\left(  d_{1},...,d_{n},d_{n+1}\right)  \in D^{n+1}\mapsto\left(
d_{1},...,d_{n},0\right)  \in D^{n+1}}%
\]

\end{proof}

\begin{proposition}
\label{t4.1.2}Let $\nabla_{x}$ be a $D^{n+1}$-pseudotangential over the bundle
$\pi:E\rightarrow M$ at $x\in E$. Let $\gamma\in\left(  M\otimes
\mathcal{W}_{D^{n}}\right)  _{\pi(x)}$.$\ $Then we have
\[
\nabla_{x}(\mathbf{s}_{n+1}(\gamma))=\left(  \mathrm{id}_{E}\otimes
\mathcal{W}_{\left(  d_{1},...,d_{n},d_{n+1}\right)  \in D^{n+1}\mapsto\left(
d_{1},...,d_{n},0\right)  \in D^{n+1}}\right)  \left(  \nabla_{x}%
(\mathbf{s}_{n+1}(\gamma))\right)
\]
so that
\[
\nabla_{x}(\mathbf{s}_{n+1}(\gamma))=\left(  \mathbf{s}_{n+1}\circ
\mathbf{d}_{n+1}\right)  \left(  \nabla_{x}(\mathbf{s}_{n+1}(\gamma))\right)
\]

\end{proposition}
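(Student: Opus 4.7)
The plan is to exploit the authentic multilinearity condition (2) in Definition \ref{d4.1} with scalar $\alpha = 0$, combined with the observation that $\mathbf{s}_{n+1}(\gamma)$ is, tautologically, independent of its $(n+1)$-st infinitesimal coordinate.

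First I would unpack $\mathbf{s}_{n+1}(\gamma)$ and $\alpha \underset{n+1}{\cdot} \mathbf{s}_{n+1}(\gamma)$ in terms of the underlying Weil algebra maps. By definition $\mathbf{s}_{n+1}(\gamma)$ is obtained from $\gamma$ by applying $\mathrm{id}_M \otimes \mathcal{W}_{(d_1,\ldots,d_{n+1}) \mapsto (d_1,\ldots,d_n)}$, while $\alpha \underset{n+1}{\cdot}(-)$ corresponds to $\mathrm{id}_M \otimes \mathcal{W}_{(d_1,\ldots,d_{n+1}) \mapsto (d_1,\ldots,d_n,\alpha d_{n+1})}$. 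Composing the underlying functions of infinitesimals, $(d_1,\ldots,d_{n+1}) \mapsto (d_1,\ldots,d_n,\alpha d_{n+1}) \mapsto (d_1,\ldots,d_n)$ collapses to $(d_1,\ldots,d_{n+1}) \mapsto (d_1,\ldots,d_n)$, so functoriality of $\mathcal{W}$ gives
\[
\alpha \underset{n+1}{\cdot} \mathbf{s}_{n+1}(\gamma) = \mathbf{s}_{n+1}(\gamma)
\]
for every $\alpha \in \mathbb{R}$.

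Next I would apply condition (2) of Definition \ref{d4.1}, which says $\nabla_x(\alpha \underset{n+1}{\cdot} \gamma') = \alpha \underset{n+1}{\cdot} \nabla_x(\gamma')$, to the element $\gamma' = \mathbf{s}_{n+1}(\gamma)$. Taking $\alpha = 0$ and combining with the identity from the previous step yields
\[
\nabla_x(\mathbf{s}_{n+1}(\gamma)) = 0 \underset{n+1}{\cdot} \nabla_x(\mathbf{s}_{n+1}(\gamma)).
\]
But $0 \underset{n+1}{\cdot}(-)$ is precisely the map $\mathrm{id}_E \otimes \mathcal{W}_{(d_1,\ldots,d_{n+1}) \mapsto (d_1,\ldots,d_n,0)}$, which is the first displayed equation in the proposition.

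Finally, the second displayed equation is an immediate consequence of Corollary \ref{t4.1.1.1}: an element of $E \otimes \mathcal{W}_{D^{n+1}}$ is fixed by $\mathrm{id}_E \otimes \mathcal{W}_{(d_1,\ldots,d_{n+1}) \mapsto (d_1,\ldots,d_n,0)}$ if and only if it equals $\mathbf{s}_{n+1} \circ \mathbf{d}_{n+1}$ applied to itself. There is no real obstacle here; the only subtle point is matching the Weil-algebra description of $0 \underset{n+1}{\cdot}(-)$ with the equalizer map in Corollary \ref{t4.1.1.1}, which is a direct verification from the definitions.
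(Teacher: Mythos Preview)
Your proof is correct and follows essentially the same route as the paper: both use condition (2) of Definition \ref{d4.1} together with the observation that $\alpha \underset{n+1}{\cdot} \mathbf{s}_{n+1}(\gamma) = \mathbf{s}_{n+1}(\gamma)$, then set $\alpha = 0$. You are slightly more explicit in unpacking the Weil-algebra maps and in invoking Corollary \ref{t4.1.1.1} for the second displayed equation, but the argument is the same.
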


\begin{proof}
For any $\alpha\in\mathbb{R}$, we have
\begin{align*}
&  \alpha\underset{n+1}{\cdot}\left(  \nabla_{x}(\mathbf{s}_{n+1}%
(\gamma))\right) \\
&  =\nabla_{x}(\alpha\underset{n+1}{\cdot}\left(  \mathbf{s}_{n+1}%
(\gamma)\right)  )\\
&  =\nabla_{x}(\mathbf{s}_{n+1}(\gamma))
\end{align*}
Therefore we have the desired result by letting $\alpha=0$ in the above calculation.
\end{proof}

\begin{corollary}
\label{t4.1.2.1}The assignment
\[
\gamma\in\left(  M\otimes\mathcal{W}_{D^{n}}\right)  _{\pi(x)}\longmapsto
\mathbf{d}_{n+1}\left(  \nabla_{x}(\mathbf{s}_{n+1}(\gamma))\right)
\in\left(  E\otimes\mathcal{W}_{D^{n}}\right)  _{x}%
\]
is an $n$-pseudotangential over the bundle $\pi:E\rightarrow M$ at $x$.
\end{corollary}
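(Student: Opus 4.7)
Let me write $\widetilde{\nabla}_x(\gamma) := \mathbf{d}_{n+1}(\nabla_x(\mathbf{s}_{n+1}(\gamma)))$. The plan is to verify the four clauses of Definition \ref{d4.1} (with $n$ in place of $n+1$) one by one, drawing in each case on the corresponding clause for $\nabla_x$ at level $n+1$ together with the simplicial identity $\mathbf{d}_{n+1}\circ\mathbf{s}_{n+1}=\mathrm{id}$ and the functoriality of the Weil prolongation. The guiding observation is that every operation appearing in the four clauses at level $n$, namely $\alpha\underset{i}{\cdot}$ and the symmetric group action (for $i\leq n$ and $\sigma\in\mathbf{S}_n$) as well as the infinitesimal multilinearity maps, touches only the first $n$ slots, whereas $\mathbf{s}_{n+1}$ and $\mathbf{d}_{n+1}$ touch only the $(n+1)$-th slot. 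On the infinitesimal-object side these obviously commute, and by contravariance of $\mathcal{W}$ together with the covariance of the Weil functor, the induced maps on $E\otimes\mathcal{W}_{D^{n+1}}$ and $M\otimes\mathcal{W}_{D^{n+1}}$ commute as well.

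For clause (1), I commute $\pi\otimes\mathrm{id}_{\mathcal{W}_{D^n}}$ past $\mathbf{d}_{n+1}$ (both are Weil-functor images of compatible maps), apply clause (1) for $\nabla_x$ to rewrite $(\pi\otimes\mathrm{id}_{\mathcal{W}_{D^{n+1}}})(\nabla_x(\mathbf{s}_{n+1}(\gamma)))$ as $\mathbf{s}_{n+1}(\gamma)$, and close with $\mathbf{d}_{n+1}(\mathbf{s}_{n+1}(\gamma))=\gamma$. For clause (2), I first slide $\alpha\underset{i}{\cdot}$ through $\mathbf{s}_{n+1}$ (same underlying map on $D^n$-factors, only a different dummy slot inserted), invoke clause (2) for $\nabla_x$ at the index $i\leq n+1$, then slide $\alpha\underset{i}{\cdot}$ back out through $\mathbf{d}_{n+1}$. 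Clause (3) is analogous: the top and bottom horizontal arrows of the Definition \ref{d4.1} square at level $n$ lift, via $\mathbf{s}_{n+1}$ and $\mathbf{d}_{n+1}$ (tensored with $\mathrm{id}_{\mathcal{W}_{D_m}}$ on the target), to the corresponding arrows at level $n+1$, so the level-$n+1$ commutativity for $\nabla_x$ descends. Clause (4) uses the same recipe with $\sigma\in\mathbf{S}_n$ extended to $\tilde\sigma\in\mathbf{S}_{n+1}$ fixing $n+1$, under which $\mathbf{s}_{n+1}$ and $\mathbf{d}_{n+1}$ are equivariant.

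The proof is thus essentially a bookkeeping exercise; there is no single conceptually hard step, and interestingly Proposition \ref{t4.1.2} itself is not needed for the verification of the four clauses (only the naturality package plus the simplicial identity). Proposition \ref{t4.1.2} plays a different role: it tells us that $\mathbf{d}_{n+1}$ is injective on the image of $\nabla_x\circ\mathbf{s}_{n+1}$, which is what makes the construction $\nabla_x\mapsto\widetilde{\nabla}_x$ a faithful ``degeneracy'' operation between $\mathbb{\hat{J}}_x^{D^{n+1}}(\pi)$ and $\mathbb{\hat{J}}_x^{D^n}(\pi)$. The main obstacle to presenting the proof cleanly is managing the size of the diagrams in clause (3); in practice I would introduce abbreviations for the horizontal arrows at both levels and reduce the verification to a single commutativity of Weil-algebra maps on the indexing infinitesimal objects, which is immediate from inspection of the underlying substitutions of coordinates.
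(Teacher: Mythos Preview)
Your verification is correct. The paper states Corollary~\ref{t4.1.2.1} without proof, so there is no explicit argument to compare against; your clause-by-clause check using the simplicial identity $\mathbf{d}_{n+1}\circ\mathbf{s}_{n+1}=\mathrm{id}$ together with the commutation of $\mathbf{s}_{n+1},\mathbf{d}_{n+1}$ (acting on slot $n+1$) with all the level-$n$ operations (acting on slots $1,\ldots,n$) is exactly the right mechanism.

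One remark on your comment that Proposition~\ref{t4.1.2} is not needed: you are technically right that the four clauses go through from naturality and the simplicial identity alone. The reason the paper labels this a corollary is presumably that Proposition~\ref{t4.1.2} gives the intertwining relation $\nabla_x\circ\mathbf{s}_{n+1}=\mathbf{s}_{n+1}\circ\widetilde{\nabla}_x$, from which the four clauses follow by injectivity of $\mathbf{s}_{n+1}$ and the same slot-disjointness observations you use. This is merely a reorganization of your argument (pushing $\mathbf{s}_{n+1}$ forward on the output side rather than $\mathbf{d}_{n+1}$), not a different idea. Your characterization of the role of Proposition~\ref{t4.1.2} as providing faithfulness of $\nabla_x\mapsto\widetilde{\nabla}_x$ overstates things slightly; its real content is the intertwining relation just mentioned, which is what the paper exploits repeatedly in Proposition~\ref{t4.1.3}.
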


\begin{notation}
By this Corollary, we have canonical projections $\widehat{\pi}_{n+1,n}%
:\mathbb{\hat{J}}^{D^{n+1}}(\pi)\rightarrow\mathbb{\hat{J}}^{D^{n}}(\pi)$. By
assigning $\pi(x)\in M$ to each $n$-pseudotangential $\nabla_{x}\ $over the
bundle $\pi:E\rightarrow M$ at $x\in E$, we have the canonical projections
$\widehat{\pi}_{n}:\mathbb{\hat{J}}^{D^{n}}(\pi)\rightarrow$ $M$. Note that
$\widehat{\pi}_{n}\circ\widehat{\pi}_{n+1,n}=\widehat{\pi}_{n+1}$ For any
natural numbers $n$, $m$ with $m\leq n$, we define $\widehat{\pi}%
_{n,m}:\mathbb{\hat{J}}^{D^{n}}(\pi)\rightarrow\mathbb{\hat{J}}^{D^{m}}(\pi)$
to be $\widehat{\pi}_{m+1,m}\circ...\circ\widehat{\pi}_{n,n-1}$.
\end{notation}

Now we are going to show that

\begin{proposition}
\label{t4.1.3}Let $\nabla_{x}\in\mathbb{\hat{J}}^{D^{n+1}}(\pi)$. Then the
following diagrams are commutative:
\begin{align*}
&  \
\begin{array}
[c]{ccc}%
\quad\quad\left(  M\otimes\mathcal{W}_{D^{n+1}}\right)  _{\pi(x)} &
\underrightarrow{\ \ \ \ \ \ \ \ \ \ \ \ \nabla_{x}%
\ \ \ \ \ \ \ \ \ \ \ \ \ \ } & \left(  E\otimes\mathcal{W}_{D^{n+1}}\right)
_{x}\\
\mathbf{s}_{i}\quad\uparrow &  & \uparrow\quad\mathbf{s}_{i}\\
\quad\quad\left(  M\otimes\mathcal{W}_{D^{n}}\right)  _{\pi(x)} &
\overrightarrow{\ \ \ \ \ \ \ \widehat{\pi}_{n+1,n}(\nabla_{x})\ \ \ \ \ \ \ }
& \left(  E\otimes\mathcal{W}_{D^{n}}\right)  _{x}%
\end{array}
\\
&
\begin{array}
[c]{ccc}%
\quad\quad\left(  M\otimes\mathcal{W}_{D^{n+1}}\right)  _{\pi(x)} &
\underrightarrow{\ \ \ \ \ \ \ \ \ \ \ \ \nabla_{x}%
\ \ \ \ \ \ \ \ \ \ \ \ \ \ } & \left(  E\otimes\mathcal{W}_{D^{n+1}}\right)
_{x}\\
\mathbf{d}_{i}\quad\downarrow &  & \downarrow\quad\mathbf{d}_{i}\\
\quad\quad\left(  M\otimes\mathcal{W}_{D^{n}}\right)  _{\pi(x)} &
\overrightarrow{\ \ \ \ \ \ \ \widehat{\pi}_{n+1,n}(\nabla_{x})\ \ \ \ \ \ \ }
& \left(  E\otimes\mathcal{W}_{D^{n}}\right)  _{x}%
\end{array}
\end{align*}

\end{proposition}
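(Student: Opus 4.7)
The plan is to reduce both commutativity squares to the case $i=n+1$, and then dispatch the general $i$ via the symmetry condition (Condition 4 of Definition \ref{d4.1}). For the $\mathbf{s}_{n+1}$ square, by the very definition of $\widehat{\pi}_{n+1,n}(\nabla_x)$ we have
\[
\mathbf{s}_{n+1}\bigl(\widehat{\pi}_{n+1,n}(\nabla_x)(\gamma)\bigr)
= (\mathbf{s}_{n+1}\circ\mathbf{d}_{n+1})\bigl(\nabla_x(\mathbf{s}_{n+1}(\gamma))\bigr),
\]
and Proposition \ref{t4.1.2} identifies the right-hand side with $\nabla_x(\mathbf{s}_{n+1}(\gamma))$. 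For the $\mathbf{d}_{n+1}$ square, Condition 2 of Definition \ref{d4.1} applied at $\alpha=0$ with $i=n+1$ gives $\nabla_x(0\underset{n+1}{\cdot}\gamma) = 0\underset{n+1}{\cdot}\nabla_x(\gamma)$; since the operator $0\underset{n+1}{\cdot}$ coincides with $\mathbf{s}_{n+1}\circ\mathbf{d}_{n+1}$, applying $\mathbf{d}_{n+1}$ to both sides yields $\widehat{\pi}_{n+1,n}(\nabla_x)(\mathbf{d}_{n+1}(\gamma)) = \mathbf{d}_{n+1}(\nabla_x(\gamma))$, as desired.

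For a general $i$ with $1\leq i\leq n$, let $\sigma\in\mathbf{S}_{n+1}$ be the cyclic permutation $i\mapsto i+1\mapsto\cdots\mapsto n+1\mapsto i$. Unwinding the coordinate maps between $D^{n+1}$ and $D^n$ one obtains the simplicial-permutation identities
\[
\mathbf{s}_i(\gamma) = (\mathbf{s}_{n+1}(\gamma))^{\sigma},
\qquad
\mathbf{d}_i(\gamma) = \mathbf{d}_{n+1}(\gamma^{\sigma^{-1}}),
\]
valid both on $M\otimes\mathcal{W}_{-}$ and on $E\otimes\mathcal{W}_{-}$. Combining these with Condition 4 of Definition \ref{d4.1} (which says $\nabla_x(\gamma^{\sigma}) = (\nabla_x(\gamma))^{\sigma}$) and the already established $i=n+1$ cases reduces both diagrams for arbitrary $i$ to pure algebra with $\sigma$; for instance, for the $\mathbf{s}_i$ square one writes
\[
\nabla_x(\mathbf{s}_i(\gamma)) = \nabla_x((\mathbf{s}_{n+1}(\gamma))^{\sigma}) = (\nabla_x(\mathbf{s}_{n+1}(\gamma)))^{\sigma} = (\mathbf{s}_{n+1}(\widehat{\pi}_{n+1,n}(\nabla_x)(\gamma)))^{\sigma} = \mathbf{s}_i(\widehat{\pi}_{n+1,n}(\nabla_x)(\gamma)),
\]
and the $\mathbf{d}_i$ square is handled symmetrically using $\sigma^{-1}$.

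The only delicate point is verifying the two simplicial-permutation identities for $\mathbf{s}_i$ and $\mathbf{d}_i$, but this is a routine bookkeeping exercise: it follows from the contravariant functoriality of $X\otimes\mathcal{W}_{-}$ applied to the evidently commuting diagrams of morphisms $D^{n+1}\rightrightarrows D^n$ and $D^n\rightrightarrows D^{n+1}$ underlying the four operators. No substantive obstacle is anticipated.
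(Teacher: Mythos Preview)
Your proof is correct and follows essentially the same strategy as the paper: establish the $i=n+1$ cases directly (via Proposition \ref{t4.1.2} for $\mathbf{s}_{n+1}$ and the $0\underset{n+1}{\cdot}$ trick for $\mathbf{d}_{n+1}$), then reduce general $i$ to $i=n+1$ using the $\mathbf{S}_{n+1}$-equivariance of $\nabla_x$. The only cosmetic difference is in the permutation bookkeeping: you use a single cycle $\sigma$ so that $\mathbf{s}_i=(\mathbf{s}_{n+1})^{\sigma}$ and $\mathbf{d}_i=\mathbf{d}_{n+1}\circ(\cdot)^{\sigma^{-1}}$, whereas the paper factors through a transposition $\langle i,n+1\rangle$ followed by a cycle on the remaining indices (and, for $\mathbf{d}_i$, applies a further $\mathbf{S}_n$-permutation after $\mathbf{d}_{n+1}$, which then requires the symmetry of $\widehat{\pi}_{n+1,n}(\nabla_x)$ as a $D^n$-pseudotangential).
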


\begin{proof}
By the very definition of $\widehat{\pi}_{n+1,n}$, we have
\[
\mathbf{s}_{n+1}(\widehat{\pi}_{n+1}(\nabla_{x})(\gamma))=\nabla
_{x}(\mathbf{s}_{n+1}(\gamma))
\]
for any $\gamma\in\left(  M\otimes\mathcal{W}_{D^{n}}\right)  _{\pi(x)}$. For
$i\neq n+1$, we have
\begin{align*}
&  \mathbf{s}_{i}(\widehat{\pi}_{n+1,n}(\nabla_{x})(\gamma))\\
&  =\left(  \left(  \mathbf{s}_{n+1}(\widehat{\pi}_{n+1,n}(\nabla_{x}%
)(\gamma))\right)  ^{<i,n+1>}\right)  ^{<i+1,i+2,...,n,n+1>}\\
&  =\left(  \left(  \nabla_{x}(\mathbf{s}_{n+1}(\gamma))\right)
^{<i,n+1>}\right)  ^{<i+1,i+2,...,n,n+1>}\\
&  =\left(  \nabla_{x}\left(  (\mathbf{s}_{n+1}(\gamma))^{<i,n+1>}\right)
\right)  ^{<i+1,i+2,...,n,n+1>}\\
&  =\nabla_{x}\left(  \left(  (\mathbf{s}_{n+1}(\gamma))^{<i,n+1>}\right)
^{<i+1,i+2,...,n,n+1>}\right) \\
&  =\nabla_{x}\left(  \mathbf{s}_{i}\left(  \gamma\right)  \right)
\end{align*}
Now we are going to show that
\[
\mathbf{d}_{i}(\nabla_{x}(\gamma))=(\widehat{\pi}_{n+1,n}(\nabla
_{x}))(\mathbf{d}_{i}(\gamma))
\]
for any $\gamma\in\left(  M\otimes\mathcal{W}_{D^{n+1}}\right)  _{\pi(x)}$.
First we deal with the case of $i=n+1$. We have
\begin{align*}
&  \mathbf{d}_{n+1}(\nabla_{x}(\gamma))\\
&  =\mathbf{d}_{n+1}(0\underset{n+1}{\cdot}\nabla_{x}(\gamma))\\
&  =\mathbf{d}_{n+1}(\nabla_{x}(0\underset{n+1}{\cdot}\gamma))\\
&  =\mathbf{d}_{n+1}(\nabla_{x}(\mathbf{s}_{n+1}(\mathbf{d}_{n+1}(\gamma))))\\
&  =(\widehat{\pi}_{n+1,n}(\nabla_{x}))(\mathbf{d}_{n+1}(\gamma))
\end{align*}
For $i\neq n+1$, we have
\begin{align*}
&  \mathbf{d}_{i}(\nabla_{x}(\gamma))\\
&  =\left(  \mathbf{d}_{n+1}\left(  (\nabla_{x}(\gamma))^{<i,n+1>}\right)
\right)  ^{<n,n-1,...,i+1,i>}\\
&  =\left(  \mathbf{d}_{n+1}(\nabla_{x}(\gamma^{<i,n+1>}))\right)
^{<n,n-1,...,i+1,i>}\\
&  =(((\widehat{\pi}_{n+1,n}(\nabla_{x}))\left(  \mathbf{d}_{n+1}%
(\gamma^{<i,n+1>}))\right)  ^{<n,n-1,...,i+1,i>}\\
&  =(\widehat{\pi}_{n+1,n}(\nabla_{x}))\left(  \left(  \mathbf{d}_{n+1}%
(\gamma^{<i,n+1>})\right)  ^{<n,n-1,...,i+1,i>}\right) \\
&  =(\widehat{\pi}_{n+1,n}(\nabla_{x}))\left(  \mathbf{d}_{i}(\gamma)\right)
\end{align*}
Thus we are done through.
\end{proof}

\begin{corollary}
Let $\nabla_{x}^{+}$, $\nabla_{x}^{-}\in\mathbb{\hat{J}}^{D^{n+1}}(\pi)$ with
\[
\widehat{\pi}_{n+1,n}(\nabla_{x}^{+})=\widehat{\pi}_{n+1,n}(\nabla_{x}^{-})
\]
Then
\[
\left(  \mathrm{id}_{E}\otimes\mathcal{W}_{i_{D\left\{  n+1\right\}
_{n}\rightarrow D^{n+1}}}\right)  \left(  \nabla_{x}^{+}(\gamma)\right)
=\left(  \mathrm{id}_{E}\otimes\mathcal{W}_{i_{D\left\{  n+1\right\}
_{n}\rightarrow D^{n+1}}}\right)  \left(  \nabla_{x}^{-}(\gamma)\right)
\]
for any $\gamma\in\left(  M\otimes\mathcal{W}_{D^{n+1}}\right)  _{\pi(x)}$.
\end{corollary}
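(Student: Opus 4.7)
The plan is to reduce the claim to verifying an equality on each ``face'' $D^{n}\hookrightarrow D\{n+1\}_{n}$ obtained by zeroing one coordinate, and then invoke Proposition \ref{t4.1.3} together with the hypothesis $\widehat{\pi}_{n+1,n}(\nabla_{x}^{+})=\widehat{\pi}_{n+1,n}(\nabla_{x}^{-})$. The structural input is the Fundamental Theorem on Simplicial Infinitesimal Objects (Theorem \ref{t2.2.4.1}): I would present $D\{n+1\}_{n}$ as a standard quasi-colimit of basic infinitesimal objects, so that microlinearity turns the restriction map $\mathrm{id}_{E}\otimes\mathcal{W}_{i}\colon E\otimes\mathcal{W}_{D^{n+1}}\to E\otimes\mathcal{W}_{D\{n+1\}_{n}}$ into something that can be probed face by face.

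First I would identify the quasi-colimit explicitly. Since each $d_{i}\in D$ forces $d_{i}^{2}=0$, the defining relations of $D\{n+1\}_{n}$ collapse to the single condition $d_{1}\cdots d_{n+1}=0$, so $D\{n+1\}_{n}=D\{n+1;(1,\ldots,n+1)\}$ in the notation of Definition \ref{d2.2.2.1}. Running the recipe from the proof of Theorem \ref{t2.2.4.1}, the maximal strictly increasing index sequences drawn from $\{1,\ldots,n+1\}$ that contain no subsequence equal to $(1,\ldots,n+1)$ are exactly the $n+1$ length-$n$ sequences obtained by omitting one index $j$. Each of these yields a natural injection $D^{n}\hookrightarrow D\{n+1\}_{n}$ which, post-composed with $D\{n+1\}_{n}\hookrightarrow D^{n+1}$, is precisely the map $D^{n}\to D^{n+1}$ dual to the face operator $\mathbf{d}_{j}$. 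The pairwise and higher overlaps contribute $D^{k}$'s for smaller $k$ in the expected way, completing the standard quasi-colimit representation.

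I would then invoke microlinearity of $E$: by Proposition \ref{t2.3.2.7}, the functor $E\otimes(-)$ converts this quasi-colimit into a transversal limit diagram in $\mathbf{FS}$. Consequently, two elements of $E\otimes\mathcal{W}_{D\{n+1\}_{n}}$ coincide iff their pullbacks along each of the $n+1$ face maps coincide, so the desired identity $(\mathrm{id}_{E}\otimes\mathcal{W}_{i})(\nabla_{x}^{+}(\gamma))=(\mathrm{id}_{E}\otimes\mathcal{W}_{i})(\nabla_{x}^{-}(\gamma))$ is equivalent to the $n+1$ equations $\mathbf{d}_{j}(\nabla_{x}^{+}(\gamma))=\mathbf{d}_{j}(\nabla_{x}^{-}(\gamma))$ for $j=1,\ldots,n+1$. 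Proposition \ref{t4.1.3} rewrites both sides as $\widehat{\pi}_{n+1,n}(\nabla_{x}^{\pm})(\mathbf{d}_{j}(\gamma))$, and the hypothesis closes the argument.

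The main obstacle, as I see it, is purely combinatorial: one must verify that the standard quasi-colimit diagram for $D\{n+1\}_{n}$ is assembled from precisely the $n+1$ face inclusions corresponding to $\mathbf{d}_{1},\ldots,\mathbf{d}_{n+1}$ (together with their iterated intersections), so that microlinearity truly delivers the face-by-face characterization of $E\otimes\mathcal{W}_{D\{n+1\}_{n}}$ that the argument relies on. Once that identification is in hand, the rest is a direct application of Proposition \ref{t4.1.3}.
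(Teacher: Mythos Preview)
Your argument is correct and is exactly the implicit reasoning the paper relies on in stating this as a corollary of Proposition~\ref{t4.1.3}: the standard quasi-colimit representation of $D\{n+1\}_{n}=D\{n+1;(1,\dots,n+1)\}$ has as its top layer precisely the $n+1$ copies of $D^{n}$ corresponding to the face inclusions, microlinearity of $E$ turns this into a limit so that equality in $E\otimes\mathcal{W}_{D\{n+1\}_{n}}$ is detected by the $\mathbf{d}_{j}$, and Proposition~\ref{t4.1.3} reduces each $\mathbf{d}_{j}(\nabla_{x}^{\pm}(\gamma))$ to $\widehat{\pi}_{n+1,n}(\nabla_{x}^{\pm})(\mathbf{d}_{j}(\gamma))$. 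One small remark: you invoke Proposition~\ref{t2.3.2.7} to obtain a \emph{transversal} limit, but you only need an ordinary limit in $\mathbf{FS}$, which is already guaranteed by the definition of microlinearity itself; the stronger transversality is unnecessary here.
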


\begin{definition}
\label{d4.2}The notion of a $D^{n}$-tangential \textit{over} the bundle
$\pi:E\rightarrow M$ \textit{at} $x$ is defined by induction on $n$. The
notion of a $D$-tangential \textit{over} the bundle $\pi:E\rightarrow M$
\textit{at} $x$ shall be identical with that of a $D$-pseudotangential
\textit{over} the bundle $\pi:E\rightarrow M$ \textit{at} $x$ . Now we proceed
inductively. A $D^{n+1}$-pseudotangential
\[
\nabla_{x}:\left(  M\otimes\mathcal{W}_{D^{n+1}}\right)  _{\pi(x)}%
\rightarrow\left(  E\otimes\mathcal{W}_{D^{n+1}}\right)  _{x}%
\]
$\ $ over the bundle $\pi:E\rightarrow M$ at $x\in E$ is called a $D^{n+1}%
$-tangential \textit{over} the bundle $\pi:E\rightarrow M$ \textit{at} $x$ if
it acquiesces in the following two conditions:

\begin{enumerate}
\item $\widehat{\pi}_{n+1,n}(\nabla_{x})$ is a $D^{n}$-tangential
\textit{over} the bundle $\pi:E\rightarrow M$ \textit{at} $x$.

\item For any $\gamma\in\left(  M\otimes\mathcal{W}_{D^{n}}\right)  _{\pi(x)}%
$, we have
\begin{align*}
&  \nabla_{x}\left(  \left(  \mathrm{id}_{M}\otimes\mathcal{W}_{\left(
d_{1},...,d_{n},d_{n+1}\right)  \in D^{n+1}\mapsto\left(  d_{1},...,d_{n}%
d_{n+1}\right)  \in D^{n}}\right)  \left(  \gamma\right)  \right) \\
&  =\left(  \mathrm{id}_{E}\otimes\mathcal{W}_{\left(  d_{1},...,d_{n}%
,d_{n+1}\right)  \in D^{n+1}\mapsto\left(  d_{1},...,d_{n}d_{n+1}\right)  \in
D^{n+1}}\right)  \left(  \left(  \widehat{\pi}_{n+1,n}(\nabla_{x})\right)
(\gamma)\right)
\end{align*}

\end{enumerate}
\end{definition}

\begin{notation}
We denote by $\mathbb{J}_{x}^{D^{n}}(\pi)$ the totality of $D^{n}%
$-tangentials$\ \nabla_{x}\ $over the bundle $\pi:E\rightarrow M$ at $x\in E
$. We denote by $\mathbb{J}^{D^{n}}(\pi)$ the set-theoretic union of
$\mathbb{J}_{x}^{D^{n}}(\pi)$'s for all $x\in E$. In particular,
$\mathbb{J}^{D^{0}}(\pi)=\mathbb{\hat{J}}^{D^{0}}(\pi)=E$ by convention and
$\mathbb{J}^{D}(\pi)=\mathbb{\hat{J}}^{D}(\pi)$ by definition. By the very
definition of $D^{n}$-tangential, the projections $\widehat{\pi}%
_{n+1,n}:\mathbb{\hat{J}}^{D^{n+1}}(\pi)\rightarrow\mathbb{\hat{J}}^{D^{n}%
}(\pi)$ are naturally restricted to mappings $\pi_{n+1,n}:\mathbb{J}^{D^{n+1}%
}(\pi)\rightarrow\mathbb{J}^{D^{n}}(\pi)$. Similarly for $\pi_{n}%
:\mathbb{J}^{D^{n}}(\pi)\rightarrow M$ and $\pi_{n,m}:\mathbb{J}^{D^{n}}%
(\pi)\rightarrow\mathbb{J}^{D^{m}}(\pi)$ with $m\leq n$.
\end{notation}

It is easy to see that

\begin{proposition}
\label{t4.1.4}Let $m,n$ be natural numbers with $m\leq n$. Let $k_{1}%
,...,k_{m}$ be positive integers with $k_{1}+...+k_{m}=n$. For any $\nabla
_{x}\in\mathbb{J}^{D^{n}}(\pi)$, any $\gamma\in\left(  M\otimes\mathcal{W}%
_{D^{m}}\right)  _{\pi(x)}$ and any $\sigma\in\mathbf{S}_{n}$, we have
\begin{align*}
&  \nabla_{x}\left(  \left(  \mathrm{id}_{M}\otimes\mathcal{W}_{(d_{1}%
,...,d_{n})\in D^{n}\mapsto\left(  d_{\sigma(1)}...d_{\sigma(k_{1})}%
,d_{\sigma(k_{1}+1)}...d_{\sigma(k_{1}+k_{2})},...,d_{\sigma(k_{1}%
+...+k_{m-1}+1)}...d_{\sigma(n)}\right)  }\right)  \left(  \gamma\right)
\right) \\
&  =\left(  \mathrm{id}_{E}\otimes\mathcal{W}_{(d_{1},...,d_{n})\in
D^{n}\mapsto\left(  d_{\sigma(1)}...d_{\sigma(k_{1})},d_{\sigma(k_{1}%
+1)}...d_{\sigma(k_{1}+k_{2})},...,d_{\sigma(k_{1}+...+k_{m-1}+1)}%
...d_{\sigma(n)}\right)  }\right)  \left(  \left(  \pi_{n,m}(\nabla
_{x})\right)  (\gamma)\right)
\end{align*}

\end{proposition}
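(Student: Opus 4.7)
The plan is to proceed by induction on $n-m$.

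When $n=m$, every $k_i$ equals $1$ and the monomial map in question collapses to the pure permutation $(d_1,\ldots,d_n)\mapsto(d_{\sigma(1)},\ldots,d_{\sigma(n)})$, while $\pi_{n,n}(\nabla_x)=\nabla_x$. The desired identity is then precisely the symmetry axiom, condition (4) of Definition \ref{d4.1}, applied to $\nabla_x$.

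For the inductive step I first isolate a single-contraction lemma: for every $\nabla_x\in\mathbb{J}^{D^n}(\pi)$ with $n\ge 2$, every pair $1\le a<b\le n$, and every $\eta\in\left(M\otimes\mathcal{W}_{D^{n-1}}\right)_{\pi(x)}$, if $\mu_{a,b}:D^n\to D^{n-1}$ denotes the monomial map that replaces the pair $(d_a,d_b)$ by the single entry $d_ad_b$ and renames the remaining entries in order, then
\[
\nabla_x\!\left(\left(\mathrm{id}_M\otimes\mathcal{W}_{\mu_{a,b}}\right)(\eta)\right)=\left(\mathrm{id}_E\otimes\mathcal{W}_{\mu_{a,b}}\right)\!\left(\pi_{n,n-1}(\nabla_x)(\eta)\right).
\]
This lemma follows by choosing $\tau\in\mathbf{S}_n$ that sends positions $a,b$ to $n-1,n$, using condition (4) of Definition \ref{d4.1} to shuttle the $\tau$-action through $\nabla_x$ on both sides, and then invoking condition (2) of Definition \ref{d4.2} for the resulting last-two-coordinate contraction, remembering that $\widehat{\pi}_{n,n-1}$ restricts to $\pi_{n,n-1}$ on $\mathbb{J}^{D^n}(\pi)$.

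Given the lemma, assume $n-m\ge 1$. Some $k_j\ge 2$, so we may pick two positions (say the first two elements of the $j$-th block determined by $\sigma$) and factor the monomial map $\mu:D^n\to D^m$ of the proposition as $\mu=\mu'\circ\mu''$, where $\mu''$ is a $\mu_{a,b}$ of the sort handled by the lemma and $\mu':D^{n-1}\to D^m$ is the analogous grouping/permutation map coming from the partition in which $k_j$ has been replaced by $k_j-1$. Contravariance of $\mathcal{W}$ gives $\mathrm{id}_M\otimes\mathcal{W}_\mu=(\mathrm{id}_M\otimes\mathcal{W}_{\mu''})\circ(\mathrm{id}_M\otimes\mathcal{W}_{\mu'})$; setting $\eta:=(\mathrm{id}_M\otimes\mathcal{W}_{\mu'})(\gamma)$, applying the single-contraction lemma to $\mu''$, and then the inductive hypothesis to $\pi_{n,n-1}(\nabla_x)\in\mathbb{J}^{D^{n-1}}(\pi)$ together with $\mu'$ (using $\pi_{n-1,m}\circ\pi_{n,n-1}=\pi_{n,m}$) produces the stated identity.

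The only real obstacle is the bookkeeping surrounding the factorization $\mu=\mu'\circ\mu''$ and the explicit permutations needed to move the chosen pair of coordinates into the last two slots; conceptually, the content of the proposition is nothing more than iterated use of the symmetry axiom (condition (4) of Definition \ref{d4.1}) and the infinitesimal product axiom (condition (2) of Definition \ref{d4.2}) already embedded in the definition of a $D^n$-tangential.
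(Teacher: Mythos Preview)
Your proof is correct: the induction on $n-m$, with the base case handled by the symmetry axiom (condition (4) of Definition~\ref{d4.1}) and the inductive step by factoring off a single two-coordinate contraction and invoking condition (2) of Definition~\ref{d4.2}, is exactly the natural argument. The paper itself offers no proof beyond the phrase ``It is easy to see that,'' so your write-up simply supplies the details the author left implicit; the only care required, which you correctly flag, is the bookkeeping in writing an arbitrary $\mu_{a,b}$ as $P_\tau$ followed by the last-two-coordinate contraction $\mu_{n-1,n}$ followed by a permutation of $D^{n-1}$, and then applying symmetry on both sides (once for $\nabla_x$ and once for $\pi_{n,n-1}(\nabla_x)$, which is again a $D^{n-1}$-pseudotangential).
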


Interestingly enough, any $D^{n}$-pseudotangential naturally gives rise to
what might be called a $\mathbb{D}$-\textit{pseudotangential} for any
simplicial infinitesimal space $\mathbb{D}$ of dimension less than or equal to
$n$.

\begin{theorem}
\label{t4.1.5}Let $n$ be a natural number. Let $\mathbb{D}$ be a simplicial
infinitesimal space of dimension less than or equal to $n$. Any $D^{n}%
$-pseudotangential $\nabla_{x}$ \textit{over }the bundle $\pi:E\rightarrow M $
\textit{at} $x\in E$ naturally induces a mapping $\nabla_{x}^{\mathbb{D}%
}:\left(  M\otimes\mathcal{W}_{\mathbb{D}}\right)  _{\pi\left(  x\right)
}\rightarrow\left(  E\otimes\mathcal{W}_{\mathbb{D}}\right)  _{x}$ abiding by
the following three conditions:

\begin{enumerate}
\item We have
\[
\left(  \pi\otimes\mathrm{id}_{\mathcal{W}_{\mathbb{D}}}\right)  \left(
\nabla_{x}^{\mathbb{D}}(\gamma)\right)  =\gamma
\]
for any $\gamma\in\left(  M\otimes\mathcal{W}_{\mathbb{D}}\right)
_{\pi\left(  x\right)  }$.

\item We have
\[
\nabla_{x}^{\mathbb{D}}(\alpha\underset{i}{\cdot}\gamma)=\alpha\underset
{i}{\cdot}\left(  \nabla_{x}^{\mathbb{D}}(\gamma)\right)
\]
for any $\alpha\in\mathbb{R}$ and any $\gamma\in\left(  M\otimes
\mathcal{W}_{\mathbb{D}}\right)  _{\pi\left(  x\right)  }$, where $i$ is a
natural number with $1\leq i\leq\mathrm{\deg}\,\mathbb{D}$.

\item The diagram
\[%
\begin{array}
[c]{ccc}%
\left(  M\otimes\mathcal{W}_{\mathbb{D}}\right)  _{\pi\left(  x\right)  } &
\rightarrow & \left(  M\otimes\mathcal{W}_{\mathbb{D}}\right)  _{\pi\left(
x\right)  }\otimes\mathcal{W}_{D_{m}}\\
\nabla_{x}\downarrow &  & \downarrow\nabla_{x}\otimes\mathrm{id}%
_{\mathcal{W}_{D_{m}}}\\
\left(  E\otimes\mathcal{W}_{\mathbb{D}}\right)  _{x} & \rightarrow & \left(
E\otimes\mathcal{W}_{\mathbb{D}}\right)  _{x}\otimes\mathcal{W}_{D_{m}}%
\end{array}
\]
is commutative, where $m$\ is an arbitrary natural number, the upper
horizontal arrow is
\[
\mathrm{id}_{M}\otimes\mathcal{W}_{\left(  d_{1},...,d_{k},e\right)
\in\mathbb{D}\times D_{m}\mapsto\left(  d_{1},...,d_{i-1},ed_{i}%
,d_{i+1},...d_{k}\right)  \in\mathbb{D}}\text{,}%
\]
and the lower horizontal arrow is
\[
\mathrm{id}_{E}\otimes\mathcal{W}_{\left(  d_{1},...,d_{k},e\right)
\in\mathbb{D}\times D_{m}\mapsto\left(  d_{1},...,d_{i-1},ed_{i}%
,d_{i+1},...d_{k}\right)  \in\mathbb{D}}%
\]
with $k=\mathrm{\deg}\,\mathbb{D}$ and $1\leq i\leq k$.
\end{enumerate}

If the simplicial infinitesimal space $\mathbb{D}$ is symmetric, the induced
mapping $\nabla_{x}^{\mathbb{D}}:\left(  M\otimes\mathcal{W}_{\mathbb{D}%
}\right)  _{\pi\left(  x\right)  }\rightarrow\left(  E\otimes\mathcal{W}%
_{\mathbb{D}}\right)  _{x}$ acquiesces in the following condition of symmetry
besides the above ones:

\begin{itemize}
\item We have
\[
\nabla_{x}^{\mathbb{D}}(\gamma^{\sigma})=(\nabla_{x}^{\mathbb{D}}%
(\gamma))^{\sigma}\text{ }%
\]
for any $\sigma\in\mathbf{S}_{k}$ and any $\gamma\in\left(  M\otimes
\mathcal{W}_{\mathbb{D}}\right)  _{\pi\left(  x\right)  }$.
\end{itemize}
\end{theorem}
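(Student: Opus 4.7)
My plan is to use the Fundamental Theorem on Simplicial Infinitesimal Objects (Theorem \ref{t2.2.4.1}) to reduce the construction on $\mathbb{D}$ to data on the basic infinitesimal pieces $D^{k}$ with $k\le n$. Concretely, I would fix the standard quasi-colimit representation $\mathbb{D}=\mathrm{qcolim}\,\Lambda$, where $\Lambda$ is a finite diagram of basic infinitesimal objects $D^{k}$ ($0\le k\le n$) with natural injections as arrows. By definition the dual diagram $\mathcal{W}_{\Lambda}$ is a limit diagram in $\mathbf{W}$, and by microlinearity of $M$ and $E$ both $M\otimes\mathcal{W}_{\Lambda}$ and $E\otimes\mathcal{W}_{\Lambda}$ are limit diagrams in $\mathbf{FS}$, with apices $M\otimes\mathcal{W}_{\mathbb{D}}$ and $E\otimes\mathcal{W}_{\mathbb{D}}$ respectively. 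Passing to fibres over $\pi(x)$ and $x$ preserves limits, so $(M\otimes\mathcal{W}_{\mathbb{D}})_{\pi(x)}$ and $(E\otimes\mathcal{W}_{\mathbb{D}})_{x}$ inherit these limit structures as well.

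Next, iterating Corollary \ref{t4.1.2.1} yields, for each $k\le n$, a $D^{k}$-pseudotangential $\nabla_{x}^{(k)}=\widehat{\pi}_{n,k}(\nabla_{x})$ at $x$, with $\nabla_{x}^{(n)}=\nabla_{x}$. For each vertex $D^{k}$ of $\Lambda$ the specified natural injection $D^{k}\hookrightarrow\mathbb{D}$ induces a projection $p_{k}:(M\otimes\mathcal{W}_{\mathbb{D}})_{\pi(x)}\to(M\otimes\mathcal{W}_{D^{k}})_{\pi(x)}$, and I form the family of maps $\nabla_{x}^{(k)}\circ p_{k}$ landing in the corresponding vertex of the $E$-side of $\Lambda$. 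To factor this family through the limit $(E\otimes\mathcal{W}_{\mathbb{D}})_{x}$, the key compatibility I must establish is that, for every arrow $D^{l}\hookrightarrow D^{k}$ of $\Lambda$, the square relating $\nabla_{x}^{(k)}$, $\nabla_{x}^{(l)}$, and the corresponding Weil-prolongation morphism commutes. Since every natural injection of basic infinitesimal objects is a composite of the coordinate-inserting maps dual to some $\mathbf{d}_{i}$'s, this reduces, by induction on the drop $k-l$, to Proposition \ref{t4.1.3}, which asserts precisely the required commutation with $\mathbf{d}_{i}$. The universal property of the limit then produces the desired mapping $\nabla_{x}^{\mathbb{D}}$ uniquely.

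The three conditions are then checked componentwise through $\Lambda$. Condition (1) is immediate from the corresponding property of each $\nabla_{x}^{(k)}$ together with the naturality of $\pi\otimes\mathrm{id}$. For (2) and (3), both sides of each proposed equality are morphisms into the limit $E\otimes\mathcal{W}_{\mathbb{D}}$, so it suffices to compare their $p_{k}$-components for every vertex of $\Lambda$; each comparison is an instance of the analogous condition for $\nabla_{x}^{(k)}$, combined with the elementary commutativity of the concrete diagrams of Weil-algebra morphisms involved (once the slot index $i$ with $1\le i\le\deg\mathbb{D}$ is matched with the coordinate index of $D^{k}$ determined by the injection $D^{k}\hookrightarrow\mathbb{D}$). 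The symmetry clause, in the symmetric case, is handled in the same componentwise spirit, appealing to condition (4) of Definition \ref{d4.1} for each $\nabla_{x}^{(k)}$ together with the fact that the $\mathbf{S}_{\deg\mathbb{D}}$-action permutes the vertices of $\Lambda$ while simultaneously permuting coordinates within each basic piece.

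The principal obstacle I anticipate is precisely the compatibility check in the second step: making the combinatorial bookkeeping sufficiently tight to identify each natural injection appearing in $\Lambda$ as an explicit composite of $\mathbf{d}_{i}$'s between fixed enumerations of coordinates, and verifying that the result is independent of how such a composite is chosen. Once this identification is in place, the actual commutation is delivered by Proposition \ref{t4.1.3}, and the remaining verifications proceed formally from microlinearity and the universal property of the limit.
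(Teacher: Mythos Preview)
Your proposal is correct and follows essentially the same approach as the paper: both use the standard quasi-colimit representation of $\mathbb{D}$ from Theorem~\ref{t2.2.4.1}, invoke microlinearity to view $(E\otimes\mathcal{W}_{\mathbb{D}})_{x}$ as a limit over the basic pieces $D^{k}$, apply the projected pseudotangentials $\widehat{\pi}_{n,k}(\nabla_{x})$ on each vertex, and rely on Proposition~\ref{t4.1.3} to secure the compatibility across the diagram. The paper carries this out on the illustrative example $\mathbb{D}=D\{3\}_{2}$ and leaves the verification of the four properties to the reader, whereas you spell out the general scheme and the componentwise verification more explicitly; but the two arguments are the same in substance.
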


\begin{proof}
For the sake of simplicity in description, we deal, by way of example, with
the case that $n=3$ and $\mathbb{D}=D\left\{  3\right\}  _{2}$, for which the
standard quasi-colimit representation was given in the proof of Theorem
\ref{t2.2.4.1}. Therefore, giving $\gamma\in\left(  M\otimes\mathcal{W}%
_{D\left\{  3\right\}  _{2}}\right)  _{\pi\left(  x\right)  }$ is equivalent
to giving $\gamma_{12},\gamma_{13},\gamma_{23}\in\left(  M\otimes
\mathcal{W}_{D^{2}}\right)  _{\pi\left(  x\right)  }$ with $\mathbf{d}%
_{2}(\gamma_{12})=\mathbf{d}_{2}(\gamma_{13})$, $\mathbf{d}_{1}(\gamma
_{12})=\mathbf{d}_{2}(\gamma_{23})$ and $\mathbf{d}_{1}(\gamma_{13}%
)=\mathbf{d}_{1}(\gamma_{23})$. By Proposition \ref{t4.1.3}, we have
\begin{align*}
\mathbf{d}_{2}(\widehat{\pi}_{3,2}\left(  \nabla_{x}\right)  (\gamma_{12}))
&  =\widehat{\pi}_{3,2}\left(  \nabla_{x}\right)  (\mathbf{d}_{2}(\gamma
_{12}))=\widehat{\pi}_{3,2}\left(  \nabla_{x}\right)  (\mathbf{d}_{2}%
(\gamma_{13}))=\mathbf{d}_{2}(\widehat{\pi}_{3,2}\left(  \nabla_{x}\right)
(\gamma_{13}))\\
\mathbf{d}_{1}(\widehat{\pi}_{3,2}\left(  \nabla_{x}\right)  (\gamma_{12}))
&  =\widehat{\pi}_{3,2}\left(  \nabla_{x}\right)  (\mathbf{d}_{1}(\gamma
_{12}))=\widehat{\pi}_{3,2}\left(  \nabla_{x}\right)  (\mathbf{d}_{2}%
(\gamma_{23}))=\mathbf{d}_{2}(\widehat{\pi}_{3,2}\left(  \nabla_{x}\right)
(\gamma_{23}))\\
\mathbf{d}_{1}(\widehat{\pi}_{3,2}\left(  \nabla_{x}\right)  (\gamma_{13}))
&  =\widehat{\pi}_{3,2}\left(  \nabla_{x}\right)  (\mathbf{d}_{1}(\gamma
_{13}))=\widehat{\pi}_{3,2}\left(  \nabla_{x}\right)  (\mathbf{d}_{1}%
(\gamma_{23}))=\mathbf{d}_{1}(\widehat{\pi}_{3,2}\left(  \nabla_{x}\right)
(\gamma_{23}))\text{,}%
\end{align*}
which determines a unique $\nabla_{x}^{D\{3\}_{2}}(\gamma)\in\left(
E\otimes\mathcal{W}_{D\left\{  3\right\}  _{2}}\right)  _{x}$ with
\begin{align*}
\mathbf{d}_{1}(\nabla_{x}^{D\{3\}_{2}}(\gamma))  &  =\widehat{\pi}%
_{3,2}\left(  \nabla_{x}\right)  (\gamma_{23})\\
\mathbf{d}_{2}(\nabla_{x}^{D\{3\}_{2}}(\gamma))  &  =\widehat{\pi}%
_{3,2}\left(  \nabla_{x}\right)  (\gamma_{13})\\
\mathbf{d}_{3}(\nabla_{x}^{D\{3\}_{2}}(\gamma))  &  =\widehat{\pi}%
_{3,2}\left(  \nabla_{x}\right)  (\gamma_{12})\text{.}%
\end{align*}
The proof that $\nabla_{x}^{D\{3\}_{2}}:\left(  M\otimes\mathcal{W}_{D\left\{
3\right\}  _{2}}\right)  _{\pi\left(  x\right)  }\rightarrow\left(
E\otimes\mathcal{W}_{D\left\{  3\right\}  _{2}}\right)  _{x}$ acquiesces in
the desired four properties is safely left to the reader.
\end{proof}

\begin{remark}
The reader should note that the induced mapping $\nabla_{x}^{\mathbb{D}}$ is
defined in terms of the standard quasi-colimit representation of $\mathbb{D}$.
The concluding corollary of this subsection will show that the induced mapping
$\nabla_{x}^{\mathbb{D}}$ is independent of our choice of a quasi-colimit
representation of $\mathbb{D}$ to a large extent, whether it is standard or
not, as long as $\nabla$ is not only a $D^{n}$-pseudotangential but also a
$D^{n}$-\textit{tangential.} We note in passing that $\hat{\pi}_{n,m}(\nabla)$
with $m\leq n$ is no other than $\nabla_{x}^{D^{m}}$.
\end{remark}

\begin{proposition}
\label{t4.1.6}Let $\pi^{\prime}:P\rightarrow E$ be another bundle with $x\in
P$. If $\nabla_{\pi^{\prime}(x)}$ is a $n$-tangential$_{2}$ over\textit{\ }the
bundle $\pi:E\rightarrow M$ \textit{at} $\pi^{\prime}(x)\in E$ and $\nabla
_{x}$ is a $n$-tangential$_{2}$ over\textit{\ }the bundle $\pi^{\prime
}:P\rightarrow E$ \textit{at} $x\in E$, then the composition $\nabla_{x}%
\circ\nabla_{\pi^{\prime}(x)}$ is a $n$-tangential$_{2}$ \textit{over }the
bundle $\pi\circ\pi^{\prime}:P\rightarrow M$ \textit{at} $x\in E$, and
$\pi_{n,n-1}(\nabla_{x}\circ\nabla_{\pi^{\prime}(x)})=\pi_{n,n-1}(\nabla
_{x})\circ\pi_{n,n-1}(\nabla_{\pi^{\prime}(x)})$ provided that $n\geq1$.
\end{proposition}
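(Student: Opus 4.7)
The plan is to split the statement into three pieces: (i) verify that $\nabla_{x}\circ\nabla_{\pi^{\prime}(x)}$ satisfies the four conditions of Definition \ref{d4.1}, so that it is a $D^{n}$-pseudotangential over $\pi\circ\pi^{\prime}$; (ii) establish the projection formula $\widehat{\pi}_{n+1,n}(\nabla_{x}\circ\nabla_{\pi^{\prime}(x)})=\widehat{\pi}_{n+1,n}(\nabla_{x})\circ\widehat{\pi}_{n+1,n}(\nabla_{\pi^{\prime}(x)})$ directly from the $\mathbf{s}/\mathbf{d}$-definition; (iii) promote the pseudotangential to a $D^{n}$-tangential by induction on $n$, using (ii) to handle the clause about $\widehat{\pi}_{n+1,n}$ inside Definition \ref{d4.2}.

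Step (i) is purely formal. The projection axiom is immediate from $(\pi\otimes\mathrm{id})(\nabla_{\pi^{\prime}(x)}(\gamma))=\gamma$ and $(\pi^{\prime}\otimes\mathrm{id})(\nabla_{x}(\eta))=\eta$; the scaling, infinitesimal-multilinearity and symmetry axioms each descend to the composite because both factors intertwine with $\alpha\underset{i}{\cdot}(-)$, with the maps $\mathrm{id}\otimes\mathcal{W}_{(d,e)\mapsto(\ldots,ed_{i},\ldots)}$, and with $(-)^{\sigma}$. Nothing more is needed here than to chase an arbitrary $\gamma\in(M\otimes\mathcal{W}_{D^{n}})_{\pi(\pi^{\prime}(x))}$ through the two $\nabla$'s in turn.

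For step (ii) I would compute, for any $\gamma\in(M\otimes\mathcal{W}_{D^{n}})_{\pi(\pi^{\prime}(x))}$,
\[
\widehat{\pi}_{n+1,n}(\nabla_{x}\circ\nabla_{\pi^{\prime}(x)})(\gamma)=\mathbf{d}_{n+1}\bigl((\nabla_{x}\circ\nabla_{\pi^{\prime}(x)})(\mathbf{s}_{n+1}(\gamma))\bigr),
\]
and then push $\mathbf{s}_{n+1}$ through $\nabla_{\pi^{\prime}(x)}$ and $\mathbf{d}_{n+1}$ through $\nabla_{x}$ using Proposition \ref{t4.1.3} applied to each factor. This converts the right-hand side into $\widehat{\pi}_{n+1,n}(\nabla_{x})\bigl(\widehat{\pi}_{n+1,n}(\nabla_{\pi^{\prime}(x)})(\gamma)\bigr)$, proving the projection formula and, once restricted to tangentials (step (iii)), the second assertion of the proposition.

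Step (iii) proceeds by induction on $n$. The base $n=1$ is vacuous since $D^{1}$-tangential coincides with $D^{1}$-pseudotangential. For the inductive step, the first clause of Definition \ref{d4.2} is precisely the projection formula from (ii) combined with the inductive hypothesis, so $\widehat{\pi}_{n+1,n}(\nabla_{x}\circ\nabla_{\pi^{\prime}(x)})$ is a $D^{n}$-tangential. For the second clause, apply $\nabla_{\pi^{\prime}(x)}$ and then $\nabla_{x}$ to $(\mathrm{id}_{M}\otimes\mathcal{W}_{(d_{1},\ldots,d_{n+1})\mapsto(d_{1},\ldots,d_{n}d_{n+1})})(\gamma)$, using the tangential property of $\nabla_{\pi^{\prime}(x)}$ first to reduce the inner expression to $(\mathrm{id}_{E}\otimes\mathcal{W}_{\ldots})(\widehat{\pi}_{n+1,n}(\nabla_{\pi^{\prime}(x)})(\gamma))$, and then the tangential property of $\nabla_{x}$ to pull the same map outside again, combined once more with the projection formula of (ii). I expect this bookkeeping in step (iii)—keeping the functorial tensor notation synchronized while the multiplication map $(d_{n},d_{n+1})\mapsto d_{n}d_{n+1}$ is commuted past both $\nabla$'s—to be the only genuinely non-routine part; the naturality of Weil prolongation ensures that the two applications of Definition \ref{d4.2}(2) mesh correctly, but one must be careful that $\widehat{\pi}_{n+1,n}(\nabla_{x})$ is evaluated at $\widehat{\pi}_{n+1,n}(\nabla_{\pi^{\prime}(x)})(\gamma)$ and not at $\gamma$ itself, which is exactly what (ii) guarantees.
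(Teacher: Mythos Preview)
Your proposal is correct and follows essentially the same route as the paper: verify the pseudotangential axioms for the composite, establish the projection formula via the $\mathbf{s}_{n+1}/\mathbf{d}_{n+1}$ description of $\widehat{\pi}_{n+1,n}$, and then induct using Definition~\ref{d4.2}(2). The only cosmetic difference is that in step~(ii) you invoke Proposition~\ref{t4.1.3} to commute $\mathbf{s}_{n+1}$ and $\mathbf{d}_{n+1}$ past the two $\nabla$'s (and then cancel $\mathbf{d}_{n+1}\circ\mathbf{s}_{n+1}$), whereas the paper inserts $\mathbf{s}_{n+1}\circ\mathbf{d}_{n+1}$ in the middle via Proposition~\ref{t4.1.2} and then reads off the two factors from the definition of $\widehat{\pi}_{n+1,n}$; these manoeuvres are interchangeable.
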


\begin{proof}
In case of $n=0$, there is nothing to prove. It is easy to see that
if$\ \nabla_{\pi^{\prime}(x)}$ is a $n$-tangential$_{2}$ over\textit{\ }the
bundle $\pi:E\rightarrow M$ \textit{at} $\pi^{\prime}(x)\in E$ and $\nabla
_{x}$ is a $n$-tangential$_{2}$ over\textit{\ }the bundle $\pi^{\prime
}:P\rightarrow E$ \textit{at} $x\in E$, then the composition $\nabla_{x}%
\circ\nabla_{\pi^{\prime}(x)}$ is an $n$-pseudoconnection \textit{over }the
bundle $\pi:E\rightarrow M$ \textit{at} $x\in P$. If $\nabla_{\pi^{\prime}%
(x)}$ is a $\left(  n+1\right)  $-tangential$_{2}$ over\textit{\ }the bundle
$\pi:E\rightarrow M$ \textit{at} $\pi^{\prime}(x)\in E$ and $\nabla_{x}$ is a
$\left(  n+1\right)  $-tangential$_{2}$ over\textit{\ }the bundle $\pi
^{\prime}:P\rightarrow E$ \textit{at} $x\in P$, then we have
\begin{align*}
\pi_{n+1,n}(\nabla_{x}\circ\nabla_{\pi^{\prime}(x)})  &  =\mathbf{d}%
_{n+1}\circ\nabla_{x}\circ\nabla_{\pi^{\prime}(x)}\circ\mathbf{s}_{n+1}\\
&  =\mathbf{d}_{n+1}\circ\nabla_{x}\circ\mathbf{s}_{n+1}\circ\mathbf{d}%
_{n+1}\circ\nabla_{\pi^{\prime}(x)}\circ\mathbf{s}_{n+1}\text{ \ \ \ }\\
&  \text{[By Proposition \ref{t4.1.2}]}\\
&  =\pi_{n+1,n}(\nabla_{x})\circ\pi_{n+1,n}(\nabla_{\pi^{\prime}(x)})
\end{align*}
Therefore we have
\begin{align*}
&  \nabla_{x}\circ\nabla_{\pi^{\prime}(x)}(\left(  \mathrm{id}_{M}%
\otimes\mathcal{W}_{\left(  d_{1},...,d_{n},d_{n+1}\right)  \in D^{n+1}%
\mapsto\left(  d_{1},...,d_{n}d_{n+1}\right)  \in D^{n}}\right)  \left(
\gamma\right)  )\\
&  =\nabla_{x}\left(  \nabla_{\pi^{\prime}(x)}\left(  \left(  \mathrm{id}%
_{M}\otimes\mathcal{W}_{\left(  d_{1},...,d_{n},d_{n+1}\right)  \in
D^{n+1}\mapsto\left(  d_{1},...,d_{n}d_{n+1}\right)  \in D^{n}}\right)
\left(  \gamma\right)  \right)  \right) \\
&  =\nabla_{x}\left(  \left(  \mathrm{id}_{E}\otimes\mathcal{W}_{\left(
d_{1},...,d_{n},d_{n+1}\right)  \in D^{n+1}\mapsto\left(  d_{1},...,d_{n}%
d_{n+1}\right)  \in D^{n}}\right)  \left(  \pi_{n+1,n}(\nabla_{\pi^{\prime
}(x)})(\gamma)\right)  \right) \\
&  =\left(  \mathrm{id}_{P}\otimes\mathcal{W}_{\left(  d_{1},...,d_{n}%
,d_{n+1}\right)  \in D^{n+1}\mapsto\left(  d_{1},...,d_{n}d_{n+1}\right)  \in
D^{n}}\right)  \left(  \pi_{n+1,n}\left(  \nabla_{x}\right)  \left(
\pi_{n+1,n}(\nabla_{\pi^{\prime}(x)})(\gamma)\right)  \right) \\
&  =\left(  \mathrm{id}_{P}\otimes\mathcal{W}_{\left(  d_{1},...,d_{n}%
,d_{n+1}\right)  \in D^{n+1}\mapsto\left(  d_{1},...,d_{n}d_{n+1}\right)  \in
D^{n}}\right)  \left(  \pi_{n+1,n}(\nabla_{x}\circ\nabla_{\pi^{\prime}%
(x)})\right)
\end{align*}
\bigskip Thus we can prove by induction on $n$ that if $\nabla_{\pi^{\prime
}(x)}$ is a $n$-tangential$_{2}$ over\textit{\ }the bundle $\pi:E\rightarrow
M$ \textit{at} $\pi^{\prime}(x)\in E$ and $\nabla_{x}$ is a $n$%
-tangential$_{2}$ $over$\textit{\ }the bundle $\pi^{\prime}:P\rightarrow E$
\textit{at} $x\in E$, then the composition $\nabla_{x}\circ\nabla_{\pi
^{\prime}(x)}$ is a $n$-tangential$_{2}$ \textit{over }the bundle $\pi\circ
\pi^{\prime}:P\rightarrow M$ \textit{at} $x\in E$.
\end{proof}

\begin{theorem}
\label{t4.1.8}Let $\nabla$ be a $D^{n}$-tangential \textit{over }the bundle
$\pi:E\rightarrow M$ \textit{at} $x\in E$. Let $\mathbb{D}$ and $\mathbb{D}%
^{\prime}$ be simplicial infinitesimal spaces of dimension less than or equal
to $n$. Let $\chi$ be a monomial mapping from $\mathbb{D}$ to $\mathbb{D}%
^{\prime}$. Let $\gamma\in\mathbf{T}_{x}^{\mathbb{D}^{\prime}}(M)$. Then we
have
\[
\nabla_{\mathbb{D}}(\left(  \mathrm{id}_{M}\otimes\mathcal{W}_{\chi}\right)
\left(  \gamma\right)  )=\left(  \mathrm{id}_{E}\otimes\mathcal{W}_{\chi
}\right)  \left(  \nabla_{\mathbb{D}^{\prime}}(\gamma)\right)
\]

\end{theorem}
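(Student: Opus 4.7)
The plan is to reduce the identity, via the standard quasi-colimit representations of $\mathbb{D}$ and of $\mathbb{D}'$, to the basic-object case supplied by Proposition~\ref{t4.1.4} and the defining axioms of a $D^n$-tangential. The key fact is that microlinearity of $E$ turns the standard quasi-colimit representation of $\mathbb{D}$ into a transversal limit after applying $E \otimes -$ (by Proposition~\ref{t2.3.2.7}), so that equality in $E \otimes \mathcal{W}_{\mathbb{D}}$ may be tested face by face against the basic inclusions $i_l : D^{k_l} \to \mathbb{D}$.

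The first reduction is immediate. By the very construction of $\nabla_{\mathbb{D}}$ from the proof of Theorem~\ref{t4.1.5}, we have the face-compatibility
\[
(\mathrm{id}_E \otimes \mathcal{W}_{i_l}) \circ \nabla_{\mathbb{D}} = \nabla_{D^{k_l}} \circ (\mathrm{id}_M \otimes \mathcal{W}_{i_l}),
\]
and (contravariant) functoriality of $\mathcal{W}$ then transforms the target equation, after composition with $\mathrm{id}_E \otimes \mathcal{W}_{i_l}$, into the same identity with $\mathbb{D}$ replaced by $D^{k_l}$ and $\chi$ replaced by the monomial mapping $\chi \circ i_l : D^{k_l} \to \mathbb{D}'$. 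Hence one may assume $\mathbb{D} = D^k$ with $k \leq n$. For the reduced case, I would expand $\gamma \in (M \otimes \mathcal{W}_{\mathbb{D}'})_{\pi(x)}$ through the standard quasi-colimit of $\mathbb{D}'$ into its compatible basic components $\gamma_m = (\mathrm{id}_M \otimes \mathcal{W}_{j_m})(\gamma)$ on the faces $j_m : D^{k'_m} \to \mathbb{D}'$; by construction $\nabla_{\mathbb{D}'}(\gamma)$ is then the unique element whose $j_m$-component is $\nabla_{D^{k'_m}}(\gamma_m)$. Both sides of the desired equation expand componentwise, and the resulting matching invokes Proposition~\ref{t4.1.4} for grouping-and-permuting monomial maps between basic objects, together with the symmetry and scalar-action axioms of Definition~\ref{d4.1} and the $\mathbb{R}$-linearity of $\nabla_{D^k}$ noted after Definition~\ref{d3.1} (via Proposition~10 of \S 1.2 of \cite{la}).

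The main obstacle is precisely this final matching: a monomial mapping $\chi : D^k \to \mathbb{D}'$ need not factor through any single basic face of $\mathbb{D}'$---the simplest illustration being the ``diagonal'' $d \mapsto (d, d) : D \to D(2)$, whose image lies in neither coordinate axis of $D(2)$. Consequently the second reduction is genuinely more delicate than the first, and the quasi-colimit universal property alone is insufficient; one needs the full force of the $D^n$-tangential axioms, so that the $\mathbb{R}$-linearity of $\nabla_{D^k}$ can be invoked to split contributions from the several basic faces of $\mathbb{D}'$ across which the image of $\chi$ is spread.
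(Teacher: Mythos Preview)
Your first reduction---testing on the faces $i_l:D^{k_l}\to\mathbb{D}$ of the standard quasi-colimit of $\mathbb{D}$ to reduce to $\mathbb{D}=D^k$---is exactly the right move, and it is the same one the paper makes implicitly. Your diagnosis of the second step is also correct: a monomial map $\chi:D^k\to\mathbb{D}'$ typically does not factor through a single basic face of $\mathbb{D}'$, so one cannot simply pull $\chi$ through the quasi-colimit of $\mathbb{D}'$ and invoke Proposition~\ref{t4.1.4} directly. Where your proposal falls short is in the remedy. You write that ``$\mathbb{R}$-linearity of $\nabla_{D^k}$ can be invoked to split contributions from the several basic faces of $\mathbb{D}'$,'' but this does not say \emph{how}; for a map like $(d_1,d_2,d_3)\mapsto(d_1d_2,d_1d_3,d_2d_3)$ into $D(3)$, the image is not a sum of face contributions in any obvious sense, and linearity alone does not resolve it.

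The paper closes this gap with a concrete organizing idea that your plan lacks: it \emph{factors the monomial map} $\chi$ as a composite of elementary monomial maps, each of a very special form, and then handles each factor separately. The two elementary types are (i) ``diagonal insertions'' such as $(d_1,\dots,d_k)\mapsto(d_1,d_1,d_2,\dots,d_k)$, landing in a space of the form $D(2)\times D^{k-1}$, for which the needed identity reduces precisely to the additivity $\nabla(\gamma_1+\gamma_2)=\nabla(\gamma_1)+\nabla(\gamma_2)$; and (ii) ``multiplications'' that take several coordinates to their product, which are handled by Proposition~\ref{t4.1.4} once one has passed to a suitable intermediate simplicial object whose standard quasi-colimit is made up of copies of $D^k$. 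In the paper's illustration, $\chi:D^3\to D(3)$ is written as $\chi_2\circ\chi_1^3\circ\chi_1^2\circ\chi_1^1$ through the intermediate object $D\{6;(1,2),(3,4),(5,6)\}$; the three $\chi_1^j$ are diagonal insertions (treated by linearity), and $\chi_2$ is a pure multiplication map whose compatibility is then checked face by face on the quasi-colimit of the intermediate object, reducing each face to a composite of a projection, a single multiplication, and an injection---exactly the situation covered by Propositions~\ref{t4.1.3} and~\ref{t4.1.4}. This factorization step is the missing idea in your proposal; once you supply it, the ingredients you list (Proposition~\ref{t4.1.4}, symmetry, linearity) are indeed the ones that finish the argument.
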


\begin{remark}
The reader should note that the above far-flung generalization of Proposition
\ref{t4.1.4} subsumes Proposition \ref{t4.1.3}.
\end{remark}

\begin{proof}
In place of giving a general proof with formidable notation, we satisfy
ourselves with an illustration. Here we deal only with the case that
$\mathbb{D}=D^{3}$, $\mathbb{D}^{\prime}=D(3)$ and $\chi$ is
\[
\chi(d_{1},d_{2},d_{3})=(d_{1}d_{2},d_{1}d_{3},d_{2}d_{3})
\]
for any $(d_{1},d_{2},d_{3})\in D^{3}$. We assume that $n\geq3$. We note first
that the monomial mapping $\chi:D^{3}\rightarrow D(3)$ is the composition of
two monomial mappings
\begin{align*}
\chi_{1}  &  :D^{3}\rightarrow D\left\{  6;(1,2),(3,4),(5,6)\right\} \\
\chi_{2}  &  :D\left\{  6;(1,2),(3,4),(5,6)\right\}  \rightarrow D(3)
\end{align*}
with
\[
\chi_{1}(d_{1},d_{2},d_{3})=(d_{1},d_{1},d_{2},d_{2},d_{3},d_{3})
\]
for any $(d_{1},d_{2},d_{3})\in D^{3}$ and
\[
\chi_{2}(d_{1},d_{2},d_{3},d_{4},d_{5},d_{6})=(d_{1}d_{3},d_{2}d_{5}%
,d_{4}d_{6})
\]
for any $(d_{1},d_{2},d_{3},d_{4},d_{5},d_{6})\in D\left\{
6;(1,2),(3,4),(5,6)\right\}  $, while the former monomial mapping $\chi
_{1}:D^{3}\rightarrow D\left\{  6;(1,2),(3,4),(5,6)\right\}  $ is in turn the
composition of three monomial mappings
\begin{align*}
\chi_{1}^{1}  &  :D^{3}\rightarrow D\left\{  4;(1,2)\right\} \\
\chi_{1}^{2}  &  :D\left\{  4;(1,2)\right\}  \rightarrow D\left\{
5;(1,2),(3,4)\right\} \\
\chi_{1}^{3}  &  :D\left\{  5;(1,2),(3,4)\right\}  \rightarrow D\left\{
6;(1,2),(3,4),(5,6)\right\}
\end{align*}
with
\[
\chi_{1}^{1}(d_{1},d_{2},d_{3})=(d_{1},d_{1},d_{2},d_{3})
\]
for any $(d_{1},d_{2},d_{3})\in D^{3}$,
\[
\chi_{1}^{2}(d_{1},d_{2},d_{3},d_{4})=(d_{1},d_{2},d_{3},d_{3},d_{4})
\]
for any $(d_{1},d_{2},d_{3},d_{4})\in D\left\{  4;(1,2)\right\}  $ and
\[
\chi_{1}^{3}(d_{1},d_{2},d_{3},d_{4},d_{5})=(d_{1},d_{2},d_{3},d_{4}%
,d_{5},d_{5})
\]
for any $(d_{1},d_{2},d_{3},d_{4},d_{5})\in D\left\{  5;(1,2),(3,4)\right\}
$. Therefore it suffices to prove that
\begin{equation}
\nabla\left(  \left(  \mathrm{id}_{M}\otimes\mathcal{W}_{\chi_{1}^{1}}\right)
\left(  \gamma^{\prime}\right)  \right)  =\left(  \mathrm{id}_{E}%
\otimes\mathcal{W}_{\chi_{1}^{1}}\right)  \left(  \nabla_{D\left\{
4;(1,2)\right\}  }(\gamma^{\prime})\right)  \label{4.1.8.1}%
\end{equation}
for any $\gamma^{\prime}\in\left(  M\otimes\mathcal{W}_{D\left\{
4;(1,2)\right\}  }\right)  _{\pi\left(  x\right)  }$, that
\begin{equation}
\nabla_{D\left\{  4;(1,2)\right\}  }\left(  \left(  \mathrm{id}_{M}%
\otimes\mathcal{W}_{\chi_{1}^{2}}\right)  \left(  \gamma^{\prime\prime
}\right)  \right)  =\left(  \mathrm{id}_{E}\otimes\mathcal{W}_{\chi_{1}^{2}%
}\right)  \left(  \nabla_{D\left\{  5;(1,2),(3,4)\right\}  }(\gamma
^{\prime\prime})\right)  \label{4.1.8.2}%
\end{equation}
for any $\gamma^{\prime\prime}\in\left(  M\otimes\mathcal{W}_{D\left\{
5;(1,2),(3,4)\right\}  }\right)  _{\pi\left(  x\right)  }$, that
\begin{equation}
\nabla_{D\left\{  5;(1,2),(3,4)\right\}  }\left(  \left(  \mathrm{id}%
_{M}\otimes\mathcal{W}_{\chi_{1}^{3}}\right)  \left(  \gamma^{\prime
\prime\prime}\right)  \right)  =\left(  \mathrm{id}_{E}\otimes\mathcal{W}%
_{\chi_{1}^{3}}\right)  \left(  \nabla_{D\left\{  6;(1,2),(3,4),(5,6)\right\}
}(\gamma^{\prime\prime\prime})\right)  \label{4.1.8.3}%
\end{equation}
for any $\gamma^{\prime\prime\prime}\in\left(  M\otimes\mathcal{W}_{D\left\{
6;(1,2),(3,4),(5,6)\right\}  }\right)  _{\pi\left(  x\right)  }$, and that
\begin{equation}
\nabla_{D\left\{  6;(1,2),(3,4),(5,6)\right\}  }(\left(  \mathrm{id}%
_{M}\otimes\mathcal{W}_{\chi_{2}}\right)  \left(  \gamma^{\prime\prime
\prime\prime}\right)  )=\left(  \mathrm{id}_{E}\otimes\mathcal{W}_{\chi_{2}%
}\right)  \left(  \nabla_{D(3)}(\gamma^{\prime\prime\prime\prime})\right)
\label{4.1.8.4}%
\end{equation}
for any $\gamma^{\prime\prime\prime\prime}\in\left(  M\otimes\mathcal{W}%
_{D(3)}\right)  _{\pi\left(  x\right)  }\mathbf{T}_{x}^{D(3)}(M)$. Since
$D\left\{  4;(1,2)\right\}  =D(2)\times D^{2}$, it is easy to see that
\[
\nabla\left(  \left(  \mathrm{id}_{M}\otimes\mathcal{W}_{\chi_{1}^{1}}\right)
\left(  \gamma^{\prime}\right)  \right)  =\nabla(\gamma_{1}^{\prime}%
\underset{1}{+}\gamma_{2}^{\prime})=\nabla(\gamma_{1}^{\prime})+\nabla
(\gamma_{2}^{\prime})
\]
where $\gamma_{1}^{\prime}=\gamma^{\prime}\circ(i_{1}\times\mathrm{id}_{D^{2}%
})$ and $\gamma_{2}^{\prime}=\gamma^{\prime}\circ(i_{2}\times\mathrm{id}%
_{D^{2}})$ with $i_{1}(d)=(d,0)\in D(2)$ and $i_{2}(d)=(0,d)\in D(2)$ for any
$d\in D $. On the other hand, we have
\[
\left(  \mathrm{id}_{E}\otimes\mathcal{W}_{\chi_{1}^{1}}\right)  \left(
\nabla_{D(4;(1,2))}(\gamma^{\prime})\right)  =\left(  \mathrm{id}_{E}%
\otimes\mathcal{W}_{\chi_{1}^{1}}\right)  \left(  \mathbf{l}_{(\nabla
(\gamma_{1}^{\prime}),\nabla(\gamma_{2}^{\prime}))}\right)  =\nabla(\gamma
_{1}^{\prime})+\nabla(\gamma_{2}^{\prime})
\]
where $\mathbf{l}_{(\nabla(\gamma_{1}^{\prime}),\nabla(\gamma_{2}^{\prime}))}$
is the unique element of $E\otimes\mathcal{W}_{D(2)\times D^{2}}$ with
\[
\left(  \mathrm{id}_{E}\otimes\mathcal{W}_{i_{1}\times\mathrm{id}_{D^{2}}%
}\right)  \left(  \mathbf{l}_{(\nabla(\gamma_{1}^{\prime}),\nabla(\gamma
_{2}^{\prime}))}\right)  =\nabla(\gamma_{1}^{\prime})
\]
and
\[
\left(  \mathrm{id}_{E}\otimes\mathcal{W}_{i_{2}\times\mathrm{id}_{D^{2}}%
}\right)  \left(  \mathbf{l}_{(\nabla(\gamma_{1}^{\prime}),\nabla(\gamma
_{2}^{\prime}))}\right)  =\nabla(\gamma_{2}^{\prime})
\]
Thus we have established (\ref{4.1.8.1}). By the same token, we can establish
(\ref{4.1.8.2}) and (\ref{4.1.8.3}). In order to prove (\ref{4.1.8.4}), it
suffices to note that
\begin{align*}
&  \left(  \mathrm{id}_{E}\otimes\mathcal{W}_{i_{135}}\right)  \left(
\nabla_{D\left\{  6;(1,2),(3,4),(5,6)\right\}  }(\left(  \mathrm{id}%
_{M}\otimes\mathcal{W}_{\chi_{2}}\right)  \left(  \gamma^{\prime\prime
\prime\prime}\right)  )\right) \\
&  =\left(  \mathrm{id}_{E}\otimes\mathcal{W}_{\chi_{2}\circ i_{135}}\right)
\left(  \nabla_{D(3)}(\gamma^{\prime\prime\prime\prime})\right)
\end{align*}
together with the seven similar identities obtained from the above by
replacing $i_{135}$ by seven other $i_{jkl}:D^{3}\rightarrow D\left\{
6;(1,2),(3,4),(5,6)\right\}  $ in the standard quasi-colimit representation of
$D\left\{  6;(1,2),(3,4),(5,6)\right\}  $, where $i_{jkl}:D^{3}\rightarrow
D\left\{  6;(1,2),(3,4),(5,6)\right\}  $ ($1\leq j<k<l\leq6$) is a mapping
with $i_{jkl}(d_{1},d_{2},d_{3})=(...,\underset{j}{d_{1}},...,\underset
{k}{d_{2}},...,\underset{l}{d_{3}},...)$ \ ($d_{1}$, $d_{2}$ and $d_{3}$ are
inserted at the $j$-th, $k$-th and $l$-th positions respectively, while the
other components are fixed at $0$). Its proof goes as follows. Since
\begin{align*}
&  \left(  \mathrm{id}_{E}\otimes\mathcal{W}_{i_{135}}\right)  \left(
\nabla_{D\left\{  6;(1,2),(3,4),(5,6)\right\}  }(\left(  \mathrm{id}%
_{M}\otimes\mathcal{W}_{\chi_{2}}\right)  \left(  \gamma^{\prime\prime
\prime\prime}\right)  )\right) \\
&  =\nabla(\left(  \mathrm{id}_{M}\otimes\mathcal{W}_{\chi_{2}\circ i_{135}%
}\right)  \left(  \gamma^{\prime\prime\prime\prime}\right)  )\text{,}%
\end{align*}
it suffices to show that
\[
\nabla(\left(  \mathrm{id}_{M}\otimes\mathcal{W}_{\chi_{2}\circ i_{135}%
}\right)  \left(  \gamma^{\prime\prime\prime\prime}\right)  )=\left(
\mathrm{id}_{E}\otimes\mathcal{W}_{\chi_{2}\circ i_{135}}\right)
\nabla_{D(3)}(\gamma^{\prime\prime\prime\prime})
\]
However the last identity follows at once by simply observing that the mapping
$\chi_{2}\circ i_{135}:D^{3}\rightarrow D(3)$ is the mapping
\[
(d_{1},d_{2},d_{3})\in D^{3}\longmapsto(d_{1}d_{2},0,0)\in D(3)\text{,}%
\]
which is the successive composition of the following three mappings:
\begin{align*}
(d_{1},d_{2},d_{3})  &  \in D^{3}\longmapsto(d_{1},d_{2})\in D^{2}\\
(d_{1},d_{2})  &  \in D^{2}\longmapsto d_{1}d_{2}\in D\\
d  &  \in D\longmapsto(d,0,0)\in D(3)\text{.}%
\end{align*}

\end{proof}

\begin{corollary}
\label{2.12}Let $\nabla$ be a $D^{n}$-tangential \textit{over }the bundle
$\pi:E\rightarrow M$ \textit{at} $x\in E$. Let $\mathbb{D}$ be a simplicially
infinitesimal spaces of dimension less than or equal to $n$. Any nonstandard
quasi-colimit representation of $\mathbb{D}$,$\mathcal{\ }$if any mapping into
$\mathbb{D}$ in the representation is monomial, induces the same mapping as
$\nabla_{\mathbb{D}}$ (induced by the standard quasi-colimit representation of
$\mathbb{D}$) by the method in the proof of Theorem \ref{t4.1.5}.
\end{corollary}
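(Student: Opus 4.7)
The plan is to reduce the corollary to a direct application of Theorem \ref{t4.1.8}, exploiting the microlinearity of $E$ to interpret a quasi-colimit representation as a limit description of $E\otimes\mathcal{W}_{\mathbb{D}}$. Fix a nonstandard quasi-colimit representation of $\mathbb{D}$ consisting of a finite diagram of simplicial infinitesimal objects $\mathbb{D}_{i}$ together with cone morphisms $\chi_{i}\colon\mathbb{D}_{i}\rightarrow\mathbb{D}$, all of which are monomial mappings by hypothesis. Dually this gives a limit diagram in $\mathbf{W}$, and microlinearity of $E$ (together with Proposition \ref{t2.3.2.7}) translates this into a limit diagram in $\mathbf{FS}$ after applying $E\otimes\mathcal{W}_{(\cdot)}$. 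Consequently, any element of $E\otimes\mathcal{W}_{\mathbb{D}}$ is uniquely determined by its images under the mappings $\mathrm{id}_{E}\otimes\mathcal{W}_{\chi_{i}}$, provided those images form a compatible cone over the diagram in $\mathbf{FS}$.

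The first step is to identify what the nonstandard construction $\nabla_{\mathbb{D}}^{\text{ns}}(\gamma)$ is forced to be. Imitating the procedure in the proof of Theorem \ref{t4.1.5}, one defines $\nabla_{\mathbb{D}}^{\text{ns}}(\gamma)$ as the unique element of $\left(E\otimes\mathcal{W}_{\mathbb{D}}\right)_{x}$ whose image under each $\mathrm{id}_{E}\otimes\mathcal{W}_{\chi_{i}}$ is
\[
\nabla_{\mathbb{D}_{i}}\!\left((\mathrm{id}_{M}\otimes\mathcal{W}_{\chi_{i}})(\gamma)\right),
\]
where $\nabla_{\mathbb{D}_{i}}$ was produced from the standard representation of $\mathbb{D}_{i}$ by Theorem \ref{t4.1.5}. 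The second step is the decisive one: apply Theorem \ref{t4.1.8} to the monomial mapping $\chi_{i}$ with the $D^{n}$-tangential $\nabla$. This yields exactly
\[
(\mathrm{id}_{E}\otimes\mathcal{W}_{\chi_{i}})\left(\nabla_{\mathbb{D}}(\gamma)\right)=\nabla_{\mathbb{D}_{i}}\!\left((\mathrm{id}_{M}\otimes\mathcal{W}_{\chi_{i}})(\gamma)\right),
\]
so that $\nabla_{\mathbb{D}}(\gamma)$ itself meets the defining characterization of $\nabla_{\mathbb{D}}^{\text{ns}}(\gamma)$. Uniqueness from the limit then forces $\nabla_{\mathbb{D}}^{\text{ns}}(\gamma)=\nabla_{\mathbb{D}}(\gamma)$.

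The step I expect to require the most care is the verification that the family $\{\nabla_{\mathbb{D}_{i}}((\mathrm{id}_{M}\otimes\mathcal{W}_{\chi_{i}})(\gamma))\}_{i}$ is genuinely compatible along the internal arrows of the diagram, so that $\nabla_{\mathbb{D}}^{\text{ns}}(\gamma)$ exists in the first place. For any internal arrow $\varphi\colon\mathbb{D}_{i}\rightarrow\mathbb{D}_{j}$ with $\chi_{j}\circ\varphi=\chi_{i}$, one needs $(\mathrm{id}_{E}\otimes\mathcal{W}_{\varphi})\nabla_{\mathbb{D}_{j}}((\mathrm{id}_{M}\otimes\mathcal{W}_{\chi_{j}})(\gamma))=\nabla_{\mathbb{D}_{i}}((\mathrm{id}_{M}\otimes\mathcal{W}_{\chi_{i}})(\gamma))$; this is again an instance of Theorem \ref{t4.1.8}, since in a quasi-colimit representation the internal arrows are themselves monomial (or factor through monomials, being natural injections between basic infinitesimal pieces), and in any case the compatibility follows by contracting through $\chi_{i}=\chi_{j}\circ\varphi$ and using the naturality of $\nabla_{(\cdot)}$ already established in Theorem \ref{t4.1.8}. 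Once compatibility is in hand, the uniqueness argument closes the proof without any further computation.
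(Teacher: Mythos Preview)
Your argument is correct and follows essentially the same route as the paper: apply Theorem~\ref{t4.1.8} to each cone morphism $\chi_i$ to see that $\nabla_{\mathbb{D}}(\gamma)$ already satisfies the defining equations for the nonstandard construction, and conclude by uniqueness in the limit. The paper's proof is the one-line version of this, written only for the case where the vertices are basic objects $D^{m}$ (so that $\nabla_{\mathbb{D}_i}=\nabla_{D^{m}}=\widehat{\pi}_{n,m}(\nabla)$), which is what ``the method in the proof of Theorem~\ref{t4.1.5}'' actually uses.

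One remark: your separate verification of compatibility along the internal arrows is unnecessary. Once Theorem~\ref{t4.1.8} gives $(\mathrm{id}_{E}\otimes\mathcal{W}_{\chi_i})(\nabla_{\mathbb{D}}(\gamma))=\nabla_{\mathbb{D}_i}((\mathrm{id}_{M}\otimes\mathcal{W}_{\chi_i})(\gamma))$ for each $i$, the family on the right is automatically a compatible cone, being the image of the single element $\nabla_{\mathbb{D}}(\gamma)$ under the limit-cone maps. So existence of $\nabla_{\mathbb{D}}^{\mathrm{ns}}(\gamma)$ and its equality with $\nabla_{\mathbb{D}}(\gamma)$ come in one stroke; the paper exploits this shortcut.
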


\begin{proof}
It suffices to note that
\[
\nabla_{D^{m}}(\left(  \mathrm{id}_{M}\otimes\mathcal{W}_{\chi}\right)
\left(  \gamma\right)  )=\left(  \mathrm{id}_{E}\otimes\mathcal{W}_{\chi
}\right)  \left(  \nabla_{\mathbb{D}}(\gamma)\right)
\]
for any mapping $\chi:D^{m}\rightarrow\mathbb{D}$ in the given nonstandard
quasi-colimit representation of $\mathbb{D}$, which follows directly from the
above theorem.
\end{proof}

\section{\label{s5}The Third Approach to Jets}

\begin{definition}
\label{d5.1.1}Let $n$ be a natural number. A $D_{n}$-\textit{pseudotangential}
\textit{over }the bundle $\pi:E\rightarrow M$ \textit{at} $x\in E$ is a
mapping
\[
\nabla_{x}:\left(  M\otimes\mathcal{W}_{D_{n}}\right)  _{\pi(x)}%
\rightarrow\left(  E\otimes\mathcal{W}_{D_{n}}\right)  _{x}%
\]
abiding by the following two conditions:

\begin{enumerate}
\item We have
\[
\left(  \pi\otimes\mathrm{id}_{\mathcal{W}_{D_{n}}}\right)  \left(  \nabla
_{x}(\gamma)\right)  =\gamma
\]
for any $\gamma\in\left(  M\otimes\mathcal{W}_{D_{n}}\right)  _{\pi(x)}$.

\item For any $\gamma\in\left(  E\otimes\mathcal{W}_{D_{n}}\right)  _{x}$ and
any $\alpha\in\mathbb{R}$, we have
\[
\nabla_{x}(\alpha\gamma)=\alpha\nabla_{x}(\gamma)
\]

\item The diagram
\[%
\begin{array}
[c]{ccc}%
\left(  M\otimes\mathcal{W}_{D_{n}}\right)  _{\pi(x)} & \underrightarrow
{\mathrm{id}_{M}\otimes\mathcal{W}_{\left(  d_{1},d_{2}\right)  \in
D_{n}\times D_{m}\mapsto d_{1}d_{2}\in D_{n}}} & \left(  M\otimes
\mathcal{W}_{D_{n}}\right)  _{\pi(x)}\otimes\mathcal{W}_{D_{m}}\\%
\begin{array}
[c]{cc}%
\nabla_{x} & \downarrow
\end{array}
&  &
\begin{array}
[c]{cc}%
\downarrow & \nabla_{x}\otimes\mathrm{id}_{\mathcal{W}_{D_{m}}}%
\end{array}
\\
\left(  E\otimes\mathcal{W}_{D_{n}}\right)  _{x} & \underrightarrow
{\mathrm{id}_{E}\otimes\mathcal{W}_{\left(  d_{1},d_{2}\right)  \in
D_{n}\times D_{m}\mapsto d_{1}d_{2}\in D_{n}}} & \left(  E\otimes
\mathcal{W}_{D_{n}}\right)  _{x}\otimes\mathcal{W}_{D_{m}}%
\end{array}
\]
commutes, where $m$\ is an arbitrary natural number.
\end{enumerate}
\end{definition}

\begin{remark}
The third condition in the above definition claims what is called
infinitesimal linearity.
\end{remark}

\begin{notation}
We denote by $\widehat{\mathbb{J}}_{x}^{D_{n}}(\pi)$ the totality of $D_{n}%
$-pseudotangentials \textit{over }the bundle $\pi:E\rightarrow M$ \textit{at}
$x\in E$. We denote by $\widehat{\mathbb{J}}^{D_{n}}(\pi)$ the set-theoretic
union of $\widehat{\mathbb{J}}_{x}^{D_{n}}(\pi)$'s for all $x\in E$.
\end{notation}

It is easy to see that

\begin{lemma}
\label{t5.1.1}The following diagram is an equalizer in the category of Weil
algebras:
\begin{align*}
&  \mathcal{W}_{D_{n}}\underrightarrow{\,\mathcal{W}_{\left(  d_{1}%
,d_{2}\right)  \in D_{n+1}\times D_{n}\mapsto d_{1}d_{2}\in D_{n}}%
}\,\mathcal{W}_{D_{n+1}\times D_{n}}\\
&
\begin{array}
[c]{c}%
\underrightarrow{\mathcal{W}_{\left(  d_{1},d_{2},d_{3}\right)  \in
D_{n+1}\times D_{n+1}\times D_{n}\mapsto\left(  d_{1}d_{2},d_{3}\right)  \in
D_{n+1}\times D_{n}}}\\
\overrightarrow{\mathcal{W}_{\left(  d_{1},d_{2},d_{3}\right)  \in
D_{n+1}\times D_{n+1}\times D_{n}\mapsto\left(  d_{1},d_{2}d_{3}\right)  \in
D_{n+1}\times D_{n}}}%
\end{array}
\,\mathcal{W}_{D_{n+1}\times D_{n+1}\times D_{n}}%
\end{align*}

\end{lemma}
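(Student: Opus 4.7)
The plan is to verify the equalizer assertion by a direct computation after passing to polynomial presentations of the Weil algebras involved. Write $\mathcal{W}_{D_n}=\mathbb{R}[T]/(T^{n+1})$, $\mathcal{W}_{D_{n+1}\times D_n}=\mathbb{R}[U,V]/(U^{n+2},V^{n+1})$ and $\mathcal{W}_{D_{n+1}\times D_{n+1}\times D_n}=\mathbb{R}[A,B,C]/(A^{n+2},B^{n+2},C^{n+1})$. Under these identifications the left-hand arrow is $T\mapsto UV$, while the two parallel right-hand arrows are
\[
\phi_{1}\colon U\mapsto AB,\ V\mapsto C \qquad\text{and}\qquad \phi_{2}\colon U\mapsto A,\ V\mapsto BC.
\]

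The first step is the easy inclusion. Since $\phi_{1}(UV)=ABC=\phi_{2}(UV)$, the two homomorphisms agree on every polynomial in $UV$, so the image of $T\mapsto UV$ lies in the equalizer. Injectivity of $T\mapsto UV$ is immediate, because $U^{k}V^{k}$ for $0\le k\le n$ are distinct monomials in the canonical basis of $\mathcal{W}_{D_{n+1}\times D_n}$.

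The main step, and the only one that requires care, is the reverse inclusion. I would write a generic element as $f=\sum_{0\le i\le n+1,\;0\le j\le n}c_{ij}U^{i}V^{j}$ and expand
\[
\phi_{1}(f)=\sum c_{ij}A^{i}B^{i}C^{j},\qquad \phi_{2}(f)=\sum c_{ij}A^{i}B^{j}C^{j}.
\]
In the monomial basis $\{A^{a}B^{b}C^{c}:0\le a,b\le n+1,\ 0\le c\le n\}$ of the target, the coefficient of $A^{a}B^{b}C^{c}$ in $\phi_{1}(f)$ equals $c_{a,c}$ when $a=b$ and vanishes otherwise, while the corresponding coefficient in $\phi_{2}(f)$ equals $c_{a,b}$ when $b=c$ and vanishes otherwise. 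Equating and specialising to $b=a$, $c\ne a$ forces $c_{a,c}=0$ whenever $a\ne c$, so $f=\sum_{k=0}^{n}c_{kk}(UV)^{k}$ lies in the image of the left-hand arrow. The only bookkeeping subtlety is that $c$ ranges only up to $n$ while $a,b$ range up to $n+1$; this asymmetry in the degree bounds is precisely what caps the equalizer at degree $n$ in $UV$, matching the relation $T^{n+1}=0$ in $\mathcal{W}_{D_n}$.
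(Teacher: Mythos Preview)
Your proof is correct. The paper does not give a proof of this lemma at all; it simply prefaces the statement with ``It is easy to see that'' and moves on. Your direct computation in the monomial bases of the three truncated polynomial rings is exactly the natural way to verify such an equalizer, and your handling of the asymmetry in the degree bounds (so that $c_{n+1,j}=0$ is forced for every $j\le n$, not merely the off-diagonal $c_{i,j}$ with $i,j\le n$) is the one place where care is needed and you have addressed it correctly.
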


\begin{proposition}
\label{t5.1.2}Let $\nabla_{x}$ be a $D_{n+1}$-pseudotangential \textit{over
}the bundle $\pi:E\rightarrow M$ \textit{at} $x\in E$ and $\gamma\in\left(
M\otimes\mathcal{W}_{D_{n}}\right)  _{\pi\left(  x\right)  }$. Then there
exists a unique $\gamma^{\prime}\in\left(  E\otimes\mathcal{W}_{D_{n}}\right)
_{x}$ such that the composition of mappings
\begin{align}
&  \left(  M\otimes\mathcal{W}_{D_{n}}\right)  _{\pi\left(  x\right)
}\underrightarrow{\,\mathrm{id}_{M}\otimes\mathcal{W}_{\left(  d_{1}%
,d_{2}\right)  \in D_{n+1}\times D_{n}\mapsto d_{1}d_{2}\in D_{n}}}\,\left(
M\otimes\mathcal{W}_{D_{n+1}}\right)  _{\pi\left(  x\right)  }\otimes
\mathcal{W}_{D_{n}}\nonumber\\
&  \underrightarrow{\,\nabla_{x}\otimes\mathrm{id}_{\mathcal{W}_{D_{n}}}%
}\,\left(  E\otimes\mathcal{W}_{D_{n+1}}\right)  _{x}\otimes\mathcal{W}%
_{D_{n}} \label{5.1.2.a}%
\end{align}
applied to $\gamma$\ results in
\begin{equation}
\left(  \mathrm{id}_{E}\otimes\mathcal{W}_{\left(  d_{1},d_{2}\right)  \in
D_{n+1}\times D_{n}\mapsto d_{1}d_{2}\in D_{n}}\right)  \left(  \gamma
^{\prime}\right)  \label{5.1.2.b}%
\end{equation}

\end{proposition}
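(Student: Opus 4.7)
The plan is to derive the proposition from Lemma~\ref{t5.1.1} together with condition (3) in Definition~\ref{d5.1.1} (the ``infinitesimal linearity'' of $\nabla_{x}$). Since $E$ is microlinear, the equalizer diagram of Weil algebras in Lemma~\ref{t5.1.1} becomes an equalizer diagram in $\mathbf{FS}$ after tensoring with $E$. Uniqueness of $\gamma^{\prime}$ is then automatic, because equalizer inclusions are monomorphisms; all that remains is to exhibit $\gamma^{\prime}$ by showing that the element produced by the composition (\ref{5.1.2.a}) lies in the equalizer subspace of $E\otimes\mathcal{W}_{D_{n+1}\times D_{n}}$.

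Write $\psi:D_{n+1}\times D_{n}\to D_{n}$ and $\phi_{1},\phi_{2}:D_{n+1}\times D_{n+1}\times D_{n}\to D_{n+1}\times D_{n}$ for the three structure maps of Lemma~\ref{t5.1.1}, so that $\phi_{1}(d_{1},d_{2},d_{3})=(d_{1}d_{2},d_{3})$ and $\phi_{2}(d_{1},d_{2},d_{3})=(d_{1},d_{2}d_{3})$. Set $\tilde{\gamma}=(\mathrm{id}_{M}\otimes\mathcal{W}_{\psi})(\gamma)$ and $\eta=(\nabla_{x}\otimes\mathrm{id}_{\mathcal{W}_{D_{n}}})(\tilde{\gamma})$. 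The key equality to verify is
\[
(\mathrm{id}_{E}\otimes\mathcal{W}_{\phi_{1}})(\eta)=(\mathrm{id}_{E}\otimes\mathcal{W}_{\phi_{2}})(\eta).
\]
I would check this in two stages. First, $\phi_{2}=\mathrm{id}_{D_{n+1}}\times\psi$ only touches the ``spectator'' $\mathcal{W}_{D_{n}}$-factor of $\mathcal{W}_{D_{n+1}\times D_{n}}=\mathcal{W}_{D_{n+1}}\otimes_{\infty}\mathcal{W}_{D_{n}}$, so $\mathrm{id}_{E}\otimes\mathcal{W}_{\phi_{2}}$ commutes with $\nabla_{x}\otimes\mathrm{id}_{\mathcal{W}_{D_{n}}}$ on purely formal grounds (the bifunctoriality of the Weil prolongation), yielding $(\mathrm{id}_{E}\otimes\mathcal{W}_{\phi_{2}})(\eta)=(\nabla_{x}\otimes\mathrm{id})((\mathrm{id}_{M}\otimes\mathcal{W}_{\phi_{2}})(\tilde{\gamma}))$. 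Second, $\phi_{1}=\mu\times\mathrm{id}_{D_{n}}$ where $\mu:D_{n+1}\times D_{n+1}\to D_{n+1}$ is multiplication; this is precisely the $m=n+1$ instance of condition (3) of Definition~\ref{d5.1.1}, which lets one move $\mathcal{W}_{\mu}$ past $\nabla_{x}$ to give $(\mathrm{id}_{E}\otimes\mathcal{W}_{\phi_{1}})(\eta)=(\nabla_{x}\otimes\mathrm{id})((\mathrm{id}_{M}\otimes\mathcal{W}_{\phi_{1}})(\tilde{\gamma}))$.

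It then suffices to observe that $\tilde{\gamma}$ itself is coequalised at the $M$-level: both $\psi\circ\phi_{1}$ and $\psi\circ\phi_{2}$ are the ternary multiplication $(d_{1},d_{2},d_{3})\mapsto d_{1}d_{2}d_{3}$, so $(\mathrm{id}_{M}\otimes\mathcal{W}_{\phi_{1}})(\tilde{\gamma})=(\mathrm{id}_{M}\otimes\mathcal{W}_{\phi_{2}})(\tilde{\gamma})$. Applying $\nabla_{x}\otimes\mathrm{id}$ to both sides transports this equality to the $E$-level, producing the desired $\gamma^{\prime}\in(E\otimes\mathcal{W}_{D_{n}})_{x}$ via the universal property of the equalizer.

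The main obstacle is purely bureaucratic: one must keep straight which tensor factor of $\mathcal{W}_{D_{n+1}}\otimes_{\infty}\mathcal{W}_{D_{n+1}}\otimes_{\infty}\mathcal{W}_{D_{n}}$ is the ``output'' direction of $\nabla_{x}$ and which are the ``parameter'' directions, and correctly identify the $\phi_{1}$-action as an instance of the infinitesimal-linearity diagram (rather than, for example, confusing it with the $\psi$-action). No new geometric input beyond microlinearity and condition (3) is needed; once the indexing is under control, the argument is a compact diagram chase using the functoriality of $\mathcal{W}$.
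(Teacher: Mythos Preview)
Your proposal is correct and follows essentially the same route as the paper's proof: reduce to the equalizer of Lemma~\ref{t5.1.1} (transported to $E$ by microlinearity), then show $\eta$ is equalized by passing $\mathcal{W}_{\phi_{2}}$ through $\nabla_{x}$ via bifunctoriality of $\otimes$, passing $\mathcal{W}_{\phi_{1}}$ through $\nabla_{x}$ via the $m=n+1$ case of condition~(3) in Definition~\ref{d5.1.1}, and finally using $\psi\circ\phi_{1}=\psi\circ\phi_{2}$ on the $M$-side. The paper presents these same three moves as a chain of equalities between compositions (\ref{5.1.2.1})$\to$(\ref{5.1.2.3})$\to$(\ref{5.1.2.4})$\to$(\ref{5.1.2.2}), but the content is identical.
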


\begin{proof}
By dint of Lemma \ref{t5.1.1}, it suffices to show that the composition of
mappings
\begin{align}
&  \left(  M\otimes\mathcal{W}_{D_{n}}\right)  _{\pi\left(  x\right)
}\underrightarrow{\,\mathrm{id}_{M}\otimes\mathcal{W}_{\left(  d_{1}%
,d_{2}\right)  \in D_{n+1}\times D_{n}\mapsto d_{1}d_{2}\in D_{n}}}\,\left(
M\otimes\mathcal{W}_{D_{n+1}}\right)  _{\pi\left(  x\right)  }\otimes
\mathcal{W}_{D_{n}}\nonumber\\
&  \underrightarrow{\,\nabla_{x}\otimes\mathrm{id}_{\mathcal{W}_{D_{n}}}%
}\,\left(  E\otimes\mathcal{W}_{D_{n+1}}\right)  _{x}\otimes\mathcal{W}%
_{D_{n}}\nonumber\\
&  \underrightarrow{\,\mathrm{id}_{E}\otimes\mathcal{W}_{\left(  d_{1}%
,d_{2},d_{3}\right)  \in D_{n+1}\times D_{n+1}\times D_{n}\mapsto\left(
d_{1},d_{2}d_{3}\right)  \in D_{n+1}\times D_{n}}}\,\nonumber\\
&  \left(  E\otimes\mathcal{W}_{D_{n+1}}\right)  _{x}\otimes\mathcal{W}%
_{D_{n+1}\times D_{n}} \label{5.1.2.1}%
\end{align}
is equal to the composition of mappings
\begin{align}
&  \left(  M\otimes\mathcal{W}_{D_{n}}\right)  _{\pi\left(  x\right)
}\underrightarrow{\,\mathrm{id}_{M}\otimes\mathcal{W}_{\left(  d_{1}%
,d_{2}\right)  \in D_{n+1}\times D_{n}\mapsto d_{1}d_{2}\in D_{n}}}\,\left(
M\otimes\mathcal{W}_{D_{n+1}}\right)  _{\pi\left(  x\right)  }\otimes
\mathcal{W}_{D_{n}}\nonumber\\
&  \underrightarrow{\,\nabla_{x}\otimes\mathrm{id}_{\mathcal{W}_{D_{n}}}%
}\,\left(  E\otimes\mathcal{W}_{D_{n+1}}\right)  _{x}\otimes\mathcal{W}%
_{D_{n}}\nonumber\\
&  \underrightarrow{\,\mathrm{id}_{E}\otimes\mathcal{W}_{\left(  d_{1}%
,d_{2},d_{3}\right)  \in D_{n+1}\times D_{n+1}\times D_{n}\mapsto\left(
d_{1}d_{2},d_{3}\right)  \in D_{n+1}\times D_{n}}}\,\nonumber\\
&  \left(  E\otimes\mathcal{W}_{D_{n+1}}\right)  _{x}\otimes\mathcal{W}%
_{D_{n+1}\times D_{n}} \label{5.1.2.2}%
\end{align}
Since $\otimes$\ is a bifunctor, the diagram
\[%
\begin{array}
[c]{ccc}%
\left(  M\otimes\mathcal{W}_{D_{n+1}}\right)  _{\pi\left(  x\right)  }%
\otimes\mathcal{W}_{D_{n}} & \rightarrow & \left(  M\otimes\mathcal{W}%
_{D_{n+1}}\right)  _{\pi\left(  x\right)  }\otimes\mathcal{W}_{D_{n+1}\times
D_{n}}\\%
\begin{array}
[c]{cc}%
\nabla_{x}\otimes\mathrm{id}_{\mathcal{W}_{D_{n}}} & \downarrow
\end{array}
&  &
\begin{array}
[c]{cc}%
\downarrow & \nabla_{x}\otimes\mathrm{id}_{\mathcal{W}_{D_{n+1}\times D_{n}}}%
\end{array}
\\
\left(  E\otimes\mathcal{W}_{D_{n+1}}\right)  _{x}\otimes\mathcal{W}_{D_{n}} &
\rightarrow & \left(  E\otimes\mathcal{W}_{D_{n+1}}\right)  _{x}%
\otimes\mathcal{W}_{D_{n+1}\times D_{n}}%
\end{array}
\]
commutes, where the upper horizontal arrow is
\[
\mathrm{id}_{M}\otimes\mathcal{W}_{\left(  d_{1},d_{2},d_{3}\right)  \in
D_{n+1}\times D_{n+1}\times D_{n}\mapsto\left(  d_{1},d_{2}d_{3}\right)  \in
D_{n+1}\times D_{n}}\text{,}%
\]
while the lower horizontal arrow is
\[
\mathrm{id}_{E}\otimes\mathcal{W}_{\left(  d_{1},d_{2},d_{3}\right)  \in
D_{n+1}\times D_{n+1}\times D_{n}\mapsto\left(  d_{1},d_{2}d_{3}\right)  \in
D_{n+1}\times D_{n}}\text{.}%
\]
Therefore the composition of mappings in (\ref{5.1.2.1}) is equal to the
composition of mappings
\begin{align}
&  \left(  M\otimes\mathcal{W}_{D_{n}}\right)  _{\pi\left(  x\right)
}\underrightarrow{\,\mathrm{id}_{M}\otimes\mathcal{W}_{\left(  d_{1}%
,d_{2}\right)  \in D_{n+1}\times D_{n}\mapsto d_{1}d_{2}\in D_{n}}}\,\left(
M\otimes\mathcal{W}_{D_{n+1}}\right)  _{\pi\left(  x\right)  }\otimes
\mathcal{W}_{D_{n}}\nonumber\\
&  \underrightarrow{\mathrm{id}_{M}\otimes\mathcal{W}_{\left(  d_{1}%
,d_{2},d_{3}\right)  \in D_{n+1}\times D_{n+1}\times D_{n}\mapsto\left(
d_{1},d_{2}d_{3}\right)  \in D_{n+1}\times D_{n}}}\,\left(  M\otimes
\mathcal{W}_{D_{n+1}}\right)  _{\pi\left(  x\right)  }\otimes\mathcal{W}%
_{D_{n+1}\times D_{n}}\nonumber\\
&  \underrightarrow{\,\nabla_{x}\otimes\mathrm{id}_{\mathcal{W}_{D_{n+1}\times
D_{n}}}}\,\left(  E\otimes\mathcal{W}_{D_{n+1}}\right)  _{x}\otimes
\mathcal{W}_{D_{n+1}\times D_{n}} \label{5.1.2.3}%
\end{align}
Since the composition of mappings
\begin{align}
&  M\otimes\mathcal{W}_{D_{n}}\underrightarrow{\,\mathrm{id}_{M}%
\otimes\mathcal{W}_{\left(  d_{1},d_{2}\right)  \in D_{n+1}\times D_{n}\mapsto
d_{1}d_{2}\in D_{n}}}\,M\otimes\mathcal{W}_{D_{n+1}\times D_{n}}\nonumber\\
&  \underrightarrow{\mathrm{id}_{M}\otimes\mathcal{W}_{\left(  d_{1}%
,d_{2},d_{3}\right)  \in D_{n+1}\times D_{n+1}\times D_{n}\mapsto\left(
d_{1},d_{2}d_{3}\right)  \in D_{n+1}\times D_{n}}}M\otimes\mathcal{W}%
_{D_{n+1}\times D_{n+1}\times D_{n}}\nonumber
\end{align}
is trivially equal to the composition of mappings
\begin{align}
&  M\otimes\mathcal{W}_{D_{n}}\underrightarrow{\mathrm{id}_{M}\otimes
\mathcal{W}_{\left(  d_{1},d_{2}\right)  \in D_{n+1}\times D_{n}\mapsto
d_{1}d_{2}\in D_{n}}}M\otimes\mathcal{W}_{D_{n+1}\times D_{n}}\nonumber\\
&  \underrightarrow{\mathrm{id}_{M}\otimes\mathcal{W}_{\left(  d_{1}%
,d_{2},d_{3}\right)  \in D_{n+1}\times D_{n+1}\times D_{n}\mapsto\left(
d_{1}d_{2},d_{3}\right)  \in D_{n+1}\times D_{n}}}\,M\otimes\mathcal{W}%
_{D_{n+1}\times D_{n+1}\times D_{n}}\text{,}\nonumber
\end{align}
the composition of mappings in (\ref{5.1.2.3}) is equal to the composition of
mappings
\begin{align}
&  \left(  M\otimes\mathcal{W}_{D_{n}}\right)  _{\pi\left(  x\right)
}\underrightarrow{\,\mathrm{id}_{M}\otimes\mathcal{W}_{\left(  d_{1}%
,d_{2}\right)  \in D_{n+1}\times D_{n}\mapsto d_{1}d_{2}\in D_{n}}}\,\left(
M\otimes\mathcal{W}_{D_{n+1}}\right)  _{\pi\left(  x\right)  }\otimes
\mathcal{W}_{D_{n}}\nonumber\\
&  \underrightarrow{\mathrm{id}_{M}\otimes\mathcal{W}_{\left(  d_{1}%
,d_{2},d_{3}\right)  \in D_{n+1}\times D_{n+1}\times D_{n}\mapsto\left(
d_{1}d_{2},d_{3}\right)  \in D_{n+1}\times D_{n}}}\,\left(  M\otimes
\mathcal{W}_{D_{n+1}}\right)  _{\pi\left(  x\right)  }\otimes\mathcal{W}%
_{D_{n+1}\times D_{n}}\nonumber\\
&  \underrightarrow{\,\nabla_{x}\otimes\mathrm{id}_{\mathcal{W}_{D_{n+1}\times
D_{n}}}}\,\left(  E\otimes\mathcal{W}_{D_{n+1}}\right)  _{x}\otimes
\mathcal{W}_{D_{n+1}\times D_{n}} \label{5.1.2.4}%
\end{align}
By dint of the third condition in Definition \ref{d5.1.1}, the diagram
\[%
\begin{array}
[c]{ccc}%
\left(  M\otimes\mathcal{W}_{D_{n+1}}\right)  _{\pi\left(  x\right)  }%
\otimes\mathcal{W}_{D_{n}} & \rightarrow & \left(  M\otimes\mathcal{W}%
_{D_{n+1}}\right)  _{\pi\left(  x\right)  }\otimes\mathcal{W}_{D_{n+1}\times
D_{n}}\\%
\begin{array}
[c]{cc}%
\nabla_{x}\otimes\mathrm{id}_{\mathcal{W}_{D_{n}}} & \downarrow
\end{array}
&  &
\begin{array}
[c]{cc}%
\downarrow & \nabla_{x}\otimes\mathrm{id}_{\mathcal{W}_{D_{n+1}\times D_{n}}}%
\end{array}
\\
\left(  E\otimes\mathcal{W}_{D_{n+1}}\right)  _{x}\otimes\mathcal{W}_{D_{n}} &
\rightarrow & \left(  E\otimes\mathcal{W}_{D_{n+1}}\right)  _{x}%
\otimes\mathcal{W}_{D_{n+1}\times D_{n}}%
\end{array}
\]
commutes, where the upper horizontal arrow is
\[
\mathrm{id}_{M}\otimes\mathcal{W}_{\left(  d_{1},d_{2},d_{3}\right)  \in
D_{n+1}\times D_{n+1}\times D_{n}\mapsto\left(  d_{1}d_{2},d_{3}\right)  \in
D_{n+1}\times D_{n}}\text{,}%
\]
and the lower horizontal arrow is
\[
\mathrm{id}_{E}\otimes\mathcal{W}_{\left(  d_{1},d_{2},d_{3}\right)  \in
D_{n+1}\times D_{n+1}\times D_{n}\mapsto\left(  d_{1}d_{2},d_{3}\right)  \in
D_{n+1}\times D_{n}}\text{.}%
\]
Therefore the composition of mappings in (\ref{5.1.2.4}) is equal to the
composition of mappings in (\ref{5.1.2.2}), which completes the proof.
\end{proof}

It is not difficult to see that

\begin{proposition}
\label{t5.1.3}Given a $D_{n+1}$-pseudotangential $\nabla_{x}$\ \textit{over
}the bundle $\pi:E\rightarrow M$ \textit{at} $x\in E$, the assignment
$\gamma\in\left(  M\otimes\mathcal{W}_{D_{n}}\right)  _{\pi\left(  x\right)
}\mapsto\gamma^{\prime}\in\left(  E\otimes\mathcal{W}_{D_{n}}\right)  _{x} $
in the above proposition, denoted by $\hat{\pi}_{n+1,n}(\nabla_{x})$, is a
$D_{n}$-pseudotangential \textit{over }the bundle $\pi:E\rightarrow M $
\textit{at} $x\in E$.
\end{proposition}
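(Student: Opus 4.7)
My plan is to verify all three clauses of Definition \ref{d5.1.1} for the assignment $\hat{\pi}_{n+1,n}(\nabla_{x}):\gamma\mapsto\gamma^{\prime}$, at each stage exploiting the uniqueness part of Proposition \ref{t5.1.2} (which rests on the equalizer in Lemma \ref{t5.1.1}). The general strategy is: to establish an identity on $\gamma^{\prime}$, I apply the canonical injection $\mathrm{id}_{E}\otimes\mathcal{W}_{(d_{1},d_{2})\in D_{n+1}\times D_{n}\mapsto d_{1}d_{2}\in D_{n}}$ to both sides, rewrite the right-hand side using the defining relation (\ref{5.1.2.b}) for $\gamma^{\prime}$ in terms of $\nabla_{x}$, and then appeal to the corresponding clause for the $D_{n+1}$-pseudotangential $\nabla_{x}$. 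Since the said injection is a monomorphism by Lemma \ref{t5.1.1} (interpreted via the microlinearity of $E$ just as in Lemma \ref{t4.1.1}), the equality of the original elements follows.

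For clause (1), I compute $(\pi\otimes\mathrm{id}_{\mathcal{W}_{D_{n}}})(\gamma^{\prime})$: its image under the injection equals, by the definition of $\gamma^{\prime}$ together with bifunctoriality of $\otimes$, the expression $(\pi\otimes\mathrm{id})\circ(\nabla_{x}\otimes\mathrm{id})$ applied to $(\mathrm{id}_{M}\otimes\mathcal{W}_{(d_{1},d_{2})\mapsto d_{1}d_{2}})(\gamma)$. By clause (1) for $\nabla_{x}$, this collapses to $(\mathrm{id}_{M}\otimes\mathcal{W}_{(d_{1},d_{2})\mapsto d_{1}d_{2}})(\gamma)$, which is the image of $\gamma$ itself under the injection. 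Monicity yields $(\pi\otimes\mathrm{id}_{\mathcal{W}_{D_{n}}})(\gamma^{\prime})=\gamma$. For clause (2), given $\alpha\in\mathbb{R}$, the map $\mathrm{id}_{M}\otimes\mathcal{W}_{(d_{1},d_{2})\mapsto d_{1}d_{2}}$ intertwines scalar multiplication by $\alpha$ on $\mathcal{W}_{D_{n}}$ with scalar multiplication by $\alpha$ on the $D_{n}$-factor of $\mathcal{W}_{D_{n+1}\times D_{n}}$, and clause (2) for $\nabla_{x}$ is precisely the analogous homogeneity statement on the $D_{n+1}$-side. Combining, $\alpha\gamma^{\prime}$ enjoys the characterizing property that marks it as $(\alpha\gamma)^{\prime}$, so uniqueness in Proposition \ref{t5.1.2} gives $\hat{\pi}_{n+1,n}(\nabla_{x})(\alpha\gamma)=\alpha\,\hat{\pi}_{n+1,n}(\nabla_{x})(\gamma)$.

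The main work is clause (3), infinitesimal linearity. Here I must show, for each natural number $m$, that the two compositions in the defining square for $\hat{\pi}_{n+1,n}(\nabla_{x})$ agree as maps into $(E\otimes\mathcal{W}_{D_{n}})_{x}\otimes\mathcal{W}_{D_{m}}$. Applying $\mathrm{id}_{E}\otimes\mathcal{W}_{(d_{1},d_{2})\mapsto d_{1}d_{2}}\otimes\mathrm{id}_{\mathcal{W}_{D_{m}}}$ once more reduces this to an identity one layer up, in $(E\otimes\mathcal{W}_{D_{n+1}\times D_{n}})_{x}\otimes\mathcal{W}_{D_{m}}$. Both sides can then be expanded by the definition of $\gamma^{\prime}$ so as to display $\nabla_{x}$ applied to some morphism on the $M$-side; using bifunctoriality of $\otimes$, the evident commutation of the Weil-algebra maps $(d_{1},d_{2})\mapsto d_{1}d_{2}$ on $D_{n+1}\times D_{n}$ with the multiplication-by-$D_{m}$ map on the $D_{n}$-factor, and clause (3) for $\nabla_{x}$ applied at level $D_{n+1}$ with the same $D_{m}$, the two compositions are seen to coincide.

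The main obstacle is precisely this bookkeeping step: one must juggle the three Weil-algebra factors $\mathcal{W}_{D_{n+1}}$, $\mathcal{W}_{D_{n}}$ and $\mathcal{W}_{D_{m}}$ and verify that the multiplication maps $(d_{1},d_{2})\mapsto d_{1}d_{2}$ associate in such a way that the infinitesimal-linearity square for $\nabla_{x}$ can be inserted into the middle. This mirrors the style of the manipulation already performed in the proof of Proposition \ref{t5.1.2}, where bifunctoriality of $\otimes$ and the trivial equality of two Weil-algebra maps obtained by reassociating the product were used in tandem. Once the agreement after applying the injection is established, the monicity provided by Lemma \ref{t5.1.1} closes the argument, so $\hat{\pi}_{n+1,n}(\nabla_{x})$ is a $D_{n}$-pseudotangential.
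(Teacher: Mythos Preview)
Your proposal is correct and follows essentially the same approach as the paper: verify each of the three clauses of Definition~\ref{d5.1.1} by pushing along the monomorphism $\mathrm{id}_{E}\otimes\mathcal{W}_{(d_{1},d_{2})\in D_{n+1}\times D_{n}\mapsto d_{1}d_{2}}$ (whose injectivity comes from Lemma~\ref{t5.1.1}), unwinding the defining relation of $\hat{\pi}_{n+1,n}(\nabla_{x})$ in terms of $\nabla_{x}$, and reducing to the corresponding clause for $\nabla_{x}$ together with bifunctoriality of $\otimes$ and elementary identities among the Weil-algebra multiplication maps. One small remark: in the paper's treatment of clause~(3) the commutation of $\nabla_{x}\otimes\mathrm{id}_{\mathcal{W}_{D_{n}}}$ with the $D_{m}$-multiplication is obtained purely by bifunctoriality (placing the $D_{m}$-factor on the $D_{n}$-side), so clause~(3) for $\nabla_{x}$ itself is not actually needed there, though your alternative of invoking it (placing the $D_{m}$-factor on the $D_{n+1}$-side) works equally well.
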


\begin{proof}
We have to verify the three conditions in Definition \ref{d5.1.1} concerning
the mapping $\hat{\pi}_{n+1,n}(\nabla_{x}):\left(  M\otimes\mathcal{W}_{D_{n}%
}\right)  _{\pi\left(  x\right)  }\rightarrow\left(  E\otimes\mathcal{W}%
_{D_{n}}\right)  _{x}$.

\begin{enumerate}
\item To see the first condition, it suffices to show that
\[
\left(  \mathrm{id}_{M}\otimes\mathcal{W}_{\left(  d_{1},d_{2}\right)  \in
D_{n+1}\times D_{n}\mapsto d_{1}d_{2}\in D_{n}}\right)  \circ\left(
\pi\otimes\mathrm{id}_{\mathcal{W}_{D_{n}}}\right)  \left(  \left(  \hat{\pi
}_{n+1,n}(\nabla_{x})\right)  \left(  \gamma\right)  \right)  =\gamma\text{,}%
\]
which is equivalent to
\[
\left(  \pi\otimes\mathrm{id}_{\mathcal{W}_{D_{n+1}\times D_{n}}}\right)
\circ\left(  \mathrm{id}_{E}\otimes\mathcal{W}_{\left(  d_{1},d_{2}\right)
\in D_{n+1}\times D_{n}\mapsto d_{1}d_{2}\in D_{n}}\right)  \left(  \left(
\hat{\pi}_{n+1,n}(\nabla_{x})\right)  \left(  \gamma\right)  \right)
=\gamma\text{,}%
\]
since $\otimes$\ is a bifunctor. Therefore it suffices to show that the
composition of mappings
\begin{align*}
&  \left(  M\otimes\mathcal{W}_{D_{n}}\right)  _{\pi\left(  x\right)
}\underrightarrow{\,\mathrm{id}_{M}\otimes\mathcal{W}_{\left(  d_{1}%
,d_{2}\right)  \in D_{n+1}\times D_{n}\mapsto d_{1}d_{2}\in D_{n}}}\,\left(
M\otimes\mathcal{W}_{D_{n+1}}\right)  _{\pi\left(  x\right)  }\otimes
\mathcal{W}_{D_{n}}\\
&  \underrightarrow{\,\nabla_{x}\otimes\mathrm{id}_{\mathcal{W}_{D_{n}}}%
}\,\left(  E\otimes\mathcal{W}_{D_{n+1}}\right)  _{x}\otimes\mathcal{W}%
_{D_{n}}\underrightarrow{\,\pi\otimes\mathrm{id}_{\mathcal{W}_{D_{n+1}\times
D_{n}}}}\,\left(  M\otimes\mathcal{W}_{D_{n+1}}\right)  _{x}\otimes
\mathcal{W}_{D_{n}}%
\end{align*}
applied to $\gamma$\ results in
\[
\left(  \mathrm{id}_{M}\otimes\mathcal{W}_{\left(  d_{1},d_{2}\right)  \in
D_{n+1}\times D_{n}\mapsto d_{1}d_{2}\in D_{n}}\right)  \left(  \gamma\right)
\text{,}%
\]
which follows directly from the first condition in Definition \ref{d5.1.1}.

\item To see the second, let us note first that the composition of mappings
\begin{align*}
&  \left(  M\otimes\mathcal{W}_{D_{n}}\right)  _{\pi\left(  x\right)
}\underrightarrow{\,\mathrm{id}_{M}\otimes\mathcal{W}_{d\in D_{n}\mapsto\alpha
d\in D_{n}}}\,\left(  M\otimes\mathcal{W}_{D_{n}}\right)  _{\pi\left(
x\right)  }\underrightarrow{\,\mathrm{id}_{M}\otimes\mathcal{W}_{\left(
d_{1},d_{2}\right)  \in D_{n+1}\times D_{n}\mapsto d_{1}d_{2}\in D_{n}}}\,\\
&  \left(  M\otimes\mathcal{W}_{D_{n+1}}\right)  _{\pi\left(  x\right)
}\otimes\mathcal{W}_{D_{n}}%
\end{align*}
is equal to the composition of mappings
\begin{align*}
&  \left(  M\otimes\mathcal{W}_{D_{n}}\right)  _{\pi\left(  x\right)
}\underrightarrow{\,\mathrm{id}_{M}\otimes\mathcal{W}_{\left(  d_{1}%
,d_{2}\right)  \in D_{n+1}\times D_{n}\mapsto d_{1}d_{2}\in D_{n}}}\,\left(
M\otimes\mathcal{W}_{D_{n+1}}\right)  _{\pi\left(  x\right)  }\otimes
\mathcal{W}_{D_{n}}\\
&  \underrightarrow{\mathrm{id}_{M}\otimes\mathcal{W}_{\left(  d_{1}%
,d_{2}\right)  \in D_{n+1}\times D_{n}\mapsto\left(  \alpha d_{1}%
,d_{2}\right)  \in D_{n+1}\times D_{n}}}\,\left(  M\otimes\mathcal{W}%
_{D_{n+1}}\right)  _{\pi\left(  x\right)  }\otimes\mathcal{W}_{D_{n}}%
\end{align*}
Since $\nabla_{x}$ is a $D_{n+1}$-pseudotangential over\textit{\ }the bundle
$\pi:E\rightarrow M$ \textit{at} $x\in E$, the diagram
\[%
\begin{array}
[c]{ccc}%
\left(  M\otimes\mathcal{W}_{D_{n+1}}\right)  _{\pi\left(  x\right)  }%
\otimes\mathcal{W}_{D_{n}} & \rightarrow & \left(  M\otimes\mathcal{W}%
_{D_{n+1}}\right)  _{\pi\left(  x\right)  }\otimes\mathcal{W}_{D_{n}}\\%
\begin{array}
[c]{cc}%
\nabla_{x}\otimes\mathrm{id}_{\mathcal{W}_{D_{n}}} & \downarrow
\end{array}
&  &
\begin{array}
[c]{cc}%
\downarrow & \nabla_{x}\otimes\mathrm{id}_{\mathcal{W}_{D_{n}}}%
\end{array}
\\
\left(  E\otimes\mathcal{W}_{D_{n+1}}\right)  _{x}\otimes\mathcal{W}_{D_{n}} &
\rightarrow & \left(  E\otimes\mathcal{W}_{D_{n+1}}\right)  _{x}%
\otimes\mathcal{W}_{D_{n}}%
\end{array}
\]
commutes, where the upper horizontal arrow is
\[
\mathrm{id}_{M}\otimes\mathcal{W}_{\left(  d_{1},d_{2}\right)  \in
D_{n+1}\times D_{n}\mapsto\left(  \alpha d_{1},d_{2}\right)  \in D_{n+1}\times
D_{n}}\text{,}%
\]
while the lower horizontal arrow is
\[
\mathrm{id}_{E}\otimes\mathcal{W}_{\left(  d_{1},d_{2}\right)  \in
D_{n+1}\times D_{n}\mapsto\left(  \alpha d_{1},d_{2}\right)  \in D_{n+1}\times
D_{n}}\text{.}%
\]
Therefore the composition of mappings
\begin{align*}
&  \left(  M\otimes\mathcal{W}_{D_{n}}\right)  _{\pi\left(  x\right)
}\underrightarrow{\,\mathrm{id}_{M}\otimes\mathcal{W}_{d\in D_{n}\mapsto\alpha
d\in D_{n}}}\,\left(  M\otimes\mathcal{W}_{D_{n}}\right)  _{\pi\left(
x\right)  }\\
&  \underrightarrow{\,\mathrm{id}_{M}\otimes\mathcal{W}_{\left(  d_{1}%
,d_{2}\right)  \in D_{n+1}\times D_{n}\mapsto d_{1}d_{2}\in D_{n}}}\,\left(
M\otimes\mathcal{W}_{D_{n+1}}\right)  _{\pi\left(  x\right)  }\otimes
\mathcal{W}_{D_{n}}\\
&  \underrightarrow{\,\nabla_{x}\otimes\mathrm{id}_{\mathcal{W}_{D_{n}}}%
}\,\left(  E\otimes\mathcal{W}_{D_{n+1}}\right)  _{x}\otimes\mathcal{W}%
_{D_{n}}%
\end{align*}
is equal to the composition of mappings
\begin{align*}
&  \left(  M\otimes\mathcal{W}_{D_{n}}\right)  _{\pi\left(  x\right)
}\underrightarrow{\,\mathrm{id}_{M}\otimes\mathcal{W}_{\left(  d_{1}%
,d_{2}\right)  \in D_{n+1}\times D_{n}\mapsto d_{1}d_{2}\in D_{n}}}\,\left(
M\otimes\mathcal{W}_{D_{n+1}}\right)  _{\pi\left(  x\right)  }\otimes
\mathcal{W}_{D_{n}}\\
&  \underrightarrow{\,\nabla_{x}\otimes\mathrm{id}_{\mathcal{W}_{D_{n}}}%
}\,\left(  E\otimes\mathcal{W}_{D_{n+1}}\right)  _{x}\otimes\mathcal{W}%
_{D_{n}}\underrightarrow{\,\mathrm{id}_{E}\otimes\mathcal{W}_{\left(
d_{1},d_{2}\right)  \in D_{n+1}\times D_{n}\mapsto\left(  \alpha d_{1}%
,d_{2}\right)  \in D_{n+1}\times D_{n}}}\,\\
&  \left(  E\otimes\mathcal{W}_{D_{n+1}}\right)  _{x}\otimes\mathcal{W}%
_{D_{n}}%
\end{align*}
The former composition of mappings applied to $\gamma\in\left(  M\otimes
\mathcal{W}_{D_{n}}\right)  _{\pi\left(  x\right)  }$ results in
\[
\left(  \mathrm{id}_{E}\otimes\mathcal{W}_{\left(  d_{1},d_{2}\right)  \in
D_{n+1}\times D_{n}\mapsto d_{1}d_{2}\in D_{n}}\right)  \left(  \hat{\pi
}_{n+1,n}(\nabla_{x})(\alpha\gamma)\right)  \text{,}%
\]
while the latter composition of mappings applied to $\gamma$ results in
\begin{align*}
&  \left(  \mathrm{id}_{E}\otimes\mathcal{W}_{\left(  d_{1},d_{2}\right)  \in
D_{n+1}\times D_{n}\mapsto\left(  \alpha d_{1},d_{2}\right)  \in D_{n+1}\times
D_{n}}\right)  \circ\\
&  \left(  \mathrm{id}_{E}\otimes\mathcal{W}_{\left(  d_{1},d_{2}\right)  \in
D_{n+1}\times D_{n}\mapsto d_{1}d_{2}\in D_{n}}\right)  \left(  \hat{\pi
}_{n+1,n}(\nabla_{x})(\gamma)\right) \\
&  =\left(  \mathrm{id}_{E}\otimes\mathcal{W}_{\left(  d_{1},d_{2}\right)  \in
D_{n+1}\times D_{n}\mapsto d_{1}d_{2}\in D_{n}}\right)  \left(  \alpha\left(
\hat{\pi}_{n+1,n}(\nabla_{x})(\gamma)\right)  \right)  \text{.}%
\end{align*}
Therefore we have
\[
\hat{\pi}_{n+1,n}(\nabla_{x})(\alpha\gamma)=\alpha\left(  \hat{\pi}%
_{n+1,n}(\nabla_{x})(\gamma)\right)
\]

\item To see the third, we have to show that the diagram
\begin{equation}%
\begin{array}
[c]{ccc}%
\left(  M\otimes\mathcal{W}_{D_{n}}\right)  _{\pi\left(  x\right)  } &
\underrightarrow{\mathrm{id}_{M}\otimes\mathcal{W}_{\mathbf{m}_{D_{n}\times
D_{m}\rightarrow D_{n}}}} & \left(  M\otimes\mathcal{W}_{D_{n}}\right)
_{\pi\left(  x\right)  }\otimes\mathcal{W}_{D_{m}}\\%
\begin{array}
[c]{cc}%
\hat{\pi}_{n+1,n}(\nabla_{x}) & \downarrow
\end{array}
&  &
\begin{array}
[c]{cc}%
\downarrow & \hat{\pi}_{n+1,n}(\nabla_{x})\otimes\mathrm{id}_{\mathcal{W}%
_{D_{m}}}%
\end{array}
\\
\left(  E\otimes\mathcal{W}_{D_{n}}\right)  _{x} & \underrightarrow
{\mathrm{id}_{E}\otimes\mathcal{W}_{\mathbf{m}_{D_{n}\times D_{m}\rightarrow
D_{n}}}} & \left(  E\otimes\mathcal{W}_{D_{n}}\right)  _{x}\otimes
\mathcal{W}_{D_{m}}%
\end{array}
\label{5.1.3.1}%
\end{equation}
commutes, where $m$\ is an arbitrary natural number. Since the lower square of
the diagram
\begin{equation}%
\begin{array}
[c]{ccc}%
\left(  M\otimes\mathcal{W}_{D_{n}}\right)  _{\pi\left(  x\right)  } &
\underrightarrow{\mathrm{id}_{M}\otimes\mathcal{W}_{\mathbf{m}_{D_{n}\times
D_{m}\rightarrow D_{n}}}} & \left(  M\otimes\mathcal{W}_{D_{n}}\right)
_{\pi\left(  x\right)  }\otimes\mathcal{W}_{D_{m}}\\%
\begin{array}
[c]{cc}%
\hat{\pi}_{n+1,n}(\nabla_{x}) & \downarrow
\end{array}
&  &
\begin{array}
[c]{cc}%
\downarrow & \hat{\pi}_{n+1,n}(\nabla_{x})\otimes\mathrm{id}_{\mathcal{W}%
_{D_{n}}}%
\end{array}
\\
\left(  E\otimes\mathcal{W}_{D_{n}}\right)  _{x} & \underrightarrow
{\mathrm{id}_{E}\otimes\mathcal{W}_{\mathbf{m}_{D_{n}\times D_{m}\rightarrow
D_{n}}}} & \left(  E\otimes\mathcal{W}_{D_{n}}\right)  _{x}\otimes
\mathcal{W}_{D_{m}}\\%
\begin{array}
[c]{cc}%
\mathrm{id}_{E}\otimes\mathcal{W}_{\mathbf{m}_{D_{n+1}\times D_{n}\rightarrow
D_{n}}} & \downarrow
\end{array}
&  &
\begin{array}
[c]{cc}%
\downarrow & \mathrm{id}_{E}\otimes\mathcal{W}_{\mathbf{m}_{D_{n+1}\times
D_{n}\rightarrow D_{n}}\times\mathrm{id}_{D_{m}}}%
\end{array}
\\
\left(  E\otimes\mathcal{W}_{D_{n+1}}\right)  _{x}\otimes\mathcal{W}_{D_{n}} &
\underrightarrow{\mathrm{id}_{E}\otimes\mathcal{W}_{\mathrm{id}_{D_{n+1}%
}\times\mathbf{m}_{D_{n}\times D_{m}\rightarrow D_{n}}}} & \left(
E\otimes\mathcal{W}_{D_{n+1}}\right)  _{x}\otimes\mathcal{W}_{D_{n}\times
D_{m}}%
\end{array}
\label{5.1.3.2}%
\end{equation}
commutes, so that the commutativity of the diagram in (\ref{5.1.3.1}) is
equivalent to the commutativity of the outer square of the diagram in
(\ref{5.1.3.2}). .The composition of mappings
\[
\left(  M\otimes\mathcal{W}_{D_{n}}\right)  _{\pi\left(  x\right)
}\underrightarrow{\,\hat{\pi}_{n+1,n}(\nabla_{x})}\,\left(  E\otimes
\mathcal{W}_{D_{n}}\right)  _{x}\underrightarrow{\,\mathrm{id}_{E}%
\otimes\mathcal{W}_{\mathbf{m}_{D_{n+1}\times D_{n}\rightarrow D_{n}}}%
}\,\left(  E\otimes\mathcal{W}_{D_{n+1}}\right)  _{x}\otimes\mathcal{W}%
_{D_{n}}%
\]
is equal to the composition of mappings
\begin{align*}
&  \left(  M\otimes\mathcal{W}_{D_{n}}\right)  _{\pi\left(  x\right)
}\underrightarrow{\,\mathrm{id}_{M}\otimes\mathcal{W}_{\mathbf{m}%
_{D_{n+1}\times D_{n}\rightarrow D_{n}}}}\,\left(  M\otimes\mathcal{W}%
_{D_{n+1}}\right)  _{\pi\left(  x\right)  }\otimes\mathcal{W}_{D_{n}}\\
&  \underrightarrow{\,\nabla_{x}\otimes\mathrm{id}_{\mathcal{W}_{D_{n}}}%
}\,\left(  E\otimes\mathcal{W}_{D_{n+1}}\right)  _{x}\otimes\mathcal{W}%
_{D_{n}}\text{,}%
\end{align*}
while the composition of mappings
\begin{align*}
&  \left(  M\otimes\mathcal{W}_{D_{n}}\right)  _{\pi\left(  x\right)  }%
\otimes\mathcal{W}_{D_{m}}\underrightarrow{\,\hat{\pi}_{n+1,n}(\nabla
_{x})\otimes\mathrm{id}_{\mathcal{W}_{D_{m}}}}\,\left(  E\otimes
\mathcal{W}_{D_{n}}\right)  _{x}\otimes\mathcal{W}_{D_{m}}\\
&  \underrightarrow{\,\mathrm{id}_{E}\otimes\mathcal{W}_{\mathbf{m}%
_{D_{n+1}\times D_{n}\rightarrow D_{n}}\times\mathrm{id}_{D_{m}}}}\,\left(
E\otimes\mathcal{W}_{D_{n+1}}\right)  _{x}\otimes\mathcal{W}_{D_{n}\times
D_{m}}%
\end{align*}
is equal to the composition of mappings
\begin{align*}
&  \left(  M\otimes\mathcal{W}_{D_{n}}\right)  _{\pi\left(  x\right)  }%
\otimes\mathcal{W}_{D_{m}}\underrightarrow{\,\mathrm{id}_{M}\otimes
\mathcal{W}_{\mathbf{m}_{D_{n+1}\times D_{n}\rightarrow D_{n}}\times
\mathrm{id}_{D_{m}}}}\,\left(  M\otimes\mathcal{W}_{D_{n+1}}\right)
_{\pi\left(  x\right)  }\otimes\mathcal{W}_{D_{n}\times D_{m}}\\
&  \underrightarrow{\nabla_{x}\otimes\mathrm{id}_{\mathcal{W}_{D_{n}\times
D_{m}}}}\,\left(  E\otimes\mathcal{W}_{D_{n+1}}\right)  _{x}\otimes
\mathcal{W}_{D_{n}\times D_{m}}%
\end{align*}
It is easy to see that the diagram
\[%
\begin{array}
[c]{ccc}%
\left(  M\otimes\mathcal{W}_{D_{n}}\right)  _{\pi\left(  x\right)  } &
\underrightarrow{\mathrm{id}_{M}\otimes\mathcal{W}_{\mathbf{m}_{D_{n}\times
D_{m}\rightarrow D_{n}}}} & \left(  M\otimes\mathcal{W}_{D_{n}}\right)
_{\pi\left(  x\right)  }\otimes\mathcal{W}_{D_{m}}\\%
\begin{array}
[c]{cc}%
\mathrm{id}_{M}\otimes\mathcal{W}_{\mathbf{m}_{D_{n+1}\times D_{n}\rightarrow
D_{n}}} & \downarrow
\end{array}
&  &
\begin{array}
[c]{cc}%
\downarrow & \mathrm{id}_{M}\otimes\mathcal{W}_{\mathbf{m}_{D_{n+1}\times
D_{n}\rightarrow D_{n}}\times\mathrm{id}_{D_{m}}}%
\end{array}
\\
\left(  M\otimes\mathcal{W}_{D_{n+1}}\right)  _{\pi\left(  x\right)  }%
\otimes\mathcal{W}_{D_{n}} & \underrightarrow{\mathrm{id}_{M}\otimes
\mathcal{W}_{\mathrm{id}_{D_{n+1}}\times\mathbf{m}_{D_{n}\times D_{m}%
\rightarrow D_{n}}}} & \left(  M\otimes\mathcal{W}_{D_{n+1}}\right)
_{\pi\left(  x\right)  }\otimes\mathcal{W}_{D_{n}\times D_{m}}\\%
\begin{array}
[c]{cc}%
\nabla_{x}\otimes\mathrm{id}_{\mathcal{W}_{D_{n}}} & \downarrow
\end{array}
&  &
\begin{array}
[c]{cc}%
\downarrow & \nabla_{x}\otimes\mathrm{id}_{\mathcal{W}_{D_{n}\times D_{m}}}%
\end{array}
\\
\left(  E\otimes\mathcal{W}_{D_{n+1}}\right)  _{x}\otimes\mathcal{W}_{D_{n}} &
\underrightarrow{\mathrm{id}_{E}\otimes\mathcal{W}_{\mathrm{id}_{D_{n+1}%
}\times\mathbf{m}_{D_{n}\times D_{m}\rightarrow D_{n}}}} & \left(
E\otimes\mathcal{W}_{D_{n+1}}\right)  _{x}\otimes\mathcal{W}_{D_{n}\times
D_{m}}%
\end{array}
\]
commutes, which implies that the outer square of the diagram in (\ref{5.1.3.2}%
) commutes. This completes the proof.
\end{enumerate}
\end{proof}

\begin{notation}
By the above proposition, we have the canonical projection $\hat{\pi}%
_{n+1,n}:\widehat{\mathbb{J}}^{D_{n+1}}(\pi)\rightarrow\widehat{\mathbb{J}%
}^{D_{n}}(\pi)$ so that, given $\nabla_{x}\in\widehat{\mathbb{J}}_{x}%
^{D_{n+1}}(\pi)$\ and $\gamma\in\left(  M\otimes\mathcal{W}_{D_{n}}\right)
_{\pi\left(  x\right)  }$, the composition of mappings in (\ref{5.1.2.a})
applied to $\gamma$\ results in
\[
\left(  \mathrm{id}_{E}\otimes\mathcal{W}_{\left(  d_{1},d_{2}\right)  \in
D_{n+1}\times D_{n}\mapsto d_{1}d_{2}\in D_{n}}\right)  \left(  \hat{\pi
}_{n+1,n}(\nabla_{x})(\gamma)\right)
\]
For any natural numbers $n$, $m$ with $m\leq n$, we define $\hat{\pi}%
_{n,m}:\widehat{\mathbb{J}}^{D_{n}}(\pi)\rightarrow\widehat{\mathbb{J}}%
^{D_{m}}(\pi)$ to be $\hat{\pi}_{m+1,m}\circ...\circ\hat{\pi}_{n,n-1}$.
\end{notation}

\begin{proposition}
\label{t5.1.4}Let $\nabla_{x}$ be a $D_{n+1}$-pseudotangential \textit{over
}the bundle $\pi:E\rightarrow M$ \textit{at} $x\in E$. Then the diagram
\[%
\begin{array}
[c]{ccc}%
\quad\quad\left(  M\otimes\mathcal{W}_{D_{n+1}}\right)  _{\pi\left(  x\right)
} & \underrightarrow{\ \ \ \ \ \ \ \ \ \ \ \ \nabla_{x}%
\ \ \ \ \ \ \ \ \ \ \ \ \ \ } & \left(  E\otimes\mathcal{W}_{D_{n+1}}\right)
_{x}\\
\widehat{\mathbf{\pi}}_{n+1,n}\quad\downarrow &  & \downarrow\quad
\widehat{\mathbf{\pi}}_{n+1,n}\\
\quad\quad\left(  M\otimes\mathcal{W}_{D_{n}}\right)  _{\pi\left(  x\right)  }
& \overrightarrow{\ \ \ \ \ \ \ \hat{\pi}_{n+1,n}(\nabla_{x})\ \ \ \ \ \ \ } &
\left(  E\otimes\mathcal{W}_{D_{n}}\right)  _{x}%
\end{array}
\]
is commutative.
\end{proposition}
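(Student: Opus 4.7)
The plan is to reduce the commutativity of the displayed square to the defining property of $\hat{\pi}_{n+1,n}(\nabla_{x})$ given in Proposition \ref{t5.1.2} by invoking infinitesimal linearity of $\nabla_{x}$ (condition 3 of Definition \ref{d5.1.1}) applied to the multiplication map $\mathbf{m}:D_{n+1}\times D_{n}\rightarrow D_{n+1}$, and then to invoke the equalizer Lemma \ref{t5.1.1} to cancel the universal monomorphism $\mathrm{id}_{E}\otimes\mathcal{W}_{\mathbf{m}}$.

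First, given $\gamma\in\left(M\otimes\mathcal{W}_{D_{n+1}}\right)_{\pi(x)}$, I would apply condition 3 of Definition \ref{d5.1.1} (with $m=n$) to $\nabla_{x}$, which is a $D_{n+1}$-pseudotangential, to obtain
\[
\left(\mathrm{id}_{E}\otimes\mathcal{W}_{\mathbf{m}}\right)\!\left(\nabla_{x}(\gamma)\right)=\left(\nabla_{x}\otimes\mathrm{id}_{\mathcal{W}_{D_{n}}}\right)\!\left(\left(\mathrm{id}_{M}\otimes\mathcal{W}_{\mathbf{m}}\right)(\gamma)\right),
\]
with $\mathbf{m}:(d_{1},d_{2})\in D_{n+1}\times D_{n}\mapsto d_{1}d_{2}\in D_{n+1}$. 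Since $d_{1}d_{2}\in D_{n}$ whenever $d_{2}\in D_{n}$, this $\mathbf{m}$ factors as $\iota\circ\mathbf{m}_{D_{n+1}\times D_{n}\rightarrow D_{n}}$, where $\iota:D_{n}\hookrightarrow D_{n+1}$ is the natural injection. Since $\widehat{\pi}_{n+1,n}$ on both sides is (by definition) $\mathrm{id}_{\bullet}\otimes\mathcal{W}_{\iota}$, functoriality of $\otimes$ rewrites the identity as
\[
\left(\mathrm{id}_{E}\otimes\mathcal{W}_{\mathbf{m}_{D_{n+1}\times D_{n}\rightarrow D_{n}}}\right)\!\left(\widehat{\pi}_{n+1,n}(\nabla_{x}(\gamma))\right)=\left(\nabla_{x}\otimes\mathrm{id}_{\mathcal{W}_{D_{n}}}\right)\!\left(\left(\mathrm{id}_{M}\otimes\mathcal{W}_{\mathbf{m}_{D_{n+1}\times D_{n}\rightarrow D_{n}}}\right)\!\left(\widehat{\pi}_{n+1,n}(\gamma)\right)\right).
\]

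Next, I apply the defining property of $\hat{\pi}_{n+1,n}(\nabla_{x})$ coming from Proposition \ref{t5.1.2}, which asserts precisely that for any $\delta\in\left(M\otimes\mathcal{W}_{D_{n}}\right)_{\pi(x)}$,
\[
\left(\nabla_{x}\otimes\mathrm{id}_{\mathcal{W}_{D_{n}}}\right)\!\left(\left(\mathrm{id}_{M}\otimes\mathcal{W}_{\mathbf{m}_{D_{n+1}\times D_{n}\rightarrow D_{n}}}\right)(\delta)\right)=\left(\mathrm{id}_{E}\otimes\mathcal{W}_{\mathbf{m}_{D_{n+1}\times D_{n}\rightarrow D_{n}}}\right)\!\left(\hat{\pi}_{n+1,n}(\nabla_{x})(\delta)\right).
\]
Taking $\delta=\widehat{\pi}_{n+1,n}(\gamma)$ and combining with the previous identity yields
\[
\left(\mathrm{id}_{E}\otimes\mathcal{W}_{\mathbf{m}}\right)\!\left(\widehat{\pi}_{n+1,n}(\nabla_{x}(\gamma))\right)=\left(\mathrm{id}_{E}\otimes\mathcal{W}_{\mathbf{m}}\right)\!\left(\hat{\pi}_{n+1,n}(\nabla_{x})(\widehat{\pi}_{n+1,n}(\gamma))\right).
\]

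To conclude, I would cancel the outer $\mathrm{id}_{E}\otimes\mathcal{W}_{\mathbf{m}_{D_{n+1}\times D_{n}\rightarrow D_{n}}}$. By Lemma \ref{t5.1.1}, the map $\mathcal{W}_{\mathbf{m}_{D_{n+1}\times D_{n}\rightarrow D_{n}}}$ sits at the vertex of an equalizer diagram in $\mathbf{W}$, hence is a monomorphism; by microlinearity of $E$ (applied to this equalizer), the induced arrow $\mathrm{id}_{E}\otimes\mathcal{W}_{\mathbf{m}_{D_{n+1}\times D_{n}\rightarrow D_{n}}}$ is the equalizer arrow in $\mathbf{FS}$, and therefore monic. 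Cancellation gives the desired equality $\widehat{\pi}_{n+1,n}(\nabla_{x}(\gamma))=\hat{\pi}_{n+1,n}(\nabla_{x})(\widehat{\pi}_{n+1,n}(\gamma))$. The only step that requires genuine care is the bookkeeping that rewrites the multiplication $D_{n+1}\times D_{n}\to D_{n+1}$ appearing in condition 3 as the composite $\iota\circ\mathbf{m}_{D_{n+1}\times D_{n}\to D_{n}}$ and tracks the two appearances of $\widehat{\pi}_{n+1,n}$ on the source and target sides; everything else is a direct combination of previously established facts.
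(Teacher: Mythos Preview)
Your proof is correct and follows essentially the same route as the paper's: both arguments invoke the infinitesimal linearity condition of Definition~\ref{d5.1.1}, factor the multiplication $D_{n+1}\times D_{n}\to D_{n+1}$ through the inclusion $\iota:D_{n}\hookrightarrow D_{n+1}$, appeal to the defining property of $\hat{\pi}_{n+1,n}(\nabla_{x})$ from Proposition~\ref{t5.1.2}, and then cancel the monic arrow $\mathrm{id}_{E}\otimes\mathcal{W}_{\mathbf{m}_{D_{n+1}\times D_{n}\to D_{n}}}$. The paper organizes the same ingredients into four separately-verified commutative squares (and applies infinitesimal linearity with $m=n+1$ before restricting, rather than with $m=n$ directly as you do), leaving the final cancellation implicit; your presentation is more direct and makes the appeal to Lemma~\ref{t5.1.1} and microlinearity of $E$ explicit, which is arguably cleaner.
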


\begin{proof}
It is easy to see that the following four diagrams are commutative:
\[%
\begin{array}
[c]{ccc}%
M\otimes\mathcal{W}_{D_{n+1}} & \underrightarrow{\mathrm{id}_{M}%
\otimes\mathcal{W}_{\left(  d_{1},d_{2}\right)  \in D_{n+1}\times
D_{n+1}\mapsto d_{1}d_{2}\in D_{n+1}}} & M\otimes\mathcal{W}_{D_{n+1}\times
D_{n+1}}\\%
\begin{array}
[c]{cc}%
\mathrm{id}_{M}\otimes\mathcal{W}_{\mathbf{i}_{D_{n}\subseteq D_{n+1}}} &
\downarrow
\end{array}
&  &
\begin{array}
[c]{cc}%
\downarrow & \mathrm{id}_{M}\otimes\mathcal{W}_{\mathbf{i}_{D_{n+1}\times
D_{n}\subseteq D_{n+1}\times D_{n+1}}}%
\end{array}
\\
M\otimes\mathcal{W}_{D_{n}} & \overrightarrow{\mathrm{id}_{M}\otimes
\mathcal{W}_{\left(  d_{1},d_{2}\right)  \in D_{n+1}\times D_{n}\mapsto
d_{1}d_{2}\in D_{n}}} & M\otimes\mathcal{W}_{D_{n+1}\times D_{n}}%
\end{array}
\]
\[%
\begin{array}
[c]{ccc}%
M\otimes\mathcal{W}_{D_{n+1}\times D_{n+1}} & \underrightarrow{\nabla
_{x}\otimes\mathrm{id}_{\mathcal{W}_{D_{n+1}}}} & E\otimes\mathcal{W}%
_{D_{n+1}\times D_{n+1}}\\%
\begin{array}
[c]{cc}%
\mathrm{id}_{M}\otimes\mathcal{W}_{\mathbf{i}_{D_{n+1}\times D_{n}\subseteq
D_{n+1}\times D_{n+1}}} & \downarrow
\end{array}
&  &
\begin{array}
[c]{cc}%
\downarrow & \mathrm{id}_{E}\otimes\mathcal{W}_{\mathbf{i}_{D_{n+1}\times
D_{n}\subseteq D_{n+1}\times D_{n+1}}}%
\end{array}
\\
M\otimes\mathcal{W}_{D_{n+1}\times D_{n}} & \nabla_{x}\otimes\mathrm{id}%
_{\mathcal{W}_{D_{n}}} & E\otimes\mathcal{W}_{D_{n+1}\times D_{n}}%
\end{array}
\]
\begin{align*}
&
\begin{array}
[c]{ccc}%
M\otimes\mathcal{W}_{D_{n+1}} & \underrightarrow{\mathrm{id}_{M}%
\otimes\mathcal{W}_{\left(  d_{1},d_{2}\right)  \in D_{n+1}\times
D_{n+1}\mapsto d_{1}d_{2}\in D_{n+1}}} & M\otimes\mathcal{W}_{D_{n+1}\times
D_{n+1}}\\%
\begin{array}
[c]{cc}%
\nabla_{x} & \downarrow
\end{array}
&  &
\begin{array}
[c]{cc}%
\downarrow & \nabla_{x}\otimes\mathrm{id}_{\mathcal{W}_{D_{n+1}}}%
\end{array}
\\
E\otimes\mathcal{W}_{D_{n+1}} & \overrightarrow{\mathrm{id}_{E}\otimes
\mathcal{W}_{\left(  d_{1},d_{2}\right)  \in D_{n+1}\times D_{n+1}\mapsto
d_{1}d_{2}\in D_{n+1}}} & E\otimes\mathcal{W}_{D_{n+1}\times D_{n+1}}%
\end{array}
\\
&  \text{[By the second condition in Definition \ref{d5.1.1}]}%
\end{align*}
\[%
\begin{array}
[c]{ccc}%
E\otimes\mathcal{W}_{D_{n+1}} & \underrightarrow{\mathrm{id}_{E}%
\otimes\mathcal{W}_{\left(  d_{1},d_{2}\right)  \in D_{n+1}\times
D_{n+1}\mapsto d_{1}d_{2}\in D_{n+1}}} & E\otimes\mathcal{W}_{D_{n+1}\times
D_{n+1}}\\%
\begin{array}
[c]{cc}%
\mathrm{id}_{E}\otimes\mathcal{W}_{\mathbf{i}_{D_{n}\subseteq D_{n+1}}} &
\downarrow
\end{array}
&  &
\begin{array}
[c]{cc}%
\downarrow & \mathrm{id}_{E}\otimes\mathcal{W}_{\mathbf{i}_{D_{n+1}\times
D_{n}\subseteq D_{n+1}\times D_{n+1}}}%
\end{array}
\\
E\otimes\mathcal{W}_{D_{n}} & \overrightarrow{\mathrm{id}_{E}\otimes
\mathcal{W}_{\left(  d_{1},d_{2}\right)  \in D_{n+1}\times D_{n}\mapsto
d_{1}d_{2}\in D_{n}}} & E\otimes\mathcal{W}_{D_{n+1}\times D_{n}}%
\end{array}
\]
Therefore the composition of mappings
\begin{align*}
&  M\otimes\mathcal{W}_{D_{n+1}}\underrightarrow{\,\mathrm{id}_{M}%
\otimes\mathcal{W}_{\mathbf{i}_{D_{n}\subseteq D_{n+1}}}}\,M\otimes
\mathcal{W}_{D_{n}}\\
&  \underrightarrow{\mathrm{id}_{M}\otimes\mathcal{W}_{\left(  d_{1}%
,d_{2}\right)  \in D_{n+1}\times D_{n}\mapsto d_{1}d_{2}\in D_{n}}}%
\,M\otimes\mathcal{W}_{D_{n+1}\times D_{n}}\\
&  =\left(  M\otimes\mathcal{W}_{D_{n+1}}\right)  \otimes\mathcal{W}_{D_{n}%
}\underrightarrow{\,\nabla_{x}\otimes\mathrm{id}_{\mathcal{W}_{D_{n}}}%
}\,\left(  E\otimes\mathcal{W}_{D_{n+1}}\right)  \otimes\mathcal{W}_{D_{n}}\\
&  =E\otimes\mathcal{W}_{D_{n+1}\times D_{n}}%
\end{align*}
is equal to the composition of mappings
\begin{align*}
&  M\otimes\mathcal{W}_{D_{n+1}}\underrightarrow{\,\nabla_{x}}\,E\otimes
\mathcal{W}_{D_{n+1}}\underrightarrow{\,\mathrm{id}_{E}\otimes\mathcal{W}%
_{\mathbf{i}_{D_{n}\rightarrow D_{n+1}}}}\,E\otimes\mathcal{W}_{D_{n}}\\
&  \underrightarrow{\mathrm{id}_{E}\otimes\mathcal{W}_{\left(  d_{1}%
,d_{2}\right)  \in D_{n+1}\times D_{n}\mapsto d_{1}d_{2}\in D_{n}}}%
\,E\otimes\mathcal{W}_{D_{n+1}\times D_{n}}%
\end{align*}
which yields the coveted result.
\end{proof}

\begin{corollary}
\label{t5.1.4.1}Let $\nabla_{x}$ be a $D_{n+1}$-pseudotangential \textit{over
}the bundle $\pi:E\rightarrow M$ \textit{at} $x\in E$. For any $\gamma
,\gamma^{\prime}\in\left(  M\otimes\mathcal{W}_{D_{n+1}}\right)  _{\pi\left(
x\right)  }$, if
\[
\mathbf{\pi}_{n+1,n}\left(  \gamma\right)  =\mathbf{\pi}_{n+1,n}\left(
\gamma^{\prime}\right)
\]
then
\[
\mathbf{\pi}_{n+1,n}\left(  \nabla_{x}(\gamma)\right)  =\mathbf{\pi}%
_{n+1,n}\left(  \nabla_{x}(\gamma^{\prime})\right)
\]

\end{corollary}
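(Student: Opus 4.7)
The plan is to deduce this corollary as an immediate consequence of Proposition \ref{t5.1.4}, which is precisely the statement that the diagram
\[
\begin{array}{ccc}
\left(M\otimes\mathcal{W}_{D_{n+1}}\right)_{\pi(x)} & \underrightarrow{\;\;\nabla_x\;\;} & \left(E\otimes\mathcal{W}_{D_{n+1}}\right)_x\\
\widehat{\pi}_{n+1,n}\downarrow & & \downarrow\widehat{\pi}_{n+1,n}\\
\left(M\otimes\mathcal{W}_{D_{n}}\right)_{\pi(x)} & \overrightarrow{\;\hat{\pi}_{n+1,n}(\nabla_x)\;} & \left(E\otimes\mathcal{W}_{D_{n}}\right)_x
\end{array}
\]
commutes. (Here the vertical arrows are the natural restrictions induced by the inclusion $D_n \subseteq D_{n+1}$, which is the uniform meaning of $\pi_{n+1,n}$ on both $M\otimes\mathcal{W}_{D_{n+1}}$ and $E\otimes\mathcal{W}_{D_{n+1}}$.)

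Concretely, I would write
\[
\pi_{n+1,n}\left(\nabla_x(\gamma)\right) = \hat{\pi}_{n+1,n}(\nabla_x)\left(\pi_{n+1,n}(\gamma)\right)
\]
and analogously for $\gamma'$, both equalities being instances of Proposition \ref{t5.1.4}. Substituting the hypothesis $\pi_{n+1,n}(\gamma) = \pi_{n+1,n}(\gamma')$ into the right-hand sides gives
\[
\pi_{n+1,n}\left(\nabla_x(\gamma)\right) = \hat{\pi}_{n+1,n}(\nabla_x)\left(\pi_{n+1,n}(\gamma')\right) = \pi_{n+1,n}\left(\nabla_x(\gamma')\right),
\]
which is the desired conclusion. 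There is no genuine obstacle here; the only potential point of care is purely notational, namely confirming that the symbol $\pi_{n+1,n}$ occurring in the hypothesis and in the conclusion both denote the same kind of restriction map (the one written $\widehat{\pi}_{n+1,n}$ in Proposition \ref{t5.1.4}), so that the two invocations line up correctly.
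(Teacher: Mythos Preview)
Your proof is correct and follows essentially the same route as the paper: both apply Proposition~\ref{t5.1.4} to rewrite $\pi_{n+1,n}(\nabla_x(\gamma))$ as $\hat\pi_{n+1,n}(\nabla_x)(\pi_{n+1,n}(\gamma))$, then substitute the hypothesis and apply the proposition again in reverse. The paper's proof is literally the same chain of equalities you wrote down.
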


\begin{proof}
By the above proposition, we have
\begin{align*}
&  \mathbf{\pi}_{n+1,n}(\nabla_{x}(\gamma))=\hat{\pi}_{n+1,n}(\nabla
_{x})(\mathbf{\pi}_{n+1,n}(\gamma))\\
&  =\hat{\pi}_{n+1,n}(\nabla_{x})(\mathbf{\pi}_{n+1,n}(\gamma^{\prime
}))=\mathbf{\pi}_{n+1,n}(\nabla_{x}(\gamma^{\prime}))\text{,}%
\end{align*}
which establishes the coveted proposition.
\end{proof}

\begin{definition}
The notion of a $D_{n}$-\textit{tangential} \textit{over }the bundle
$\pi:E\rightarrow M$ \textit{at} $x\in E$ is defined inductively on $n$. The
notion of a $D_{0}$-\textit{tangential} \textit{over }the bundle
$\pi:E\rightarrow M$ \textit{at} $x\in E$ and that of a $D_{1}$%
-\textit{tangential} \textit{over }the bundle $\pi:E\rightarrow M$ \textit{at}
$x\in E$ shall be identical with that of a $D_{0}$-pseudotangential
\textit{over }the bundle $\pi:E\rightarrow M$ \textit{at} $x\in E$ and that of
a $D_{1}$-pseudotangential \textit{over }the bundle $\pi:E\rightarrow M$
\textit{at} $x\in E$ respectively. Now we proceed by induction on $n$. A
$D_{n+1}$-pseudotangential $\nabla_{x}:\left(  M\otimes\mathcal{W}_{D_{n+1}%
}\right)  _{\pi\left(  x\right)  }\rightarrow\left(  E\otimes\mathcal{W}%
_{D_{n+1}}\right)  _{x}$ \textit{over }the bundle $\pi:E\rightarrow M$
\textit{at} $x\in E$ is called a $D_{n+1}$-\textit{tangential} \textit{over
}the bundle $\pi:E\rightarrow M$ \textit{at} $x\in E$ if it acquiesces in the
following two conditions:

\begin{enumerate}
\item $\hat{\pi}_{n+1,n}(\nabla_{x})$ is a $D_{n}$-tangential \textit{over
}the bundle $\pi:E\rightarrow M$ \textit{at} $x\in E$.

\item For any simple polynomial $\rho$ of $d\in D_{n+1}$ with $l=\mathrm{\dim
\,}\rho$ and any $\gamma\in\left(  M\otimes\mathcal{W}_{D_{l}}\right)
_{\pi\left(  x\right)  }$, we have
\[
\nabla_{x}(\gamma\circ\rho)=(\pi_{n+1,l}(\nabla_{x})(\gamma))\circ\rho
\]

\end{enumerate}
\end{definition}

\begin{notation}
We denote by $\mathbb{J}_{x}^{D_{n}}(\pi)$ the totality of $D_{n}$-tangentials
\textit{over }the bundle $\pi:E\rightarrow M$ \textit{at} $x\in E$, while we
denote by $\mathbb{J}^{D_{n}}(\pi)$\ the totality of $D_{n}$-tangentials
\textit{over }the bundle $\pi:E\rightarrow M$. By the very definition of a
$D_{n}$-tangential, the projection $\hat{\pi}_{n+1,n}:\widehat{\mathbb{J}%
}^{D_{n+1}}(\pi)\rightarrow\widehat{\mathbb{J}}^{D_{n}}(\pi)$ is naturally
restricted to a mapping $\pi_{n+1,n}:\mathbb{J}^{D_{n+1}}(\pi)\rightarrow
\mathbb{J}^{D_{n}}(\pi)$. Similarly for $\pi_{n,m}:\mathbb{J}^{D_{n}}%
(\pi)\rightarrow\mathbb{J}^{D_{m}}(\pi)$ with $m\leq n$.
\end{notation}

\section{\label{s6}From the First Approach to the Second}

\begin{definition}
Mappings $\varphi_{n}:\mathbf{J}^{n}(\pi)\rightarrow\mathbb{J}^{D^{n}}(\pi)$
$(n=0,1)$ shall be the identity mappings. We are going to define $\varphi
_{n}:\mathbf{J}^{n}(\pi)\rightarrow\mathbb{J}^{D^{n}}(\pi)\ $for any natural
number $n$ by induction on $n$. Let $x_{n}=\nabla_{x_{n-1}}\in\mathbf{J}%
^{n}(\pi)$ and $\nabla_{x_{n}}\in\mathbf{J}^{n+1}(\pi)$. We define
$\varphi_{n+1}(\nabla_{x_{n}})$ as the composition of mappings
\begin{align*}
&  \left(  M\otimes\mathcal{W}_{D^{n+1}}\right)  _{\pi\left(  x_{n}\right)
}\\
&  =\left(  \left(  M\otimes\mathcal{W}_{D^{n}}\right)  \otimes\mathcal{W}%
_{D}\right)  _{\left(  M\otimes\mathcal{W}_{D^{n}}\right)  _{\pi\left(
x_{n}\right)  }}\\
&  \underrightarrow{\left\langle \pi_{M}^{M\otimes\mathcal{W}_{D^{n}}}%
\otimes\mathrm{id}_{\mathcal{W}_{D}},\mathrm{id}_{\left(  M\otimes
\mathcal{W}_{D^{n}}\right)  \otimes\mathcal{W}_{D}}\right\rangle }\,\\
&  \left(  M\otimes\mathcal{W}_{D}\right)  _{\pi\left(  x_{n}\right)
}\underset{M\otimes\mathcal{W}_{D}}{\times}\left(  \left(  M\otimes
\mathcal{W}_{D^{n}}\right)  \otimes\mathcal{W}_{D}\right)  _{\left(
M\otimes\mathcal{W}_{D^{n}}\right)  _{\pi\left(  x_{n}\right)  }}\\
&  \underrightarrow{\nabla_{x_{n}}\times\mathrm{id}_{\left(  M\otimes
\mathcal{W}_{D^{n}}\right)  \otimes\mathcal{W}_{D}}}\,\\
&  \left(  \mathbf{J}^{n}(\pi)\otimes\mathcal{W}_{D}\right)  _{x_{n}}%
\underset{M\otimes\mathcal{W}_{D}}{\times}\left(  \left(  M\otimes
\mathcal{W}_{D^{n}}\right)  \otimes\mathcal{W}_{D}\right)  _{\left(
M\otimes\mathcal{W}_{D^{n}}\right)  _{\pi\left(  x_{n}\right)  }}\\
&  \underrightarrow{\left(  \varphi_{n}\otimes\mathrm{id}_{\mathcal{W}_{D}%
}\right)  \times\mathrm{id}_{\left(  M\otimes\mathcal{W}_{D^{n}}\right)
\otimes\mathcal{W}_{D}}}\,\\
&  \left(  \mathbb{J}^{D^{n}}(\pi)\otimes\mathcal{W}_{D}\right)  _{\varphi
_{n}\left(  x_{n}\right)  }\underset{M\otimes\mathcal{W}_{D}}{\times}\left(
\left(  M\otimes\mathcal{W}_{D^{n}}\right)  \otimes\mathcal{W}_{D}\right)
_{\left(  M\otimes\mathcal{W}_{D^{n}}\right)  _{\pi\left(  x_{n}\right)  }}\\
&  =\left(  \left(  \mathbb{J}^{D^{n}}(\pi)\underset{M}{\times}\left(
M\otimes\mathcal{W}_{D^{n}}\right)  \right)  \otimes\mathcal{W}_{D}\right)
_{\varphi_{n}\left(  x_{n}\right)  \times\left(  M\otimes\mathcal{W}_{D^{n}%
}\right)  _{\pi\left(  x_{n}\right)  }}\\
&  \underrightarrow{\left(  \left(  \nabla,\gamma\right)  \in\mathbb{J}%
^{D^{n}}(\pi)\underset{M}{\times}\left(  M\otimes\mathcal{W}_{D^{n}}\right)
\mapsto\nabla\left(  \gamma\right)  \in E\otimes\mathcal{W}_{D^{n}}\right)
\otimes\mathrm{id}_{\mathcal{W}_{D}}}\\
&  \left(  \left(  E\otimes\mathcal{W}_{D^{n}}\right)  \otimes\mathcal{W}%
_{D}\right)  _{\left(  E\otimes\mathcal{W}_{D^{n}}\right)  _{\pi_{0}\left(
x_{n}\right)  }}\\
&  =\left(  E\otimes\mathcal{W}_{D^{n+1}}\right)  _{\pi_{0}\left(
x_{n}\right)  }%
\end{align*}

\end{definition}

Surely we have to show that

\begin{lemma}
\label{t6.1.1}We have
\[
\varphi_{n+1}(\nabla_{x_{n}})\in\mathbb{\hat{J}}^{n+1}(\pi)
\]

\end{lemma}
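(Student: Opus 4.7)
The strategy is induction on $n$. The base case $n=0$ is immediate: $\varphi_{1}$ is the identity, and for $n=1$ the four conditions of Definition \ref{d4.1} specialize to the three conditions defining a $1$-tangential (Definition \ref{d3.1}) together with a vacuous $\mathbf{S}_{1}$-symmetry requirement.

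For the inductive step, decompose $\mathcal{W}_{D^{n+1}}=\mathcal{W}_{D^{n}}\otimes_{\infty}\mathcal{W}_{D}$, so that any $\gamma\in(M\otimes\mathcal{W}_{D^{n+1}})_{\pi(x_{n})}$ is viewed as an element of $(M\otimes\mathcal{W}_{D^{n}})\otimes\mathcal{W}_{D}$ lying above some $t\in(M\otimes\mathcal{W}_{D})_{\pi(x_{n})}$. Condition (1) of Definition \ref{d4.1} for $\varphi_{n+1}(\nabla_{x_{n}})$ falls out of the construction by tracing the long composition: its final step applies $(\nabla,\gamma')\mapsto\nabla(\gamma')$ tensored with $\mathrm{id}_{\mathcal{W}_{D}}$, and since the inductive hypothesis guarantees that $\varphi_{n}(x_{n})\in\mathbb{\hat{J}}^{D^{n}}(\pi)$ is $\pi$-compatible while $\nabla_{x_{n}}$ satisfies condition (1) of a $1$-tangential, the composite projects back to $\gamma$ as required. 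Conditions (2) and (3) are verified index by index on $1\le i\le n+1$: for the ``new'' coordinate $i=n+1$ (living in the $\mathcal{W}_{D}$-factor) they reduce to conditions (2) and (3) of the $1$-tangential $\nabla_{x_{n}}$, while for $1\le i\le n$ they are inherited from the corresponding pseudotangential properties of $\varphi_{n}(x_{n})$.

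The main obstacle is condition (4), the $\mathbf{S}_{n+1}$-symmetry. Since $\mathbf{S}_{n+1}$ is generated by the adjacent transpositions $\langle i,i+1\rangle$ for $1\le i\le n$, it suffices to establish symmetry one generator at a time. For $i\le n-1$ the transposition acts only within the first $n$ coordinates of $D^{n+1}$, so the required identity follows from the inductive assumption that $\varphi_{n}(x_{n})\in\mathbb{\hat{J}}^{D^{n}}(\pi)$ is $\mathbf{S}_{n}$-symmetric. The critical generator is $\langle n,n+1\rangle$, which interchanges the last two $D$-factors. Here one invokes the \emph{holonomic} constraint built into the definition of $\mathbf{J}^{n+1}(\pi)$ in Notation \ref{n3.3}(3): the posited equality of the two long compositions involving $(d_{1},d_{2})\in D^{2}\mapsto(d_{2},d_{1})\in D^{2}$ is precisely the statement that $\nabla_{x_{n}}$ intertwines the swap of the $n$th and $(n+1)$th $D$-coordinates at the level of iterated Weil prolongations. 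Matching each arrow of the composition that defines $\varphi_{n+1}(\nabla_{x_{n}})$ with the corresponding arrow in Notation \ref{n3.3}(3) yields the desired identity $\varphi_{n+1}(\nabla_{x_{n}})(\gamma^{\langle n,n+1\rangle})=(\varphi_{n+1}(\nabla_{x_{n}})(\gamma))^{\langle n,n+1\rangle}$.

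The principal technical hurdle is therefore the bookkeeping required to align the two diagrams -- the holonomic square of Notation \ref{n3.3}(3) on one side and the symmetry identity for $\varphi_{n+1}(\nabla_{x_{n}})$ on the other -- via the chain of canonical isomorphisms among $\mathcal{W}_{D^{n}}\otimes_{\infty}\mathcal{W}_{D}$, $\mathcal{W}_{D^{n+1}}$, and $(M\otimes\mathcal{W}_{D^{n}})\otimes\mathcal{W}_{D}$. The other three pseudotangential axioms are essentially automatic consequences of the inductive hypothesis combined with the defining properties of a $1$-tangential; it is only the mixing of the ``old'' $D^{n}$-direction with the ``new'' $D$-direction under transposition that genuinely consumes the holonomicity of $\nabla_{x_{n}}$.
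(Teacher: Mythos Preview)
Your proposal is correct and follows essentially the same approach as the paper: induction on $n$, with conditions (1)--(3) of Definition~\ref{d4.1} dispatched routinely (the paper in fact only states (6.1.1.1)--(6.1.1.3) and leaves the multilinearity clauses to the reader), and the $\mathbf{S}_{n+1}$-symmetry reduced to the generator $\langle n,n+1\rangle$, where the holonomic constraint from item~3 of Notation~\ref{n3.3} is exactly what is needed. The paper carries out the diagram-alignment you describe as the ``principal technical hurdle'' in full detail, unwinding $\varphi_{n+1}$ via $\varphi_n$ and then $\varphi_{n-1}$ to land on $\mathbf{J}^{n-1}(\pi)\otimes\mathcal{W}_{D^2}$, where the swap $(d_1,d_2)\mapsto(d_2,d_1)$ on the last two $D$-factors becomes the holonomic square verbatim.
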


\begin{proof}
We have to show that for any $\gamma\in\mathbf{T}_{\pi_{n}(x_{n})}^{n+1}%
(M)$,\ any $\alpha\in\mathbb{R}$ and any $\sigma\in\mathbf{S}_{n+1}$, we have
\begin{align}
\gamma &  =\left(  \pi\otimes\mathrm{id}_{\mathcal{W}_{D^{n+1}}}\right)
\circ\left(  \varphi_{n+1}(\nabla_{x_{n}})\right)  (\gamma)\label{6.1.1.1}\\
\varphi_{n+1}(\nabla_{x_{n}})(\alpha\underset{i}{\cdot}\gamma)  &
=\alpha\underset{i}{\cdot}\varphi_{n+1}(\nabla_{x_{n}})(\gamma)\quad(1\leq
i\leq n+1)\label{6.1.1.2}\\
\varphi_{n+1}(\nabla_{x_{n}})(\gamma^{\sigma})  &  =(\varphi_{n+1}%
(\nabla_{x_{n}})(\gamma))^{\sigma} \label{6.1.1.3}%
\end{align}
We proceed by induction on $n$.

\begin{enumerate}
\item First we deal with (\ref{6.1.1.1}). The mapping
\[
\left(  \pi\otimes\mathrm{id}_{\mathcal{W}_{D^{n+1}}}\right)  \left(
\varphi_{n+1}(\nabla_{x_{n}})\right)
\]
is the composition of mappings
\begin{align*}
&  \left(  M\otimes\mathcal{W}_{D^{n+1}}\right)  _{\pi\left(  x_{n}\right)
}\\
&  =\left(  \left(  M\otimes\mathcal{W}_{D^{n}}\right)  \otimes\mathcal{W}%
_{D}\right)  _{\left(  M\otimes\mathcal{W}_{D^{n}}\right)  _{\pi\left(
x_{n}\right)  }}\\
&  \underrightarrow{\left\langle \pi_{M}^{M\otimes\mathcal{W}_{D^{n}}}%
\otimes\mathrm{id}_{\mathcal{W}_{D}},\mathrm{id}_{\left(  M\otimes
\mathcal{W}_{D^{n}}\right)  \otimes\mathcal{W}_{D}}\right\rangle }\,\\
&  \left(  M\otimes\mathcal{W}_{D}\right)  _{\pi\left(  x_{n}\right)
}\underset{M\otimes\mathcal{W}_{D}}{\times}\left(  \left(  M\otimes
\mathcal{W}_{D^{n}}\right)  \otimes\mathcal{W}_{D}\right)  _{\left(
M\otimes\mathcal{W}_{D^{n}}\right)  _{\pi\left(  x_{n}\right)  }}\\
&  \underrightarrow{\nabla_{x_{n}}\times\mathrm{id}_{\left(  M\otimes
\mathcal{W}_{D^{n}}\right)  \otimes\mathcal{W}_{D}}}\,\\
&  \left(  \mathbf{J}^{n}(\pi)\otimes\mathcal{W}_{D}\right)  _{x_{n}}%
\underset{M\otimes\mathcal{W}_{D}}{\times}\left(  \left(  M\otimes
\mathcal{W}_{D^{n}}\right)  \otimes\mathcal{W}_{D}\right)  _{\left(
M\otimes\mathcal{W}_{D^{n}}\right)  _{\pi\left(  x_{n}\right)  }}\\
&  \underrightarrow{\left(  \varphi_{n}\otimes\mathrm{id}_{\mathcal{W}_{D}%
}\right)  \times\mathrm{id}_{\left(  M\otimes\mathcal{W}_{D^{n}}\right)
\otimes\mathcal{W}_{D}}}\,\\
&  \left(  \mathbb{J}^{D^{n}}(\pi)\otimes\mathcal{W}_{D}\right)  _{\varphi
_{n}\left(  x_{n}\right)  }\underset{M\otimes\mathcal{W}_{D}}{\times}\left(
\left(  M\otimes\mathcal{W}_{D^{n}}\right)  \otimes\mathcal{W}_{D}\right)
_{\left(  M\otimes\mathcal{W}_{D^{n}}\right)  _{\pi\left(  x_{n}\right)  }}\\
&  =\left(  \left(  \mathbb{J}^{D^{n}}(\pi)\underset{M}{\times}\left(
M\otimes\mathcal{W}_{D^{n}}\right)  \right)  \otimes\mathcal{W}_{D}\right)
_{\varphi_{n}\left(  x_{n}\right)  \times\left(  M\otimes\mathcal{W}_{D^{n}%
}\right)  _{\pi\left(  x_{n}\right)  }}\\
&  \underrightarrow{\left(  \left(  \nabla,\gamma\right)  \in\mathbb{J}%
^{D^{n}}(\pi)\underset{M}{\times}\left(  M\otimes\mathcal{W}_{D^{n}}\right)
\mapsto\nabla\left(  \gamma\right)  \in E\otimes\mathcal{W}_{D^{n}}\right)
\otimes\mathrm{id}_{\mathcal{W}_{D}}}\\
&  \left(  \left(  E\otimes\mathcal{W}_{D^{n}}\right)  \otimes\mathcal{W}%
_{D}\right)  _{\left(  E\otimes\mathcal{W}_{D^{n}}\right)  _{\pi_{0}\left(
x_{n}\right)  }}\\
&  =\left(  E\otimes\mathcal{W}_{D^{n+1}}\right)  _{\pi_{0}\left(
x_{n}\right)  }\,\underrightarrow{\,\pi\otimes\mathrm{id}_{\mathcal{W}%
_{D^{n+1}}}}\,\left(  M\otimes\mathcal{W}_{D^{n+1}}\right)  _{\pi\left(
x_{n}\right)  }%
\end{align*}
It is easy to see that the composition of mappings
\begin{align*}
&  \left(  \mathbf{J}^{n}(\pi)\otimes\mathcal{W}_{D}\right)  _{x_{n}}%
\underset{M\otimes\mathcal{W}_{D}}{\times}\left(  \left(  M\otimes
\mathcal{W}_{D^{n}}\right)  \otimes\mathcal{W}_{D}\right)  _{\left(
M\otimes\mathcal{W}_{D^{n}}\right)  _{\pi\left(  x_{n}\right)  }}\\
&  \underrightarrow{\left(  \varphi_{n}\otimes\mathrm{id}_{\mathcal{W}_{D}%
}\right)  \times\mathrm{id}_{\left(  M\otimes\mathcal{W}_{D^{n}}\right)
\otimes\mathcal{W}_{D}}}\,\\
&  \left(  \mathbb{J}^{D^{n}}(\pi)\otimes\mathcal{W}_{D}\right)  _{\varphi
_{n}\left(  x_{n}\right)  }\underset{M\otimes\mathcal{W}_{D}}{\times}\left(
\left(  M\otimes\mathcal{W}_{D^{n}}\right)  \otimes\mathcal{W}_{D}\right)
_{\left(  M\otimes\mathcal{W}_{D^{n}}\right)  _{\pi\left(  x_{n}\right)  }}\\
&  =\left(  \left(  \mathbb{J}^{D^{n}}(\pi)\underset{M}{\times}\left(
M\otimes\mathcal{W}_{D^{n}}\right)  \right)  \otimes\mathcal{W}_{D}\right)
_{\left\{  \varphi_{n}\left(  x_{n}\right)  \right\}  \times\left(
M\otimes\mathcal{W}_{D^{n}}\right)  _{\pi\left(  x_{n}\right)  }}\\
&  \underrightarrow{\left(  \left(  \nabla,\gamma\right)  \in\mathbb{J}%
^{D^{n}}(\pi)\underset{M}{\times}\left(  M\otimes\mathcal{W}_{D^{n}}\right)
\mapsto\nabla\left(  \gamma\right)  \in E\otimes\mathcal{W}_{D^{n}}\right)
\otimes\mathrm{id}_{\mathcal{W}_{D}}}\\
&  \left(  \left(  E\otimes\mathcal{W}_{D^{n}}\right)  \otimes\mathcal{W}%
_{D}\right)  _{\left(  E\otimes\mathcal{W}_{D^{n}}\right)  _{\pi_{0}\left(
x_{n}\right)  }}\\
&  =\left(  E\otimes\mathcal{W}_{D^{n+1}}\right)  _{\pi_{0}\left(
x_{n}\right)  }\,\underrightarrow{\,\pi\otimes\mathrm{id}_{\mathcal{W}%
_{D^{n+1}}}}\,\left(  M\otimes\mathcal{W}_{D^{n+1}}\right)  _{\pi\left(
x_{n}\right)  }%
\end{align*}
is no other than the canonical projection of
\[
\left(  \mathbf{J}^{n}(\pi)\otimes\mathcal{W}_{D}\right)  _{x_{n}}%
\underset{M\otimes\mathcal{W}_{D}}{\times}\left(  \left(  M\otimes
\mathcal{W}_{D^{n}}\right)  \otimes\mathcal{W}_{D}\right)  _{\left(
M\otimes\mathcal{W}_{D^{n}}\right)  _{\pi\left(  x_{n}\right)  }}%
\]
to the second factor $\left(  \left(  M\otimes\mathcal{W}_{D^{n}}\right)
\otimes\mathcal{W}_{D}\right)  _{\left(  M\otimes\mathcal{W}_{D^{n}}\right)
_{\pi\left(  x_{n}\right)  }}$. It is also easy to see that the composition of
mappings
\begin{align*}
&  \left(  \left(  M\otimes\mathcal{W}_{D^{n}}\right)  \otimes\mathcal{W}%
_{D}\right)  _{\left(  M\otimes\mathcal{W}_{D^{n}}\right)  _{\pi\left(
x_{n}\right)  }}\\
&  \underrightarrow{\left\langle \pi_{M}^{M\otimes\mathcal{W}_{D^{n}}}%
\otimes\mathrm{id}_{\mathcal{W}_{D}},\mathrm{id}_{\left(  M\otimes
\mathcal{W}_{D^{n}}\right)  \otimes\mathcal{W}_{D}}\right\rangle }\,\\
&  \left(  M\otimes\mathcal{W}_{D}\right)  _{\pi\left(  x_{n}\right)
}\underset{M\otimes\mathcal{W}_{D}}{\times}\left(  \left(  M\otimes
\mathcal{W}_{D^{n}}\right)  \otimes\mathcal{W}_{D}\right)  _{\left(
M\otimes\mathcal{W}_{D^{n}}\right)  _{\pi\left(  x_{n}\right)  }}\\
&  \underrightarrow{\nabla_{x_{n}}\times\mathrm{id}_{\left(  M\otimes
\mathcal{W}_{D^{n}}\right)  \otimes\mathcal{W}_{D}}}\,\\
&  \left(  \mathbf{J}^{n}(\pi)\otimes\mathcal{W}_{D}\right)  _{x_{n}}%
\underset{M\otimes\mathcal{W}_{D}}{\times}\left(  \left(  M\otimes
\mathcal{W}_{D^{n}}\right)  \otimes\mathcal{W}_{D}\right)  _{\left(
M\otimes\mathcal{W}_{D^{n}}\right)  _{\pi\left(  x_{n}\right)  }}\\
&  \underrightarrow{\left(  \varphi_{n}\otimes\mathrm{id}_{\mathcal{W}_{D}%
}\right)  \times\mathrm{id}_{\left(  M\otimes\mathcal{W}_{D^{n}}\right)
\otimes\mathcal{W}_{D}}}\,\\
&  \left(  \mathbb{J}^{D^{n}}(\pi)\otimes\mathcal{W}_{D}\right)  _{\varphi
_{n}\left(  x_{n}\right)  }\underset{M\otimes\mathcal{W}_{D}}{\times}\left(
\left(  M\otimes\mathcal{W}_{D^{n}}\right)  \otimes\mathcal{W}_{D}\right)
_{\left(  M\otimes\mathcal{W}_{D^{n}}\right)  _{\pi\left(  x_{n}\right)  }}%
\end{align*}
is
\begin{align*}
&  \left(  \left(  M\otimes\mathcal{W}_{D^{n}}\right)  \otimes\mathcal{W}%
_{D}\right)  _{\left(  M\otimes\mathcal{W}_{D^{n}}\right)  _{\pi\left(
x_{n}\right)  }}\\
&  \underrightarrow{\left\langle \left(  \varphi_{n}\otimes\mathrm{id}%
_{\mathcal{W}_{D}}\right)  \circ\nabla_{x_{n}}\circ\left(  \pi_{M}%
^{M\otimes\mathcal{W}_{D^{n}}}\otimes\mathrm{id}_{\mathcal{W}_{D}}\right)
,\mathrm{id}_{\left(  M\otimes\mathcal{W}_{D^{n}}\right)  \otimes
\mathcal{W}_{D}}\right\rangle }\\
&  \left(  \mathbb{J}^{D^{n}}(\pi)\otimes\mathcal{W}_{D}\right)  _{\varphi
_{n}\left(  x_{n}\right)  }\underset{M\otimes\mathcal{W}_{D}}{\times}\left(
\left(  M\otimes\mathcal{W}_{D^{n}}\right)  \otimes\mathcal{W}_{D}\right)
_{\left(  M\otimes\mathcal{W}_{D^{n}}\right)  _{\pi\left(  x_{n}\right)  }%
}\text{.}%
\end{align*}
Therefore (\ref{6.1.1.1}) follows at once.

\item Now we deal with (\ref{6.1.1.2}), the treatment of which is divided into
two cases, namely, $i\leq n$ and $i=n+1$. Since both of them are almost
trivial, they can safely be left to the reader.

\item Finally we must deal with (\ref{6.1.1.3}), for which it suffices to
consider only transpositions $\sigma=\left\langle i,i+1\right\rangle \;\left(
1\leq i\leq n\right)  $. Here we deal only with the most difficult case of
$\sigma=\left\langle n,n+1\right\rangle $. We consider the composition of
mappings
\begin{align}
&  \left(  M\otimes\mathcal{W}_{D^{n+1}}\right)  _{\pi\left(  x_{n}\right)
}\,\underrightarrow{\,\gamma\in\left(  M\otimes\mathcal{W}_{D^{n+1}}\right)
_{\pi\left(  x_{n}\right)  }\mapsto\gamma^{\left\langle n,n+1\right\rangle
}\in\left(  M\otimes\mathcal{W}_{D^{n+1}}\right)  _{\pi\left(  x_{n}\right)
}}\,\nonumber\\
&  \left(  M\otimes\mathcal{W}_{D^{n+1}}\right)  _{\pi\left(  x_{n}\right)
}\nonumber\\
&  =\left(  \left(  M\otimes\mathcal{W}_{D^{n}}\right)  \otimes\mathcal{W}%
_{D}\right)  _{\left(  M\otimes\mathcal{W}_{D^{n}}\right)  _{\pi\left(
x_{n}\right)  }}\nonumber\\
&  \underrightarrow{\left\langle \pi_{M}^{M\otimes\mathcal{W}_{D^{n}}}%
\otimes\mathrm{id}_{\mathcal{W}_{D}},\mathrm{id}_{\left(  M\otimes
\mathcal{W}_{D^{n}}\right)  \otimes\mathcal{W}_{D}}\right\rangle
}\,\nonumber\\
&  \left(  M\otimes\mathcal{W}_{D}\right)  _{\pi\left(  x_{n}\right)
}\underset{M\otimes\mathcal{W}_{D}}{\times}\left(  \left(  M\otimes
\mathcal{W}_{D^{n}}\right)  \otimes\mathcal{W}_{D}\right)  _{\left(
M\otimes\mathcal{W}_{D^{n}}\right)  _{\pi\left(  x_{n}\right)  }}\nonumber\\
&  \underrightarrow{\nabla_{x_{n}}\times\mathrm{id}_{\left(  M\otimes
\mathcal{W}_{D^{n}}\right)  \otimes\mathcal{W}_{D}}}\,\nonumber\\
&  \left(  \mathbf{J}^{n}(\pi)\otimes\mathcal{W}_{D}\right)  _{x_{n}}%
\underset{M\otimes\mathcal{W}_{D}}{\times}\left(  \left(  M\otimes
\mathcal{W}_{D^{n}}\right)  \otimes\mathcal{W}_{D}\right)  _{\left(
M\otimes\mathcal{W}_{D^{n}}\right)  _{\pi\left(  x_{n}\right)  }}\nonumber\\
&  \underrightarrow{\left(  \varphi_{n}\otimes\mathrm{id}_{\mathcal{W}_{D}%
}\right)  \times\mathrm{id}_{\left(  M\otimes\mathcal{W}_{D^{n}}\right)
\otimes\mathcal{W}_{D}}}\,\nonumber\\
&  \left(  \mathbb{J}^{D^{n}}(\pi)\otimes\mathcal{W}_{D}\right)  _{\varphi
_{n}\left(  x_{n}\right)  }\underset{M\otimes\mathcal{W}_{D}}{\times}\left(
\left(  M\otimes\mathcal{W}_{D^{n}}\right)  \otimes\mathcal{W}_{D}\right)
_{\left(  M\otimes\mathcal{W}_{D^{n}}\right)  _{\pi\left(  x_{n}\right)  }%
}\nonumber\\
&  =\left(  \left(  \mathbb{J}^{D^{n}}(\pi)\underset{M}{\times}\left(
M\otimes\mathcal{W}_{D^{n}}\right)  \right)  \otimes\mathcal{W}_{D}\right)
_{\varphi_{n}\left(  x_{n}\right)  \times\left(  M\otimes\mathcal{W}_{D^{n}%
}\right)  _{\pi\left(  x_{n}\right)  }}\nonumber\\
&  \underrightarrow{\left(  \left(  \nabla,\gamma\right)  \in\mathbb{J}%
^{D^{n}}(\pi)\underset{M}{\times}\left(  M\otimes\mathcal{W}_{D^{n}}\right)
\mapsto\nabla\left(  \gamma\right)  \in E\otimes\mathcal{W}_{D^{n}}\right)
\otimes\mathrm{id}_{\mathcal{W}_{D}}}\nonumber\\
&  \left(  \left(  E\otimes\mathcal{W}_{D^{n}}\right)  \otimes\mathcal{W}%
_{D}\right)  _{\left(  E\otimes\mathcal{W}_{D^{n}}\right)  _{\pi_{0}\left(
x_{n}\right)  }}\nonumber\\
&  =\left(  E\otimes\mathcal{W}_{D^{n+1}}\right)  _{\pi_{0}\left(
x_{n}\right)  } \label{6.1.1.4'}%
\end{align}
By the very definition of $\varphi_{n}$, the composition of mappings
\begin{align*}
&  \left(  \mathbf{J}^{n}(\pi)\otimes\mathcal{W}_{D}\right)  _{x_{n}}%
\underset{M\otimes\mathcal{W}_{D}}{\times}\left(  \left(  M\otimes
\mathcal{W}_{D^{n}}\right)  \otimes\mathcal{W}_{D}\right)  _{\left(
M\otimes\mathcal{W}_{D^{n}}\right)  _{\pi\left(  x_{n}\right)  }}\\
&  \underrightarrow{\left(  \varphi_{n}\otimes\mathrm{id}_{\mathcal{W}_{D}%
}\right)  \times\mathrm{id}_{\left(  M\otimes\mathcal{W}_{D^{n}}\right)
\otimes\mathcal{W}_{D}}}\,\\
&  \left(  \mathbb{J}^{D^{n}}(\pi)\otimes\mathcal{W}_{D}\right)  _{\varphi
_{n}\left(  x_{n}\right)  }\underset{M\otimes\mathcal{W}_{D}}{\times}\left(
\left(  M\otimes\mathcal{W}_{D^{n}}\right)  \otimes\mathcal{W}_{D}\right)
_{\left(  M\otimes\mathcal{W}_{D^{n}}\right)  _{\pi\left(  x_{n}\right)  }}\\
&  =\left(  \left(  \mathbb{J}^{D^{n}}(\pi)\underset{M}{\times}\left(
M\otimes\mathcal{W}_{D^{n}}\right)  \right)  \otimes\mathcal{W}_{D}\right)
_{\varphi_{n}\left(  x_{n}\right)  \times\left(  M\otimes\mathcal{W}_{D^{n}%
}\right)  _{\pi\left(  x_{n}\right)  }}\\
&  \underrightarrow{\left(  \left(  \nabla,\gamma\right)  \in\mathbb{J}%
^{D^{n}}(\pi)\underset{M}{\times}\left(  M\otimes\mathcal{W}_{D^{n}}\right)
\mapsto\nabla\left(  \gamma\right)  \in E\otimes\mathcal{W}_{D^{n}}\right)
\otimes\mathrm{id}_{\mathcal{W}_{D}}}\\
&  \left(  \left(  E\otimes\mathcal{W}_{D^{n}}\right)  \otimes\mathcal{W}%
_{D}\right)  _{\left(  E\otimes\mathcal{W}_{D^{n}}\right)  _{\pi_{0}\left(
x_{n}\right)  }}\\
&  =\left(  E\otimes\mathcal{W}_{D^{n+1}}\right)  _{\pi_{0}\left(
x_{n}\right)  }%
\end{align*}
is equivalent to the composition of mappings
\begin{align*}
&  \left(  \mathbf{J}^{n}(\pi)\otimes\mathcal{W}_{D}\right)  _{x_{n}}%
\underset{M\otimes\mathcal{W}_{D}}{\times}\left(  \left(  M\otimes
\mathcal{W}_{D^{n}}\right)  \otimes\mathcal{W}_{D}\right)  _{\left(
M\otimes\mathcal{W}_{D^{n}}\right)  _{\pi\left(  x_{n}\right)  }}\\
&  =\left(  \mathbf{J}^{n}(\pi)\otimes\mathcal{W}_{D}\right)  _{x_{n}%
}\underset{M\otimes\mathcal{W}_{D}}{\times}\\
&  \left(  \left(  \left(  M\otimes\mathcal{W}_{D^{n-1}}\right)
\otimes\mathcal{W}_{D}\right)  \otimes\mathcal{W}_{D}\right)  _{\left(
\left(  M\otimes\mathcal{W}_{D^{n-1}}\right)  \otimes\mathcal{W}_{D}\right)
_{\left(  M\otimes\mathcal{W}_{D^{n-1}}\right)  _{\pi\left(  x_{n}\right)  }}%
}\\
&  =\left(  \left(  \mathbf{J}^{n}(\pi)\underset{M}{\times}\left(  \left(
M\otimes\mathcal{W}_{D^{n-1}}\right)  \otimes\mathcal{W}_{D}\right)  \right)
\otimes\mathcal{W}_{D}\right)  _{\ast}\\
\text{\lbrack}\ast &  =x_{n}\times\left(  \left(  M\otimes\mathcal{W}%
_{D^{n-1}}\right)  \otimes\mathcal{W}_{D}\right)  _{\left(  M\otimes
\mathcal{W}_{D^{n-1}}\right)  _{\pi\left(  x_{n}\right)  }}\text{]}\\
&  \underrightarrow{\left(  \mathrm{id}_{\mathbf{J}^{n}(\pi)}\times
\left\langle \pi_{M}^{M\otimes\mathcal{W}_{D^{n-1}}}\otimes\mathrm{id}%
_{\mathcal{W}_{D}},\mathrm{id}_{\left(  M\otimes\mathcal{W}_{D^{n-1}}\right)
\otimes\mathcal{W}_{D}}\right\rangle \right)  \otimes\mathrm{id}%
_{\mathcal{W}_{D}}}\\
&  \left(  \left(  \mathbf{J}^{n}(\pi)\underset{M}{\times}\left(
M\otimes\mathcal{W}_{D}\right)  \underset{M}{\times}\left(  \left(
M\otimes\mathcal{W}_{D^{n-1}}\right)  \otimes\mathcal{W}_{D}\right)  \right)
\otimes\mathcal{W}_{D}\right)  _{\ast}\\
\text{\lbrack}\ast &  =x_{n}\times\pi\left(  x_{n}\right)  \times\left(
\left(  M\otimes\mathcal{W}_{D^{n-1}}\right)  \otimes\mathcal{W}_{D}\right)
_{\left(  M\otimes\mathcal{W}_{D^{n-1}}\right)  _{\pi\left(  x_{n}\right)  }%
}\text{]}\\
&  \underrightarrow{\left(  \left(
\begin{array}
[c]{c}%
\left(  \nabla,t\right)  \in\mathbf{J}^{n}(\pi)\times\left(  M\otimes
\mathcal{W}_{D}\right)  \mapsto\\
\nabla\left(  t\right)  \in\mathbf{J}^{n-1}(\pi)\otimes\mathcal{W}_{D}%
\end{array}
\right)  \times\mathrm{id}_{\left(  \left(  M\otimes\mathcal{W}_{D^{n-1}%
}\right)  \otimes\mathcal{W}_{D}\right)  }\right)  \otimes\mathrm{id}%
_{\mathcal{W}_{D}}}\\
&  \,\left(  \left(  \left(  \mathbf{J}^{n-1}(\pi)\otimes\mathcal{W}%
_{D}\right)  \underset{M}{\times}\left(  \left(  M\otimes\mathcal{W}_{D^{n-1}%
}\right)  \otimes\mathcal{W}_{D}\right)  \right)  \otimes\mathcal{W}%
_{D}\right)  _{\ast}\\
\text{\lbrack}\ast &  =\left(  \mathbf{J}^{n-1}(\pi)\otimes\mathcal{W}%
_{D}\right)  _{\pi_{n-1}\left(  x_{n}\right)  }\times\left(  \left(
M\otimes\mathcal{W}_{D^{n-1}}\right)  \otimes\mathcal{W}_{D}\right)  _{\left(
M\otimes\mathcal{W}_{D^{n-1}}\right)  _{\pi\left(  x_{n}\right)  }}\text{]}\\
&  =\left(  \left(  \mathbf{J}^{n-1}(\pi)\underset{M}{\times}\left(
M\otimes\mathcal{W}_{D^{n-1}}\right)  \right)  \otimes\mathcal{W}_{D^{2}%
}\right)  _{\pi_{n-1}\left(  x_{n}\right)  \times\left(  M\otimes
\mathcal{W}_{D^{n-1}}\right)  _{\pi\left(  x_{n}\right)  }}\\
&  \underrightarrow{\varphi_{n-1}\times\mathrm{id}_{\left(  M\otimes
\mathcal{W}_{D^{n}}\right)  \otimes\mathcal{W}_{D}}}\,\\
&  \left(  \left(  \mathbb{J}^{D^{n-1}}(\pi)\underset{M}{\times}\left(
M\otimes\mathcal{W}_{D^{n-1}}\right)  \right)  \otimes\mathcal{W}_{D^{2}%
}\right)  _{\pi_{0}\left(  x_{n}\right)  \times\left(  M\otimes\mathcal{W}%
_{D^{n-1}}\right)  _{\pi\left(  x_{n}\right)  }}\\
&  \underrightarrow{\left(  \left(  \nabla,\gamma\right)  \in\mathbb{J}%
^{D^{n-1}}(\pi)\times\left(  M\otimes\mathcal{W}_{D^{n-1}}\right)
\mapsto\nabla\left(  \gamma\right)  \in E\otimes\mathcal{W}_{D^{n-1}}\right)
\otimes\mathrm{id}_{\mathcal{W}_{D^{2}}}}\\
&  \left(  \left(  E\otimes\mathcal{W}_{D^{n-1}}\right)  \otimes
\mathcal{W}_{D^{2}}\right)  _{\left(  E\otimes\mathcal{W}_{D^{n-1}}\right)
_{\pi_{0}\left(  x_{n}\right)  }}\\
&  =\left(  E\otimes\mathcal{W}_{D^{n+1}}\right)  _{\pi_{0}\left(
x_{n}\right)  }%
\end{align*}
Therefore (\ref{6.1.1.4'}) is no other than the composition of mappings
\begin{align*}
&  \left(  M\otimes\mathcal{W}_{D^{n+1}}\right)  _{\pi\left(  x_{n}\right)
}\\
&  \,\underrightarrow{\,\gamma\in\left(  M\otimes\mathcal{W}_{D^{n+1}}\right)
_{\pi\left(  x_{n}\right)  }\mapsto\gamma^{\left\langle n,n+1\right\rangle
}\in\left(  M\otimes\mathcal{W}_{D^{n+1}}\right)  _{\pi\left(  x_{n}\right)
}}\,\\
&  \left(  M\otimes\mathcal{W}_{D^{n+1}}\right)  _{\pi\left(  x_{n}\right)
}\\
&  =\left(  \left(  M\otimes\mathcal{W}_{D^{n}}\right)  \otimes\mathcal{W}%
_{D}\right)  _{\left(  M\otimes\mathcal{W}_{D^{n}}\right)  _{\pi\left(
x_{n}\right)  }}\\
&  \underrightarrow{\left\langle \pi_{M}^{M\otimes\mathcal{W}_{D^{n}}}%
\otimes\mathrm{id}_{\mathcal{W}_{D}},\mathrm{id}_{\left(  M\otimes
\mathcal{W}_{D^{n}}\right)  \otimes\mathcal{W}_{D}}\right\rangle }\,\\
&  \left(  M\otimes\mathcal{W}_{D}\right)  _{\pi\left(  x_{n}\right)
}\underset{M\otimes\mathcal{W}_{D}}{\times}\left(  \left(  M\otimes
\mathcal{W}_{D^{n}}\right)  \otimes\mathcal{W}_{D}\right)  _{\left(
M\otimes\mathcal{W}_{D^{n}}\right)  _{\pi\left(  x_{n}\right)  }}\\
&  \underrightarrow{\nabla_{x_{n}}\times\mathrm{id}_{\left(  M\otimes
\mathcal{W}_{D^{n}}\right)  \otimes\mathcal{W}_{D}}}\,\\
&  \left(  \mathbf{J}^{n}(\pi)\otimes\mathcal{W}_{D}\right)  _{x_{n}}%
\underset{M\otimes\mathcal{W}_{D}}{\times}\left(  \left(  M\otimes
\mathcal{W}_{D^{n}}\right)  \otimes\mathcal{W}_{D}\right)  _{\left(
M\otimes\mathcal{W}_{D^{n}}\right)  _{\pi\left(  x_{n}\right)  }}\\
&  =\left(  \mathbf{J}^{n}(\pi)\otimes\mathcal{W}_{D}\right)  _{x_{n}%
}\underset{M\otimes\mathcal{W}_{D}}{\times}\\
&  \left(  \left(  \left(  M\otimes\mathcal{W}_{D^{n-1}}\right)
\otimes\mathcal{W}_{D}\right)  \otimes\mathcal{W}_{D}\right)  _{\left(
\left(  M\otimes\mathcal{W}_{D^{n-1}}\right)  \otimes\mathcal{W}_{D}\right)
_{\left(  M\otimes\mathcal{W}_{D^{n-1}}\right)  _{\pi\left(  x_{n}\right)  }}%
}\\
&  =\left(  \left(  \mathbf{J}^{n}(\pi)\underset{M}{\times}\left(  \left(
M\otimes\mathcal{W}_{D^{n-1}}\right)  \otimes\mathcal{W}_{D}\right)  \right)
\otimes\mathcal{W}_{D}\right)  _{\ast}\\
\text{\lbrack}\ast &  =x_{n}\times\left(  \left(  M\otimes\mathcal{W}%
_{D^{n-1}}\right)  \otimes\mathcal{W}_{D}\right)  _{\left(  M\otimes
\mathcal{W}_{D^{n-1}}\right)  _{\pi\left(  x_{n}\right)  }}\text{]}\\
&  \underrightarrow{\left(  \mathrm{id}_{\mathbf{J}^{n}(\pi)}\times
\left\langle \pi_{M}^{M\otimes\mathcal{W}_{D^{n-1}}}\otimes\mathrm{id}%
_{\mathcal{W}_{D}},\mathrm{id}_{\left(  M\otimes\mathcal{W}_{D^{n-1}}\right)
\otimes\mathcal{W}_{D}}\right\rangle \right)  \otimes\mathrm{id}%
_{\mathcal{W}_{D}}}\\
&  \left(  \left(  \mathbf{J}^{n}(\pi)\underset{M}{\times}\left(
M\otimes\mathcal{W}_{D}\right)  \underset{M}{\times}\left(  \left(
M\otimes\mathcal{W}_{D^{n-1}}\right)  \otimes\mathcal{W}_{D}\right)  \right)
\otimes\mathcal{W}_{D}\right)  _{\ast}\\
\text{\lbrack}\ast &  =x_{n}\times\pi\left(  x_{n}\right)  \times\left(
\left(  M\otimes\mathcal{W}_{D^{n-1}}\right)  \otimes\mathcal{W}_{D}\right)
_{\left(  M\otimes\mathcal{W}_{D^{n-1}}\right)  _{\pi\left(  x_{n}\right)  }%
}\text{]}\\
&  \underrightarrow{\left(  \left(
\begin{array}
[c]{c}%
\left(  \nabla,t\right)  \in\mathbf{J}^{n}(\pi)\times\left(  M\otimes
\mathcal{W}_{D}\right)  \mapsto\\
\nabla\left(  t\right)  \in\mathbf{J}^{n-1}(\pi)\otimes\mathcal{W}_{D}%
\end{array}
\right)  \times\mathrm{id}_{\left(  \left(  M\otimes\mathcal{W}_{D^{n-1}%
}\right)  \otimes\mathcal{W}_{D}\right)  }\right)  \otimes\mathrm{id}%
_{\mathcal{W}_{D}}}\\
&  \left(  \left(  \left(  \mathbf{J}^{n-1}(\pi)\otimes\mathcal{W}_{D}\right)
\underset{M}{\times}\left(  \left(  M\otimes\mathcal{W}_{D^{n-1}}\right)
\otimes\mathcal{W}_{D}\right)  \right)  \otimes\mathcal{W}_{D}\right)  _{\ast
}\\
\text{\lbrack}\ast &  =\left(  \mathbf{J}^{n-1}(\pi)\otimes\mathcal{W}%
_{D}\right)  _{\pi_{n-1}\left(  x_{n}\right)  }\times\left(  \left(
M\otimes\mathcal{W}_{D^{n-1}}\right)  \otimes\mathcal{W}_{D}\right)  _{\left(
M\otimes\mathcal{W}_{D^{n-1}}\right)  _{\pi\left(  x_{n}\right)  }}\text{]}\\
&  =\left(  \left(  \mathbf{J}^{n-1}(\pi)\times\left(  M\otimes\mathcal{W}%
_{D^{n-1}}\right)  \right)  \otimes\mathcal{W}_{D^{2}}\right)  _{\pi
_{n-1}\left(  x_{n}\right)  \times\left(  M\otimes\mathcal{W}_{D^{n-1}%
}\right)  _{\pi\left(  x_{n}\right)  }}\\
&  \underrightarrow{\varphi_{n-1}\times\mathrm{id}_{\left(  M\otimes
\mathcal{W}_{D^{n}}\right)  \otimes\mathcal{W}_{D}}}\,\\
&  \left(  \left(  \mathbb{J}^{D^{n-1}}(\pi)\underset{M}{\times}\left(
M\otimes\mathcal{W}_{D^{n-1}}\right)  \right)  \otimes\mathcal{W}_{D^{2}%
}\right)  _{\pi_{0}\left(  x_{n}\right)  \times\left(  M\otimes\mathcal{W}%
_{D^{n-1}}\right)  _{\pi\left(  x_{n}\right)  }}\\
&  \underrightarrow{\left(  \left(  \nabla,\gamma\right)  \in\mathbb{J}%
^{D^{n-1}}(\pi)\times\left(  M\otimes\mathcal{W}_{D^{n-1}}\right)
\mapsto\nabla\left(  \gamma\right)  \in E\otimes\mathcal{W}_{D^{n-1}}\right)
\otimes\mathrm{id}_{\mathcal{W}_{D^{2}}}}\\
&  \left(  \left(  E\otimes\mathcal{W}_{D^{n-1}}\right)  \otimes
\mathcal{W}_{D^{2}}\right)  _{\left(  E\otimes\mathcal{W}_{D^{n-1}}\right)
_{\pi_{0}\left(  x_{n}\right)  }}\\
&  =\left(  E\otimes\mathcal{W}_{D^{n+1}}\right)  _{\pi_{0}\left(
x_{n}\right)  }%
\end{align*}
On the other hand, the composition of mappings
\begin{align*}
&  \left(  M\otimes\mathcal{W}_{D^{n+1}}\right)  _{\pi\left(  x_{n}\right)
}\\
&  =\left(  \left(  M\otimes\mathcal{W}_{D^{n}}\right)  \otimes\mathcal{W}%
_{D}\right)  _{\left(  M\otimes\mathcal{W}_{D^{n}}\right)  _{\pi\left(
x_{n}\right)  }}\\
&  \underrightarrow{\left\langle \pi_{M}^{M\otimes\mathcal{W}_{D^{n}}}%
\otimes\mathrm{id}_{\mathcal{W}_{D}},\mathrm{id}_{\left(  M\otimes
\mathcal{W}_{D^{n}}\right)  \otimes\mathcal{W}_{D}}\right\rangle }\,\\
&  \left(  M\otimes\mathcal{W}_{D}\right)  _{\pi\left(  x_{n}\right)
}\underset{M\otimes\mathcal{W}_{D}}{\times}\left(  \left(  M\otimes
\mathcal{W}_{D^{n}}\right)  \otimes\mathcal{W}_{D}\right)  _{\left(
M\otimes\mathcal{W}_{D^{n}}\right)  _{\pi\left(  x_{n}\right)  }}\\
&  \underrightarrow{\nabla_{x_{n}}\times\mathrm{id}_{\left(  M\otimes
\mathcal{W}_{D^{n}}\right)  \otimes\mathcal{W}_{D}}}\,\\
&  \left(  \mathbf{J}^{n}(\pi)\otimes\mathcal{W}_{D}\right)  _{x_{n}}%
\underset{M\otimes\mathcal{W}_{D}}{\times}\left(  \left(  M\otimes
\mathcal{W}_{D^{n}}\right)  \otimes\mathcal{W}_{D}\right)  _{\left(
M\otimes\mathcal{W}_{D^{n}}\right)  _{\pi\left(  x_{n}\right)  }}\\
&  \underrightarrow{\left(  \varphi_{n}\otimes\mathrm{id}_{\mathcal{W}_{D}%
}\right)  \times\mathrm{id}_{\left(  M\otimes\mathcal{W}_{D^{n}}\right)
\otimes\mathcal{W}_{D}}}\,\\
&  \left(  \mathbb{J}^{D^{n}}(\pi)\otimes\mathcal{W}_{D}\right)  _{\varphi
_{n}\left(  x_{n}\right)  }\underset{M\otimes\mathcal{W}_{D}}{\times}\left(
\left(  M\otimes\mathcal{W}_{D^{n}}\right)  \otimes\mathcal{W}_{D}\right)
_{\left(  M\otimes\mathcal{W}_{D^{n}}\right)  _{\pi\left(  x_{n}\right)  }}\\
&  =\left(  \left(  \mathbb{J}^{D^{n}}(\pi)\underset{M}{\times}\left(
M\otimes\mathcal{W}_{D^{n}}\right)  \right)  \otimes\mathcal{W}_{D}\right)
_{\varphi_{n}\left(  x_{n}\right)  \times\left(  M\otimes\mathcal{W}_{D^{n}%
}\right)  _{\pi\left(  x_{n}\right)  }}\\
&  \underrightarrow{\left(  \left(  \nabla,\gamma\right)  \in\mathbb{J}%
^{D^{n}}(\pi)\underset{M}{\times}\left(  M\otimes\mathcal{W}_{D^{n}}\right)
\mapsto\nabla\left(  \gamma\right)  \in E\otimes\mathcal{W}_{D^{n}}\right)
\otimes\mathrm{id}_{\mathcal{W}_{D}}}\\
&  \left(  \left(  E\otimes\mathcal{W}_{D^{n}}\right)  \otimes\mathcal{W}%
_{D}\right)  _{\left(  E\otimes\mathcal{W}_{D^{n}}\right)  _{\pi_{0}\left(
x_{n}\right)  }}\\
&  =\left(  E\otimes\mathcal{W}_{D^{n+1}}\right)  _{\pi_{0}\left(
x_{n}\right)  }\\
&  \underrightarrow{\,\gamma\in E\otimes\mathcal{W}_{D^{n+1}}\mapsto
\gamma^{\left\langle n,n+1\right\rangle }\in E\otimes\mathcal{W}_{D^{n+1}}%
}\,\,\left(  E\otimes\mathcal{W}_{D^{n+1}}\right)  _{\pi_{0}\left(
x_{n}\right)  }%
\end{align*}
is the composition of mappings
\begin{align*}
&  \left(  M\otimes\mathcal{W}_{D^{n+1}}\right)  _{\pi\left(  x_{n}\right)
}\\
&  =\left(  \left(  M\otimes\mathcal{W}_{D^{n}}\right)  \otimes\mathcal{W}%
_{D}\right)  _{\left(  M\otimes\mathcal{W}_{D^{n}}\right)  _{\pi\left(
x_{n}\right)  }}\\
&  \underrightarrow{\left\langle \pi_{M}^{M\otimes\mathcal{W}_{D^{n}}}%
\otimes\mathrm{id}_{\mathcal{W}_{D}},\mathrm{id}_{\left(  M\otimes
\mathcal{W}_{D^{n}}\right)  \otimes\mathcal{W}_{D}}\right\rangle }\,\\
&  \left(  M\otimes\mathcal{W}_{D}\right)  _{\pi\left(  x_{n}\right)
}\underset{M\otimes\mathcal{W}_{D}}{\times}\left(  \left(  M\otimes
\mathcal{W}_{D^{n}}\right)  \otimes\mathcal{W}_{D}\right)  _{\left(
M\otimes\mathcal{W}_{D^{n}}\right)  _{\pi\left(  x_{n}\right)  }}\\
&  \underrightarrow{\nabla_{x_{n}}\times\mathrm{id}_{\left(  M\otimes
\mathcal{W}_{D^{n}}\right)  \otimes\mathcal{W}_{D}}}\,\\
&  \left(  \mathbf{J}^{n}(\pi)\otimes\mathcal{W}_{D}\right)  _{x_{n}}%
\underset{M\otimes\mathcal{W}_{D}}{\times}\left(  \left(  M\otimes
\mathcal{W}_{D^{n}}\right)  \otimes\mathcal{W}_{D}\right)  _{\left(
M\otimes\mathcal{W}_{D^{n}}\right)  _{\pi\left(  x_{n}\right)  }}\\
&  =\left(  \mathbf{J}^{n}(\pi)\otimes\mathcal{W}_{D}\right)  _{x_{n}%
}\underset{M\otimes\mathcal{W}_{D}}{\times}\\
&  \left(  \left(  \left(  M\otimes\mathcal{W}_{D^{n-1}}\right)
\otimes\mathcal{W}_{D}\right)  \otimes\mathcal{W}_{D}\right)  _{\left(
\left(  M\otimes\mathcal{W}_{D^{n-1}}\right)  \otimes\mathcal{W}_{D}\right)
_{\left(  M\otimes\mathcal{W}_{D^{n-1}}\right)  _{\pi\left(  x_{n}\right)  }}%
}\\
&  =\left(  \left(  \mathbf{J}^{n}(\pi)\times\left(  \left(  M\otimes
\mathcal{W}_{D^{n-1}}\right)  \otimes\mathcal{W}_{D}\right)  \right)
\otimes\mathcal{W}_{D}\right)  _{\ast}\\
\text{ [}\ast &  =x_{n}\times\left(  \left(  M\otimes\mathcal{W}_{D^{n-1}%
}\right)  \otimes\mathcal{W}_{D}\right)  _{\left(  M\otimes\mathcal{W}%
_{D^{n-1}}\right)  _{\pi\left(  x_{n}\right)  }}\text{]}\\
&  \underrightarrow{\left(  \mathrm{id}_{\mathbf{J}^{n}(\pi)}\times
\left\langle \pi_{M}^{M\otimes\mathcal{W}_{D^{n-1}}}\otimes\mathrm{id}%
_{\mathcal{W}_{D}},\mathrm{id}_{\left(  M\otimes\mathcal{W}_{D^{n-1}}\right)
\otimes\mathcal{W}_{D}}\right\rangle \right)  \otimes\mathrm{id}%
_{\mathcal{W}_{D}}}\\
&  \left(  \left(  \mathbf{J}^{n}(\pi)\underset{M}{\times}\left(
M\otimes\mathcal{W}_{D}\right)  \underset{M}{\times}\left(  \left(
M\otimes\mathcal{W}_{D^{n-1}}\right)  \otimes\mathcal{W}_{D}\right)  \right)
\otimes\mathcal{W}_{D}\right)  _{\ast}\\
\text{\lbrack}\ast &  =x_{n}\times\pi\left(  x_{n}\right)  \times\left(
\left(  M\otimes\mathcal{W}_{D^{n-1}}\right)  \otimes\mathcal{W}_{D}\right)
_{\left(  M\otimes\mathcal{W}_{D^{n-1}}\right)  _{\pi\left(  x_{n}\right)  }%
}\text{]}\\
&  \underline{\left(  \left(
\begin{array}
[c]{c}%
\left(  \nabla,t\right)  \in\mathbf{J}^{n}(\pi)\times\left(  M\otimes
\mathcal{W}_{D}\right)  \mapsto\\
\nabla\left(  t\right)  \in\mathbf{J}^{n-1}(\pi)\otimes\mathcal{W}_{D}%
\end{array}
\right)  \times\mathrm{id}_{\left(  \left(  M\otimes\mathcal{W}_{D^{n-1}%
}\right)  \otimes\mathcal{W}_{D}\right)  }\right)  }\\
&  \underrightarrow{\otimes\mathrm{id}_{\mathcal{W}_{D}}}\,\\
&  \left(  \left(  \left(  \mathbf{J}^{n-1}(\pi)\otimes\mathcal{W}_{D}\right)
\underset{M\otimes\mathcal{W}_{D}}{\times}\left(  \left(  M\otimes
\mathcal{W}_{D^{n-1}}\right)  \otimes\mathcal{W}_{D}\right)  \right)
\otimes\mathcal{W}_{D}\right)  _{\ast}\\
\text{\lbrack}\ast &  =\left(  \mathbf{J}^{n-1}(\pi)\otimes\mathcal{W}%
_{D}\right)  _{\pi_{n-1}\left(  x_{n}\right)  }\underset{M\otimes
\mathcal{W}_{D}}{\times}\left(  \left(  M\otimes\mathcal{W}_{D^{n-1}}\right)
\otimes\mathcal{W}_{D}\right)  _{\left(  M\otimes\mathcal{W}_{D^{n-1}}\right)
_{\pi\left(  x_{n}\right)  }}\text{]}\\
&  =\left(  \left(  \mathbf{J}^{n-1}(\pi)\times\left(  M\otimes\mathcal{W}%
_{D^{n-1}}\right)  \right)  \otimes\mathcal{W}_{D^{2}}\right)  _{\pi
_{n-1}\left(  x_{n}\right)  \times\left(  M\otimes\mathcal{W}_{D^{n-1}%
}\right)  _{\pi\left(  x_{n}\right)  }}%
\end{align*}
followed by the composition of mappings
\begin{align*}
&  \left(  \left(  \mathbf{J}^{n-1}(\pi)\times\left(  M\otimes\mathcal{W}%
_{D^{n-1}}\right)  \right)  \otimes\mathcal{W}_{D^{2}}\right)  _{\pi
_{n-1}\left(  x_{n}\right)  \times\left(  M\otimes\mathcal{W}_{D^{n-1}%
}\right)  _{\pi\left(  x_{n}\right)  }}\\
&  \underrightarrow{\varphi_{n-1}\times\mathrm{id}_{\left(  M\otimes
\mathcal{W}_{D^{n}}\right)  \otimes\mathcal{W}_{D}}}\,\\
&  \left(  \left(  \mathbb{J}^{D^{n-1}}(\pi)\underset{M}{\times}\left(
M\otimes\mathcal{W}_{D^{n-1}}\right)  \right)  \otimes\mathcal{W}_{D^{2}%
}\right)  _{\pi_{0}\left(  x_{n}\right)  \times\left(  M\otimes\mathcal{W}%
_{D^{n-1}}\right)  _{\pi\left(  x_{n}\right)  }}\\
&  \underrightarrow{\left(  \left(  \nabla,\gamma\right)  \in\mathbb{J}%
^{n-1}(\pi)\times\left(  M\otimes\mathcal{W}_{D^{n-1}}\right)  \mapsto
\nabla\left(  \gamma\right)  \in E\otimes\mathcal{W}_{D^{n-1}}\right)
\otimes\mathrm{id}_{\mathcal{W}_{D^{2}}}}\\
&  \left(  E\otimes\mathcal{W}_{D^{n-1}}\right)  \otimes\mathcal{W}_{D^{2}%
}=\left(  E\otimes\mathcal{W}_{D^{n+1}}\right)  _{\pi_{0}\left(  x_{n}\right)
}\\
&  \underrightarrow{\,\gamma\in E\otimes\mathcal{W}_{D^{n+1}}\mapsto
\gamma^{\left\langle n,n+1\right\rangle }\in E\otimes\mathcal{W}_{D^{n+1}}%
}\,\,\left(  E\otimes\mathcal{W}_{D^{n+1}}\right)  _{\pi_{0}\left(
x_{n}\right)  }\text{,}%
\end{align*}
which is easily seen to be equivalent to the composition of mappings
\begin{align*}
&  \left(  \left(  \mathbf{J}^{n-1}(\pi)\times\left(  M\otimes\mathcal{W}%
_{D^{n-1}}\right)  \right)  \otimes\mathcal{W}_{D^{2}}\right)  _{\pi
_{n-1}\left(  x_{n}\right)  \times\left(  M\otimes\mathcal{W}_{D^{n-1}%
}\right)  _{\pi\left(  x_{n}\right)  }}\\
&  \underrightarrow{\mathrm{id}_{\mathbf{J}^{n-1}(\pi)\times\left(
M\otimes\mathcal{W}_{D^{n-1}}\right)  }\otimes\mathcal{W}_{\left(  d_{1}%
.d_{2}\right)  \in D^{2}\mapsto\left(  d_{2}.d_{1}\right)  \in D^{2}}}\,\\
&  \left(  \left(  \mathbf{J}^{n-1}(\pi)\times\left(  M\otimes\mathcal{W}%
_{D^{n-1}}\right)  \right)  \otimes\mathcal{W}_{D^{2}}\right)  _{\pi
_{n-1}\left(  x_{n}\right)  \times\left(  M\otimes\mathcal{W}_{D^{n-1}%
}\right)  _{\pi\left(  x_{n}\right)  }}\\
&  \underrightarrow{\varphi_{n-1}\times\mathrm{id}_{\left(  M\otimes
\mathcal{W}_{D^{n}}\right)  \otimes\mathcal{W}_{D}}}\,\\
&  \left(  \left(  \mathbb{J}^{D^{n-1}}(\pi)\underset{M}{\times}\left(
M\otimes\mathcal{W}_{D^{n-1}}\right)  \right)  \otimes\mathcal{W}_{D^{2}%
}\right)  _{\pi_{0}\left(  x_{n}\right)  \times\left(  M\otimes\mathcal{W}%
_{D^{n-1}}\right)  _{\pi\left(  x_{n}\right)  }}\\
&  \underrightarrow{\left(  \left(  \nabla,\gamma\right)  \in\mathbb{J}%
^{D^{n-1}}(\pi)\times\left(  M\otimes\mathcal{W}_{D^{n-1}}\right)
\mapsto\nabla\left(  \gamma\right)  \in E\otimes\mathcal{W}_{D^{n-1}}\right)
\otimes\mathrm{id}_{\mathcal{W}_{D^{2}}}}\\
&  \left(  \left(  E\otimes\mathcal{W}_{D^{n-1}}\right)  \otimes
\mathcal{W}_{D^{2}}\right)  _{\left(  E\otimes\mathcal{W}_{D^{n-1}}\right)
_{\pi_{0}\left(  x_{n}\right)  }}\\
&  =\left(  E\otimes\mathcal{W}_{D^{n+1}}\right)  _{\pi_{0}\left(
x_{n}\right)  }%
\end{align*}
Therefore the desired result follows from.the second condition in the item 3
of Notation \ref{n3.3}.
\end{enumerate}
\end{proof}

\begin{lemma}
\label{t6.1.2}The diagram
\[%
\begin{array}
[c]{ccc}%
\quad\quad\mathbf{J}^{n+1}(\pi) & \underrightarrow
{\ \ \ \ \ \ \ \ \ \ \ \ \varphi_{n+1}\ \ \ \ \ \ \ \ \ \ \ \ \ \ } &
\mathbb{\hat{J}}^{D^{n+1}}(\pi)\quad\quad\\
\pi_{n+1,n}\quad\downarrow &  & \downarrow\quad\widehat{\pi}_{n+1,n}\\
\quad\quad\mathbf{J}^{n}(\pi) & \overrightarrow
{\ \ \ \ \ \ \ \ \ \ \ \ \ \varphi_{n}\ \ \ \ \ \ \ \ \ \ \ \ \ \ } &
\widehat{\mathbb{J}}^{D^{n}}(\pi)\quad\quad
\end{array}
\]
is commutative.
\end{lemma}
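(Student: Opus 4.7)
The plan is to prove the commutativity by induction on $n$, the base cases $n=0,1$ being trivial since both $\varphi_0,\varphi_1$ and $\widehat{\pi}_{1,0}=\pi_{1,0}$ are identities (and $\varphi_n$ agrees with $\pi_{n+1,n}$ on the projections involved). So fix $n\ge 1$ and an arbitrary element $\nabla_{x_{n}}\in\mathbf{J}^{n+1}(\pi)$, with $x_{n}\in\mathbf{J}^{n}(\pi)$; by the very definition of $\pi_{n+1,n}$ in Notation \ref{n3.3}, one has $\pi_{n+1,n}(\nabla_{x_{n}})=x_{n}$, so it suffices to check that $\widehat{\pi}_{n+1,n}(\varphi_{n+1}(\nabla_{x_{n}}))=\varphi_{n}(x_{n})$ as mappings $(M\otimes\mathcal{W}_{D^{n}})_{\pi(x_{n})}\rightarrow(E\otimes\mathcal{W}_{D^{n}})_{x_{n}}$.

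By Corollary \ref{t4.1.1.1} and the very construction of $\widehat{\pi}_{n+1,n}$, what we must verify is that, for every $\gamma\in(M\otimes\mathcal{W}_{D^{n}})_{\pi(x_{n})}$,
\[
\mathbf{d}_{n+1}\bigl(\varphi_{n+1}(\nabla_{x_{n}})(\mathbf{s}_{n+1}(\gamma))\bigr)=\varphi_{n}(x_{n})(\gamma).
\]
The key observation is that under the canonical identification $M\otimes\mathcal{W}_{D^{n+1}}=(M\otimes\mathcal{W}_{D^{n}})\otimes\mathcal{W}_{D}$, the element $\mathbf{s}_{n+1}(\gamma)$ corresponds to the \emph{degenerate} (constant) $D$-curve at $\gamma$ in $M\otimes\mathcal{W}_{D^{n}}$. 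I would then trace $\mathbf{s}_{n+1}(\gamma)$ through each step of the definition of $\varphi_{n+1}(\nabla_{x_{n}})$:

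\begin{enumerate}
\item The pairing $\langle\pi_{M}^{M\otimes\mathcal{W}_{D^{n}}}\otimes\mathrm{id}_{\mathcal{W}_{D}},\mathrm{id}\rangle$ sends it to $(0_{\pi(x_{n})},\mathbf{s}_{n+1}(\gamma))$, where $0_{\pi(x_{n})}$ denotes the degenerate tangent vector in $(M\otimes\mathcal{W}_{D})_{\pi(x_{n})}$.
\item Applying $\nabla_{x_{n}}\times\mathrm{id}$ and using condition (2) of Definition \ref{d3.1} with $\alpha=0$, one has $\nabla_{x_{n}}(0_{\pi(x_{n})})=0_{x_{n}}$, the degenerate element in $\mathbf{J}^{n}(\pi)\otimes\mathcal{W}_{D}$ at $x_{n}$.
\item Applying $\varphi_{n}\otimes\mathrm{id}_{\mathcal{W}_{D}}$ gives the degenerate element at $\varphi_{n}(x_{n})$ in $\mathbb{J}^{D^{n}}(\pi)\otimes\mathcal{W}_{D}$, paired still with the constant $D$-curve at $\gamma$.
\item The evaluation $(\nabla,\gamma')\mapsto\nabla(\gamma')$, tensored with $\mathrm{id}_{\mathcal{W}_{D}}$, applied to a pair that is degenerate in $\mathbb{J}^{D^{n}}(\pi)\otimes\mathcal{W}_{D}$, produces a degenerate element in $(E\otimes\mathcal{W}_{D^{n}})\otimes\mathcal{W}_{D}$, namely $\mathbf{s}_{n+1}\bigl(\varphi_{n}(x_{n})(\gamma)\bigr)$.
\end{enumerate}
Applying $\mathbf{d}_{n+1}$ then recovers $\varphi_{n}(x_{n})(\gamma)$, which is exactly what we wanted.

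The main obstacle is purely notational: one has to carefully identify the degenerate/constant curves arising from $\mathbf{s}_{n+1}$ in each intermediate product, and note that both the bifunctoriality of $\otimes$ and the naturality of the projections $\pi_{M}^{M\otimes\mathcal{W}_{D^{n}}}$ ensure that "constant in the last $D$-variable" is preserved throughout the chain of maps defining $\varphi_{n+1}(\nabla_{x_{n}})$. Once this is clear, the conclusion is immediate from $\mathbf{d}_{n+1}\circ\mathbf{s}_{n+1}=\mathrm{id}$ applied to $\varphi_{n}(x_{n})(\gamma)$. The inductive hypothesis plays no role beyond establishing that $\varphi_{n}$ is well defined on $\mathbf{J}^{n}(\pi)$.
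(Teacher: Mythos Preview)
Your proposal is correct and follows essentially the same approach as the paper: both unfold $\widehat{\pi}_{n+1,n}\circ\varphi_{n+1}$ as $\mathbf{d}_{n+1}\circ\varphi_{n+1}(\nabla_{x_n})\circ\mathbf{s}_{n+1}$, expand the definition of $\varphi_{n+1}$, and simplify. The paper compresses your steps (1)--(4) into the single phrase ``this is easily seen to be equivalent to $\varphi_n(\pi_{n+1,n}(\nabla_{x_n}))$''; your explicit tracking of degeneracy in the last $D$-variable is precisely what justifies that phrase.
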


\begin{proof}
Given $\nabla_{x_{n}}\in\mathbf{J}^{n+1}(\pi)$, $\left(  \widehat{\pi}%
_{n+1,n}\circ\varphi_{n+1}\right)  \left(  \nabla_{x_{n}}\right)  $ is, by the
very definition of $\widehat{\pi}_{n+1,n}$, the composition of mappings
\begin{align*}
&  \left(  M\otimes\mathcal{W}_{D^{n}}\right)  _{\pi\left(  x_{n}\right)
}\underrightarrow{\,\mathbf{s}_{n+1}}\,\left(  M\otimes\mathcal{W}_{D^{n+1}%
}\right)  _{\pi\left(  x_{n}\right)  }\underrightarrow{\,\varphi_{n+1}%
(\nabla_{x_{n}})}\,\\
&  \left(  E\otimes\mathcal{W}_{D^{n+1}}\right)  _{\pi_{0}\left(
x_{n}\right)  }\underrightarrow{\,\mathbf{d}_{n+1}}\,\left(  E\otimes
\mathcal{W}_{D^{n}}\right)  _{\pi_{0}\left(  x_{n}\right)  }%
\end{align*}
which is equivalent, by the very definition of $\varphi_{n+1}(\nabla_{x_{n}}%
)$, to the composition of mappings
\begin{align*}
&  \left(  M\otimes\mathcal{W}_{D^{n}}\right)  _{\pi\left(  x_{n}\right)
}\underrightarrow{\,\mathbf{s}_{n+1}}\,\left(  M\otimes\mathcal{W}_{D^{n+1}%
}\right)  _{\pi\left(  x_{n}\right)  }\\
&  =\left(  \left(  M\otimes\mathcal{W}_{D^{n}}\right)  \otimes\mathcal{W}%
_{D}\right)  _{\left(  M\otimes\mathcal{W}_{D^{n}}\right)  _{\pi\left(
x_{n}\right)  }}\\
&  \underrightarrow{\left\langle \pi_{M}^{M\otimes\mathcal{W}_{D^{n}}}%
\otimes\mathrm{id}_{\mathcal{W}_{D}},\mathrm{id}_{\left(  M\otimes
\mathcal{W}_{D^{n}}\right)  \otimes\mathcal{W}_{D}}\right\rangle }\\
&  \left(  \left(  M\otimes\mathcal{W}_{D}\right)  \underset{M\otimes
\mathcal{W}_{D}}{\times}\left(  \left(  M\otimes\mathcal{W}_{D^{n}}\right)
\otimes\mathcal{W}_{D}\right)  \right)  _{\left\{  \pi\left(  x_{n}\right)
\right\}  \times\left(  M\otimes\mathcal{W}_{D^{n}}\right)  _{\pi\left(
x_{n}\right)  }}\\
&  \underrightarrow{\nabla_{x_{n}}\times\mathrm{id}_{\left(  M\otimes
\mathcal{W}_{D^{n}}\right)  \otimes\mathcal{W}_{D}}}\\
&  \left(  \left(  \mathbf{J}^{n}(\pi)\otimes\mathcal{W}_{D}\right)
\underset{M\otimes\mathcal{W}_{D}}{\times}\left(  \left(  M\otimes
\mathcal{W}_{D^{n}}\right)  \otimes\mathcal{W}_{D}\right)  \right)  _{\left\{
\pi\left(  x_{n}\right)  \right\}  \times\left(  M\otimes\mathcal{W}_{D^{n}%
}\right)  _{\pi\left(  x_{n}\right)  }}\\
&  \underrightarrow{\left(  \varphi_{n}\otimes\mathrm{id}_{\mathcal{W}_{D}%
}\right)  \times\mathrm{id}_{\left(  M\otimes\mathcal{W}_{D^{n}}\right)
\otimes\mathcal{W}_{D}}}\\
&  \left(  \left(  \mathbb{J}^{D^{n}}(\pi)\otimes\mathcal{W}_{D}\right)
\underset{M\otimes\mathcal{W}_{D}}{\times}\left(  \left(  M\otimes
\mathcal{W}_{D^{n}}\right)  \otimes\mathcal{W}_{D}\right)  \right)  _{\left\{
\pi\left(  x_{n}\right)  \right\}  \times\left(  M\otimes\mathcal{W}_{D^{n}%
}\right)  _{\pi\left(  x_{n}\right)  }}\\
&  =\left(  \left(  \mathbb{J}^{D^{n}}(\pi)\underset{M}{\times}\left(
M\otimes\mathcal{W}_{D^{n}}\right)  \right)  \otimes\mathcal{W}_{D}\right)
_{\left\{  \pi\left(  x_{n}\right)  \right\}  \times\left(  M\otimes
\mathcal{W}_{D^{n}}\right)  _{\pi\left(  x_{n}\right)  }}\\
&  \underrightarrow{\left(  \left(  \nabla,\gamma\right)  \in\mathbb{J}%
^{D^{n}}(\pi)\times\left(  M\otimes\mathcal{W}_{D^{n}}\right)  \mapsto
\nabla\left(  \gamma\right)  \in E\otimes\mathcal{W}_{D^{n}}\right)
\otimes\mathrm{id}_{\mathcal{W}_{D}}}\\
&  \left(  \left(  E\otimes\mathcal{W}_{D^{n}}\right)  \otimes\mathcal{W}%
_{D}\right)  _{\left(  E\otimes\mathcal{W}_{D^{n}}\right)  _{\pi_{0}\left(
x_{n}\right)  }}\\
&  =\left(  E\otimes\mathcal{W}_{D^{n+1}}\right)  _{\pi_{0}\left(
x_{n}\right)  }\underrightarrow{\,\mathbf{d}_{n+1}}\,\left(  E\otimes
\mathcal{W}_{D^{n}}\right)  _{\pi_{0}\left(  x_{n}\right)  }%
\end{align*}
This is easily seen to be equivalent to $\varphi_{n}(\pi_{n+1,n}\left(
\nabla_{x_{n}}\right)  )$, which completes the proof.
\end{proof}

Lemma \ref{t6.1.1} can be strengthened as follows:

\begin{lemma}
\label{t6.1.3}We have
\[
\varphi_{n+1}(\nabla_{x_{n}})\in\mathbb{J}^{n+1}(\pi)
\]

\end{lemma}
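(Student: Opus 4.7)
The plan is to proceed by induction on $n$, checking the two conditions of Definition \ref{d4.2}. The base cases $n=0,1$ are trivial: $\varphi_{0},\varphi_{1}$ are identities, $\mathbb{J}^{D^{0}}=\widehat{\mathbb{J}}^{D^{0}}=E$, and $\mathbb{J}^{D^{1}}=\widehat{\mathbb{J}}^{D^{1}}=\mathbf{J}^{1}(\pi)$, so Lemma \ref{t6.1.1} already gives the conclusion. Suppose inductively that $\varphi_{k}$ maps $\mathbf{J}^{k}(\pi)$ into $\mathbb{J}^{D^{k}}(\pi)$ for every $k\leq n$, and let $\nabla_{x_{n}}\in\mathbf{J}^{n+1}(\pi)$ with $x_{n}=\nabla_{x_{n-1}}\in\mathbf{J}^{n}(\pi)$. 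By Lemma \ref{t6.1.1}, $\varphi_{n+1}(\nabla_{x_{n}})$ is already a $D^{n+1}$-pseudotangential; it remains to verify the two clauses of Definition \ref{d4.2}.

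First I would verify clause (1). By Lemma \ref{t6.1.2} we have $\widehat{\pi}_{n+1,n}(\varphi_{n+1}(\nabla_{x_{n}}))=\varphi_{n}(\pi_{n+1,n}(\nabla_{x_{n}}))$. Since $\pi_{n+1,n}(\nabla_{x_{n}})\in\mathbf{J}^{n}(\pi)$, the inductive hypothesis immediately delivers a $D^{n}$-tangential on the right-hand side, which is exactly clause (1).

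The main task is clause (2): for every $\gamma\in(M\otimes\mathcal{W}_{D^{n}})_{\pi(x_{n})}$ one must show that $\varphi_{n+1}(\nabla_{x_{n}})$ commutes with the multiplication map $\mu\colon(d_{1},\ldots,d_{n+1})\in D^{n+1}\mapsto(d_{1},\ldots,d_{n}d_{n+1})\in D^{n}$ in the sense required by Definition \ref{d4.2}. My approach is to exploit the factorisation of $M\otimes\mathcal{W}_{D^{n+1}}$ as $(M\otimes\mathcal{W}_{D^{n-1}})\otimes\mathcal{W}_{D^{2}}$ (as was already used in the proof of Lemma \ref{t6.1.1}), so that the squash map $\mathrm{id}_{M}\otimes\mathcal{W}_{\mu}$ becomes $\mathrm{id}_{M\otimes\mathcal{W}_{D^{n-1}}}\otimes\mathcal{W}_{\mathbf{m}_{D\times D\to D}}$ on the final two factors. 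Then, unfolding the definition of $\varphi_{n+1}(\nabla_{x_{n}})$ and using this factorisation, the action on $(\mathrm{id}_{M}\otimes\mathcal{W}_{\mu})(\gamma)$ becomes the composition that evaluates $\nabla_{x_{n}}$ on a $D^{2}$-valued direction whose two components are the first and second coordinates of the $D^{2}$-factor. This is precisely the input to the holonomy condition in clause (3) of Notation \ref{n3.3}: that condition equates the two compositions obtained by applying $\nabla_{x_{n}}$ to a $D^{2}$-direction and then evaluating through $\nabla_{x_{n-1}}\in\mathbf{J}^{n}(\pi)$, with or without the $\langle1,2\rangle$-swap on $D^{2}$. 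Applying it (after transporting through $\varphi_{n-1}$, which we may do by the inductive hypothesis) reduces the desired equality to the fact that $\varphi_{n}(\pi_{n+1,n}(\nabla_{x_{n}}))$ already satisfies the clause-(2) condition at level $n$, which is guaranteed by the inductive hypothesis together with Lemma \ref{t6.1.2}.

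The principal obstacle is clause (2): one must keep track of the factorisation $D^{n+1}\cong D^{n-1}\times D^{2}$, align the squash map $\mu$ with the multiplication $\mathbf{m}\colon D\times D\to D$ that appears in the holonomy condition of Notation \ref{n3.3}, and verify that when the inductive $\varphi_{n}$ and the evaluation map $(\nabla,\gamma)\mapsto\nabla(\gamma)$ are threaded through the resulting diagram, the holonomy condition on $\nabla_{x_{n}}$ is transformed into exactly the multiplication-compatibility of $\varphi_{n}(\pi_{n+1,n}(\nabla_{x_{n}}))$ at the lower level. Once this correspondence between the two notions of holonomy is established, the inductive step goes through, and the lemma follows.
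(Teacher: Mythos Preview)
Your treatment of clause (1) via Lemma \ref{t6.1.2} and induction matches the paper and is fine. The difficulty is clause (2), and here your approach goes astray.

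You propose to invoke the holonomy condition in item (3) of Notation \ref{n3.3}, claiming that the multiplication $\mathbf{m}\colon D\times D\to D$ ``appears in the holonomy condition''. It does not: that condition involves only the swap $(d_1,d_2)\mapsto(d_2,d_1)$ and the two inclusions $d\mapsto(d,0)$, $d\mapsto(0,d)$; no multiplication occurs anywhere in it. The holonomy condition is exactly what supplies the \emph{symmetry} property (\ref{6.1.1.3}) in Lemma \ref{t6.1.1}; it has nothing to say about compatibility with $\mu\colon(d_1,\dots,d_{n+1})\mapsto(d_1,\dots,d_n d_{n+1})$. Consequently your claimed reduction of clause (2) at level $n+1$ to clause (2) at level $n$ via holonomy is not justified, and indeed those two clauses multiply different pairs of coordinates, so a swap would not bridge them.

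The paper's argument is much more direct and avoids holonomy entirely. Write out $\varphi_{n+1}(\nabla_{x_n})$ applied to $(\mathrm{id}_M\otimes\mathcal{W}_\mu)(\gamma)$. The first step in the definition of $\varphi_{n+1}$ projects to the outer $D$-factor via $\pi_M^{M\otimes\mathcal{W}_{D^n}}\otimes\mathrm{id}_{\mathcal{W}_D}$; but since $d_{n+1}$ appears in $(\mathrm{id}_M\otimes\mathcal{W}_\mu)(\gamma)$ only through the product $d_n d_{n+1}$, this projection yields the \emph{degenerate} element of $(M\otimes\mathcal{W}_D)_{\pi(x_n)}$. Hence $\nabla_{x_n}$ returns the degenerate element at $x_n$, and after $\varphi_n$ one is left simply with the evaluation map $(\varphi_n(x_n)\otimes\mathrm{id}_{\mathcal{W}_D})$ applied to $(\mathrm{id}_M\otimes\mathcal{W}_\mu)(\gamma)\in(M\otimes\mathcal{W}_{D^n})\otimes\mathcal{W}_D$. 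Now the equality with the right-hand side $(\mathrm{id}_E\otimes\mathcal{W}_\mu)(\varphi_n(x_n)(\gamma))$ is precisely condition (3) of Definition \ref{d4.1} for $\varphi_n(x_n)$ with $i=n$, $m=1$ --- the infinitesimal multilinearity of a $D^n$-pseudotangential --- which holds by Lemma \ref{t6.1.1}. No appeal to the holonomy condition is needed.
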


\begin{proof}
With due regard to Lemmas \ref{t6.1.1} and \ref{t6.1.2}, we have only to show
that
\begin{align}
&  \left(  \varphi_{n+1}(\nabla_{x_{n}})\right)  \circ\left(  \mathrm{id}%
_{M}\otimes\mathcal{W}_{\left(  d_{1},...,d_{n},d_{n+1}\right)  \in
D^{n+1}\mapsto\left(  d_{1},...,d_{n}d_{n+1}\right)  \in D^{n}}\right)
\nonumber\\
&  =\left(  \mathrm{id}_{E}\otimes\mathcal{W}_{\left(  d_{1},...,d_{n}%
,d_{n+1}\right)  \in D^{n+1}\mapsto\left(  d_{1},...,d_{n}d_{n+1}\right)  \in
D^{n+1}}\right)  \circ\nonumber\\
&  \left(  \widehat{\pi}_{n+1,n}(\varphi_{n+1}(\nabla_{x_{n}}))\right)
\label{6.1.3.1}%
\end{align}
For $n=0$, there is nothing to prove. We proceed by induction on $n$. By the
very definition of $\varphi_{n+1}$, the left-hand side of (\ref{6.1.3.1}) is
the composition of mappings
\begin{align*}
&  \left(  M\otimes\mathcal{W}_{D^{n}}\right)  _{\pi\left(  x_{n}\right)  }\\
&  \underrightarrow{\mathrm{id}_{M}\otimes\mathcal{W}_{\left(  d_{1}%
,...,d_{n},d_{n+1}\right)  \in D^{n+1}\mapsto\left(  d_{1},...,d_{n}%
d_{n+1}\right)  \in D^{n}}}\\
&  \left(  M\otimes\mathcal{W}_{D^{n+1}}\right)  _{\pi\left(  x_{n}\right)
}\\
&  =\left(  \left(  M\otimes\mathcal{W}_{D^{n}}\right)  \otimes\mathcal{W}%
_{D}\right)  _{\left(  M\otimes\mathcal{W}_{D^{n}}\right)  _{\pi\left(
x_{n}\right)  }}\\
&  \underrightarrow{\left\langle \pi_{M}^{M\otimes\mathcal{W}_{D^{n}}}%
\otimes\mathrm{id}_{\mathcal{W}_{D}},\mathrm{id}_{\left(  M\otimes
\mathcal{W}_{D^{n}}\right)  \otimes\mathcal{W}_{D}}\right\rangle }\\
&  \left(  M\otimes\mathcal{W}_{D}\right)  _{\pi\left(  x_{n}\right)
}\underset{M\otimes\mathcal{W}_{D}}{\times}\left(  \left(  M\otimes
\mathcal{W}_{D^{n}}\right)  \otimes\mathcal{W}_{D}\right)  _{\left(
M\otimes\mathcal{W}_{D^{n}}\right)  _{\pi\left(  x_{n}\right)  }}\\
&  \underrightarrow{\nabla_{x_{n}}\times\mathrm{id}_{\left(  M\otimes
\mathcal{W}_{D^{n}}\right)  \otimes\mathcal{W}_{D}}}\\
&  \left(  \mathbf{J}^{n}(\pi)\otimes\mathcal{W}_{D}\right)  _{\pi\left(
x_{n}\right)  }\underset{M\otimes\mathcal{W}_{D}}{\times}\left(  \left(
M\otimes\mathcal{W}_{D^{n}}\right)  \otimes\mathcal{W}_{D}\right)  _{\left(
M\otimes\mathcal{W}_{D^{n}}\right)  _{\pi\left(  x_{n}\right)  }}\\
&  \underrightarrow{\left(  \varphi_{n}\otimes\mathrm{id}_{\mathcal{W}_{D}%
}\right)  \times\mathrm{id}_{\left(  M\otimes\mathcal{W}_{D^{n}}\right)
\otimes\mathcal{W}_{D}}}\\
&  \left(  \mathbb{J}^{D^{n}}(\pi)\otimes\mathcal{W}_{D}\right)  _{\pi\left(
x_{n}\right)  }\underset{M\otimes\mathcal{W}_{D}}{\times}\left(  \left(
M\otimes\mathcal{W}_{D^{n}}\right)  \otimes\mathcal{W}_{D}\right)  _{\left(
M\otimes\mathcal{W}_{D^{n}}\right)  _{\pi\left(  x_{n}\right)  }}\\
&  =\left(  \left(  \mathbb{J}^{D^{n}}(\pi)\underset{M}{\times}\left(
M\otimes\mathcal{W}_{D^{n}}\right)  \right)  \otimes\mathcal{W}_{D}\right)
_{\left\{  \pi\left(  x_{n}\right)  \right\}  \times\left(  M\otimes
\mathcal{W}_{D^{n}}\right)  _{\pi\left(  x_{n}\right)  }}\\
&  \underrightarrow{\left(  \left(  \nabla,\gamma\right)  \in\mathbb{J}%
^{D^{n}}(\pi)\times\left(  M\otimes\mathcal{W}_{D^{n}}\right)  \mapsto
\nabla\left(  \gamma\right)  \in E\otimes\mathcal{W}_{D^{n}}\right)
\otimes\mathrm{id}_{\mathcal{W}_{D}}}\\
&  \left(  \left(  E\otimes\mathcal{W}_{D^{n}}\right)  \otimes\mathcal{W}%
_{D}\right)  _{\left(  E\otimes\mathcal{W}_{D^{n}}\right)  _{\pi_{0}\left(
x_{n}\right)  }}\\
&  =\left(  E\otimes\mathcal{W}_{D^{n+1}}\right)  _{\pi_{0}\left(
x_{n}\right)  }%
\end{align*}
which is easily seen, by dint of Lemma \ref{t6.1.1},\ to be equivalent to the
right-hand side of (\ref{6.1.3.1}).
\end{proof}

Thus we have established the mappings $\varphi_{n}:\mathbf{J}^{n}%
(\pi)\rightarrow\mathbb{J}^{D^{n}}(\pi)$.

\section{\label{s7}From the Second Approach to the Third}

The principal objective in this section is to define a mapping $\psi
_{n}:\mathbb{J}^{D^{n}}(\pi)\rightarrow\mathbb{J}^{D_{n}}(\pi)$. Let us begin with

\begin{proposition}
\label{t7.1.1}Let $\nabla_{x}$ be a $D^{n}$-pseudotangential \textit{over }the
bundle $\pi:E\rightarrow M$ \textit{at} $x\in E$ and $\gamma\in\left(
M\otimes\mathcal{W}_{D_{n}}\right)  _{\pi(x)}$. Then there exists a unique
$\gamma^{\prime}\in\left(  E\otimes\mathcal{W}_{D_{n}}\right)  _{x}$ such
that
\begin{align*}
&  \nabla_{x}(\left(  \mathrm{id}_{M}\otimes\mathcal{W}_{(d_{1},...,d_{n})\in
D^{n}\longmapsto(d_{1}+...+d_{n})\in D_{n}}\right)  \left(  \gamma\right)  )\\
&  =\left(  \mathrm{id}_{E}\otimes\mathcal{W}_{(d_{1},...,d_{n})\in
D^{n}\longmapsto(d_{1}+...+d_{n})\in D_{n}}\right)  \left(  \gamma^{\prime
}\right)
\end{align*}

\end{proposition}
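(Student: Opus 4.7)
Write $+$ for the mapping $(d_1,\ldots,d_n)\in D^n \mapsto d_1+\cdots+d_n \in D_n$. The plan is to exhibit $\gamma'$ as the unique factorization through an equalizer whose existence is guaranteed by the symmetry axiom for $D^n$-pseudotangentials.

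The decisive algebraic input is that $\mathcal{W}_+ : \mathcal{W}_{D_n}\to\mathcal{W}_{D^n}$ presents $\mathcal{W}_{D_n}$ as the equalizer in $\mathbf{W}$ of the $n!$ permutation automorphisms $\mathcal{W}_\sigma$ of $\mathcal{W}_{D^n}$ indexed by $\sigma\in\mathbf{S}_n$. Under the identifications $\mathcal{W}_{D_n}=\mathbb{R}[X]/(X^{n+1})$ and $\mathcal{W}_{D^n}=\mathbb{R}[X_1,\ldots,X_n]/(X_1^2,\ldots,X_n^2)$, the map $\mathcal{W}_+$ sends $X$ to $X_1+\cdots+X_n$; one verifies that $(X_1+\cdots+X_n)^k=k!\,e_k$ modulo the defining relations, so $\mathcal{W}_+$ identifies $\mathcal{W}_{D_n}$ with the subalgebra spanned by $\{1,e_1,\ldots,e_n\}$, which is precisely the subalgebra of $\mathbf{S}_n$-invariants in $\mathcal{W}_{D^n}$.

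By microlinearity of $E$, applying $E\otimes -$ transports this equalizer to an equalizer in $\mathbf{FS}$: the mapping $\mathrm{id}_E\otimes\mathcal{W}_+$ equalizes the $n!$ actions $\mathrm{id}_E\otimes\mathcal{W}_\sigma$ on $E\otimes\mathcal{W}_{D^n}$. It therefore suffices to verify that $\nabla_x((\mathrm{id}_M\otimes\mathcal{W}_+)(\gamma))$ is fixed by every such permutation action. Since $+\circ\sigma=+$ for any $\sigma\in\mathbf{S}_n$, we have $((\mathrm{id}_M\otimes\mathcal{W}_+)(\gamma))^\sigma=(\mathrm{id}_M\otimes\mathcal{W}_+)(\gamma)$, and condition (4) of Definition \ref{d4.1} yields
\[
(\nabla_x((\mathrm{id}_M\otimes\mathcal{W}_+)(\gamma)))^\sigma=\nabla_x(((\mathrm{id}_M\otimes\mathcal{W}_+)(\gamma))^\sigma)=\nabla_x((\mathrm{id}_M\otimes\mathcal{W}_+)(\gamma)).
\]
The universal property of the equalizer then produces a unique $\gamma'\in E\otimes\mathcal{W}_{D_n}$ satisfying the required identity; that $\gamma'$ lies in the fiber $(E\otimes\mathcal{W}_{D_n})_x$ is immediate upon composing with the augmentation $\mathcal{W}_{D_n}\to\mathbb{R}$ (which factors through $\mathcal{W}_+$) and using condition (1) of Definition \ref{d4.1}.

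The main obstacle is the explicit verification of the equalizer claim for $\mathcal{W}_+$ as presenting the symmetric subalgebra; once that algebraic fact is secured, microlinearity of $E$ transports it to $\mathbf{FS}$, and the symmetry axiom for $D^n$-pseudotangentials yields the factorization mechanically.
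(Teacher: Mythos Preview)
Your proof is correct and follows essentially the same route as the paper: the paper's argument reduces immediately to Lemma~\ref{t7.1.2}, which records that $\mathcal{W}_{+}:\mathcal{W}_{D_n}\to\mathcal{W}_{D^n}$ is the limit (joint equalizer) of the adjacent transpositions $\mathcal{W}_{\tau_i}$ on $\mathcal{W}_{D^n}$, and then invokes microlinearity of $E$ together with the symmetry axiom~(4) of Definition~\ref{d4.1} exactly as you do. The only cosmetic difference is that you equalize over all $n!$ permutations while the paper equalizes over the $n-1$ generating transpositions; since the latter generate $\mathbf{S}_n$, the two limit diagrams have the same universal cone.
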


\begin{proof}
This stems easily from the following simple lemma.
\end{proof}

\begin{lemma}
\label{t7.1.2}The diagram
\[
\mathcal{W}_{D_{n}}\underrightarrow{\,\mathcal{W}_{(d_{1},...,d_{n})\in
D^{n}\longmapsto(d_{1}+...+d_{n})\in D_{n}}}\,\mathcal{W}_{D^{n}}
\begin{array}
[c]{c}%
\underrightarrow{\mathcal{W}_{\tau_{1}}}\\
\vdots\\
\underrightarrow{\mathcal{W}_{\tau_{i}}}\\
\vdots\\
\underrightarrow{\mathcal{W}_{\tau_{n-1}}}%
\end{array}
\mathcal{W}_{D^{n}}%
\]
is a limit diagram in the category of Weil algebras, where $\tau_{i}%
:D^{n}\rightarrow D^{n}$ is the mapping permuting the $i$-th and $(i+1)$-th
components of $D^{n}$ while fixing the other components.
\end{lemma}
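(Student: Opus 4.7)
The plan is to identify both Weil algebras concretely, compute the sum map in coordinates, and show it gives an isomorphism onto the $S_n$-invariant subalgebra of $\mathcal{W}_{D^n}$, which is precisely the equalizer of the $n-1$ transpositions $\mathcal{W}_{\tau_i}$ (these generate the full $S_n$-action on $\mathcal{W}_{D^n}$, by the same fact used in Subsection 2.2.1 for $\mathbf{S}_n$).

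First, I would write $\mathcal{W}_{D_n}=\mathbb{R}[X]/(X^{n+1})$ and $\mathcal{W}_{D^n}=\mathbb{R}[X_1,\ldots,X_n]/(X_1^2,\ldots,X_n^2)$, with the obvious $\mathbb{R}$-basis of $\mathcal{W}_{D^n}$ consisting of the squarefree monomials $X_{i_1}\cdots X_{i_k}$ indexed by subsets $\{i_1<\cdots<i_k\}\subseteq\{1,\ldots,n\}$. The map $\mathcal{W}_{(d_1,\ldots,d_n)\mapsto d_1+\cdots+d_n}$ sends $X\mapsto X_1+\cdots+X_n$, hence $X^k\mapsto(X_1+\cdots+X_n)^k$. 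Since $X_i^2=0$, only tuples of pairwise distinct indices survive, giving $X^k\mapsto k!\,e_k$, where $e_k=\sum_{i_1<\cdots<i_k}X_{i_1}\cdots X_{i_k}$ is the $k$-th elementary symmetric polynomial in $X_1,\ldots,X_n$. (For $k\geq n+1$ both sides vanish, so $(X^{n+1})$ is respected.)

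Next I would identify the equalizer. Since the transpositions $\tau_1,\ldots,\tau_{n-1}$ generate $\mathbf{S}_n$, the equalizer of the $\mathcal{W}_{\tau_i}$ is exactly the subalgebra $\mathcal{W}_{D^n}^{\mathbf{S}_n}$ of $\mathbf{S}_n$-invariants. Its $\mathbb{R}$-basis is given by the orbit sums of squarefree monomials under $\mathbf{S}_n$; but because all squarefree monomials of a fixed degree $k$ form a single $\mathbf{S}_n$-orbit, these orbit sums are precisely $e_0=1,e_1,\ldots,e_n$. Hence $\mathcal{W}_{D^n}^{\mathbf{S}_n}$ is an $(n+1)$-dimensional vector space with basis $e_0,\ldots,e_n$.

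Finally, the image of the map $X\mapsto X_1+\cdots+X_n$ contains $1!\,e_1,2!\,e_2,\ldots,n!\,e_n$ and $1=e_0$, which span $\mathcal{W}_{D^n}^{\mathbf{S}_n}$; on the other hand $\mathcal{W}_{D_n}$ has $\mathbb{R}$-basis $1,X,\ldots,X^n$, also of dimension $n+1$, and maps to these linearly independent elements $e_0,1!\,e_1,\ldots,n!\,e_n$. Therefore the map $\mathcal{W}_{D_n}\to\mathcal{W}_{D^n}$ is injective with image exactly the equalizer of the $\mathcal{W}_{\tau_i}$, which establishes the limit diagram. The only mildly delicate step is the combinatorial observation that the $\mathbf{S}_n$-orbit of a squarefree monomial of degree $k$ in $\mathcal{W}_{D^n}$ is the full set of squarefree monomials of that degree, so that orbit sums coincide with the $e_k$; everything else is bookkeeping.
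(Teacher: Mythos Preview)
Your proof is correct. The paper does not actually supply a proof of this lemma: it is introduced only as ``the following simple lemma'' supporting Proposition~\ref{t7.1.1} and is then stated without argument. Your explicit computation --- identifying $\mathcal{W}_{D_n}=\mathbb{R}[X]/(X^{n+1})$, $\mathcal{W}_{D^n}=\mathbb{R}[X_1,\dots,X_n]/(X_1^2,\dots,X_n^2)$, noting that $X^k\mapsto k!\,e_k$, and checking that $1,e_1,\dots,e_n$ is a basis for the $\mathbf{S}_n$-invariants --- is exactly the routine verification the author had in mind, and nothing more is needed.

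One small remark on exposition: you pass from ``equalizer of the $\mathcal{W}_{\tau_i}$'' to ``$\mathbf{S}_n$-invariant subalgebra'' by invoking that the adjacent transpositions generate $\mathbf{S}_n$. This is the right conclusion, but strictly speaking it uses that the relevant limit is the subobject on which each $\mathcal{W}_{\tau_i}$ agrees with the identity (i.e.\ the common fixed points), rather than merely the subobject on which the $\mathcal{W}_{\tau_i}$ agree with one another. The former is what the application in Proposition~\ref{t7.1.1} requires (since $\nabla_x$ respects each $\tau_i$ individually via condition~(4) of Definition~\ref{d4.1}), so your reading is the intended one; it would do no harm to say so explicitly. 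Everything else is straightforward, including the observation that equalizers in $\mathbf{W}$ are computed at the level of underlying $\mathbb{R}$-algebras.
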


\begin{notation}
We will denote by $\widehat{\psi}_{n}(\nabla_{x})(\gamma)$ the unique
$\gamma^{\prime}$ in the above proposition, thereby getting a function
$\widehat{\psi}_{n}(\nabla_{x}):\left(  M\otimes\mathcal{W}_{D_{n}}\right)
_{\pi(x)}\rightarrow\left(  E\otimes\mathcal{W}_{D_{n}}\right)  _{x}$.
\end{notation}

\begin{proposition}
\label{t7.1.3}For any\textbf{\ }$\nabla_{x}\in\widehat{\mathbb{J}}_{x}^{D^{n}%
}(\pi)$, we have $\widehat{\psi}_{n}(\nabla_{x})\in\widehat{\mathbb{J}}%
_{x}^{D_{n}}(\pi)$.
\end{proposition}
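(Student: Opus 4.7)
The plan is to verify the three defining clauses of Definition \ref{d5.1.1} for $\widehat{\psi}_{n}(\nabla_{x})$. The lever that drives everything is that, by Lemma \ref{t7.1.2} combined with microlinearity of $E$, the map
\[
\mathrm{id}_{E}\otimes\mathcal{W}_{(d_{1},\ldots,d_{n})\in D^{n}\mapsto d_{1}+\cdots+d_{n}\in D_{n}}\colon E\otimes\mathcal{W}_{D_{n}}\longrightarrow E\otimes\mathcal{W}_{D^{n}}
\]
is a monomorphism (it is the equalizer, in $\mathbf{FS}$, of the same permutation maps that $\mathcal{W}_{+}$ equalizes at the Weil-algebra level). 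Consequently, any desired equality of elements of $(E\otimes\mathcal{W}_{D_{n}})_{x}$ can be checked after pushforward by $\mathrm{id}_{E}\otimes\mathcal{W}_{+}$, and each clause of Definition \ref{d5.1.1} will be transported from a corresponding identity for $\nabla_{x}$ at the $D^{n}$-level, provided the relevant map on $D_{n}$ factors through $+\colon D^{n}\to D_{n}$.

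Clause (1) is nearly immediate: pushing $(\pi\otimes\mathrm{id}_{\mathcal{W}_{D_{n}}})(\widehat{\psi}_{n}(\nabla_{x})(\gamma))$ forward by $+$ and invoking the first clause of Definition \ref{d4.1} for $\nabla_{x}$ produces $(\mathrm{id}_{M}\otimes\mathcal{W}_{+})(\gamma)$ on the nose, so monicity delivers clause (1) for $\widehat{\psi}_{n}(\nabla_{x})$. For clause (2), the observation is that
\[
(\alpha\,\cdot)\circ(+)\;=\;(+)\circ(\alpha\underset{1}{\cdot})\circ\cdots\circ(\alpha\underset{n}{\cdot})
\]
as maps $D^{n}\to D_{n}$, since both send $(d_{1},\ldots,d_{n})$ to $\alpha(d_{1}+\cdots+d_{n})$. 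Feeding this through $\nabla_{x}$ and iterating the second clause of Definition \ref{d4.1} once per slot, monicity then yields $\widehat{\psi}_{n}(\nabla_{x})(\alpha\gamma)=\alpha\,\widehat{\psi}_{n}(\nabla_{x})(\gamma)$.

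Clause (3) is the delicate one, and I expect it to be the main obstacle. Its engine is the identity
\[
\mathbf{m}_{D_{n}\times D_{m}\to D_{n}}\circ(+\,\times\,\mathrm{id}_{D_{m}})\;=\;(+)\circ\chi,
\]
where $\chi\colon D^{n}\times D_{m}\to D^{n}$ sends $((d_{1},\ldots,d_{n}),e)$ to $(ed_{1},\ldots,ed_{n})$; both sides evaluate to $e(d_{1}+\cdots+d_{n})\in D_{n}$. The map $\chi$ is itself an $n$-fold composition of the slot-multiplication maps appearing in the third clause of Definition \ref{d4.1}, with the common parameter $e\in D_{m}$ diagonally inserted into the $n$ slots via $D_{m}\hookrightarrow D_{m}^{n}$. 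Iterating the third clause of Definition \ref{d4.1} once per slot produces the required commutative square at the $D^{n}\times D_{m}$-level; descending along $\mathrm{id}_{E}\otimes\mathcal{W}_{+}\otimes\mathrm{id}_{\mathcal{W}_{D_{m}}}$, which remains a monomorphism by bifunctoriality of $\otimes$ and left exactness of Weil functors, yields clause (3) for $\widehat{\psi}_{n}(\nabla_{x})$.

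The real difficulty in the third step is thus the bookkeeping: one must be scrupulous about how the single parameter $e\in D_{m}$ is copied diagonally through the $n$ slot-multiplications, and confirm that the monomorphism property of $\mathrm{id}_{E}\otimes\mathcal{W}_{+}$ is preserved after tensoring with $\mathcal{W}_{D_{m}}$. Both reduce, in the end, to careful diagram chases using bifunctoriality of $\otimes$ and left exactness of the Weil functor associated to $\mathcal{W}_{D_{m}}$.
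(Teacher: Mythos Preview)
Your proposal is correct and follows essentially the same route as the paper's own proof: verify the three clauses of Definition~\ref{d5.1.1} by pushing forward along $\mathrm{id}_{E}\otimes\mathcal{W}_{+_{D^{n}\to D_{n}}}$ and reducing to the corresponding clauses of Definition~\ref{d4.1} for~$\nabla_{x}$, exploiting the monicity furnished by Lemma~\ref{t7.1.2} and microlinearity. Your treatment is in fact slightly more explicit than the paper's on two points---you articulate the monicity argument directly, and you spell out that the map $\chi$ (the paper calls it~$\eta$) must be seen as an $n$-fold iteration of the slot-multiplication squares in Definition~\ref{d4.1}(3) with $e$ inserted diagonally---whereas the paper simply asserts the relevant square commutes without decomposing it.
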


\begin{proof}
We have to verify the three conditions in Definition \ref{d5.1.1} concerning
the mapping $\widehat{\psi}_{n}(\nabla_{x}):\left(  M\otimes\mathcal{W}%
_{D_{n}}\right)  _{\pi(x)}\rightarrow\left(  E\otimes\mathcal{W}_{D_{n}%
}\right)  _{x}$.

\begin{enumerate}
\item To see the first condition, it suffices to show that
\begin{align*}
&  \left(  \mathrm{id}_{M}\otimes\mathcal{W}_{(d_{1},...,d_{n})\in
D^{n}\longmapsto(d_{1}+...+d_{n})\in D_{n}}\right)  \left(  \gamma\right) \\
&  =\left(  \mathrm{id}_{E}\otimes\mathcal{W}_{(d_{1},...,d_{n})\in
D^{n}\longmapsto(d_{1}+...+d_{n})\in D_{n}}\right)  \left(  \left(  \pi
\otimes\mathrm{id}_{\mathcal{W}_{D_{n}}}\right)  \left(  \widehat{\psi}%
_{n}(\nabla_{x})\left(  \gamma\right)  \right)  \right)  \text{,}%
\end{align*}
which follows from
\begin{align*}
&  \left(  \mathrm{id}_{M}\otimes\mathcal{W}_{(d_{1},...,d_{n})\in
D^{n}\longmapsto(d_{1}+...+d_{n})\in D_{n}}\right)  \left(  \left(  \pi
\otimes\mathrm{id}_{\mathcal{W}_{D_{n}}}\right)  \left(  \widehat{\psi}%
_{n}(\nabla_{x})\left(  \gamma\right)  \right)  \right) \\
&  =\left(  \pi\otimes\mathrm{id}_{\mathcal{W}_{D^{n}}}\right)  \left(
\left(  \mathrm{id}_{E}\otimes\mathcal{W}_{(d_{1},...,d_{n})\in D^{n}%
\longmapsto(d_{1}+...+d_{n})\in D_{n}}\right)  \left(  \widehat{\psi}%
_{n}(\nabla_{x})\left(  \gamma\right)  \right)  \right) \\
&  \text{\lbrack By the bifunctionality of }\otimes\text{]}\\
&  =\left(  \pi\otimes\mathrm{id}_{\mathcal{W}_{D^{n}}}\right)  \left(
\nabla_{x}(\left(  \mathrm{id}_{M}\otimes\mathcal{W}_{(d_{1},...,d_{n})\in
D^{n}\longmapsto(d_{1}+...+d_{n})\in D_{n}}\right)  \left(  \gamma\right)
)\right) \\
&  \text{\lbrack By the very definition of }\widehat{\psi}_{n}(\nabla
_{x})\text{]}\\
&  =\left(  \mathrm{id}_{M}\otimes\mathcal{W}_{(d_{1},...,d_{n})\in
D^{n}\longmapsto(d_{1}+...+d_{n})\in D_{n}}\right)  \left(  \gamma\right)
\end{align*}

\item Now we are going to deal with the second condition. It is easy to see
that the composition of mappings
\begin{align*}
&  \left(  M\otimes\mathcal{W}_{D_{n}}\right)  _{\pi\left(  x\right)
}\underrightarrow{\,\mathrm{id}_{M}\otimes\mathcal{W}_{\left(  \alpha
\cdot\right)  _{D_{n}}}}\,\left(  M\otimes\mathcal{W}_{D_{n}}\right)
_{\pi\left(  x\right)  }\underrightarrow{\,\mathcal{W}_{(d_{1},...,d_{n})\in
D^{n}\longmapsto(d_{1}+...+d_{n})\in D_{n}}}\,\\
&  \left(  M\otimes\mathcal{W}_{D^{n}}\right)  _{\pi\left(  x\right)  }%
\end{align*}
is equivalent to the composition of mappings
\begin{align*}
&  \left(  M\otimes\mathcal{W}_{D_{n}}\right)  _{\pi\left(  x\right)
}\underrightarrow{\,\mathcal{W}_{(d_{1},...,d_{n})\in D^{n}\longmapsto
(d_{1}+...+d_{n})\in D_{n}}}\,\left(  M\otimes\mathcal{W}_{D^{n}}\right)
_{\pi\left(  x\right)  }\\
&  \underrightarrow{\,\mathrm{id}_{M}\otimes\mathcal{W}_{\left(
\alpha\underset{1}{\cdot}\right)  _{D^{n}}}}\,\left(  M\otimes\mathcal{W}%
_{D^{n}}\right)  _{\pi\left(  x\right)  }...\underrightarrow{\mathrm{id}%
_{M}\otimes\mathcal{W}_{\left(  \alpha\underset{n}{\cdot}\right)  _{D^{n}}}%
}\,\\
&  \left(  M\otimes\mathcal{W}_{D^{n}}\right)  _{\pi\left(  x\right)
}\text{,}%
\end{align*}
while the composition of mappings
\begin{align*}
&  \left(  M\otimes\mathcal{W}_{D^{n}}\right)  _{\pi\left(  x\right)
}\underrightarrow{\,\mathrm{id}_{M}\otimes\mathcal{W}_{\left(  \alpha
\underset{1}{\cdot}\right)  _{D^{n}}}}\,\left(  M\otimes\mathcal{W}_{D^{n}%
}\right)  _{\pi\left(  x\right)  }...\,\underrightarrow{\mathrm{id}_{M}%
\otimes\mathcal{W}_{\left(  \alpha\underset{n}{\cdot}\right)  _{D^{n}}}}\,\\
&  \left(  M\otimes\mathcal{W}_{D^{n}}\right)  _{\pi\left(  x\right)
}\underrightarrow{\,\nabla_{x}}\,\left(  E\otimes\mathcal{W}_{D^{n}}\right)
_{x}%
\end{align*}
is equivalent to the composition of mappings
\begin{align*}
&  \left(  M\otimes\mathcal{W}_{D^{n}}\right)  _{\pi\left(  x\right)
}\underrightarrow{\,\nabla_{x}}\,\left(  E\otimes\mathcal{W}_{D^{n}}\right)
_{x}\underrightarrow{\,\mathrm{id}_{E}\otimes\mathcal{W}_{\left(
\alpha\underset{1}{\cdot}\right)  _{D^{n}}}}\,\left(  E\otimes\mathcal{W}%
_{D^{n}}\right)  _{x}...\\
&  \,\underrightarrow{\mathrm{id}_{E}\otimes\mathcal{W}_{\left(
\alpha\underset{n}{\cdot}\right)  _{D^{n}}}}\,\left(  E\otimes\mathcal{W}%
_{D^{n}}\right)  _{x}%
\end{align*}
Therefore the composition of mappings
\begin{align*}
&  \left(  M\otimes\mathcal{W}_{D_{n}}\right)  _{\pi\left(  x\right)
}\underrightarrow{\,\mathrm{id}_{M}\otimes\mathcal{W}_{\left(  \alpha
\cdot\right)  _{D_{n}}}}\,\left(  M\otimes\mathcal{W}_{D_{n}}\right)
_{\pi\left(  x\right)  }\underrightarrow{\,\mathcal{W}_{(d_{1},...,d_{n})\in
D^{n}\longmapsto(d_{1}+...+d_{n})\in D_{n}}}\,\\
&  \left(  M\otimes\mathcal{W}_{D^{n}}\right)  _{\pi\left(  x\right)
}\underrightarrow{\,\nabla_{x}}\,\left(  E\otimes\mathcal{W}_{D^{n}}\right)
_{x}%
\end{align*}
is equivalent to the composition of mappings
\begin{align*}
&  \left(  M\otimes\mathcal{W}_{D_{n}}\right)  _{\pi\left(  x\right)
}\underrightarrow{\,\mathcal{W}_{(d_{1},...,d_{n})\in D^{n}\longmapsto
(d_{1}+...+d_{n})\in D_{n}}}\,\left(  M\otimes\mathcal{W}_{D^{n}}\right)
_{\pi\left(  x\right)  }\underrightarrow{\,\nabla_{x}}\,\left(  E\otimes
\mathcal{W}_{D^{n}}\right)  _{x}\\
&  \underrightarrow{\,\mathrm{id}_{E}\otimes\mathcal{W}_{\left(
\alpha\underset{1}{\cdot}\right)  _{D^{n}}}}\,\left(  E\otimes\mathcal{W}%
_{D^{n}}\right)  _{x}...\underrightarrow{\,\mathrm{id}_{E}\otimes
\mathcal{W}_{\left(  \alpha\underset{n}{\cdot}\right)  _{D^{n}}}}\,\left(
E\otimes\mathcal{W}_{D^{n}}\right)  _{x}\text{,}%
\end{align*}
which should be equivalent in turn to
\begin{align*}
&  \left(  M\otimes\mathcal{W}_{D_{n}}\right)  _{\pi\left(  x\right)
}\underrightarrow{\widehat{\,\psi}_{n}(\nabla_{x})}\,\left(  E\otimes
\mathcal{W}_{D_{n}}\right)  _{x}\underrightarrow{\,\mathrm{id}_{E}%
\otimes\mathcal{W}_{(d_{1},...,d_{n})\in D^{n}\longmapsto(d_{1}+...+d_{n})\in
D_{n}}}\,\left(  E\otimes\mathcal{W}_{D^{n}}\right)  _{x}\\
&  \underrightarrow{\,\mathrm{id}_{E}\otimes\mathcal{W}_{\left(
\alpha\underset{1}{\cdot}\right)  _{D^{n}}}}\,\left(  E\otimes\mathcal{W}%
_{D^{n}}\right)  _{x}...\underrightarrow{\,\mathrm{id}_{E}\otimes
\mathcal{W}_{\left(  \alpha\underset{n}{\cdot}\right)  _{D^{n}}}}\,\left(
E\otimes\mathcal{W}_{D^{n}}\right)  _{x}%
\end{align*}
Since the composition of mappings
\begin{align*}
&  \left(  E\otimes\mathcal{W}_{D_{n}}\right)  _{x}\underrightarrow
{\mathcal{W}_{(d_{1},...,d_{n})\in D^{n}\longmapsto(d_{1}+...+d_{n})\in D_{n}%
}}\left(  E\otimes\mathcal{W}_{D^{n}}\right)  _{x}\underrightarrow
{\,\mathrm{id}_{E}\otimes\mathcal{W}_{\left(  \alpha\underset{1}{\cdot
}\right)  _{D^{n}}}}\,\left(  E\otimes\mathcal{W}_{D^{n}}\right)  _{x}...\\
&  \underrightarrow{\mathrm{id}_{E}\otimes\mathcal{W}_{\left(  \alpha
\underset{n}{\cdot}\right)  _{D^{n}}}}\,\left(  E\otimes\mathcal{W}_{D^{n}%
}\right)  _{x}%
\end{align*}
is equivalent to the composition of mappings
\begin{align*}
&  \left(  E\otimes\mathcal{W}_{D_{n}}\right)  _{x}\underrightarrow
{\,\mathrm{id}_{E}\otimes\mathcal{W}_{\left(  \alpha\cdot\right)  _{D_{n}}}%
}\,\left(  E\otimes\mathcal{W}_{D_{n}}\right)  _{x}\underrightarrow
{\,\mathrm{id}_{E}\otimes\mathcal{W}_{(d_{1},...,d_{n})\in D^{n}%
\longmapsto(d_{1}+...+d_{n})\in D_{n}}}\\
&  \left(  E\otimes\mathcal{W}_{D^{n}}\right)  _{x}\text{,}%
\end{align*}
the coveted result follows.

\item We are going to deal with the third condition. We have to show that the
diagram
\begin{equation}%
\begin{array}
[c]{ccc}%
\left(  M\otimes\mathcal{W}_{D_{n}}\right)  _{\pi\left(  x\right)  } &
\underrightarrow{\mathrm{id}_{M}\otimes\mathcal{W}_{\mathbf{m}_{D_{n}\times
D_{m}\rightarrow D_{n}}}} & \left(  M\otimes\mathcal{W}_{D_{n}}\right)
_{\pi\left(  x\right)  }\otimes\mathcal{W}_{D_{m}}\\%
\begin{array}
[c]{cc}%
\widehat{\psi}_{n}(\nabla_{x}) & \downarrow
\end{array}
&  &
\begin{array}
[c]{cc}%
\downarrow & \widehat{\psi}_{n}(\nabla_{x})\otimes\mathrm{id}_{\mathcal{W}%
_{D_{m}}}%
\end{array}
\\
\left(  E\otimes\mathcal{W}_{D_{n}}\right)  _{x} & \underrightarrow
{\mathrm{id}_{E}\otimes\mathcal{W}_{\mathbf{m}_{D_{n}\times D_{m}\rightarrow
D_{n}}}} & \left(  E\otimes\mathcal{W}_{D_{n}}\right)  _{x}\otimes
\mathcal{W}_{D_{m}}%
\end{array}
\label{7.1.3.1}%
\end{equation}
commutes. It is easy to see that the diagram
\[%
\begin{array}
[c]{ccc}%
\left(  E\otimes\mathcal{W}_{D_{n}}\right)  _{x} & \mathrm{id}_{E}%
\otimes\mathcal{W}_{+_{D^{n}\rightarrow D_{n}}} & \left(  E\otimes
\mathcal{W}_{D^{n}}\right)  _{x}\\%
\begin{array}
[c]{cc}%
\mathrm{id}_{E}\otimes\mathcal{W}_{\mathbf{m}_{D_{n}\times D_{m}\rightarrow
D_{n}}} & \downarrow
\end{array}
&  &
\begin{array}
[c]{cc}%
\downarrow & \mathrm{id}_{E}\otimes\mathcal{W}_{\eta}%
\end{array}
\\
\left(  E\otimes\mathcal{W}_{D_{n}}\right)  _{x}\otimes\mathcal{W}_{D_{m}} &
\mathrm{id}_{E}\otimes\mathcal{W}_{+_{D^{n}\rightarrow D_{n}}\times
\mathrm{id}_{D_{m}}} & \left(  E\otimes\mathcal{W}_{D^{n}}\right)  _{x}%
\otimes\mathcal{W}_{D_{m}}%
\end{array}
\]
commutes, where $\eta$\ stands for
\[
(d_{1},...,d_{n},e)\in D^{n}\times D_{m}\longmapsto(d_{1}e,...,d_{n}e)\in
D^{n}%
\]
so that the commutativity of the diagram in (\ref{7.1.3.1}) is equivalent to
the commutativity of the outer square of the diagram
\begin{equation}%
\begin{array}
[c]{ccc}%
\left(  M\otimes\mathcal{W}_{D_{n}}\right)  _{\pi\left(  x\right)  } &
\underrightarrow{\mathrm{id}_{M}\otimes\mathcal{W}_{\mathbf{m}_{D_{n}\times
D_{m}\rightarrow D_{n}}}} & \left(  M\otimes\mathcal{W}_{D_{n}}\right)
_{\pi\left(  x\right)  }\otimes\mathcal{W}_{D_{m}}\\%
\begin{array}
[c]{cc}%
\widehat{\psi}_{n}(\nabla_{x}) & \downarrow
\end{array}
&  &
\begin{array}
[c]{cc}%
\downarrow & \widehat{\psi}_{n}(\nabla_{x})\otimes\mathrm{id}_{\mathcal{W}%
_{D_{m}}}%
\end{array}
\\
\left(  E\otimes\mathcal{W}_{D_{n}}\right)  _{x} & \underrightarrow
{\mathrm{id}_{E}\otimes\mathcal{W}_{\mathbf{m}_{D_{n}\times D_{m}\rightarrow
D_{n}}}} & \left(  E\otimes\mathcal{W}_{D_{n}}\right)  _{x}\otimes
\mathcal{W}_{D_{m}}\\%
\begin{array}
[c]{cc}%
\mathrm{id}_{E}\otimes\mathcal{W}_{+_{D^{n}\rightarrow D_{n}}} & \downarrow
\end{array}
&  &
\begin{array}
[c]{cc}%
\downarrow & \mathrm{id}_{E}\otimes\mathcal{W}_{+_{D^{n}\rightarrow D_{n}%
}\times\mathrm{id}_{D_{m}}}%
\end{array}
\\
\left(  E\otimes\mathcal{W}_{D^{n}}\right)  _{x} & \underrightarrow
{\mathrm{id}_{E}\otimes\mathcal{W}_{\eta}} & \left(  E\otimes\mathcal{W}%
_{D^{n}}\right)  _{x}\otimes\mathcal{W}_{D_{m}}%
\end{array}
\label{7.1.3.2}%
\end{equation}
where $+_{D^{n}\rightarrow D_{n}}$\ stands for
\[
(d_{1},...,d_{n})\in D^{n}\longmapsto(d_{1}+...+d_{n})\in D_{n}%
\]
The composition of mappings
\[
\left(  M\otimes\mathcal{W}_{D_{n}}\right)  _{\pi\left(  x\right)
}\underrightarrow{\widehat{\,\psi}_{n}(\nabla_{x})}\,\left(  E\otimes
\mathcal{W}_{D_{n}}\right)  _{x}\underrightarrow{\,\mathrm{id}_{E}%
\otimes\mathcal{W}_{+_{D^{n}\rightarrow D_{n}}}}\,\left(  E\otimes
\mathcal{W}_{D^{n}}\right)  _{x}%
\]
is equal to the composition of mappings
\[
\left(  M\otimes\mathcal{W}_{D_{n}}\right)  _{\pi\left(  x\right)
}\underrightarrow{\,\mathrm{id}_{M}\otimes\mathcal{W}_{+_{D^{n}\rightarrow
D_{n}}}}\,\left(  M\otimes\mathcal{W}_{D^{n}}\right)  _{\pi\left(  x\right)
}\underrightarrow{\,\nabla_{x}}\,\left(  E\otimes\mathcal{W}_{D^{n}}\right)
_{x}%
\]
while the composition of mappings
\begin{align*}
&  \left(  M\otimes\mathcal{W}_{D_{n}}\right)  _{\pi\left(  x\right)  }%
\otimes\mathcal{W}_{D_{m}}\underrightarrow{\widehat{\,\psi}_{n}(\nabla
_{x})\otimes\mathrm{id}_{\mathcal{W}_{D_{m}}}}\,\left(  E\otimes
\mathcal{W}_{D_{n}}\right)  _{x}\otimes\mathcal{W}_{D_{m}}\\
&  \underrightarrow{\,\mathrm{id}_{E}\otimes\mathcal{W}_{+_{D^{n}\rightarrow
D_{n}}\times\mathrm{id}_{D_{m}}}}\left(  E\otimes\mathcal{W}_{D^{n}}\right)
_{x}\otimes\mathcal{W}_{D_{m}}%
\end{align*}
is equal to the composition of mappings
\begin{align*}
&  \left(  M\otimes\mathcal{W}_{D_{n}}\right)  _{\pi\left(  x\right)
}\underrightarrow{\,\mathrm{id}_{M}\otimes\mathcal{W}_{+_{D^{n}\rightarrow
D_{n}}\times\mathrm{id}_{D_{m}}}}\,\left(  M\otimes\mathcal{W}_{D^{n}}\right)
_{\pi\left(  x\right)  }\underrightarrow{\,\nabla_{x}\otimes\mathrm{id}%
_{\mathcal{W}_{D_{m}}}}\\
&  \left(  E\otimes\mathcal{W}_{D^{n}}\right)  _{x}\otimes\mathcal{W}_{D_{m}}%
\end{align*}
Since the diagram
\[%
\begin{array}
[c]{ccc}%
\left(  M\otimes\mathcal{W}_{D_{n}}\right)  _{\pi\left(  x\right)  } &
\underrightarrow{\mathrm{id}_{M}\otimes\mathcal{W}_{\mathbf{m}_{D_{n}\times
D_{m}\rightarrow D_{n}}}} & \left(  M\otimes\mathcal{W}_{D_{n}}\right)
_{\pi\left(  x\right)  }\otimes\mathcal{W}_{D_{m}}\\%
\begin{array}
[c]{cc}%
\mathrm{id}_{M}\otimes\mathcal{W}_{+_{D^{n}\rightarrow D_{n}}} & \downarrow
\end{array}
&  &
\begin{array}
[c]{cc}%
\downarrow & \mathrm{id}_{M}\otimes\mathcal{W}_{+_{D^{n}\rightarrow D_{n}%
}\times\mathrm{id}_{D_{m}}}%
\end{array}
\\
\left(  M\otimes\mathcal{W}_{D^{n}}\right)  _{\pi\left(  x\right)  } &
\underrightarrow{\mathrm{id}_{M}\otimes\mathcal{W}_{\eta}} & \left(
M\otimes\mathcal{W}_{D^{n}}\right)  _{\pi\left(  x\right)  }\otimes
\mathcal{W}_{D_{m}}\\%
\begin{array}
[c]{cc}%
\nabla_{x} & \downarrow
\end{array}
&  &
\begin{array}
[c]{cc}%
\downarrow & \nabla_{x}\otimes\mathrm{id}_{\mathcal{W}_{D_{m}}}%
\end{array}
\\
\left(  E\otimes\mathcal{W}_{D^{n}}\right)  _{x} & \underrightarrow
{\mathrm{id}_{E}\otimes\mathcal{W}_{\eta}} & \left(  E\otimes\mathcal{W}%
_{D^{n}}\right)  _{x}\otimes\mathcal{W}_{D_{m}}%
\end{array}
\]
commutes, the outer square of the diagram in (\ref{7.1.3.2}) commutes. This
completes the proof.
\end{enumerate}
\end{proof}

\begin{proposition}
\label{t7.1.4}The diagram
\[%
\begin{array}
[c]{ccc}%
\mathbb{\hat{J}}_{x}^{D^{n+1}}(\pi) & \underrightarrow{\widehat{\psi}_{n+1}} &
\mathbb{\hat{J}}_{x}^{D_{n+1}}(\pi)\\%
\begin{array}
[c]{cc}%
\widehat{\mathbf{\pi}}_{n+1,n} & \downarrow
\end{array}
&  &
\begin{array}
[c]{cc}%
\downarrow & \widehat{\mathbf{\pi}}_{n+1,n}%
\end{array}
\\
\mathbb{\hat{J}}_{x}^{D^{n}}(\pi) & \overrightarrow{\widehat{\psi}_{n}} &
\mathbb{\hat{J}}_{x}^{D_{n}}(\pi)
\end{array}
\]
commutes.
\end{proposition}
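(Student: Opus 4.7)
My plan is to verify the commutativity by exploiting the uniqueness built into Proposition \ref{t7.1.1}. Writing $\gamma_1' := \widehat{\pi}_{n+1,n}(\widehat{\psi}_{n+1}(\nabla_x))(\gamma)$ and $\gamma_2' := \widehat{\psi}_n(\widehat{\pi}_{n+1,n}(\nabla_x))(\gamma)$ for an arbitrary $\gamma \in (M \otimes \mathcal{W}_{D_n})_{\pi(x)}$, the element $\gamma_2'$ is by definition the unique element of $(E \otimes \mathcal{W}_{D_n})_x$ satisfying $(\mathrm{id}_E \otimes \mathcal{W}_{+_n})(\gamma_2') = \widehat{\pi}_{n+1,n}(\nabla_x)((\mathrm{id}_M \otimes \mathcal{W}_{+_n})(\gamma))$, where $+_n : D^n \to D_n$ is the summing map. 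Since $(\mathrm{id}_E \otimes \mathcal{W}_{+_n})$ is injective on $(E \otimes \mathcal{W}_{D_n})_x$ by the equalizer of Lemma \ref{t7.1.2} combined with the microlinearity of $E$, it suffices to verify that $\gamma_1'$ also satisfies that same equation.

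The key algebraic input is the commutative square of infinitesimal maps $+_{n+1} \circ \mathbf{d}_{n+1}^{\infty} = \iota \circ +_n : D^n \to D_{n+1}$, where $\iota : D_n \hookrightarrow D_{n+1}$ is the canonical inclusion, which upon dualizing and tensoring with $\mathrm{id}_X$ yields
\[
\mathbf{d}_{n+1} \circ (\mathrm{id}_X \otimes \mathcal{W}_{+_{n+1}}) = (\mathrm{id}_X \otimes \mathcal{W}_{+_n}) \circ r_{n+1,n},
\]
where $r_{n+1,n} := \mathrm{id}_X \otimes \mathcal{W}_{\iota}$ is the restriction from $D_{n+1}$-curves to $D_n$-curves. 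Using that $\widehat{\psi}_{n+1}(\nabla_x)$ is a $D_{n+1}$-pseudotangential by Proposition \ref{t7.1.3}, I would chain in order: Proposition \ref{t5.1.4} applied to $\widehat{\psi}_{n+1}(\nabla_x)$, the key identity above, the defining property of $\widehat{\psi}_{n+1}(\nabla_x)$, Proposition \ref{t4.1.3} applied to $\nabla_x$, and the key identity once more. The outcome is
\[
(\mathrm{id}_E \otimes \mathcal{W}_{+_n})\bigl(\widehat{\pi}_{n+1,n}(\widehat{\psi}_{n+1}(\nabla_x))(r_{n+1,n}(\eta))\bigr) = \widehat{\pi}_{n+1,n}(\nabla_x)\bigl((\mathrm{id}_M \otimes \mathcal{W}_{+_n})(r_{n+1,n}(\eta))\bigr)
\]
for every $\eta \in (M \otimes \mathcal{W}_{D_{n+1}})_{\pi(x)}$, which is the required identity for every $\gamma$ lying in the image of $r_{n+1,n}$.

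The hard part is upgrading this to arbitrary $\gamma$ not necessarily of the form $r_{n+1,n}(\eta)$. In the classical smooth-manifold context, $r_{n+1,n}$ is surjective on curve germs through $\pi(x)$ and the argument terminates immediately. In the general microlinear Fr\"olicher setting I cannot assume this, and instead I would redo the same chase at the parametrized level, starting from the characterization of $\gamma_1'$ supplied by Proposition \ref{t5.1.2} in $(E \otimes \mathcal{W}_{D_{n+1}}) \otimes \mathcal{W}_{D_n}$ (which does not require any extension $\eta$), then applying $(\mathrm{id} \otimes \mathcal{W}_{+_{n+1}} \otimes \mathrm{id}_{\mathcal{W}_{D_n}})$ (using the defining relation of $\widehat{\psi}_{n+1}$) followed by $(\mathbf{d}_{n+1} \otimes \mathrm{id}_{\mathcal{W}_{D_n}})$ (using Proposition \ref{t4.1.3}). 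The composite $\mathbf{m}_{D_{n+1}\times D_n \to D_n} \circ (+_{n+1} \times \mathrm{id})$ simplifies under $\mathbf{d}_{n+1}^{\infty} \times \mathrm{id}$ to $h_2 := \mathbf{m}_{D_n \times D_n \to D_n} \circ (+_n \times \mathrm{id}) : D^n \times D_n \to D_n$, $((d_i),b) \mapsto (\sum_i d_i)b$; invoking Proposition \ref{t7.1.1} applied to $\widehat{\pi}_{n+1,n}(\nabla_x)$ together with the infinitesimal linearity condition (third condition of Definition \ref{d5.1.1}) of the $D_n$-pseudotangential $\widehat{\psi}_n(\widehat{\pi}_{n+1,n}(\nabla_x))$ rewrites the right-hand side and produces the identity $(\mathrm{id}_E \otimes \mathcal{W}_{h_2})(\gamma_1') = (\mathrm{id}_E \otimes \mathcal{W}_{h_2})(\gamma_2')$. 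The main remaining obstacle is the injectivity of $\mathrm{id}_E \otimes \mathcal{W}_{h_2}$ on $(E \otimes \mathcal{W}_{D_n})_x$, which I would establish by factoring $\mathcal{W}_{h_2} = \mathcal{W}_{+_n \times \mathrm{id}} \circ \mathcal{W}_{\mathbf{m}_{D_n \times D_n \to D_n}}$ and combining the analogue of Lemma \ref{t5.1.1} for $\mathbf{m}_{D_n \times D_n \to D_n}$ with the equalizer of Lemma \ref{t7.1.2} (tensored with $\mathcal{W}_{D_n}$), applying microlinearity of $E$ at each step.
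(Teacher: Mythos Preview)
Your second (``parametrized'') argument is correct and is essentially the paper's own proof: the paper also post-composes both sides with $\mathrm{id}_E\otimes\mathcal{W}_{\mathbf{m}_{D_n\times D_n\to D_n}}$ followed by $\mathrm{id}_E\otimes\mathcal{W}_{+_{D^n\to D_n}\times\mathrm{id}_{D_n}}$ (your map $\mathrm{id}_E\otimes\mathcal{W}_{h_2}$), and then runs the same chain of rewrites---Proposition~\ref{t5.1.2} for $\widehat{\psi}_{n+1}(\nabla_x)$, the defining relation of $\widehat{\psi}_{n+1}$, Proposition~\ref{t4.1.3} for $\nabla_x$, the defining relation of $\widehat{\psi}_n$, and finally the infinitesimal-linearity clause of Definition~\ref{d5.1.1} for $\widehat{\psi}_n(\widehat{\pi}_{n+1,n}(\nabla_x))$---to reach the matching expression. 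Your first attempt via surjectivity of $r_{n+1,n}$ is indeed a dead end in this generality, as you note; the paper does not go there. One place where you are actually more explicit than the paper: the paper just says the equality of the two composites ``implies the coveted result at once,'' whereas you spell out the needed injectivity of $\mathrm{id}_E\otimes\mathcal{W}_{h_2}$ by factoring through the equalizer of Lemma~\ref{t7.1.2} (tensored with $\mathcal{W}_{D_n}$) and an analogue of Lemma~\ref{t5.1.1} for $\mathbf{m}_{D_n\times D_n\to D_n}$; that analogue does hold (the equalizer of the two maps $\mathcal{W}_{D_n\times D_n}\rightrightarrows\mathcal{W}_{D_n\times D_n\times D_n}$ induced by $(d_1,d_2,d_3)\mapsto(d_1d_2,d_3)$ and $(d_1,d_2,d_3)\mapsto(d_1,d_2d_3)$ is exactly $\mathcal{W}_{D_n}$), so your justification is sound.
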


\begin{proof}
Given $\nabla_{x}\in\mathbb{\hat{J}}_{x}^{D^{n+1}}(\pi)$, the composition of
mappings
\begin{align}
&  \left(  M\otimes\mathcal{W}_{D_{n}}\right)  _{\pi\left(  x\right)
}\underrightarrow{\,\widehat{\mathbf{\pi}}_{n+1,n}\left(  \widehat{\psi}%
_{n+1}\left(  \nabla_{x}\right)  \right)  }\,\left(  E\otimes\mathcal{W}%
_{D_{n}}\right)  _{x}\underrightarrow{\,\mathrm{id}_{E}\otimes\mathcal{W}%
_{\mathbf{m}_{D_{n}\times D_{n}\rightarrow D_{n}}}}\,\nonumber\\
&  \left(  E\otimes\mathcal{W}_{D_{n}}\right)  _{x}\otimes\mathcal{W}_{D_{n}%
}\,\underrightarrow{\mathrm{id}_{E}\otimes\mathcal{W}_{+_{D^{n}\rightarrow
D_{n}}\times\mathrm{id}_{D_{n}}}}\,\left(  E\otimes\mathcal{W}_{D^{n}}\right)
_{x}\otimes\mathcal{W}_{D_{n}} \label{7.1.4.1.a}%
\end{align}
is equivalent to the composition of mappings
\begin{align*}
&  \left(  M\otimes\mathcal{W}_{D_{n}}\right)  _{\pi\left(  x\right)
}\underrightarrow{\,\widehat{\mathbf{\pi}}_{n+1,n}\left(  \widehat{\psi}%
_{n+1}\left(  \nabla_{x}\right)  \right)  }\,\left(  E\otimes\mathcal{W}%
_{D_{n}}\right)  _{x}\underrightarrow{\,\mathrm{id}_{E}\otimes\mathcal{W}%
_{\mathbf{m}_{D_{n+1}\times D_{n}\rightarrow D_{n}}}}\,\\
&  \left(  E\otimes\mathcal{W}_{D_{n+1}}\right)  _{x}\otimes\mathcal{W}%
_{D_{n}}\underrightarrow{\mathrm{id}_{E}\otimes\mathcal{W}_{+_{D^{n+1}%
\rightarrow D_{n+1}}\times\mathrm{id}_{D_{n}}}}\,\left(  E\otimes
\mathcal{W}_{D^{n+1}}\right)  _{x}\otimes\mathcal{W}_{D_{n}}\underrightarrow
{\,\mathbf{d}_{n+1}\otimes\mathrm{id}_{\mathcal{W}_{D_{n}}}}\\
&  \,\left(  E\otimes\mathcal{W}_{D^{n}}\right)  _{x}\otimes\mathcal{W}%
_{D_{n}}%
\end{align*}
which is in turn equivalent to the composition of mappings
\begin{align*}
&  \left(  M\otimes\mathcal{W}_{D_{n}}\right)  _{\pi\left(  x\right)
}\underrightarrow{\,\mathrm{id}_{M}\otimes\mathcal{W}_{\mathbf{m}%
_{D_{n+1}\times D_{n}\rightarrow D_{n}}}}\,\left(  M\otimes\mathcal{W}%
_{D_{n+1}}\right)  _{\pi\left(  x\right)  }\otimes\mathcal{W}_{D_{n}%
}\,\underrightarrow{\widehat{\psi}_{n+1}\left(  \nabla_{x}\right)
\otimes\mathcal{W}_{\mathrm{id}_{\mathcal{W}_{D_{n}}}}}\,\\
&  \left(  E\otimes\mathcal{W}_{D_{n+1}}\right)  _{x}\otimes\mathcal{W}%
_{D_{n}}\,\underrightarrow{\mathrm{id}_{E}\otimes\mathcal{W}_{+_{D^{n+1}%
\rightarrow D_{n+1}}\times\mathrm{id}_{D_{n}}}}\,\left(  E\otimes
\mathcal{W}_{D^{n+1}}\right)  _{x}\otimes\mathcal{W}_{D_{n}}\underrightarrow
{\,\mathbf{d}_{n+1}\otimes\mathrm{id}_{\mathcal{W}_{D_{n}}}}\,\\
&  \left(  E\otimes\mathcal{W}_{D^{n}}\right)  _{x}\otimes\mathcal{W}_{D_{n}}%
\end{align*}
This is to be supplanted by the composition of mappings
\begin{align*}
&  \left(  M\otimes\mathcal{W}_{D_{n}}\right)  _{\pi\left(  x\right)
}\underrightarrow{\,\mathrm{id}_{M}\otimes\mathcal{W}_{\mathbf{m}%
_{D_{n+1}\times D_{n}\rightarrow D_{n}}}}\,\left(  M\otimes\mathcal{W}%
_{D_{n+1}}\right)  _{\pi\left(  x\right)  }\otimes\mathcal{W}_{D_{n}}\\
&  \underrightarrow{\,\mathrm{id}_{M}\otimes\mathcal{W}_{+_{D^{n+1}\rightarrow
D_{n+1}}\times\mathrm{id}_{D_{n}}}}\,\,\left(  M\otimes\mathcal{W}_{D^{n+1}%
}\right)  _{\pi\left(  x\right)  }\otimes\mathcal{W}_{D_{n}}\underrightarrow
{\nabla_{x}\otimes\mathrm{id}_{\mathcal{W}_{D_{n}}}}\\
&  \left(  E\otimes\mathcal{W}_{D^{n+1}}\right)  _{x}\otimes\mathcal{W}%
_{D_{n}}\,\underrightarrow{\mathbf{d}_{n+1}\otimes\mathrm{id}_{\mathcal{W}%
_{D_{n}}}}\,\left(  E\otimes\mathcal{W}_{D^{n}}\right)  _{x}\otimes
\mathcal{W}_{D_{n}}\text{,}%
\end{align*}
which is in turn equivalent to the composition of mappings
\begin{align*}
&  \left(  M\otimes\mathcal{W}_{D_{n}}\right)  _{\pi\left(  x\right)
}\underrightarrow{\,\mathrm{id}_{M}\otimes\mathcal{W}_{\mathbf{m}%
_{_{D_{n+1}\times D_{n}\rightarrow D_{n}}}}}\,\left(  M\otimes\mathcal{W}%
_{D_{n+1}}\right)  _{\pi\left(  x\right)  }\otimes\mathcal{W}_{D_{n}%
}\underrightarrow{\,\mathrm{id}_{M}\otimes\mathcal{W}_{+_{D^{n+1}\rightarrow
D_{n+1}}\times\mathrm{id}_{D_{n}}}}\,\\
&  \left(  M\otimes\mathcal{W}_{D^{n+1}}\right)  _{\pi\left(  x\right)
}\otimes\mathcal{W}_{D_{n}}\,\underrightarrow{\mathbf{d}_{n+1}\otimes
\mathrm{id}_{\mathcal{W}_{D_{n}}}}\,\left(  M\otimes\mathcal{W}_{D^{n}%
}\right)  _{\pi\left(  x\right)  }\otimes\mathcal{W}_{D_{n}}\\
&  \underrightarrow{\,\widehat{\mathbf{\pi}}_{n+1,n}\left(  \nabla_{x}\right)
\otimes\mathrm{id}_{\mathcal{W}_{D_{n}}}}\,\left(  E\otimes\mathcal{W}_{D^{n}%
}\right)  _{x}\otimes\mathcal{W}_{D_{n}}%
\end{align*}
by Proposition \ref{t4.1.3}. This is to be supplanted by the composition of
mappings
\begin{align*}
&  \left(  M\otimes\mathcal{W}_{D_{n}}\right)  _{\pi\left(  x\right)
}\underrightarrow{\,\mathrm{id}_{M}\otimes\mathcal{W}_{\mathbf{m}_{D_{n}\times
D_{n}\rightarrow D_{n}}}}\,\left(  M\otimes\mathcal{W}_{D_{n}}\right)
_{\pi\left(  x\right)  }\otimes\mathcal{W}_{D_{n}}\underrightarrow
{\,\mathrm{id}_{M}\otimes\mathcal{W}_{+_{D^{n}\rightarrow D_{n}}%
\times\mathrm{id}_{D_{n}}}}\,\\
&  \left(  M\otimes\mathcal{W}_{D^{n}}\right)  _{\pi\left(  x\right)  }%
\otimes\mathcal{W}_{D_{n}}\underrightarrow{\,\widehat{\mathbf{\pi}}%
_{n+1,n}\left(  \nabla_{x}\right)  \otimes\mathrm{id}_{\mathcal{W}_{D_{n}}}%
}\,\left(  E\otimes\mathcal{W}_{D^{n}}\right)  _{x}\otimes\mathcal{W}_{D_{n}%
}\text{,}%
\end{align*}
which is equivalent to the composition of mappings
\begin{align*}
&  \left(  M\otimes\mathcal{W}_{D_{n}}\right)  _{\pi\left(  x\right)
}\underrightarrow{\,\mathrm{id}_{M}\otimes\mathcal{W}_{\mathbf{m}_{D_{n}\times
D_{n}\rightarrow D_{n}}}}\,\left(  M\otimes\mathcal{W}_{D_{n}}\right)
_{\pi\left(  x\right)  }\otimes\mathcal{W}_{D_{n}}\underrightarrow
{\widehat{\,\psi}_{n}\left(  \widehat{\mathbf{\pi}}_{n+1,n}\left(  \nabla
_{x}\right)  \right)  \otimes\mathrm{id}_{\mathcal{W}_{D_{n}}}}\,\\
&  \left(  E\otimes\mathcal{W}_{D_{n}}\right)  _{x}\otimes\mathcal{W}_{D_{n}%
}\,\underrightarrow{\mathrm{id}_{E}\otimes\mathcal{W}_{+_{D^{n}\rightarrow
D_{n}}\times\mathrm{id}_{D_{n}}}}\,\left(  E\otimes\mathcal{W}_{D^{n}}\right)
_{x}\otimes\mathcal{W}_{D_{n}}%
\end{align*}
This is really equivalent to the composition of mappings
\begin{align}
&  \left(  M\otimes\mathcal{W}_{D_{n}}\right)  _{\pi\left(  x\right)
}\underrightarrow{\widehat{\,\psi}_{n}\left(  \widehat{\mathbf{\pi}}%
_{n+1,n}\left(  \nabla_{x}\right)  \right)  }\,\left(  E\otimes\mathcal{W}%
_{D_{n}}\right)  _{x}\underrightarrow{\,\mathrm{id}_{E}\otimes\mathcal{W}%
_{\mathbf{m}_{D_{n}\times D_{n}\rightarrow D_{n}}}}\,\left(  E\otimes
\mathcal{W}_{D_{n}}\right)  _{x}\otimes\mathcal{W}_{D_{n}}\nonumber\\
&  \underrightarrow{\mathrm{id}_{E}\otimes\mathcal{W}_{+_{D^{n}\rightarrow
D_{n}}\times\mathrm{id}_{D_{n}}}}\,E\otimes\mathcal{W}_{D^{n}\times D_{n}}
\label{7.1.4.2.a}%
\end{align}
This just established fact that the composition of mappings in
(\ref{7.1.4.1.a}) and that in (\ref{7.1.4.2.a}) are equivalent implies the
coveted result at once. This completes the proof.
\end{proof}

\begin{proposition}
\label{t7.1.5}Let $\mathbb{D}$ be a simplicial infinitesimal space of
dimension $n$ and degree $m$. Let $\nabla_{x}$ be a $D^{n}$-pseudotangential
\textit{over }the bundle $\pi:E\rightarrow M$ \textit{at} $x\in E$ and
$\gamma\in\left(  M\otimes\mathcal{W}_{D_{n}}\right)  _{\pi\left(  x\right)
}$. Then the composition of mappings
\[
\left(  M\otimes\mathcal{W}_{D_{n}}\right)  _{\pi\left(  x\right)
}\underrightarrow{\,\mathrm{id}_{M}\otimes\mathcal{W}_{+_{\mathbb{D}%
\rightarrow D_{n}}}}\,\left(  M\otimes\mathcal{W}_{\mathbb{D}}\right)
_{\pi\left(  x\right)  }\underrightarrow{\,\nabla_{x}^{\mathbb{D}}}\,\left(
E\otimes\mathcal{W}_{\mathbb{D}}\right)  _{x}%
\]
is equivalent to the composition of mappings
\[
\left(  M\otimes\mathcal{W}_{D_{n}}\right)  _{\pi\left(  x\right)
}\,\underrightarrow{\widehat{\psi}_{n}(\nabla_{x}})\,\left(  E\otimes
\mathcal{W}_{D_{n}}\right)  _{x}\underrightarrow{\,\mathrm{id}_{E}%
\otimes\mathcal{W}_{+_{\mathbb{D}\rightarrow D_{n}}}}\,\left(  E\otimes
\mathcal{W}_{\mathbb{D}}\right)  _{x}%
\]

\end{proposition}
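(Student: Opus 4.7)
The plan is to verify the desired equality in $\left(E \otimes \mathcal{W}_{\mathbb{D}}\right)_x$ by testing against the \emph{standard} quasi-colimit representation of $\mathbb{D}$ from Theorem \ref{t2.2.4.1}. Since $E$ is microlinear and Weil exponentiable, Proposition \ref{t2.3.2.7} ensures that the finite limit diagram in $\mathbf{W}$ dual to this quasi-colimit becomes a transversal limit diagram in $\mathbf{FS}$ after applying $E \otimes -$; consequently, an element of $E \otimes \mathcal{W}_{\mathbb{D}}$ is determined by its images under the maps $\mathrm{id}_E \otimes \mathcal{W}_{i_l}$ indexed by the natural injections $i_l : D^{k_l} \hookrightarrow \mathbb{D}$ (with $k_l \le n$) appearing in the standard representation. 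Moreover, the characterization of $\nabla_x^{\mathbb{D}}$ inherent in the proof of Theorem \ref{t4.1.5} is precisely that its $i_l$-component equals $\widehat{\pi}_{n,k_l}(\nabla_x)$ applied to the $i_l$-component of the input.

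Fixing one such injection $i_l$, I would observe that the composite $\beta := +_{\mathbb{D} \to D_n} \circ i_l : D^{k_l} \to D_n$ is the coordinate sum $(e_1, \ldots, e_{k_l}) \mapsto e_1 + \cdots + e_{k_l}$, which factors as $D^{k_l} \xrightarrow{+_{D^{k_l} \to D_{k_l}}} D_{k_l} \hookrightarrow D_n$. Applying the above characterization to the left-hand side immediately yields $\widehat{\pi}_{n,k_l}(\nabla_x)\bigl(\left(\mathrm{id}_M \otimes \mathcal{W}_\beta\right)(\gamma)\bigr)$ as its $i_l$-component. For the right-hand side I would commute the restriction along $D_{k_l} \hookrightarrow D_n$ past $\widehat{\psi}_n(\nabla_x)$ by iterating Proposition \ref{t5.1.4}, and then use Proposition \ref{t7.1.4} iteratively to identify $\widehat{\pi}_{n,k_l} \circ \widehat{\psi}_n$ with $\widehat{\psi}_{k_l} \circ \widehat{\pi}_{n,k_l}$. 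The reconciliation of the two $i_l$-components then reduces to the defining property of $\widehat{\psi}_{k_l}$ applied to the $D^{k_l}$-pseudotangential $\widehat{\pi}_{n,k_l}(\nabla_x)$, which is exactly the case $\mathbb{D} = D^{k_l}$ of the proposition and holds by construction of $\widehat{\psi}_{k_l}$.

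The main obstacle will be the bookkeeping of the two different projection systems—both denoted $\widehat{\pi}_{n,k}$—one acting on $D^n$-pseudotangentials from Section \ref{s4} and one acting on $D_n$-pseudotangentials from Section \ref{s5}, together with the parallel $M \otimes \mathcal{W}_{\bullet}$-level restriction maps appearing in Propositions \ref{t4.1.3} and \ref{t5.1.4}. Once the notation is sorted out, each individual step is a direct invocation of a previously established compatibility, and crucially no promotion of $\nabla_x$ to a $D^n$-tangential (Theorem \ref{t4.1.8}, Corollary \ref{2.12}) is needed—only the pseudotangential structure is used, so the argument goes through for the full space $\mathbb{\hat{J}}_{x}^{D^{n}}(\pi)$ rather than merely $\mathbb{J}_{x}^{D^{n}}(\pi)$.
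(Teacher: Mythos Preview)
Your proposal is correct and follows essentially the same route as the paper: test the desired equality on each injection $i:D^{k}\hookrightarrow\mathbb{D}$ in the standard quasi-colimit representation, use that the $i$-component of $\nabla_{x}^{\mathbb{D}}$ is $\widehat{\pi}_{n,k}(\nabla_{x})$ by the construction in Theorem~\ref{t4.1.5}, factor $+_{\mathbb{D}\to D_{n}}\circ i$ through $D_{k}$, and then invoke the defining property of $\widehat{\psi}_{k}$ together with the compatibility of $\widehat{\psi}$ with the projections (Proposition~\ref{t7.1.4}). The paper's write-up is terser: it collapses the ``restriction commutes with $\widehat{\psi}_{n}(\nabla_{x})$'' step (your iterated Proposition~\ref{t5.1.4}) and the ``$\widehat{\pi}_{n,k}\circ\widehat{\psi}_{n}=\widehat{\psi}_{k}\circ\widehat{\pi}_{n,k}$'' step into a single appeal to Proposition~\ref{t7.1.4}, and it cites Theorem~\ref{t4.1.8} for the commutation of $\nabla_{x}^{\mathbb{D}}$ with $\mathrm{id}\otimes\mathcal{W}_{i}$. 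Your observation that Theorem~\ref{t4.1.8} is not actually needed here is well taken: since $i$ is one of the natural injections defining the standard representation, the required commutation is literally the definition of $\nabla_{x}^{\mathbb{D}}$ from Theorem~\ref{t4.1.5}, which is stated for pseudotangentials. This keeps the argument consistent with the hypothesis of the proposition (only a $D^{n}$-pseudotangential is assumed) and is a small but genuine sharpening of the paper's citation.
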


\begin{proof}
Let $i:D^{k}\rightarrow\mathbb{D}$ be any mapping in the standard
quasi-colimit representation of $\mathbb{D}$. The composition of mappings
\begin{align}
&  \left(  M\otimes\mathcal{W}_{D_{n}}\right)  _{\pi\left(  x\right)
}\underrightarrow{\,\mathrm{id}_{M}\otimes\mathcal{W}_{+_{\mathbb{D}%
\rightarrow D_{n}}}}\,\left(  M\otimes\mathcal{W}_{\mathbb{D}}\right)
_{\pi\left(  x\right)  }\underrightarrow{\,\nabla_{x}^{\mathbb{D}}}\,\left(
E\otimes\mathcal{W}_{\mathbb{D}}\right)  _{x}\nonumber\\
&  \underrightarrow{\mathrm{id}_{E}\otimes\mathcal{W}_{i}}\,\left(
E\otimes\mathcal{W}_{D^{k}}\right)  _{x} \label{7.1.5.1}%
\end{align}
is equivalent, by dint of Theorem \ref{t4.1.8}, to the composition of
mappings
\begin{align*}
&  \left(  M\otimes\mathcal{W}_{D_{n}}\right)  _{\pi\left(  x\right)
}\underrightarrow{\,\mathrm{id}_{M}\otimes\mathcal{W}_{\mathbf{i}%
_{D_{k}\rightarrow D_{n}}}}\left(  \,M\otimes\mathcal{W}_{D_{k}}\right)
_{\pi\left(  x\right)  }\underrightarrow{\,\mathrm{id}_{M}\otimes
\mathcal{W}_{+_{D^{k}\rightarrow D_{k}}}}\,\left(  M\otimes\mathcal{W}_{D^{k}%
}\right)  _{\pi\left(  x\right)  }\\
&  \underrightarrow{\,\nabla_{x}^{D^{k}}}\,\left(  E\otimes\mathcal{W}_{D^{k}%
}\right)  _{x}\text{,}%
\end{align*}
which is in turn equivalent, by the very definition of $\widehat{\psi}_{k}$,
to the composition of mappings
\begin{align*}
&  \left(  M\otimes\mathcal{W}_{D_{n}}\right)  _{\pi\left(  x\right)
}\underrightarrow{\,\mathrm{id}_{M}\otimes\mathcal{W}_{\mathbf{i}%
_{D_{k}\rightarrow D_{n}}}}\,\left(  \,M\otimes\mathcal{W}_{D_{k}}\right)
_{\pi\left(  x\right)  }\underrightarrow{\widehat{\,\psi}_{k}\left(
\nabla_{x}^{D^{k}}\right)  }\,\left(  E\otimes\mathcal{W}_{D_{k}}\right)
_{x}\\
&  \underrightarrow{\,\mathrm{id}_{E}\otimes\mathcal{W}_{+_{D^{k}\rightarrow
D_{k}}}}\,\left(  E\otimes\mathcal{W}_{D^{k}}\right)  _{x}\text{.}%
\end{align*}
This is indeed equivalent, by dint of Proposition \ref{t7.1.4}, to the
composition of mappings
\begin{align*}
&  \left(  M\otimes\mathcal{W}_{D_{n}}\right)  _{\pi\left(  x\right)
}\underrightarrow{\widehat{\psi}_{n}\left(  \nabla_{x}\right)  }\,\left(
E\otimes\mathcal{W}_{D_{n}}\right)  _{x}\underrightarrow{\,\mathrm{id}%
_{E}\otimes\mathcal{W}_{\mathbf{i}_{D_{k}\rightarrow D_{n}}}}\,\left(
E\otimes\mathcal{W}_{D_{k}}\right)  _{x}\\
&  \underrightarrow{\,\mathrm{id}_{E}\otimes\mathcal{W}_{+_{D^{k}\rightarrow
D_{k}}}}\,\left(  E\otimes\mathcal{W}_{D^{k}}\right)  _{x}\text{,}%
\end{align*}
which is in turn equivalent to the composition of mappings
\begin{align}
&  \left(  M\otimes\mathcal{W}_{D_{n}}\right)  _{\pi\left(  x\right)
}\underrightarrow{\widehat{\,\psi}_{n}\left(  \nabla_{x}\right)  }\,\left(
E\otimes\mathcal{W}_{D_{n}}\right)  _{x}\underrightarrow{\,\mathrm{id}%
_{E}\otimes\mathcal{W}_{+_{\mathbb{D}\rightarrow D_{n}}}}\,\left(
E\otimes\mathcal{W}_{\mathbb{D}}\right)  _{x}\nonumber\\
&  \underrightarrow{\,\mathrm{id}_{E}\otimes\mathcal{W}_{i}}\,\left(
E\otimes\mathcal{W}_{D^{k}}\right)  _{x} \label{7.1.5.2}%
\end{align}
The just established fact that the composition of mappings in (\ref{7.1.5.1})
and that in (\ref{7.1.5.2}) are equivalent implies the coveted result at once.
This completes the proof.
\end{proof}

\begin{theorem}
\label{t7.1.6}For any\textbf{\ }$\nabla_{x}\in\mathbb{J}_{x}^{D^{n}}(\pi)$, we
have $\widehat{\psi}_{n}\left(  \nabla_{x}\right)  \in\mathbb{J}_{x}^{D_{n}%
}(\pi)$.
\end{theorem}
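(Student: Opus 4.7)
I proceed by induction on $n$. The base cases $n=0$ and $n=1$ are trivial because $\mathbb{J}^{D_0}(\pi)=\widehat{\mathbb{J}}^{D_0}(\pi)=E$ and $\mathbb{J}^{D_1}(\pi)=\widehat{\mathbb{J}}^{D_1}(\pi)$ by the conventions in the definition following Corollary~\ref{t5.1.4.1}, while $\widehat{\psi}_0$ and $\widehat{\psi}_1$ are just identities. For the inductive step, set $\widetilde{\nabla}_x:=\widehat{\psi}_n(\nabla_x)$; Proposition~\ref{t7.1.3} already places $\widetilde{\nabla}_x$ in $\widehat{\mathbb{J}}_x^{D_n}(\pi)$, so only the two inductive clauses of the definition of $D_n$-tangential remain to be checked. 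Clause~(1), which demands that $\widehat{\pi}_{n,n-1}(\widetilde{\nabla}_x)\in\mathbb{J}^{D_{n-1}}(\pi)$, follows at once from Proposition~\ref{t7.1.4}, which identifies it with $\widehat{\psi}_{n-1}(\widehat{\pi}_{n,n-1}\nabla_x)$ (the rightmost projection being the one of the second approach); since $\nabla_x\in\mathbb{J}^{D^n}(\pi)$, clause~(1) of Definition~\ref{d4.2} yields $\widehat{\pi}_{n,n-1}\nabla_x\in\mathbb{J}^{D^{n-1}}(\pi)$, and the inductive hypothesis on $n-1$ supplies the rest.

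For clause~(2), fix a simple polynomial $\rho$ of $d\in D_n$ with $l=\dim\rho$ and $\gamma\in(M\otimes\mathcal{W}_{D_l})_{\pi(x)}$; the goal is $\widetilde{\nabla}_x(\gamma\circ\rho)=\pi_{n,l}(\widetilde{\nabla}_x)(\gamma)\circ\rho$ in $(E\otimes\mathcal{W}_{D_n})_x$. My plan is to test this identity after composing with the sum $+_{D^n\to D_n}:D^n\to D_n$; by Lemma~\ref{t7.1.2} the corresponding Weil-algebra map sits as the first arrow of an equalizer and is therefore a monomorphism, so $\mathrm{id}_E\otimes\mathcal{W}_{+_{D^n\to D_n}}$ is injective. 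The LHS transforms, by the very definition of $\widehat{\psi}_n$, into $\nabla_x(\gamma\circ\rho\circ +_{D^n\to D_n})$; the RHS, using Proposition~\ref{t7.1.4} together with the remark after Theorem~\ref{t4.1.5} identifying $\widehat{\pi}_{n,l}(\nabla_x)$ with $\nabla_x^{D^l}$, becomes $\widehat{\psi}_l(\nabla_x^{D^l})(\gamma)\circ\rho\circ +_{D^n\to D_n}$.

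The residual identity is thus $\nabla_x(\gamma\circ\rho\circ +_{D^n\to D_n})=\widehat{\psi}_l(\nabla_x^{D^l})(\gamma)\circ\rho\circ +_{D^n\to D_n}$ in $E\otimes\mathcal{W}_{D^n}$. My plan is to exhibit a simplicial infinitesimal space $\mathbb{D}$ of dimension $l$ together with a monomial map $\tilde{\rho}:D^n\to\mathbb{D}$ satisfying $+_{\mathbb{D}\to D_l}\circ\tilde{\rho}=\rho\circ +_{D^n\to D_n}$. Proposition~\ref{t7.1.5} (applied with $l$ in place of $n$) then rewrites the RHS as $\nabla_x^{\mathbb{D}}(\gamma\circ +_{\mathbb{D}\to D_l})\circ\tilde{\rho}$, and Theorem~\ref{t4.1.8} moves the monomial $\tilde{\rho}$ past $\nabla_x^{\mathbb{D}}$, collapsing the expression into $\nabla_x(\gamma\circ +_{\mathbb{D}\to D_l}\circ\tilde{\rho})=\nabla_x(\gamma\circ\rho\circ +_{D^n\to D_n})$, namely the LHS. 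The principal obstacle is combinatorial: expanding $(d_1+\cdots+d_n)^k$ inside $\mathcal{W}_{D^n}$ produces elementary symmetric polynomials decorated with factorial coefficients $k!$, so the naive choice $\mathbb{D}=D^l$ with $\tilde{\rho}$ monomial does not work. One must decompose $\rho$ into its monomial summands $d^{a_k}$, select $\mathbb{D}$ of the form $D\{m;\mathcal{S}\}$ large enough to accommodate the resulting sums of $a_k$-fold monomials, and absorb the factorial scalars via the scalar-linearity axiom (condition~2) of Definition~\ref{d4.1}. Making this factorization precise and uniform for a general simple polynomial $\rho$ is the main technical step.
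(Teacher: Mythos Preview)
Your approach is essentially the same as the paper's: reduce to the pseudotangential case via Proposition~\ref{t7.1.3}, handle clause~(1) by Proposition~\ref{t7.1.4} and induction, and for clause~(2) factor $\rho\circ+_{D^n\to D_n}$ as $+_{\mathbb{D}\to D_l}\circ\tilde{\rho}$ with $\tilde{\rho}$ monomial, then invoke Theorem~\ref{t4.1.8} and Proposition~\ref{t7.1.5}. The paper illustrates exactly this with $n=3$, $\rho(d)=d^2$, taking $\mathbb{D}=D(6)$ and $\tilde{\rho}(d_1,d_2,d_3)=(d_1d_2,d_1d_3,d_2d_3,d_1d_2,d_1d_3,d_2d_3)$.

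One small correction to your final sentence: the factorial weights $a_k!$ arising from $(d_1+\cdots+d_n)^{a_k}$ are not absorbed via the scalar-linearity axiom of Definition~\ref{d4.1}; a monomial mapping in the sense of Definition~\ref{d2.2.2.4} must have every component a monomial with coefficient~$1$, so scalars cannot be inserted there. The paper's device is instead to \emph{repeat} each squarefree monomial $a_k!$ times among the components of $\tilde{\rho}$ (hence $D(6)$ rather than $D(3)$ in the example), so that the plain sum $+_{\mathbb{D}\to D_l}$ reproduces the multinomial coefficients. With that adjustment, your ``select $\mathbb{D}$ large enough'' plan is precisely the paper's method, and the general $\rho$ goes through by taking $\mathbb{D}=D(m)$ with $m=\sum_k \epsilon_k\, a_k!\binom{n}{a_k}$ and listing each monomial with the appropriate multiplicity.
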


\begin{proof}
In view of Proposition \ref{t7.1.3}, it suffices to show that $\widehat{\psi
}_{n}\left(  \nabla_{x}\right)  $ satisfies the condition in Definition
\ref{d5.1.2}. Here we deal only with the case that $n=3$ and the simple
polynomial $\rho$ at issue is $d\in D_{3}\longmapsto d^{2}\in D$, leaving the
general case safely to the reader. Since
\[
(d_{1}+d_{2}+d_{3})^{2}=2(d_{1}d_{2}+d_{1}d_{3}+d_{2}d_{3})
\]
for any $(d_{1},d_{2},d_{3})\in D^{3}$, we have the commutative diagram
\begin{equation}%
\begin{array}
[c]{ccc}%
D^{3} & \overset{\chi}{\rightarrow} & D(6)\\
+_{D^{3}\rightarrow D_{3}}\downarrow &  & \downarrow+_{D(6)\rightarrow D}\\
D_{3} & \underset{\rho}{\rightarrow} & D
\end{array}
\label{7.1.6.1}%
\end{equation}
where $\chi$ stands for the mapping
\[
(d_{1},d_{2},d_{3})\in D^{3}\mapsto\left(  d_{1}d_{2},d_{1}d_{3},d_{2}%
d_{3},d_{1}d_{2},d_{1}d_{3},d_{2}d_{3}\right)  \in D(6)
\]
Then the composition of mappings
\begin{align*}
&  \left(  M\otimes\mathcal{W}_{D}\right)  _{\pi\left(  x\right)
}\underrightarrow{\,\mathrm{id}_{M}\otimes\mathcal{W}_{\rho}}\,\left(
M\otimes\mathcal{W}_{D_{3}}\right)  _{\pi\left(  x\right)  }\,\underrightarrow
{\widehat{\psi}_{3}\left(  \nabla_{x}\right)  }\,\left(  E\otimes
\mathcal{W}_{D_{3}}\right)  _{x}\\
&  \underrightarrow{\,\mathrm{id}_{E}\otimes\mathcal{W}_{+_{D^{3}\rightarrow
D_{3}}}}\,\left(  E\otimes\mathcal{W}_{D^{3}}\right)  _{x}%
\end{align*}
is equivalent, by the very definition of $\widehat{\psi}_{3}$, to the
composition of mappings
\begin{align*}
&  \left(  M\otimes\mathcal{W}_{D}\right)  _{\pi\left(  x\right)
}\underrightarrow{\,\mathrm{id}_{M}\otimes\mathcal{W}_{\rho}}\,\left(
M\otimes\mathcal{W}_{D_{3}}\right)  _{\pi\left(  x\right)  }\underrightarrow
{\,\mathrm{id}_{M}\otimes\mathcal{W}_{+_{D^{3}\rightarrow D_{3}}}}\,\left(
M\otimes\mathcal{W}_{D^{3}}\right)  _{\pi\left(  x\right)  }\\
&  \underrightarrow{\,\nabla_{x}}\,\left(  E\otimes\mathcal{W}_{D^{3}}\right)
_{x}%
\end{align*}
which is in turn equivalent to the composition of mappings
\begin{align*}
&  \left(  M\otimes\mathcal{W}_{D}\right)  _{\pi\left(  x\right)
}\underrightarrow{\,\mathrm{id}_{M}\otimes\mathcal{W}_{+_{D(6)\rightarrow D}}%
}\,\left(  M\otimes\mathcal{W}_{D(6)}\right)  _{\pi\left(  x\right)
}\underrightarrow{\,\mathrm{id}_{M}\otimes\mathcal{W}_{\chi}}\,\left(
M\otimes\mathcal{W}_{D^{3}}\right)  _{\pi\left(  x\right)  }\\
&  \underrightarrow{\,\nabla_{x}}\,\left(  E\otimes\mathcal{W}_{D^{3}}\right)
_{x}%
\end{align*}
with due regard to the commutative diagram in (\ref{7.1.6.1}). By Theorem
\ref{t4.1.8}, this is equivalent to the composition of mappings
\begin{align*}
&  \left(  M\otimes\mathcal{W}_{D}\right)  _{\pi\left(  x\right)
}\underrightarrow{\,\mathrm{id}_{M}\otimes\mathcal{W}_{+_{D(6)\rightarrow D}}%
}\,\left(  M\otimes\mathcal{W}_{D(6)}\right)  _{\pi\left(  x\right)
}\underrightarrow{\,\nabla_{x}^{D(6)}}\,\left(  E\otimes\mathcal{W}%
_{D(6)}\right)  _{x}\\
&  \underrightarrow{\,\mathrm{id}_{E}\otimes\mathcal{W}_{\chi}}\,\left(
E\otimes\mathcal{W}_{D^{3}}\right)  _{x}%
\end{align*}
which is in turn equivalent by Proposition \ref{t7.1.5}\ to the composition of
mappings
\begin{align*}
&  \left(  M\otimes\mathcal{W}_{D}\right)  _{\pi\left(  x\right)
}\underrightarrow{\widehat{\,\mathbb{\psi}}_{1}(\pi_{3,1}(\nabla_{x}%
))}\,\left(  E\otimes\mathcal{W}_{D}\right)  _{x}\underrightarrow
{\,\mathrm{id}_{E}\otimes\mathcal{W}_{+_{D(6)\rightarrow D}}}\,\left(
E\otimes\mathcal{W}_{D(6)}\right)  _{x}\\
&  \underrightarrow{\,\mathrm{id}_{E}\otimes\mathcal{W}_{\chi}}\,\left(
E\otimes\mathcal{W}_{D^{3}}\right)  _{x}%
\end{align*}
Since
\[
\widehat{\mathbb{\psi}}_{1}(\widehat{\pi}_{3,1}(\nabla_{x}))=\widehat{\pi
}_{3,1}(\widehat{\mathbb{\psi}}_{3}(\nabla_{x}))
\]
by Proposition \ref{t7.1.4}\ and the commutativity of the diagram
(\ref{7.1.6.1}), this is equivalent to the composition of mappings
\begin{align*}
&  \left(  M\otimes\mathcal{W}_{D}\right)  _{\pi\left(  x\right)
}\underrightarrow{\,\pi_{3,1}(\widehat{\mathbb{\psi}}_{3}(\nabla_{x}%
))}\,\left(  E\otimes\mathcal{W}_{D}\right)  _{x}\underrightarrow
{\,\mathrm{id}_{E}\otimes\mathcal{W}_{\rho}}\,\left(  E\otimes\mathcal{W}%
_{D_{3}}\right)  _{x}\\
&  \underrightarrow{\,\mathrm{id}_{E}\otimes\mathcal{W}_{+_{D^{3}\rightarrow
D_{3}}}}\,\left(  E\otimes\mathcal{W}_{D^{3}}\right)  _{x}\text{,}%
\end{align*}
which completes the proof.
\end{proof}

\begin{notation}
Thus the mapping $\widehat{\mathbb{\psi}}_{n}:\widehat{\mathbb{J}}^{D^{n}}%
(\pi)\rightarrow\widehat{\mathbb{J}}^{D_{n}}(\pi)$ is naturally restricted to
a mapping $\psi_{n}:\mathbb{J}^{D^{n}}(\pi)\rightarrow\mathbb{J}^{D_{n}}(\pi)$.
\end{notation}

\end{document}